\documentclass[reqno, a4paper]{amsart}

\usepackage{fixltx2e}
\usepackage[T1]{fontenc}
\usepackage[ngerman,british]{babel}
\addto\extrasbritish{\languageshorthands{ngerman}}
\useshorthands{"}
\usepackage{graphicx}
\usepackage{amsfonts}
\usepackage{amssymb}
\usepackage{amsthm}
\usepackage{amsmath}
\usepackage[all]{xy}
\usepackage{hyperref}
\usepackage{mathbbol}
\usepackage{calrsfs}
\let\smallhorn=\wedge
\usepackage{mathabx}
\usepackage{alphalph}

\renewcommand{\square}{\raisebox{.4mm}{\,\ensuremath{\boxvoid}}}

\theoremstyle{plain}
\newtheorem{theorem}[subsection]{Theorem}
\newtheorem{lemma}[subsection]{Lemma}
\newtheorem{proposition}[subsection]{Proposition}
\newtheorem{corollary}[subsection]{Corollary}

\theoremstyle{definition}
\newtheorem{definition}[subsection]{Definition}
\newtheorem{remark}[subsection]{Remark}
\newtheorem{notation}[subsection]{Notation}
\newtheorem{example}[subsection]{Example}

\setcounter{tocdepth}{1}

\newcommand{\comp}{\raisebox{0.2mm}{\ensuremath{\scriptstyle{\circ}}}}
\newcommand{\defn}{\textbf}
\newcommand{\del}{\partial}
\newcommand{\noproof}{\hfill \qed}
\newcommand{\To}{\Rightarrow}
\newcommand{\downmapsto}{\rotatebox{270}{$\mapsto$}}
\newcommand{\horn}{\raisebox{.145mm}{\scalebox{1.3}[1.15]{$\smallhorn$}}\!}
\newcommand{\cycle}{\triangle}
\newcommand{\kuub}{\raisebox{.4mm}{\ensuremath{\boxvoid}}}
\newcommand{\squaredot}{\raisebox{.4mm}{\,\ensuremath{\boxdot}}}
\newcommand{\lb}{\langle}
\newcommand{\rb}{\rangle}
\newcommand{\lind}{\lgroup}
\newcommand{\rind}{\rgroup}

\DeclareMathOperator{\Centr}{Centr}
\DeclareMathOperator{\ext}{Ext}
\DeclareMathOperator{\cod}{cod}
\DeclareMathOperator{\Coker}{coker}
\DeclareMathOperator{\dom}{dom}
\renewcommand{\H}{\mathrm{H}}
\DeclareMathOperator{\Hom}{hom}
\DeclareMathOperator{\Ker}{ker}
\DeclareMathOperator{\op}{op}
\DeclareMathOperator{\pr}{pr}
\DeclareMathOperator{\Eq}{Eq}
\newcommand{\R}[1]{\Eq(#1)}
\newcommand{\RR}[1]{\Eq^2(#1)}
\newcommand{\Rthree}[1]{\Eq^3(#1)}
\newcommand{\Rn}[1]{\Eq^n(#1)}
\DeclareMathOperator{\Kernel}{Ker}
\newcommand{\K}[1]{\Kernel(#1)}
\newcommand{\Ktwo}[1]{\Kernel^2(#1)}
\newcommand{\Kn}[1]{\Kernel^n(#1)}
\newcommand{\Knminus}[1]{\Kernel^{n-1}(#1)}
\DeclareMathOperator{\Tors}{Tors}

\newcommand{\X}{\ensuremath{\mathcal{X}}}
\newcommand{\B}{\ensuremath{\mathcal{B}}}

\newcommand{\E}{\ensuremath{\mathcal{E}}}
\newcommand{\F}{\ensuremath{\mathcal{F}}}

\newcommand{\dd}{\ensuremath{\mathbb{d}}}
\newcommand{\ff}{\ensuremath{\mathbb{f}}}
\newcommand{\GG}{\ensuremath{\mathbb{G}}}
\newcommand{\KK}{\ensuremath{\mathbb{K}}}
\newcommand{\one}{\ensuremath{\mathbb{1}}}
\renewcommand{\tt}{\ensuremath{\mathbb{t}}}
\newcommand{\TT}{\ensuremath{\mathbb{T}}}
\newcommand{\xx}{\ensuremath{\mathbb{x}}}
\newcommand{\XX}{\ensuremath{\mathbb{X}}}
\newcommand{\yy}{\ensuremath{\mathbb{y}}}
\newcommand{\YY}{\ensuremath{\mathbb{Y}}}

\renewcommand{\a}{\ensuremath{\mathsf{a}}}
\newcommand{\ab}{\ensuremath{\mathsf{ab}}}
\newcommand{\Ab}{\ensuremath{\mathsf{Ab}}}
\newcommand{\Arr}{\ensuremath{\mathsf{Arr}}}
\newcommand{\arr}{\ensuremath{\mathsf{arr}}}
\newcommand{\Arrn}{\ensuremath{\mathsf{Arr}^{n}}}
\newcommand{\Arrnn}{\ensuremath{\mathsf{Arr}^{n+1}}}

\newcommand{\CExt}{\ensuremath{\mathsf{CExt}}}
\newcommand{\cosk}{\ensuremath{\mathsf{cosk}}}
\newcommand{\Cosk}{\ensuremath{\mathsf{Cosk}}}
\newcommand{\D}{\ensuremath{\mathsf{D}}}
\newcommand{\Dd}{\ensuremath{\mathsf{d}}}
\newcommand{\Diag}[1]{\ensuremath{3^{#1}\text{-}\mathsf{Diag}}}
\newcommand{\Dis}{\ensuremath{\mathsf{Dis}}}
\newcommand{\Ext}{\ensuremath{\mathsf{Ext}}}
\newcommand{\Extn}{\ensuremath{\mathsf{Ext}^{n}}}
\newcommand{\Gp}{\ensuremath{\mathsf{Gp}}}
\newcommand{\Gpd}{\ensuremath{\mathsf{Gpd}}}
\newcommand{\Fun}{\ensuremath{\mathsf{Fun}}}
\renewcommand{\L}{\ensuremath{\mathsf{L}}}

\newcommand{\Nil}{\ensuremath{\mathsf{Nil}}}
\newcommand{\nil}{\ensuremath{\mathsf{nil}}}
\newcommand{\Ran}{\ensuremath{\mathsf{Ran}}}
\renewcommand{\S}{\ensuremath{\mathsf{s}}}
\newcommand{\Set}{\ensuremath{\mathsf{Set}}}
\newcommand{\simparr}{\ensuremath{\mathsf{s}}}
\newcommand{\SimpArr}{\ensuremath{\mathsf{SArr}}}
\newcommand{\SimpCExt}{\ensuremath{\mathsf{SCExt}}}
\newcommand{\trunc}{\ensuremath{\mathsf{tr}}}

\def\pullback{% with thanks to Valerian Even
 \ar@{-}[]+R+<6pt,-1pt>;[]+RD+<6pt,-6pt>%
 \ar@{-}[]+D+<1pt,-6pt>;[]+RD+<6pt,-6pt>}

\def\pullbackdots{%
 \ar@{.}[]+R+<6pt,-1pt>;[]+RD+<6pt,-6pt>%
 \ar@{.}[]+D+<1pt,-6pt>;[]+RD+<6pt,-6pt>}
 
\def\pushout{%
 \ar@{-}[]+L+<-6pt,1pt>;[]+LU+<-6pt,6pt>%
 \ar@{-}[]+U+<-1pt,6pt>;[]+LU+<-6pt,6pt>}

\def\splitpullback{%
 \ar@{-}[]+R+<6pt,-.51ex>;[]+RD+<6pt,-6pt>%
 \ar@{-}[]+D+<.51ex,-6pt>;[]+RD+<6pt,-6pt>}

\def\skewpullback{%
 \ar@{-}[]+R+<6pt,-1pt>;[]+RD+<-7pt,-8pt>%
 \ar@{-}[]+D+<-12pt,-8pt>;[]+RD+<-7pt,-8pt>}

\hyphenation{cat-e-go-ri-cal cat-e-go-ries e-quiv-a-len-ces e-quiv-a-len-ce co-ker-nels gen-er-al-ised ex-act-ness ex-ten-sion Ja-ne-lid-ze pro-jec-tive nil-po-tent co-ker-nel a-sphe-ri-cal group-oid al-ter-na-ting semi-lat-tice semi-lat-tices e-quiv-a-lent abe-li-an-i-sa-tion com-mu-ta-tor com-mu-ta-tors prop-o-si-tion cen-tral def-i-ni-tion re-sult re-sults di-rec-tion di-rec-tions com-po-nent com-po-nents de-nom-i-na-tor in-ter-pre-ta-tion di-groups sub-sec-tion cen-tral-i-ty co-ho-mo-lo-gy di-men-sion-al be-tween de-ter-mined de-ter-mine op-er-a-tor op-er-a-tors al-ge-bras can-di-dates ma-trix mul-ti-pli-ca-tions na-tu-ral-ity di-men-sion-al de-ter-mines mor-phism epi-mor-phism epi-mor-phisms}

\newdir{>>}{{}*!/3.5pt/:(1,-.2)@^{>}*!/3.5pt/:(1,+.2)@_{>}*!/7pt/:(1,-.2)@^{>}*!/7pt/:(1,+.2)@_{>}}
\newdir{ >>}{{}*!/8pt/@{|}*!/3.5pt/:(1,-.2)@^{>}*!/3.5pt/:(1,+.2)@_{>}}
\newdir{ |>}{{}*!/-3.5pt/@{|}*!/-8pt/:(1,-.2)@^{>}*!/-8pt/:(1,+.2)@_{>}}
\newdir{ >}{{}*!/-8pt/@{>}}
\newdir{>}{{}*:(1,-.2)@^{>}*:(1,+.2)@_{>}}
\newdir{<}{{}*:(1,+.2)@^{<}*:(1,-.2)@_{<}}

\begin{document}

\title[Higher central extensions and cohomology]{Higher central extensions\\ and cohomology}

\author{Diana Rodelo}
\author{Tim Van~der Linden}

\thanks{Diana Rodelo was supported by the Centro de Matem\'{a}tica da
Universidade de Coimbra (CMUC), funded by the European Regional
Development Fund through the program COMPETE and by the Portuguese
Government through the FCT--Funda\c{c}\~{a}o para a Ci\^{e}ncia e
a Tecnologia under the project PEst-C/MAT/UI0324/2013 and grants
number PTDC/MAT/120222/2010 and SFRH/BPD/69661/2010}
\thanks{Tim Van der Linden is a Research Associate of the Fonds de la Recherche Scientifique--FNRS. He was supported by Centro de Matem\'atica da Universidade de Coimbra (CMUC) and by FCT--Funda\c c\~ao para a Ci\^encia e a Tecnologia (under grant number SFRH/BPD/38797/2007) and wishes to thank the Janelidze family for its warm hospitality during his stay in Cape Town}

\email{drodelo@ualg.pt}
\email{tim.vanderlinden@uclouvain.be}

\address{Departamento de Matem\'atica, Faculdade de Ci\^{e}ncias e Tecnologia, Universidade do Algarve, Campus de
Gambelas, 8005--139 Faro, Portugal}
\address{Centro de Matem\'atica, Universidade de Coimbra, 3001--454 Coimbra, Portugal}
\address{Institut de Recherche en Math\'ematique et Physique, Universit\'e catholique de Louvain, chemin du cyclotron~2 bte~L7.01.02, 1348 Louvain-la-Neuve,
Belgium}

\keywords{Cohomology, categorical Galois theory, semi-abelian category, higher central extension, torsor}

\subjclass[2010]{18G50, 18G60, 18G15, 20J, 55N}

\begin{abstract}
We establish a Galois-theoretic interpretation of cohomology in semi-abelian categories: cohomology with trivial coefficients classifies central extensions, also in arbitrarily high degrees. This allows us to obtain a duality, in a certain sense, between ``internal'' homology and ``external'' cohomology in semi-abelian categories. These results depend on a geometric viewpoint of the concept of a higher central extension, as well as the algebraic one in terms of commutators.
\end{abstract}

\maketitle

\tableofcontents

\section*{Introduction}
This article exposes a hidden duality between ``internal'' homology and ``external'' cohomology for certain group-like structures: we prove that cohomology with trivial coefficients classifies (higher) central extensions. Together with the work in low dimensions and with several closely related results in homology theory, this reveals a deep connection between Galois theory and cohomology, and a close link with homology which has been invisible so far.

The context in which we work is sufficiently general to cover cohomology of, say, groups, crossed modules, Lie algebras and non-unitary rings, as well as the Yoneda $\ext$ groups in the abelian case, and many new examples can easily be added to the list. In fact, almost any semi-abelian category would do, as long as it satisfies a certain commutator condition which occurs naturally in this setting---see below.

This interpretation of cohomology is part of a bigger programme which intends to understand homological algebra in a non-abelian environment from the viewpoint of (categorical) Galois theory. Related results include, for instance, higher Hopf formulae for homology in semi-abelian categories~\cite{EGVdL}, higher-dimensional torsion theories~\cite{Everaert-Gran-TT}, a theory of satellites for homology without projectives~\cite{GVdL2}, and higher-dimensional commutator theory based on a notion of higher centrality~\cite{EverVdL4, EverVdLRCT}.

\subsection*{Higher centrality}
The key novelty in the present approach to (co)homology of non-abelian algebraic objects is the concept of \emph{higher centrality}. It allows us to express in an abstract but simple way the commutator conditions which we have to deal with.

Following the ideas of Janelidze~\cite{Janelidze:Double, Janelidze:Hopf-talk}, the formal theory of (not necessarily central) \emph{higher (cubic) extensions} was first developed in~\cite{EGVdL} in order to provide a general setting for the Brown--Ellis--Hopf formulae~\cite{Brown-Ellis, Donadze-Inassaridze-Porter}. The notion of \emph{centrality} in the sense of categorical Galois theory~\cite{Borceux-Janelidze, Janelidze:Pure, Janelidze-Kelly} depends on a Galois structure, and accordingly, centrality of higher extensions is defined using a tower of Galois structures.

Let us make this explicit with a concrete example. Consider the category~$\Gp$ of all groups and its (reflective) subcategory $\Nil_{2}$ determined by all groups of nilpotency class at most $2$. The induced reflector $\nil_{2}\colon{\Gp\to \Nil_{2}}$, left adjoint to the inclusion functor, takes a group $G$ and sends it to its $2$-nilpotent quotient~$G/[[G,G],G]$. This situation---$\Gp$ being a variety of algebras over $\Set$, and~$\Nil_{2}$ a subvariety of it---admits a canonical homology theory: Barr--Beck comonadic homology~\cite{Barr-Beck} with coefficients in the reflector $\nil_{2}$. Now for any group $Z$, the induced third homology group $\H_{3}(Z,\nil_{2})$ of~$Z$ may be expressed by a Hopf formula, namely the quotient~\cite[Theorem~9.3]{EGVdL}
\[
\frac{K_{0}\cap K_{1}\cap [[X,X],X]}{[[K_{0}\cap K_{1},X],X][[K_{0}, K_{1}],X][[K_{0},X],K_{1}][[X,K_{0}],K_{1}][[X,X],K_{0}\cap K_{1}]}.
\]
Here the objects $K_{0}=\K{c}$ and $K_{1}=\K{d}$ are the kernels of $c$ and $d$, for any \emph{two-cubic presentation}
\begin{equation}\label{Double-Extension-Intro}
\vcenter{\xymatrix{X \ar@{ >>}[r]^-{c} \ar@{ >>}[d]_-{d} & C \ar@{ >>}[d] \\
D \ar@{ >>}[r] & Z}}
\end{equation}
of $Z$. This means that the objects $C$, $D$ and $X$ are projective (= free) groups, and furthermore this commutative square is a \emph{two-cubic extension} of $Z$: all its arrows, as well as the induced arrow to the pullback $\lind d,c\rind\colon{X\to D\times_{Z}C}$, are surjections. The denominator in the formula is a generalised commutator: a two-cubic extension of groups such as~\eqref{Double-Extension-Intro} is central (with respect to $\Nil_{2}$) precisely when this denominator is zero. The concept of \emph{centrality} of two-cubic extensions is given by the Galois structure $\Gamma_{1}$ in the ``tower'' consisting of
\[
\Gamma_{0}=(\Gp,\Nil_{2},\E,\F,\nil_{2},\subseteq)
\] 
and
\[
\Gamma_{1}=(\Ext(\Gp),\CExt_{\Nil_{2}}(\Gp),\E^{1},\F^{1},(\nil_{2})_{1},\subseteq),
\] 
where $\E$, $\F$ are the classes of surjections and $\E^{1}$, $\F^{1}$ are the classes of two-cubic extensions in $\Gp$ and in $\Nil_{2}$, respectively. Here $\Gamma_{1}$ is induced by $\Gamma_{0}$ through its one-cubic central extensions, which are the objects of the full reflective subcategory $\CExt_{\Nil_{2}}(\Gp)$ with reflector $(\nil_{2})_{1}$ of the category $\Ext(\Gp)$ of one-cubic extensions in $\Gp$. 

It is not hard to construct a two-cubic presentation of an object, certainly not in the varietal case, since a truncation of any simplicial projective resolution will~do. As is apparent from the formula, the main difficulty in making it explicit lies in characterising the (two-cubic) central extensions corresponding to the functor which is being derived (in this case,~$\nil_{2}$). Higher cubic central extensions are defined by induction; let us explain how this is done for lowest degrees (more details can be found in the following sections and in the articles~\cite{EverHopf, EGoeVdL, EGVdL}, amongst others).

A \defn{semi-abelian} category~\cite{Janelidze-Marki-Tholen, Borceux-Bourn} is pointed, Barr-exact~\cite{Barr} and Bourn-pro\-to\-mod\-ular~\cite{Bourn1991} with binary sums. Let $\X$ be a semi-abelian category and $\B$ a \defn{Birkhoff subcategory}~\cite{Janelidze-Kelly} of $\X$---full, reflective and closed under subobjects and regular quotients, so that a Birkhoff subcategory of a variety is nothing but a subvariety. Let
\begin{equation}\label{Adjunction-1}
\vcenter{\xymatrix{{\X} \ar@<1ex>[r]^-{I} \ar@{}[r]|-{\perp} & \B \ar@<1ex>[l]^-{\supset}}}
\end{equation}
denote the induced adjunction, with $I$ the reflector and $\eta\colon{1_{\X}\To I}$ the unit.

In this context, a \defn{cubic extension} $f\colon{X\to Z}$ is defined as a regular epimorphism, and an \defn{extension} $(k,f)$ as a short exact sequence
\[
\xymatrix{0 \ar[r] & A \ar@{{ |>}->}[r]^-{k} & X \ar@{ >>}[r]^-{f} & Z \ar[r] & 0.}
\]
Together with the classes of cubic extensions in $\X$ and in $\B$, the adjunction~\eqref{Adjunction-1} forms a \emph{Galois structure} in the sense of Janelidze~\cite{Borceux-Janelidze, Janelidze:Pure}. Central (cubic) extensions are defined with respect to such a Galois structure, as follows. A cubic extension~$f$ is called \defn{trivial} when the induced naturality square
\[
\vcenter{\xymatrix{X \ar@{ >>}[r]^-{f} \ar@{ >>}[d]_-{\eta_{X}} & Z \ar@{ >>}[d]^{\eta_{Z}} \\ IX \ar@{ >>}[r]_-{If} & IZ}}
\]
is a pullback; $f\colon{X\to Z}$ is \defn{central} when either of the kernel pair projections $\pr_{0}$, $\pr_{1}\colon \R{f}=X\times_Z X\to X$ is trivial~\cite{Janelidze-Kelly}. An extension $(k,f)$ is said to be \defn{trivial} or \defn{central} whenever so is the underlying cubic extension $f$.

\begin{figure}
%\resizebox{.6\textwidth}{!}{
$\vcenter{\xymatrix@!0@=3.5em{& 0 \ar@{.>}[d] & 0 \ar@{->}[d] & 0 \ar@{->}[d]\\
0 \ar@{->}[r] & A  \ar@{{ |>}->}[r] \ar@{{ |>}.>}[d] & \cdot \ar@{{ |>}->}[d] \ar@{-{ >>}}[r] & \cdot \ar@{{ |>}->}[d] \ar@{->}[r] & 0\\
0 \ar@{->}[r] & \cdot \ar@{.>}[rd]  \ar@{{ |>}->}[r] \ar@{-{ >>}}[d] & F_{2} \ar@{-{ >>}}[r]_-{f_{0}} \ar@{-{ >>}}[d]^-{f_{1}}  & \cdot \ar@{-{ >>}}[d] \ar@{->}[r] & 0\\
0 \ar@{->}[r] & \cdot \ar@{{ |>}->}[r] \ar[d] & \cdot \ar@{.{ >>}}[r] \ar@{->}[d] & Z \ar@{->}[d] \ar@{.>}[r] & 0\\
& 0 & 0 & 0 }}$%}
\caption{A two-fold extension is a $3\times 3$-diagram: all rows and columns are short exact sequences.}\label{3x3 diag}
\end{figure}
It turns out that the cubic central extensions relative to $\B$ determine a reflective subcategory~$\CExt_\B(\X)$ of the category $\Ext(\X)$ of cubic extensions in $\X$, so we have an adjunction
\[
\xymatrix{{\Ext(\X)} \ar@<1ex>[r]^-{I_{1}} \ar@{}[r]|-{\perp} & \CExt_{\B}(\X).\ar@<1ex>[l]^-{\supset}}
\]
Together with classes of two-cubic extensions, defined as in the case of groups above, this adjunction forms a Galois structure, and thus we acquire the notion of \emph{two-cubic central extension} with respect to $\B$. This construction may be repeated ad infinitum, so that notions of \emph{$n$-cubic extension} (special $n$-dimensional cubes in $\X$) and \emph{$n$-cubic central extension} are obtained. An \defn{$n$-fold (central) extension} will be a special diagram of short exact sequences: a $3^{n}$-diagram, which is essentially an $n$-cubic extension with chosen kernels. For instance, a two-fold extension is a short exact sequence of short exact sequences---a $3\times 3$-diagram as in Figure~\ref{3x3 diag}, where the bottom right square is the underlying two-cubic extension. The dotted arrows in this diagram form a \emph{Yoneda extension}~\cite{Yoneda-Exact-Sequences} from $A$ to $Z$, which in the abelian case allows to reconstruct the entire $3\times 3$-diagram up to equivalence. In general, though, the diagram may not be thus reduced without loss of information. See Figure~\ref{Figure-Direction} below for a picture in dimension three.

Of course, whether or not an $n$-cubic extension is central with respect to some chosen Birkhoff subcategory depends on this subcategory more than anything else. In many cases (like, for instance, the case of groups vs.\ $2$-nilpotent groups) there are explicit descriptions of the central extensions in some, or in all, degrees (see, for instance, \cite{CKLVdL, Everaert-Gran-TT, Everaert-Gran-nGroupoids, EverVdL3}). Knowing, in a given case, what the central extensions are, gives a complete description of the corresponding homology objects as higher Hopf formulae: this is the content of Theorem~8.1 in~\cite{EGVdL}. 
In this article we only consider cubic extensions which are central with respect to the Birkhoff subcategory ${\B=\Ab(\X)}$ of all \emph{abelian} objects in $\X$, the objects which admit an internal abelian group structure; that is to say, they are central with respect to abelianisation. The reason for this constraint is that we only treat cohomology with trivial coefficients---coefficients in trivial modules, which are precisely the internal abelian group objects. In the non-trivial case, where the theory involves Birkhoff subcategories of~$\Ab(\X)$, the situation becomes significantly more complicated, and forms the subject of current work-in-progress.

The Hopf formulae now take the following shape~\cite{EGVdL}: 
\begin{equation}\label{General-Hopf}
\H_{n+1}(Z,\Ab(\X))\cong\dfrac{\bigcap_{i\in n}\K{f_{i}}\cap \lb F_{n}\rb}{L_{n}[F]}
\end{equation}
for any $n$-presentation~$F$ of $Z$. Here $F_{n}$ is the ``initial object'' of the $n$-cubic extension $F$ and the $f_{i}$ are the ``initial arrows'' (see the solid part of Figure~\ref{Figure-Direction} for a picture in degree three). The brackets~$\lb-\rb$ in the formula give the \emph{zero-dimensional commutator} of $F_{n}$ determined by its abelianisation: for any object $X$ of $\X$ there is a short exact sequence
\begin{equation}\label{Angular Bracket}
\xymatrix@1{0 \ar[r] & \lb X\rb \ar@{{ |>}->}[r] & X \ar@{-{ >>}}[r]^-{\eta_{X}} & \ab X \ar[r] & 0,}
\end{equation}
so $\lb X\rb=[X,X]$, the Huq commutator~\cite{BG,Huq} of $X$ with itself, giving a functor $\lb - \rb\colon {\X\to \X}$. The object in the denominator of the Hopf formula is the smallest normal subobject of~$F_{n}$ which, when divided out, makes $F$ central; in other words, an $n$-cubic extension~$F$ is central if and only if $L_{n}[F]=0$. In many cases (see Section~\ref{Section-Commutator-Assumption}) this ``abstract higher-dimensional commutator'' may be computed as a join of binary Huq commutators~\cite{EGVdL, RVdL3}.

On the other hand, the use of higher (central) extensions is not at all limited to homology and Hopf formulae. The concept of higher (cubic) extension is quite interesting in its own right~\cite{EGoeVdL} while centrality may, for instance, be used to model more exotic commutator theories~\cite{EverVdL4, EverVdLRCT}. The present article is meant to clarify the connection with cohomology and obtain a higher-dimensional counterpart of the the low-dimensional work which has been done in this context~\cite{Bourn1999, Bourn-Janelidze:Torsors, Gran-VdL, RVdL}.

\subsection*{Cohomology and centrality}
The current development starts with the long-established interpretation of the second cohomology group $\H^{2}(Z,A)$ of a group $Z$ with coefficients in an abelian group~$A$ in terms of central extensions of $Z$ by $A$ (see for instance~\cite{MacLane:Homology}). A central extension $(k,f)$ of $Z$ by $A$ is a short exact sequence of groups
\begin{equation}\label{1-fold extension}
\xymatrix{0 \ar[r] & A \ar@{{ |>}->}[r]^-{k} & X \ar@{ >>}[r]^-{f} & Z \ar[r] & 0,}
\end{equation}
so $k=\ker f$ and $f=\Coker k$, such that the commutator $[A,X]$ is trivial:
\[
\text{$axa^{-1}x^{-1}=1$ for all $a\in A$ and~${x\in X}$.} 
\]
Two extensions $(k,f)\colon {A\to X\to Z}$ and $(k',f')\colon{A\to X'\to Z}$ of $Z$ by $A$ are said to be equivalent if and only if there exists a group (iso)morphism $i\colon{X\to X'}$ satisfying ${f'\comp i=f}$ and~$i\comp k=k'$. The corresponding equivalence classes, together with the so-called Baer sum, form an abelian group $\Centr^{1}(Z,A)$, and this group is isomorphic to~$\H^{2}(Z,A)$. 

In this article we generalise the isomorphism $\Centr^{1}(Z,A)\cong \H^{2}(Z,A)$ in two ways: first of all, we also consider higher cohomology groups; secondly, we replace the concrete context of groups by an abstract context of a semi-abelian category satisfying an additional axiom which holds in many important examples of semi-abelian categories---in particular, it holds in the category of groups and in any abelian category.

It was proved in~\cite{Gran-VdL}, see also~\cite{Bourn:Baer-Sums-2007} and~\cite{Bourn-Janelidze:Torsors}, that the classical interpretation of group cohomology via central extensions may be extended from the context of groups to semi-abelian categories. Here the concept of centrality is the one coming from Galois theory, using the Birkhoff subcategory of all abelian objects~\cite{Bourn-Gran}---that is, we use centrality relative to abelianisation---and the cohomology is comonadic cohomology~\cite{Barr-Beck}. Thus the well-known similar results for Lie algebras over a field, commutative algebras, non-unitary rings, (pre)crossed modules, etc.\ could be included in a general theory, and new examples could be studied. 

When $A$ is a $Z$-module with a non-trivial action, of course the above concept of central extension does no longer suffice to capture cohomology with coefficients in~$A$. Nevertheless, there is good hope that something can be done in general and in higher degrees which extends both the present work and the results in~\cite{Bourn-Janelidze:Torsors}. In the current paper we limit ourselves to the case of trivial module coefficients essentially for the sake of simplicity. In contrast with this potential extension of the theory, when $A$ is not even an abelian object---so when we enter the realm of true non-abelian cohomology---it is not clear at all how the current approach could form the basis of a new theory.

The next step, an interpretation of the third cohomology group in similar terms, turned out to be quite hard to take. The reason is that one needs a theory of higher central extensions for this---which until recently was unavailable. The problem was finally solved in~\cite{RVdL}, where the characterisation of two-cubic central extensions given in~\cite{Janelidze:Double, Gran-Rossi} is extended to semi-abelian categories and an isomorphism
\[
\H^{3}(Z,A)\cong \Centr^{2}(Z,A)
\]
is constructed. The abelian group $\Centr^{2}(Z,A)$ consists of equivalence classes of two-fold central extensions of an object~$Z$ by an abelian object~$A$ as in Figure~\ref{3x3 diag}, equipped with a canonical addition induced by the internal group structure of~$A$. It must be mentioned that the cohomology theory used in~\cite{RVdL}---the \emph{directions approach}, using internal $n$-fold groupoids, introduced by Bourn in~\cite{Bourn1999, Bourn2002b, Bourn:Baer-Sums-2007} and further worked out by Bourn and Rodelo in~\cite{Bourn-Rodelo, Rodelo-Directions}---is less classical than the one we shall be using here, or at least is not obviously related to it in higher degrees. In this paper we use a different interpretation of cohomology which is based on higher torsors~\cite{Duskin, Duskin-Torsors, Glenn}.

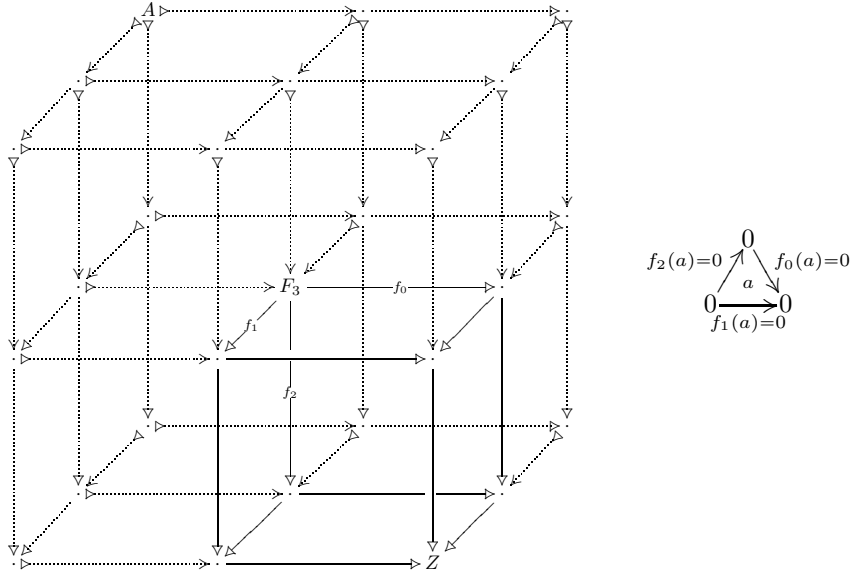
\begin{figure}
\resizebox{.6\textwidth}{!}{
$\vcenter{\xymatrix{&& A \ar@{{ |>}.>}[ld] \ar@{{ |>}.>}[rrr] \ar@{{ |>}.>}[ddd] &&& \cdot \ar@{{ |>}.>}[ld] \ar@{{ |>}.>}[ddd] \ar@{.{ >>}}[rrr] &&& \cdot \ar@{{ |>}.>}[ld] \ar@{{ |>}.>}[ddd]\\
&{\cdot} \ar@{.{ >>}}[ld] \ar@{{ |>}.>}[rrr] \ar@{{ |>}.>}[ddd] &&& \cdot \ar@{.{ >>}}[ld] \ar@{{ |>}.>}[ddd] \ar@{.{ >>}}[rrr] &&& \cdot \ar@{.{ >>}}[ld] \ar@{{ |>}.>}[ddd]\\
{\cdot} \ar@{{ |>}.>}[rrr] \ar@{{ |>}.>}[ddd] &&& \cdot \ar@{{ |>}.>}[ddd] \ar@{.{ >>}}[rrr] &&& \cdot \ar@{{ |>}.>}[ddd]\\
&&{\cdot} \ar@{{ |>}.>}[ld] \ar@{{ |>}.>}[rrr] \ar@{.{ >>}}[ddd] &&& \cdot \ar@{{ |>}.>}[ld] \ar@{.{ >>}}[ddd] \ar@{.{ >>}}[rrr] &&& \cdot \ar@{{ |>}.>}[ld] \ar@{.{ >>}}[ddd]\\
&{\cdot} \ar@{.{ >>}}[ld] \ar@{{ |>}.>}[rrr] \ar@{.{ >>}}[ddd] &&& F_{3} \ar@{-{ >>}}[ld]|-{f_{1}} \ar@{-{ >>}}[ddd]|(.33){\hole}|-{f_{2}} \ar@{-{ >>}}[rrr]|-{f_{0}} &&& \cdot \ar@{-{ >>}}[ld] \ar@{-{ >>}}[ddd]\\
{\cdot} \ar@{{ |>}.>}[rrr] \ar@{.{ >>}}[ddd] &&& \cdot \ar@{-{ >>}}[ddd] \ar@{-{ >>}}[rrr] &&& \cdot \ar@{-{ >>}}[ddd]\\
&&{\cdot} \ar@{{ |>}.>}[ld] \ar@{{ |>}.>}[rrr] &&& \cdot \ar@{{ |>}.>}[ld] \ar@{.{ >>}}[rrr] &&& \cdot \ar@{{ |>}.>}[ld] \\
&{\cdot} \ar@{.{ >>}}[ld] \ar@{{ |>}.>}[rrr] &&& \cdot \ar@{-{ >>}}[ld] \ar@{-{ >>}}[rrr]|(.66){\hole} &&& \cdot \ar@{-{ >>}}[ld]\\
{\cdot} \ar@{{ |>}.>}[rrr] &&& \cdot \ar@{-{ >>}}[rrr] &&& Z}}$}
\qquad
$\vcenter{\xymatrix@1@!0@R=2.4495em@C=1.4142em{& 0 \ar@{}[d]|(.67){a} \ar[rd]^-{f_{0}(a)=0}\\
0 \ar[ru]^-{f_{2}(a)=0} \ar[rr]_-{f_{1}(a)=0} && 0}}$
\caption{A $3$-fold (central) extension of $Z$ with its direction~$A$. The solid arrows are the underlying $3$-cubic extension.}\label{Figure-Direction}
\end{figure}

A key ingredient here is the concept of \defn{direction} of a higher (central) extension, which is the initial object of this extension $E$, when $E$ is considered as a diagram of short exact sequences in $\X$: the objects $A$ in Figure~\ref{3x3 diag} and Figure~\ref{Figure-Direction}. From the point of view of the $n$-cubic extension $F$ underlying $E$, the direction is an intersection of (chosen) kernels. For a one-fold extension such as~\eqref{1-fold extension}, the direction is the kernel $A=\K{f}$ of the underlying one-cubic extension $f\colon{X\to Z}$, while for a two-cubic extension as in Figure~\ref{3x3 diag} it is the intersection of the kernels~${\K{f_{0}}\cap \K{f_{1}}}$. Considering the underlying two-cubic extension $F$ as an arrow in the category of arrows in $\X$, the kernel of $F$ is a one-cubic extension in $\X$, whose kernel is isomorphic to the direction of $E$; so we write it as~$\Ktwo{F}$. In higher degrees a similar (inductive) analysis makes sense: given an $n$-fold extension $E$ with underlying $n$-cubic extension~$F$, its direction is $\Kn{F}=\bigcap_{i\in n}\K{f_{i}}$, which is an abelian object of~$\X$ when~$F$ is central (compare with the Hopf formula~\eqref{General-Hopf}).

Figure~\ref{Figure-Direction} gives a picture in degree $3$. The different but equivalent ways in which the direction may be obtained as a kernel come from the several ways in which a three-cubic extension may be considered as an arrow between two-cubic extensions, etc. An element of $F_{3}$ should be viewed as a (directed) triangle with faces given by $f_{0}$, $f_{1}$ and $f_{2}$, and such a triangle $a$ is in the direction~$A$ if and only if all its faces (edges) are zero.

We write $\CExt^{n}_{Z}(\X)$ for the category of $n$-fold central extensions over $Z$. Thus, for each $n\geq 1$ and any object $Z$ in $\X$, we obtain a functor
\[
\D_{(n,Z)}\colon\CExt^{n}_{Z}(\X)\to \Ab(\X)
\]
that sends an $n$-fold central extension $E$ of $Z$ to its direction $A$. Given any object~$Z$ in $\X$ and any abelian object $A$, an \defn{$n$-fold central extension of $Z$ by $A$} is an $n$-fold central extension $F$ of~$Z$ with direction~$A$, an object of the fibre $\D_{(n,Z)}^{-1}A$. Taking connected components gives us the (possibly large) set
\[
\Centr^{n}(Z,A)=\pi_{0}(\D_{(n,Z)}^{-1}A)
\]
which admits a canonical abelian group structure, since, as established in Proposition~\ref{Proposition-Centr^{n}(Z,-)}, the assignment $A\mapsto \Centr^{n}(Z,A)$ gives rise to a product-preserving functor $\Ab(\X) \to \Set$.

Now the question remains whether these groups have any cohomological meaning. The main body of this article explains that, indeed, they have: we shall prove that, under the commutator condition (CC), they agree with the interpretation of comonadic cohomology in terms of higher torsors.

\subsection*{Cohomology via higher torsors}
One could say that Duskin and Glenn's \emph{higher torsors}~\cite{Duskin, Duskin-Torsors, Glenn} are to central extensions what truncations of simplicial resolutions are to cubic extensions, or what groupoids are to pregroupoids:
\[
\frac{\text{torsor}}{\text{central extension}}=\frac{\text{truncation of simplicial resolution}}{\text{cubic extension}}=\frac{\text{groupoid}}{\text{pregroupoid}}.
\]
In a groupoid
\[
\xymatrix{G_{1} \ar@<-1ex>[r]_-{\del_{0}} \ar@<1ex>[r]^-{\del_{1}} &
G_{0} \ar[l]|-{\sigma_{0}}}
\]
there are identities (given by $\sigma_{0}$) and a composition $m$
\[
\vcenter{\xymatrix@1@!0@R=2.4495em@C=1.4142em{& {\cdot} \ar[rd]^-{\beta}\\
{\cdot} \ar[ru]^-{\alpha} \ar@{.>}[rr]_-{\gamma} && {\cdot}}}
\qquad\qquad
m(\beta,\alpha)=\gamma
\]
which is associative, admits inverses and is compatible with the identities; there is only one object of objects, $G_{0}$. On the other hand, a pregroupoid~\cite{Kock-Pregroupoids}
\[
\xymatrix@!0@=3em{& G_{1} \ar[dl]_-{\del_{0}} \ar[dr]^-{\del_{1}} \\
G_{0} && G'_{0}}
\]
has two objects of objects, $G_{0}$ and $G_{0}'$. Consequently, it has no identities, and instead of a composition, there is an (associative) Mal'tsev operation $p$
\[
\vcenter{\xymatrix@1@!0@=2em{& {\cdot}\\
{\cdot} \ar[ru]^-{\gamma} \ar@{.>}[rd]_-{\delta} && {\cdot} \ar[lu]_-{\beta} \ar[ld]^-{\alpha}\\
&{\cdot}}}
\qquad\qquad
p(\alpha,\beta,\gamma)=\delta
\]
satisfying $p(\alpha,\alpha,\gamma)=\gamma$ and $p(\alpha,\gamma,\gamma)=\alpha$. In the present context, associativity is automatic.

In dimension $3$ now, truncating a simplicial object $\XX$ at degree $2$, we obtain a diagram as on the left
\[
\vcenter{\xymatrix@1@!0@=45pt{\XX_{2} \ar[r]|-{\del_{1}} \ar@<2ex>[r]^-{\del_{2}} \ar@<-2ex>[r]_-{\del_{0}} & \XX_{1} \ar@<-1ex>[l] \ar@<1ex>[l] \ar@<1ex>[r]^-{\del_{1}} \ar@<-1ex>[r]_-{\del_{0}} & \XX_{0} \ar[r]^-{\del_{0}} \ar[l]|-{\sigma_{0}} & \XX_{-1}}}
\qquad\qquad
\vcenter{\xymatrix@1@!0@=35pt{
& \XX_{2} \ar[rr]^-{\del_{0}} \ar[dd]|(.25){\del_{2}}|-{\hole} \ar[ld]_-{\del_{1}} && \XX_{1} \ar[dd]^-{\del_{1}} \ar[ld]|-{\del_{0}} \\
\XX_{1} \ar[rr]^(.75){\del_{0}} \ar[dd]_-{\del_{1}} && \XX_{0} \ar[dd]^(.25){\del_{0}} \\
& \XX_{1} \ar[ld]_-{\del_{1}} \ar[rr]^(.25){\del_{0}}|-{\hole} && \XX_{0} \ar[ld]^-{\del_{0}} \\
\XX_{0} \ar[rr]_-{\del_{0}} && \XX_{-1}}}
\]
which may be ``unfolded'' to a commutative cube as on the right.
The extension property of this cube corresponds to acyclicity of the given simplicial object $\XX$ (its being a resolution) up to degree $2$. Note that this cube is special, because certain objects in it occur several times; on the other hand, the cube does not capture the degeneracies present in the simplicial object. Groupoids (and torsors) live in the \emph{simplicial} world, whereas pregroupoids belong to the \emph{cubical} world of $n$-cubic (central) extensions. As we shall explain in Subsection~\ref{Maltsev}, higher central extensions may be considered as \emph{higher-dimensional pregroupoids} in some precise sense.

Given an object $Z$ and an abelian object $A$ in a semi-abelian category $\X$, we consider the augmented simplicial object $\KK(Z,A,n)$ determined by
\[
\resizebox{\textwidth}{!}{
$
\vcenter{\xymatrix@R=20pt@C=45pt{\scriptstyle{n+1} & \scriptstyle{n} & \scriptstyle{n-1} & \scriptstyle{n-2} \ar@{}[r]|-{\cdots} & \scriptstyle{0} & \scriptstyle{-1}\\
A^{n+1} \times Z \ar@<2.33ex>[r]^-{\del_{n+1}\times 1_{Z}} \ar@<1.16ex>[r]|-{\pr_{n}\times 1_{Z}} \ar@<-2.33ex>[r]_-{\pr_{0}\times 1_{Z}}^-{\vdots} &
 A \times Z \ar@<1.75ex>[r]^-{\pr_{Z}} \ar@<-1.75ex>[r]_-{\pr_{Z}}^-{\vdots} &
 Z \ar@{=}@<1.75ex>[r] \ar@{=}@<-1.75ex>[r]^-{\vdots} & Z \ar@{}[r]|-{\cdots} & Z \ar@{=}[r] & Z}}
$}
\]
with $\del_{n+1}=(-1)^{n}\sum^{n}_{i=0}(-1)^{i}\pr_{i}$. An \defn{$n$-torsor of $Z$ by $A$} is an augmented simplicial object $\TT$ equipped with a simplicial morphism $\tt\colon {\TT\to \KK(Z,A,n)}$ such that
\begin{enumerate}
\item[(T1)] $\tt$ is a fibration which is exact from degree $n$ on;
\item[(T2)] $\TT\cong \Cosk_{n-1}\TT$, the $(n-1)$-coskeleton of $\TT$;
\item[(T3)] $\TT$ is a resolution.
\end{enumerate}
Axiom (T2) means that $\TT$ does not contain information above level $n-1$, which together with (T3) amounts to the $(n-1)$-truncation $T$ of $\TT$, considered as an~$n$-cube, being an $n$-cubic extension. The fibration property in (T1) is (almost) automatic, while the exactness tells us that, for all $i$,
\begin{equation}\label{Torsor-Iso}
\cycle(\TT,n)\cong A\times \horn^{i}(\TT,n).
\end{equation}
Here $A=\bigcap_{i\in n}\K{\del_{i}}$ is the direction of $T$, the object $\cycle(\TT,n)$ consists of all $n$"~cycles in $\TT$ and $\horn^{i}(\TT,n)$ is the object of $(n,i)$-horns in $\TT$. In degree two, for instance, we obtain the following picture:
\begin{equation}\label{Simplicial-Dimension-Two}
\begin{matrix}\cycle(\TT,2) & \cong & A & \times & \horn^{1}(\TT,2) \\
\vcenter{\xymatrix@1@!0@R=2.4495em@C=1.4142em{& {\cdot} \ar[rd]^-{\beta}\\
{\cdot} \ar[ru]^-{\alpha} \ar[rr]_-{\gamma} && {\cdot}}} &&
\vcenter{\xymatrix@1@!0@R=2.4495em@C=1.4142em{{0} \ar[rr]_-{a} && {0}}} &&
\vcenter{\xymatrix@1@!0@R=2.4495em@C=1.4142em{& {\cdot} \ar[rd]^-{\beta}\\
{\cdot} \ar[ru]^-{\alpha} && {\cdot}}}
\end{matrix}
\end{equation}
Given $a$, $\alpha$ and $\beta$, there is a unique arrow $\gamma=\mu^{1}(a,\beta,\alpha)$ such that the projection $\pr_{A}(\beta, \gamma, \alpha)$ on $A$ gives back $a$. In some sense $a=0$ if and only if the triangle on the left ``commutes'', and taking $\gamma=\mu^{1}(0,\beta,\alpha)=m^{1}(\beta,\alpha)$ as a composite of $\beta$ and~$\alpha$ really does define a groupoid structure $m^{1}$ on $T$.

Let $\S^{+}(\X)$ denote the category of augmented simplicial objects in $\X$. The full subcategory of the slice $\S^{+}(\X)/\KK(Z,A,n)$ determined by the $n$-torsors of~$Z$ by~$A$ is written $\Tors^{n}(Z,A)$. Taking connected components we obtain the set
\[
\Tors^{n}[Z,A]=\pi_{0}\Tors^{n}(Z,A)
\]
of equivalence classes of $n$-torsors of $Z$ by $A$. It is, in fact, an abelian group~\cite{Duskin-Torsors}.

Duskin explains in~\cite{Duskin, Duskin-Torsors} that the group $\Tors^{n}[Z,A]$ may be considered as a cohomology group $\H^{n+1}(Z,A)$ of $Z$ with coefficients in the trivial module $A$, and that under certain conditions this cohomology coincides with other known cohomology theories. For instance, when $\X$ is monadic over $\Set$, we obtain Barr--Beck cohomology~\cite[Theorem 5.2]{Duskin-Torsors}. 

If $\GG$ is the comonad induced by the forgetful/free adjunction of $\X$ to $\Set$, if~$Z$ is an object of $\X$ and $A$ an abelian object, then for any natural number $n$,
\[
\H^{n+1} (Z,A)_{\GG}=\H^{n}\Hom (\ab \GG Z,A)
\]
is the $(n+1)$-th cohomology group of $Z$ with coefficients in $A$, relative to the comonad $\GG$~\cite{Barr-Beck}. This defines a functor $\H^{n+1} (-,A)\colon \X \to \Ab $, for any~${n\geq 0}$. As mentioned in the previous paragraph, Duskin obtains an isomorphism
\[
\H^{n+1} (Z,A)_{\GG}\cong \H^{n+1} (Z,A),
\]
where the latter cohomology group is $\Tors^{n}[Z,A]$ by definition.

\subsection*{The geometry of higher central extensions}
In Section~\ref{Section-Geometry} we analyse higher central extensions from a geometrical point of view so that we can compare them with higher torsors. We work towards Proposition~\ref{Proposition-Central-then-Torsor} which says that an augmented simplicial object carries a (unique) structure of $n$-torsor as soon as its underlying $n$-fold arrow is an $n$-cubic central extension. This result is based on Theorem~\ref{Theorem-Higher-Centrality} which gives a new characterisation of higher central extensions: an $n$-cubic extension $F$ is central if and only if its direction $A$ is abelian and there is a canonical isomorphism
\begin{equation}\label{Iso-Box}
\bigboxvoid_{i\in n}\R{f_{i}}\cong A\times \bigboxdot_{i\in n}^{I}\R{f_{i}}
\end{equation}
for any (hence, all) $I\subseteq n$. (Compare with the isomorphism~\eqref{Torsor-Iso}.) The precise definition of the objects $\bigboxvoid_{i\in n}\R{f_{i}}$ and $\bigboxdot_{i\in n}^{I}\R{f_{i}}$ will be presented in Section~\ref{Section-Geometry}, but we can already explain the meaning of this characterisation in some low-dimensional cases and give the main idea.

When $n=1$ this characterisation of centrality becomes the well-known result (see~\cite{Bourn-Gran, Bourn-Gran-Maltsev}) that an extension $(k,f)\colon{A\to X\to Z}$ is central if and only if the kernel (direction) $A$ of $f$ is abelian and the kernel pair of $f$ may be decomposed into a product as $\R{f}\cong A\times X$.

When $F$ is the two-cubic extension~\eqref{Double-Extension-Intro} the isomorphism becomes
\[
\R{d}\square \R{c}\cong A\times (\R{d}\times_{X} \R{c}),
\]
where the direction $A$ is given by $A=\K{d}\cap \K{c}$. As explained in Subsection~\ref{Degree two}, this isomorphism can be obtained as a consequence of the analysis of two-cubic extensions carried out in~\cite{RVdL}. Recall~\cite{Janelidze-Pedicchio, Borceux-Bourn} that $\R{d}\square \R{c}$ contains \emph{diamonds} (as on the left)
\[
\xymatrix@1@!0@=2em{& {\cdot}\\
{\cdot} \ar[ru]^-{\gamma} \ar[rd]_-{\delta} && {\cdot} \ar[lu]_-{\beta} \ar[ld]^-{\alpha}\\
&{\cdot}}
\qquad\qquad
\xymatrix@1@!0@=2em{& {\cdot}\\
{\cdot} \ar[ru]^-{\gamma} && {\cdot} \ar[lu]_-{\beta} \ar[ld]^-{\alpha}\\
&{\cdot}}
\]
in $X$, so that the object $\R{d}\times_{X} \R{c}$, which is an instance of the pullback~\eqref{Pullback-Smith} on page~\pageref{Pullback-Smith}, contains \emph{diamonds with one face missing} (as on the right above) and
\[
\pi\colon {\R{d}\square \R{c}\to \R{d}\times_{X} \R{c}}
\]
is the projection which forgets $\delta$. The analogy with~\eqref{Simplicial-Dimension-Two} is clear and not accidental: the missing $\delta$ corresponds to a unique element $a$ of~$A$; on the other hand, given any diamond (including $\delta$), the corresponding element $a$ of~$A$ measures how far the diamond is from being ``commutative'' (in which case one may think of~$\delta$ as a composite $\alpha\beta^{-1}\gamma$). Note that instead of forgetting~$\delta$, we could have chosen to forget $\alpha$, $\beta$ or $\gamma$; each of those choices determines a different pullback~${\R{d}\times_{X} \R{c}}$ which, for the sake of clarity, could be written as $\R{d}\squaredot^{I} \R{c}$ where the index~${I\subseteq 2}$ determines the chosen projection (indeed there are four options).

In general, given an $n$-cubic extension $F$, the object $\bigboxvoid_{i\in n}\R{f_{i}}$ contains what we call ``$n$-dimensional diamonds'' in $F_{n}$ (see Figure~\ref{Figure-Diamond} on page~\pageref{Figure-Diamond} for an illustration in dimension $3$) and $\bigboxdot_{i\in n}^{I}\R{f_{i}}$ contains ``$n$-dimensional diamonds'' with one face (determined by the index $I\subseteq n$) missing. The cubic extension $F$ is central when its direction $A$ is abelian and the canonical projection
\[
\pi^{I}\colon\bigboxvoid_{i\in n}\R{f_{i}}\to \bigboxdot_{i\in n}^{I}\R{f_{i}}
\]
induces the isomorphism~\eqref{Iso-Box}; this means that a missing face in any $n$-fold diamond in $F_{n}$ is completely determined by an element in $A$. We also obtain an explicit formula for the splitting $\pr_{A}\colon {\bigboxvoid_{i\in n}\R{f_{i}}\to A}$ of the kernel of $\pi^{I}$, the projection on $A$ which gives us a ``measure of commutativity'' for $n$-fold diamonds: Proposition~\ref{Centrality-Sum} states that the lifting of
\[
\sum_{J\subseteq n}(-1)^{|J|}\eta_{F_{n}}\comp \pr_{J}
\]
over $A$, where $\pr_{J}\colon{\bigboxvoid_{i\in n}\R{f_{i}}\to F_{n}}$ sends a diamond to its $J$-face, is $\pr_{A}$.

Using this geometrical interpretation of centrality we can compare torsors and central extensions. Any $n$-cycle may be ``completed'' into an $n$-fold diamond by adding well-chosen degeneracies, and thus restricting the isomorphism~\eqref{Iso-Box} to an isomorphism~\eqref{Torsor-Iso} we may prove that any augmented simplicial object of which the underlying $n$-fold arrow is a central extension is in fact an $n$-torsor. The converse, however, needs more, since in general it is not clear how an isomorphism on the simplicial level may be extended to an isomorphism on the level of higher-dimensional diamonds. For this implication we pass via an interpretation of centrality in terms of commutators.

\subsection*{The commutator condition}
In order to complete the equivalence between torsors and higher central extensions, we shall assume that centrality may be characterised in terms of binary Huq commutators. We call this assumption the \defn{commutator condition (CC)} on higher central extensions~\cite{RVdL3}: it holds when, for all $n\geq 1$, an $n$-cubic extension $F$ is central if and only~if
\[
\displaystyle\Bigl[\bigcap_{i\in I}\K{f_{i}},\bigcap_{i\in n\setminus I}\K{f_{i}}\Bigr]=0
\]
for all $I\subseteq n$. Following~\cite{RVdL3}, an $n$-cubic extension which satisfies this commutator condition is called \defn{H-central}. If we name the concept of centrality coming from Galois theory \defn{categorical centrality}, then (CC) says that \emph{H-central and categorically central extensions are the same}.

The condition (CC) amounts to asking that the Hopf formula for higher homology~\eqref{General-Hopf} becomes a quotient of binary Huq commutators: its denominator~$L_{n}[F]$ is then equal to the join $\bigcup_{I\subseteq n}\bigl[\bigcap_{i\in I}\K{f_{i}},\bigcap_{i\in n\setminus I}\K{f_{i}}\bigr]$, so that
\[
\H_{n+1}(Z,\Ab(\X))\cong\dfrac{\bigcap_{i\in n}\K{f_{i}}\cap [F_{n},F_{n}]}{\bigcup_{I\subseteq n}\bigl[\bigcap_{i\in I}\K{f_{i}},\bigcap_{i\in n\setminus I}\K{f_{i}}\bigr]}
\]
for any $n$-cubic presentation $F$ of any object $Z$ and for any $n\geq 1$. We shall, however, focus on the cohomological meaning of this condition rather than on Hopf formulae.

It is certain that many categories satisfy (CC), although thus far no explicit characterisation is known; in the article~\cite{EGVdL} the categories of groups, Lie algebras and non-unitary rings are given as examples, and it is not difficult to add new examples to the list by using the technique explained there. A~wide range of (generally non-trivial) examples are those semi-abelian categories with a protoadditive abelianisation functor~\cite{Everaert-Gran-nGroupoids, Everaert-Gran-TT}, of which two extreme special cases are all semi-abelian arithmetical categories, such as the categories of von Neumann regular rings, Boolean rings and Heyting semilattices (where the cohomology theory becomes trivial) on the one hand, and all abelian categories (where, via a version of the Dold--Kan correspondence~\cite{Dold-Puppe}, the theory gives us the Yoneda $\ext$ groups) on the other. More recently it was shown in~\cite{RVdL3} that all semi-abelian categories with the \emph{Smith is Huq}~\cite{MFVdL} property satisfy (CC), while the categories of loops and of commutative loops do not. So, \emph{action representative} semi-abelian categories~\cite{BJK2,Borceux-Bourn-SEC}, \emph{action accessible} categories~\cite{BJ07} (which makes all \emph{categories of interest}~\cite{Orzech} examples~\cite{Montoli}), \emph{strongly semi-abelian}~\cite{Bourn2004} and \emph{Moore} categories~\cite{Gerstenhaber, Rodelo:Moore} are all examples of categories with the \emph{Smith is Huq} property, thus satisfy (CC). For instance, so do the categories of associative and non-associative algebras and of (pre)crossed modules, and all \emph{varieties of groups} in the sense of~\cite{Neumann}. In any case, every semi-abelian category satisfies (CC) for~${n=1}$ (see~\cite{Gran-Alg-Cent, Gran-VdL}).

Proposition~\ref{Proposition-Torsor-then-Central} now tells us that in a semi-abelian category with (CC), the $n$"~cubic extension underlying an $n$-torsor is always central. Hence for a truncated augmented simplicial object, the two concepts are equivalent (Theorem~\ref{Theorem-Torsor-Equivalence}): indeed, given a simplicial object $\TT$, when it exists, a morphism $\tt$ making~$(\TT,\tt)$ a torsor is necessarily unique; furthermore, any morphism of $n$-fold central extensions of $Z$ by $A$ which restricts to the truncation of a simplicial morphism, extends uniquely to a morphism of $n$-torsors of $Z$ by $A$---see Section~\ref{Section-Torsors-and-Centrality}, in particular Proposition~\ref{Proposition-Full}.

\subsection*{The main theorem}
Proposition~\ref{Proposition-Simplification-of-Central-Extension} tells us that, as soon as enough projectives exist,  any $n$-fold central extension of an object $Z$ by an abelian object $A$ is connected to an $n$-fold central extension of~$Z$ by~$A$ of which the underlying $n$-cube is a truncation of an augmented simplicial object. Thus under~(CC), any $n$-cubic central extension is connected to the simplicial-object part of an $n$-torsor. Since, furthermore, this process is compatible with directions, and we acquire an isomorphism
\[
\Tors^{n}[Z,A]=\pi_{0}\Tors^{n}(Z,A)\cong \pi_{0}(\D^{-1}_{(n,Z)}A)=\Centr^{n}(Z,A),
\]
natural in $A$. As a consequence we obtain the main result of this article, Theorem~\ref{Main-Theorem}: if $Z$ is an object and~$A$ an abelian object in a semi-abelian category with enough projectives satisfying the commutator condition (CC), then for every $n\geq 1$ we have that
\[
\H^{n+1}(Z,A)\cong\Centr^{n}(Z,A)
\]
as abelian groups. This establishes the result conjectured in~\cite{RVdL}, though for a different definition of cohomology, and has several other interesting implications. For instance, from~\cite{Duskin} it follows that there is a long exact sequence for~$\Centr^{n}(Z,-)$.

\subsection*{``Duality'' between ``internal'' homology and ``external'' cohomology}
We call a (co)homology theory \defn{internal} when the (co)homology object is an actual (abelian) object in the ground semi-abelian category, and \defn{external} if it is an (abelian) group and hence, in general, is an object outside the ground category. For instance, the approach to homology in terms of higher Hopf formulae discussed above is internal, while the approach to cohomology via higher central extensions is external. An example of internal \emph{co}homology is developed in Gray's Ph.D.\ thesis~\cite{GrayPhD}.

Combined with the main result of the article~\cite{GVdL2}, our present  interpretation of external cohomology gives an answer to the following somewhat naive question:
\begin{quote}
\emph{In which sense are internal homology\\ and external cohomology ``dual'' to each other?}
\end{quote}
The word ``dual'' here should not be read in its formal categorical sense, but similarly to the way we read ``dual of a vector space''. It is true that there is a kind of ``duality'', or at least a strong symmetry, in the definitions of homology and cohomology when one uses, for instance, the comonadic Barr--Beck approach. Nevertheless, so far there was no meaningful connection at all between the \emph{interpretations} of internal homology (using Hopf formulae, say) and external cohomology (many different approaches here), at least not for non-abelian algebraic objects. Following~\cite{Tim-Missing-Link}, we claim that the hidden connection is the concept of \emph{direction for higher central extensions} and the analysis of both homology (internal, relative to abelianisation) and cohomology (external, with trivial coefficients) in these terms.

The theory of \emph{satellites}~\cite{GVdL2, Janelidze-Satellites} makes it possible to replace Hopf formulae for homology with (possibly large) limits, so that homology objects may also be computed in contexts where not enough projective objects are available. The results in~\cite{GVdL2} are again based on higher central extensions in semi-abelian categories, and the article's Corollary~4.10 tells us that, for any Birkhoff subcategory~$\B$ of a semi-abelian category $\X$, for any object $Z$ of $\X$ and any integer~${n\geq 1}$, the homology object $\H_{n+1}(Z,\B)$ is the limit of the diagram $\D_{(n,Z)}\colon{\CExt^{n}_{Z}(\X)\to \B}$.

That is to say, in the case of abelianisation, all the internal homological and external cohomological information on an object $Z$ at a given level $n$ is contained in one and the same functor 
\[
\D_{(n,Z)}\colon\CExt^{n}_{Z}(\X)\to \Ab(\X)\colon E\mapsto \D_{(n,Z)}E=\bigcap_{i\in n}\K{f_{i}}=\Kn{F}
\]
(where $F=E|_{2^{n}}$) in two ``opposite'' ways,
\[
\H_{n+1}(Z,\Ab(\X))=\lim \D_{(n,Z)}\qquad\text{and}\qquad \H^{n+1}(Z,A)=\pi_{0}(\D^{-1}_{(n,Z)}A);
\]
homology is a limit of $\D_{(n,Z)}$ while cohomology consists of connected components of a fibre of $\D_{(n,Z)}$. So on the one end we have the limit of all possible directions and, on the other, all classes of all central extensions with one given and fixed direction---again, see Figure~\ref{Figure-Direction}. We consider this ``duality'' (Theorem~\ref{Duality-Theorem}) to be a major point of the present article. In the article~\cite{PVdL1} it is analysed from the point of view of the Yoneda lemma, which deals with precisely this kind of contrast or ``duality'' between ``internal'' and ``external''.

\subsection*{Structure of the text}
In Section~\ref{Section-Preliminaries} we recall some basic definitions and results which we need later on: semi-abelian categories, simplicial objects, higher extensions and higher torsors. Section~\ref{Section-Central-Extensions} contains all the theory needed to introduce the groups~$\Centr^{n}(Z,A)$. In Section~\ref{Section-Geometry} we give a geometric interpretation of the concept of higher central extension (Theorem~\ref{Theorem-Higher-Centrality} and Proposition~\ref{Centrality-Sum}), used in the next section where we analyse torsors in terms of this geometry. The most important result here is Proposition~\ref{Proposition-Central-then-Torsor} which says that a truncation of an augmented simplicial object, considered as a higher extension, is a torsor as soon as it is a central extension. The other implication in the equivalence between torsors and central extensions is obtained in Section~\ref{Section-Commutator-Assumption} (Proposition~\ref{Proposition-Torsor-then-Central} and Theorem~\ref{Theorem-Torsor-Equivalence}). However, to make it work, we have to strengthen the context of semi-abelian categories with the additional commutator condition~(CC). The short last Section~\ref{Section-Main-Theorem} explains how to suitably transform an $n$-cubic central extension (which need not be a truncation of anything simplicial) into an $n$-cubic central extension underlying a torsor, so that we may conclude with Theorem~\ref{Main-Theorem}, the isomorphism $\H^{n+1}(Z,A)\cong\Centr^{n}(Z,A)$ for all $n\geq 1$, and Theorem~\ref{Duality-Theorem}, the ``duality'' between internal homology and external cohomology.

\pagebreak
\section{Preliminaries}\label{Section-Preliminaries}
We sketch the context in which we shall be working: homological and semi-abelian categories for all general results, with the approach to external cohomology in Barr-exact categories due to Duskin~\cite{Duskin, Duskin-Torsors} and Glenn~\cite{Glenn}. We also recall the definition of higher extensions and the relation with simplicial resolutions~\cite{EGVdL, EGoeVdL}.

\subsection{Barr-exact, homological and semi-abelian categories}
For the sake of clarity, the results in this article will be presented in the context of semi-abelian categories. Although this is an extremely convenient environment to work in, it is probably not the most general context in which the theory may be developed. Nevertheless, we believe that in this first approach it is better not to cloud our results in technical subtleties concerning the surrounding category, but rather to focus on their intrinsic meaning and their correctness. The only disadvantage this added transparency may possibly have is the potential loss of some more elaborate examples; such examples can always be recovered later on.

We recall the main definitions and properties of Barr-exact~\cite{Barr}, homological~\cite{Borceux-Bourn} and semi-abelian categories~\cite{Janelidze-Marki-Tholen}.

Recall that a \defn{regular epimorphism} is the coequaliser of some pair of morphisms. A~finitely complete category endowed with a pullback-stable (regular epimorphism, monomorphism)-factorisation system is called \defn{regular}. Regular categories provide a natural context for working with relations. We denote the kernel relation (=~kernel pair) of a morphism $f$, the pullback of $f$ along itself, by $(\R{f},\pr_{0},\pr_{1})$ or~$(\R{f},f_{0},f_{1})$, depending on the situation. A regular category is said to be \defn{Barr-exact} when every equivalence relation is the kernel pair of some morphism~\cite{Barr}.

A \defn{pointed} category (that is, with a \defn{zero object}, an initial object that is also terminal) that admits pullbacks is called \defn{Bourn-protomodular}~\cite{Bourn1991} when the Split Short Five Lemma holds. Moreover, if the pointed category is regular, then protomodularity is equivalent to the (Regular) Short Five Lemma: given a commutative diagram
\begin{equation}\label{Short-Five-Lemma}
\vcenter{\xymatrix{0 \ar[r] & \K{f'} \ar[d]_{k} \ar@{{ |>}->}[r]^-{\ker f'} & X' \ar[d]_-{x} \ar@{-{ >>}}[r]^-{f'} & Y' \ar[d]^-{y} \ar[r] & 0\\
0 \ar[r] & \K{f} \ar@{{ |>}->}[r]_-{\ker f} & X \ar@{-{ >>}}[r]_-{f} & Y \ar[r] & 0}}
\end{equation}
with regular epimorphisms $f$, $f'$ and their kernels, if $k$ and $y$ are isomorphisms then also $x$ is an isomorphism. We usually denote the kernel of a morphism~$f$ by $(\K{f}, \Ker f)$. A pointed, regular and protomodular category is called \defn{homological}~\cite{Borceux-Bourn}. This is a context where many of the basic diagram lemmas of homological algebra hold. In particular, here the notion of \emph{(short) exact sequence} has its full meaning: a regular (= normal) epimorphism with its kernel.

In order for commutator theory to work flawlessly, the context should be finitely cocomplete and Mal'tsev. By definition, a \defn{Mal'tsev} category~\cite{Carboni-Lambek-Pedicchio,CPP} is finitely complete and such that every reflexive relation is necessarily an equivalence relation. It is well known that any finitely complete protomodular category is necessarily a Mal'tsev category~\cite{Bourn1996}.

Joining all these conditions brings us to the notion of a \defn{semi-abelian} category which can be defined as a pointed, Barr-exact and protomodular category that admits binary coproducts. This definition unifies many older approaches towards a suitable categorical context for the study of homological properties of non-abelian categories such as the categories of groups, Lie algebras, etc. In the founding article~\cite{Janelidze-Marki-Tholen} which introduces the concept, it is explained how this solves the problem of finding the right axioms to be added to Barr-exactness in order that the resulting context is equivalent with the contexts obtained in terms of ``old-style axioms'' such as, for instance, the one introduced in~\cite{Huq}.

Examples of semi-abelian categories include all varieties of $\Omega$-groups~\cite{Higgins}, such as groups and non-unitary rings, precrossed and crossed modules, and categories of non-unitary algebras such as associative algebras and Leibniz and Lie $n$-algebras; then there are non-unitary $C^{*}$-algebras and loops; also any abelian category is an example, as is the dual of the category of pointed objects in any elementary topos.

\begin{lemma}\cite{Bourn-Janelidze:Semidirect, Bourn2001}\label{Lemma-Iso-Pullback}
In a semi-abelian category, given a commutative diagram with short exact rows such as~\eqref{Short-Five-Lemma} above, $k$ is an isomorphism if and only if the right-hand square is a pullback.\noproof
\end{lemma}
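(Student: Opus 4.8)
The plan is to phrase both implications in terms of the single comparison morphism into a pullback. Write $P$ for the pullback $X\times_{Y}Y'$ of $f$ along $y$, with projections $p_{X}\colon{P\to X}$ and $p_{Y'}\colon{P\to Y'}$, and let $\varphi=\langle x,f'\rangle\colon{X'\to P}$ be the morphism induced by the right-hand square of~\eqref{Short-Five-Lemma}. By the universal property of the pullback, that square is a pullback precisely when $\varphi$ is an isomorphism, so in either direction it suffices to compare invertibility of $\varphi$ with invertibility of $k$.

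First I would collect two elementary facts about $P$. Since $\X$ is regular and $f$ is a regular epimorphism, its pullback $p_{Y'}$ is again a regular epimorphism; and pulling the short exact sequence $0\to\K{f}\to X\to Y\to 0$ back along $y$ shows that the kernel of $p_{Y'}$ is canonically $\K{f}$, with kernel inclusion $\langle\Ker f,0\rangle\colon{\K{f}\to P}$. On the other hand, commutativity of the left-hand square of~\eqref{Short-Five-Lemma} gives $x\comp\Ker f'=\Ker f\comp k$, whence
\[
\varphi\comp\Ker f'=\langle x\comp\Ker f',\;f'\comp\Ker f'\rangle=\langle\Ker f\comp k,\;0\rangle=\langle\Ker f,0\rangle\comp k;
\]
that is, $\varphi$ restricts to $k$ on the kernels of $f'$ and $p_{Y'}$, so that the triple $(k,\varphi,1_{Y'})$ is a morphism of short exact sequences from $(\Ker f',f')$ to $(\langle\Ker f,0\rangle,p_{Y'})$.

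With this in place the argument is short. If $k$ is an isomorphism, then in this morphism of short exact sequences the outer vertical maps $k$ and $1_{Y'}$ are isomorphisms, so the regular Short Five Lemma---equivalently, protomodularity of $\X$---forces the middle map $\varphi$ to be an isomorphism, i.e.\ the right-hand square of~\eqref{Short-Five-Lemma} is a pullback. Conversely, if that square is a pullback, then $\varphi$ is an isomorphism commuting with $f'$ and $p_{Y'}$ over $1_{Y'}$, hence it induces an isomorphism between their kernels; by uniqueness this induced isomorphism is $k$, so $k$ is an isomorphism. The only non-formal input is the (regular) Short Five Lemma used in the forward direction; everything else is bookkeeping with kernels, pullback pasting and pullback-stability of regular epimorphisms, so I do not expect a genuine obstacle here---the content is essentially packaged into the protomodularity of $\X$.
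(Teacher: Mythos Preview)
Your argument is correct and is the standard one: form the comparison $\varphi\colon X'\to P=X\times_{Y}Y'$, identify $\K{p_{Y'}}\cong\K{f}$ via pullback pasting, observe that $(k,\varphi,1_{Y'})$ is a morphism of short exact sequences, and then invoke the Short Five Lemma for one direction and the trivial ``isomorphisms induce isomorphisms on kernels'' for the other. The paper does not actually prove this lemma---it is stated with a \texttt{\textbackslash noproof} and cited from \cite{Bourn-Janelidze:Semidirect, Bourn2001}---so there is nothing to compare against; your proof is exactly what one would expect and would be perfectly acceptable as a self-contained justification.
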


\begin{lemma}\cite[Theorem 4.9]{Bourn-Gran-Normal-Sections}\label{Lemma-Split-Kernel-Product}
In a semi-abelian category, given a short exact sequence
\[
\vcenter{\xymatrix{0 \ar[r] & \K{f} \ar@<-.5ex>@{{ |>}->}[r]_-{\ker f} & X \ar@{-{ >>}}[r]^-{f} \ar@<-.5ex>@{-{ >>}}[l]_-{p} & Y \ar[r] & 0}}
\]
in which the kernel of $f$ is split by a morphism $p$, the object $X$ is a product of which the projections are $p$ and $f$.
\end{lemma}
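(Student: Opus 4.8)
The plan is to exhibit the induced morphism $\langle p,f\rangle\colon X\to \K{f}\times Y$ as an isomorphism, and then to observe that along this isomorphism $p$ and $f$ become the two product projections; so $(X,p,f)$ will be a product of $\K{f}$ and $Y$, as claimed. First I would record the two relations that describe the situation: the splitting hypothesis says $p\comp\ker f=1_{\K{f}}$, while exactness of the given sequence gives $f\comp\ker f=0$. Together these say exactly that the left-hand square of
\[\vcenter{\xymatrix{0 \ar[r] & \K{f} \ar@{=}[d] \ar@{{ |>}->}[r]^-{\ker f} & X \ar[d]^-{\langle p,f\rangle} \ar@{-{ >>}}[r]^-{f} & Y \ar@{=}[d] \ar[r] & 0\\
0 \ar[r] & \K{f} \ar@{{ |>}->}[r]_-{\langle 1,0\rangle} & \K{f}\times Y \ar@{-{ >>}}[r]_-{\pr_{Y}} & Y \ar[r] & 0}}\]
commutes, while the right-hand square commutes by the very definition of $\langle p,f\rangle$, since $\pr_{Y}\comp\langle p,f\rangle=f$.

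Next I would check that the bottom row is a short exact sequence: in a pointed category with binary products, $\langle 1_{\K{f}},0\rangle$ is a kernel of $\pr_{Y}$, and $\pr_{Y}$ is split (by $\langle 0,1_{Y}\rangle$), hence a regular epimorphism. So both rows are short exact sequences built from a regular epimorphism and its kernel, the two outer vertical morphisms are identities, and protomodularity---in the form of the Regular Short Five Lemma recalled around~\eqref{Short-Five-Lemma}---forces $\langle p,f\rangle$ to be an isomorphism. (Equivalently one may invoke Lemma~\ref{Lemma-Iso-Pullback} directly: since the left-hand vertical morphism $1_{\K{f}}$ is an isomorphism, the right-hand square is a pullback; but the pullback of $\pr_{Y}$ along $1_{Y}$ is nothing but $\K{f}\times Y$ itself, so $\langle p,f\rangle$ is an isomorphism.) Finally $\pr_{\K{f}}\comp\langle p,f\rangle=p$ and $\pr_{Y}\comp\langle p,f\rangle=f$, so transporting the product projections of $\K{f}\times Y$ along this isomorphism recovers $p$ and $f$; hence $X$ is a product with projections $p$ and $f$.

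I do not expect a genuine obstacle: the argument is a routine application of the Short Five Lemma. The only points deserving a word of care are the verification that $\pr_{Y}$ is a regular epimorphism with kernel $\langle 1,0\rangle$ (immediate in any pointed category with binary products, where split epimorphisms are regular) and the applicability of the Short Five Lemma, which is precisely the regular form of protomodularity available in a semi-abelian category.
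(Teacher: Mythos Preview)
Your proof is correct and uses essentially the same idea as the paper: both reduce to Lemma~\ref{Lemma-Iso-Pullback} (equivalently, the Short Five Lemma). The only cosmetic difference is the choice of auxiliary diagram: the paper compares the given sequence with the bottom row $\K{f}=\K{f}\to 0$, so that Lemma~\ref{Lemma-Iso-Pullback} directly exhibits the right-hand square as a pullback over~$0$, i.e.\ a product; you instead compare with $\K{f}\to \K{f}\times Y\to Y$ and conclude that the comparison $\langle p,f\rangle$ is an isomorphism. Both routes are equally short and valid.
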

\begin{proof}
Applying Lemma~\ref{Lemma-Iso-Pullback} to the diagram
\[
\vcenter{\xymatrix{0 \ar[r] & \K{f} \ar@{=}[d] \ar@{{ |>}->}[r]^-{\ker f} & X \ar[d]_-{p} \ar@{-{ >>}}[r]^-{f} & Y \ar[d] \ar[r] & 0\\
0 \ar[r] & \K{f} \ar@{=}[r] & \K{f} \ar@{-{ >>}}[r] & 0}}
\]
shows that its right hand square is a pullback.
\end{proof}

\subsection{The Huq commutator and the Smith/Pedicchio commutator}\label{Commutators}
We work in a semi-abelian category $\X$. A coterminal pair
\begin{equation*}\label{Cospan}
\vcenter{\xymatrix{K \ar[r]^-{k} & X & L \ar[l]_-{l}}}
\end{equation*}
of morphisms in $\X$ \defn{(Huq-)commutes}~\cite{BG,Huq} when there is a (necessarily unique) morphism $\varphi_{k,l}$ such that the diagram
\[
\xymatrix@!0@=3em{ & K \ar[ld]_{\lind 1_{K},0\rind} \ar[rd]^-{k} \\
K\times L \ar@{.>}[rr]|-{\varphi_{k,l}} && X\\
& L \ar[lu]^{\lind 0,1_{L}\rind} \ar[ru]_-{l}}
\]
is commutative. We shall only consider the case where $k$ and $l$ are normal monomorphisms (kernels). The \defn{Huq commutator $[k,l]\colon {[K,L]\to X}$ of~$k$ and~$l$} is the smallest normal subobject of $X$ which should be divided out to make $k$ and~$l$ commute, so that they do commute if and only if $[K,L]=0$. It may be obtained through the colimit $Q$ of the outer square above, as the kernel of the (normal epi)morphism ${X\to Q}$. The commutator $[K,L]$ becomes the ordinary commutator of normal subgroups~$K$ and~$L$ in the case of groups, the ideal generated by $KL+LK$ in the case of non-unitary rings, the Lie bracket in the case of Lie algebras, and so on.

Consider a pair of equivalence relations
\begin{equation*}\label{Category-RG}
\xymatrix@!0@=6em{R \ar@<1.5ex>[r]^-{r_{0}} \ar@<-1.5ex>[r]_-{r_{1}} & X \ar[l]|-{\lind1_{X},1_{X}\rind} \ar[r]|-{\lind1_{X},1_{X}\rind} & S \ar@<1.5ex>[l]^-{s_{0}} \ar@<-1.5ex>[l]_-{s_{1}}}
\end{equation*}
on a common object $X$ and consider the induced pullback of $r_{1}$ and $s_{0}$:
\begin{equation}\label{Pullback-Smith}
\vcenter{\xymatrix@!0@=4em{R\times_{X}S \pullback \ar[r]^-{\pi_{S}} \ar[d]_-{\pi_{R}} & S \ar[d]^-{s_{0}} \\
R \ar[r]_-{r_{1}} & X}}
\end{equation}
The pair $(R,S)$ \defn{(Smith/Pedicchio-)commutes}~\cite{Smith, Pedicchio, BG} when there is a (necessarily unique) morphism $\theta$ such that the diagram
\[
\xymatrix@!0@=3em{ & R \ar[ld]_{\lind 1_{R},\lind1_{X},1_{X}\rind r_{1}\rind} \ar[rd]^-{r_{0}} \\
R\times_{X}S \ar@{.>}[rr]|-{\theta} && X\\
& S \ar[lu]^{\lind\lind1_{X},1_{X}\rind s_{0},1_{S}\rind} \ar[ru]_-{s_{1}}}
\]
is commutative. As for the Huq commutator, the \defn{Smith/Pedicchio commutator} is the smallest equivalence relation $[R,S]$ on $X$ which, divided out of~$X$, makes~$R$ and $S$ commute. It can be obtained through a colimit, similarly to the situation above. Thus $R$ and $S$ commute if and only if $[R,S]=\Delta_{X}$, where $\Delta_{X}$ is the smallest equivalence relation on $X$. We say that $R$ is a \defn{central} equivalence relation when it commutes with $\nabla_X$, the largest equivalence relation on $X$, so that~$[R,\nabla_X]=\Delta_X$.

\subsection{Abelian objects, Beck modules}\label{Subsection-Abelian}
In a semi-abelian category $\X$, an object~$A$ is said to be \defn{abelian} when~$[A,A]=0$. The abelian objects of~$\X$ determine a full and reflective subcategory which is denoted~$\Ab(\X)$. Given any object $X$ of~$\X$, we shall write~${\lb X\rb=[X,X]}$, so that we obtain a short exact sequence
\[
\xymatrix@1{0 \ar[r] & \lb X\rb \ar@{{ |>}->}[r] & X \ar@{-{ >>}}[r]^-{\eta_{X}} & \ab X=X/[X,X] \ar[r] & 0}
\]
where $\eta_{X}$ is the $X$-component of the unit $\eta$ of the adjunction
\[
\xymatrix{{\X} \ar@<1ex>[r]^-{\ab} \ar@{}[r]|-{\perp} & \Ab(\X). \ar@<1ex>[l]^-{\supset}}
\]
An object in a semi-abelian category is abelian precisely when it admits a (necessarily unique) internal abelian group structure. In fact, $\Ab(\X)$ may be viewed as the abelian category of internal abelian groups in $\X$. For instance, an abelian object in the category of groups is an abelian group, and an abelian associative algebra over a field is a vector space (equipped with a trivial multiplication).

Given an object $Z$ of $\X$, a \defn{$Z$-module} or \defn{Beck module over $Z$} is an abelian group in the slice category~$\X/Z$. Thus a $Z$-module $(f,m,s)$ consists of a morphism~$f\colon {X\to Z}$ in $\X$, equipped with a multiplication $m$ and a unit~$s$ as in the diagrams
\[
\vcenter{\xymatrix@!0@R=3em@C=2em{\R{f} \ar[rr]^-{m} \ar[rd] && X \ar[ld]^-{f} \\ & Z}}
\qquad
\qquad
\vcenter{\xymatrix@!0@R=3em@C=2em{Z \ar[rr]^-{s} \ar@{=}[rd] && X \ar[ld]^-{f} \\ & Z}}
\]
satisfying the usual axioms. In particular we obtain a split short exact sequence
\[
\xymatrix{0 \ar[r] & A \ar@{{ |>}->}[r]^-{\Ker f} & X \ar@<-.5ex>@{ >>}[r]_-{f} & Z \ar[r] \ar@<-.5ex>[l]_-{s} & 0}
\]
where $A$ is an abelian object in $\X$ and $f$ is split by $s$. Furthermore, the morphism~$f$ satisfies $[\R{f },\R{f}]=\Delta_{X}$. Conversely, given the splitting $s$ of~$f$, this latter condition makes it possible to recover the multiplication $m$. Hence, for split epimorphisms in a semi-abelian category, ``being a Beck module'' is a property; the entire module structure is contained in the splitting. Using the equivalence between split epimorphisms and internal actions~\cite{Bourn-Janelidze:Semidirect}, we can replace $X$ with a semi-direct product~${(A,\xi)\rtimes Z}$. By the above, modules are ``abelian actions''. For simplicity, we denote a $Z$-module by its induced~$Z$-algebra $(A,\xi)$.

For us, the most important case arises when the $Z$-module structure on~$A$ is the trivial one, denoted $(A,\tau)$: then $A$ is just an abelian object, the semidirect product~${(A,\tau)\rtimes Z}$ is $A\times Z$ and $f$ is the product projection $\pr_{Z}\colon {A\times Z\to Z}$.

\subsection{Connected components}\label{Connected-components}
In a category $\X$, two objects are \defn{connected} when there exists a (finite) zigzag of morphisms between them. This defines an equivalence relation between the objects of $\X$, of which the equivalence classes form the set $\pi_{0}(\X)$ of \defn{connected components} of $\X$.

In general $\pi_{0}(\X)$ may not be a small set, and even in the two situations where we shall use this construction (Subsection~\ref{Torsors} and Definition~\ref{Definition-Centr}) it will a priori not be clear whether or not the result is not a proper class. In fact, even when it \emph{is} a proper class, this has no significant effect at all on the theory we develop, so we decided not to go into this question any further. Additionally, in the monadic case the smallness of the cohomology groups follows from the interpretation in terms of Barr--Beck cohomology.

\subsection{A lemma on double split epimorphisms}
By a result in~\cite{Bourn1996}, a finitely complete category is \defn{naturally Mal'tsev}~\cite{Johnstone:Maltsev} when, given a split epimorphism of split epimorphisms as in
\begin{equation}\label{Double-Split-Epi}
\vcenter{\xymatrix@=3em{A_{1} \ar@<-.5ex>[r]_-{f_{1}} \ar@<-.5ex>[d]_-{a} & B_1 \ar@<-.5ex>[d]_-{b} \ar@<-.5ex>[l]_-{\overline{f_{1}}}\\
A_{0} \ar@<-.5ex>[u]_-{\overline{a}} \ar@<-.5ex>[r]_-{f_{0}} & B_{0} \ar@<-.5ex>[u]_-{\overline{b}} \ar@<-.5ex>[l]_-{\overline{f_{0}}}}}
\end{equation}
(all squares commute), if the square is a (down-right) pullback of split epimorphisms, then it is an (up-left) pushout of split monomorphisms. As a consequence we obtain the following lemma (see also~\cite{MFVdL2} and~\cite{EGoeVdL}).

\begin{lemma}\label{Lemma-Naturally-Maltsev}
In a naturally Mal'tsev category, given a double split epimorphism such as~\eqref{Double-Split-Epi}, the universally induced comparison morphism
\[
\lind a,f_{1}\rind\colon A_{1}\to A_{0}\times_{B_{0}}B_{1}
\]
to the pullback of $f_{0}$ and $b$ is a split epimorphism, with a unique splitting
\[
\nu\colon {A_{0}\times_{B_{0}}B_{1}\to A_{1}}
\]
such that $\overline{a}=\nu \lind1_{A_{0}},\overline{b}f_{0}\rind$ and $\overline{f_{1}}=\nu\lind \overline{f_{0}}b,1_{B_{1}}\rind$.
\noproof
\end{lemma}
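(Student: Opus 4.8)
The statement to prove is Lemma~\ref{Lemma-Naturally-Maltsev}: in a naturally Mal'tsev category, for a double split epimorphism as in~\eqref{Double-Split-Epi}, the comparison map $\lind a,f_{1}\rind\colon A_{1}\to A_{0}\times_{B_{0}}B_{1}$ to the pullback of $f_{0}$ and $b$ is a split epimorphism with a unique splitting $\nu$ satisfying $\overline a=\nu\lind 1_{A_{0}},\overline b f_{0}\rind$ and $\overline{f_{1}}=\nu\lind\overline{f_{0}}b,1_{B_{1}}\rind$.

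The plan is to reduce the statement to the pullback-implies-pushout property of naturally Mal'tsev categories recalled just above, applied not to the square~\eqref{Double-Split-Epi} itself (which is not assumed to be a pullback) but to the canonical double split epimorphism lying over the pullback $P=A_{0}\times_{B_{0}}B_{1}$.

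First I would equip $P$ with its evident double split epimorphism structure: the two projections $\pi_{A_{0}}\colon{P\to A_{0}}$ and $\pi_{B_{1}}\colon{P\to B_{1}}$ are split by $\lind 1_{A_{0}},\overline{b}f_{0}\rind\colon{A_{0}\to P}$ and $\lind\overline{f_{0}}b,1_{B_{1}}\rind\colon{B_{1}\to P}$ respectively---these arrows land in $P$ precisely because $b\comp\overline{b}=1_{B_{0}}$ and $f_{0}\comp\overline{f_{0}}=1_{B_{0}}$, and a short check using the commutativities in~\eqref{Double-Split-Epi} shows the four splittings fit together. By construction this square is a pullback of the split epimorphisms $f_{0}$ and $b$, so the naturally Mal'tsev hypothesis forces it to be a pushout of split monomorphisms: $P$ is the pushout of the span with legs $\overline{f_{0}}\colon{B_{0}\to A_{0}}$ and $\overline{b}\colon{B_{0}\to B_{1}}$, with coprojections $\lind 1_{A_{0}},\overline{b}f_{0}\rind$ and $\lind\overline{f_{0}}b,1_{B_{1}}\rind$.

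Next I would define $\nu$ from this pushout. The morphisms $\overline{a}\colon{A_{0}\to A_{1}}$ and $\overline{f_{1}}\colon{B_{1}\to A_{1}}$ form a cocone on that span, since $\overline{a}\comp\overline{f_{0}}=\overline{f_{1}}\comp\overline{b}$ is one of the commutative squares of~\eqref{Double-Split-Epi} (the one formed by the four splittings). The universal property yields a unique $\nu\colon{P\to A_{1}}$ with $\nu\comp\lind 1_{A_{0}},\overline{b}f_{0}\rind=\overline{a}$ and $\nu\comp\lind\overline{f_{0}}b,1_{B_{1}}\rind=\overline{f_{1}}$, which is exactly the characterisation demanded in the statement; uniqueness of a morphism subject to these two equations is immediate, as the two arrows are the coprojections of the pushout.

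Finally I would check that $\nu$ is a section of the comparison morphism, i.e.\ $\lind a,f_{1}\rind\comp\nu=1_{P}$. Since $P$ is a pushout it suffices to precompose with the two coprojections: one gets $\lind a,f_{1}\rind\comp\overline{a}=\lind a\comp\overline{a},f_{1}\comp\overline{a}\rind=\lind 1_{A_{0}},\overline{b}f_{0}\rind$ using $a\comp\overline{a}=1_{A_{0}}$ and the commutativity $f_{1}\comp\overline{a}=\overline{b}f_{0}$, and likewise $\lind a,f_{1}\rind\comp\overline{f_{1}}=\lind a\comp\overline{f_{1}},f_{1}\comp\overline{f_{1}}\rind=\lind\overline{f_{0}}b,1_{B_{1}}\rind$ using $f_{1}\comp\overline{f_{1}}=1_{B_{1}}$ and $a\comp\overline{f_{1}}=\overline{f_{0}}b$. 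Both composites coincide with the respective coprojection, hence $\lind a,f_{1}\rind\comp\nu=1_{P}$ and $\lind a,f_{1}\rind$ is a split epimorphism. The only genuinely delicate point---and the one I would call the main (if modest) obstacle---is getting the pushout presentation of $P$ exactly right, so that the cocone compatibility and the two section identities line up with the commutativities of~\eqref{Double-Split-Epi}; once those are in place, the rest is bookkeeping with the definitions of the splittings.
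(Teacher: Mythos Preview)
Your proof is correct and follows precisely the approach the paper has in mind: the lemma is stated with \verb|\noproof| in the paper, which merely remarks that it is a consequence of Bourn's pullback-implies-pushout characterisation of naturally Mal'tsev categories (citing \cite{Bourn1996}, \cite{MFVdL2} and \cite{EGoeVdL}) without spelling out the argument. What you have done is exactly to supply those omitted details---forming the canonical double split epimorphism on the pullback $P=A_{0}\times_{B_{0}}B_{1}$, invoking the pushout property there, and reading off $\nu$ and its uniqueness from the universal property---so your argument and the paper's intended one coincide.
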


It is well known that every additive category is naturally Mal'tsev. In particular, for any semi-abelian category $\X$, the above lemma is valid in the abelian category~$\Ab(\X)$.

\subsection{The von Neumann construction of the finite ordinals}
We shall write ${0=\emptyset}$ and $n=\{0,\dots, n-1\}$ for $n\geq 1$. We also write $2^{n}$ for the power-set of $n$, considered as a category of which an object is a subset of $n$, and an arrow ${I\to J}$ is an inclusion $I\subseteq J$.

\subsection{Higher arrows}\label{HDA}
Let $\X$ be any category. The category $\Arrn(\X)$ consists of \defn{$n$-fold arrows} in~$\X$: $\Arr^{0}(\X)=\X$, while $\Arr^{1}(\X)=\Arr(\X)$ is the category of arrows in~$\X$ and $\Arrnn(\X)=\Arr(\Arrn(\X))$.

The category of arrows in $\X$ is the functor category ${\Fun(2^{\op},\X)=\X^{2^{\op}}}$. Similarly, any $n$-fold arrow $F$ in $\X$ may be viewed as an ``$n$-fold cube with chosen directions'', a functor $F\colon{(2^{n})^{\op}\to \X}$, and any morphism of $n$-fold arrows as a natural transformation between such functors. If $F$ is an $n$-fold arrow and~$I$ and~$J$ are subsets of $n$ such that $I\subseteq J$, we shall write $F_{I}=F(I)$ for the value of~$F$ in~$I$ and~$f^{J}_{I}\colon{F_{J}\to F_{I}}$ for the value of~$F$ in the morphism induced by the inclusion $I\subseteq J$. When $I=J\setminus \{i\}$ we write $f_{i}\colon{F_{J}\to F_{I}}$ for~$f^{J}_{I}$.

An $n$-fold arrow given as a functor $F\colon{(2^{n})^{\op}\to \X}$ can be seen as an arrow between $(n-1)$-fold arrows $F\colon \dom F \to \cod F$, where its domain $\dom F$ is determined by the restriction of $F$ to all $I\subseteq n$ which contain $n-1$, and its codomain $\cod F$ by the restriction of $F$ to all $I\subseteq n$ which do not contain~$n-1$. Thus, if~${n\geq 2}$, we may see $F$ as a commutative square
\begin{equation}\label{Double-Extension}
\vcenter{\xymatrix{X \ar[r]^-{c} \ar[d]_-{d} & C \ar[d]^-{g}\\
D \ar[r]_-{f} & Z}}
\end{equation}
in $\Arr^{n-2}(\X)$ or, equivalently, a morphism $(c,f)\colon d\to g$ of~$\Arr^{n-1}(\X)$.

Given an $n$-fold arrow $F\colon{(2^{n})^{\op}\to \X}$, we can always consider the restriction of this diagram to the subcategory $2^{n}\setminus \{n\}$; it is the $n$-fold cube~$F$ without its ``initial object'' $F_{n}$. When it exists, write $(\L F,(\pr_{i})_{i\in n})$ for the limit of this diagram, and
\[
l_{F}=\lind f_{0},\dots,f_{n-1}\rind\colon{F_{n}\to \L F}
\]
for the universally induced comparison morphism.

\begin{lemma}\label{Lemma-L-pullbacks}
Given $n\geq 2$, if $F$ is an $n$-fold arrow considered as a square~\eqref{Double-Extension} of $(n-1)$-fold arrows, then $\L F$ may be obtained as $\L G$, where the $(n-1)$-fold arrow $G$ is $\lind d,c\rind\colon X\to D\times_{Z}C$, induced by the pullback of $f$ and $g$. Furthermore, $l_{F}=l_{G}$.
\end{lemma}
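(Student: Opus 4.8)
The plan is to unravel both sides of the claimed identity $\L F = \L G$ purely in terms of universal properties, using the fact that limits can be computed iteratively. First I would fix notation: regarding the $n$-fold arrow $F$ as the square \eqref{Double-Extension} of $(n-1)$-fold arrows, the indexing poset $2^n\setminus\{n\}$ decomposes according to whether a subset $I\subseteq n$ contains the element $n-1$ or not. Those $I$ \emph{not} containing $n-1$ index the diagram $\cod F$ (an $(n-1)$-fold arrow on the index set $2^{n-1}$), and those $I$ \emph{strictly containing} $n-1$ but not equal to $n$ index the diagram $\dom F$ \emph{minus its initial object}. So $\L F$ is the limit of the diagram obtained by gluing the full diagram $\cod F = g$, the diagram $\dom F$ with its top object $F_n = X$ removed, and the connecting maps $c$, $d$. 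The key observation is that the limit of $\dom F$ minus its initial object, that is $\L(\dom F)$, receives the maps $\L c$ and $\L d$ into $\L(\cod F)$-type data, and the whole diagram reorganises so that $\L F$ is the limit over $2^{n-1}$ of the $(n-1)$-fold arrow $X \to D\times_Z C$.

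In more detail, the steps are: (1) Observe that $D\times_Z C$, as an $(n-1)$-fold arrow, is the limit of the diagram $D \xrightarrow{f} Z \xleftarrow{g} C$ taken in $\Arr^{n-1}(\X)$, which pointwise over $2^{n-1}$ is the pullback $D_I\times_{Z_I} C_I$. (2) The diagram defining $\L G$ is the $(n-1)$-cube $G = \lind d,c\rind\colon X\to D\times_Z C$ with its initial object $X$ removed; its remaining objects are exactly $(D\times_Z C)_I = D_I\times_{Z_I} C_I$ for $I\subseteq n-1$. (3) Match these against the objects of the diagram defining $\L F$: an object $F_I$ of the latter with $I\not\ni n-1$ is a component of $\cod F = g$, hence of the form $Z_I$ or $C_I$; an object $F_I$ with $n-1\in I\subsetneq n$ is a component of $\dom F = d$ other than $F_n$, hence of the form $X_J$ or $D_J$ for $J = I\setminus\{n-1\}$. (4) Use the standard fact that a limit over a poset can be computed by first forming limits over a cofinal family of down-sets — here, the cone over the pullback squares $D_J\times_{Z_J} C_J$ — to identify a cofinal subdiagram of the $\L F$-diagram with the $\L G$-diagram, giving a canonical iso $\L F\cong \L G$. (5) Finally, chase the universal property: $l_F = \lind f_0,\dots,f_{n-1}\rind$ and $l_G = \lind d, c\rind$ — but note $f_{n-1}$ for $F$ is the map $X\to D\times_Z C$'s components, so under the identification the comparison map $l_F$ is forced to equal $l_G$ since both are the unique map from $F_n = X = G_{n-1}$ to the respective (identified) limit cone, determined by the same family of legs $f_0,\dots,f_{n-2}$ together with $\lind d,c\rind$.

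The main obstacle I expect is step (4): making precise, without excessive bookkeeping, the claim that the $\L F$-diagram and the $\L G$-diagram have the same limit. The subtlety is that the $\L F$-diagram is "larger" — it contains all the separate copies $D_J$, $C_J$, $Z_J$ along with the $X_J$ for $n-1\in I$, whereas the $\L G$-diagram has already amalgamated $D_J, C_J, Z_J$ into the single pullback object $D_J\times_{Z_J} C_J$. One must argue that inserting these pullback objects (with their projections) into the $\L F$-diagram does not change the limit — equivalently, that the subdiagram on indices $I\not\ni n-1$ together with the maps $c,d$ "resolves" exactly to the pullbacks. This is a cofinality / final-functor argument: the inclusion of the reindexing poset for $G$ into (a suitable completion of) the reindexing poset for $F$ is final, because over each object $D_J\times_{Z_J} C_J$ the relevant comma category is the cospan $D_J\to Z_J\leftarrow C_J$, which is connected. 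Once cofinality is established the identification of limits and of the comparison morphisms $l_F = l_G$ is formal. I would present this compactly by invoking the iterated-limit / Fubini theorem for limits rather than spelling out the connectedness argument by hand, since $\X$ is assumed finitely complete and all the relevant small limits exist.
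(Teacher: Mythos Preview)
The paper does not give a self-contained proof here; it simply cites Proposition~1.16 of~\cite{EGoeVdL}. Your direct argument via iterated limits/Fubini is the natural approach and is essentially what underlies that reference, so the overall strategy is sound.

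However, step~(2) contains a genuine error that propagates to step~(5). You write that the diagram defining $\L G$ has ``its initial object $X$ removed'' and that ``its remaining objects are exactly $(D\times_Z C)_I$ for $I\subseteq n-1$''. This is false unless $n=2$. The symbol $X$ here denotes an $(n-2)$-fold arrow, not a single object of $\X$; the initial object of the $(n-1)$-cube $G$ is the single vertex $G_{n-1}=X_{n-2}$. Removing only that one vertex, the $\L G$-diagram still contains all the lower vertices of the domain side, namely the $X_K$ for $K\subsetneq n-2$, in addition to the $(D\times_Z C)_K$ for $K\subseteq n-2$. For the same reason, $l_G$ is not the morphism $\lind d,c\rind$ of $(n-2)$-fold arrows but a morphism $G_{n-1}\to\L G$ in $\X$; so the identification ``$F_n=X=G_{n-1}$'' in step~(5) does not type-check, and as written $l_G=\lind d,c\rind$ is only correct for $n=2$.

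The fix is easy and does not change the shape of your argument. The $\L F$-diagram contains the $X_K$ for $K\subsetneq n-2$ together with $D_K$, $C_K$, $Z_K$ for all $K\subseteq n-2$; your Fubini step~(4) correctly replaces each cospan $D_K\to Z_K\leftarrow C_K$ by its pullback $(D\times_Z C)_K$ without altering the limit, and what you obtain is precisely the $\L G$-diagram \emph{including the surviving $X_K$'s}. Once this bookkeeping is corrected, $l_F=l_G$ follows by the universal property exactly as you indicate.
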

\begin{proof}
This is part of the proof of Proposition~1.16 in~\cite{EGoeVdL}.
\end{proof}

\subsection{Higher cubic extensions}\label{Extensions}
Let $\X$ be a semi-abelian category. A~\defn{zero-cubic extension} in~$\X$ is an object of $\X$ and a \defn{one-cubic extension} is a regular epimorphism in $\X$. For $n\geq 2$, an \defn{$n$-cubic extension} is a commutative square~\eqref{Double-Extension} in~$\Arr^{n-2}(\X)$ such that the morphisms $c$, $d$, $f$, $g$ and the universally induced comparison morphism $\lind d,c\rind\colon{X\to D\times_Z C}$ to the pullback of $f$ with $g$ are $(n-1)$-cubic extensions. The $n$-cubic extensions determine a full subcategory~$\Extn(\X)$ of~$\Arrn(\X)$, and $\Ext(\X)=\Ext^{1}(\X)$.

\begin{proposition}\cite{EGoeVdL}\label{Limit-Characterisation-Extensions}
Given any $n$-fold arrow $F$ in a regular category, the following are equivalent:
\begin{enumerate}
\item $F$ is an $n$-cubic extension;
\item for all $\emptyset\neq I\subseteq n$, the morphism ${F_{I}\to\lim_{J\subsetneq I} F_{J}}$ is a regular epimorphism.
\end{enumerate}
In particular, the induced comparison $l_{F}=\lind f_{0},\dots,f_{n-1}\rind\colon{F_{n}\to \L F}$ is regular epimorphic. \noproof
\end{proposition}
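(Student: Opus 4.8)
The plan is to establish the equivalence by induction on $n$, matching the recursive definition of an $n$-cubic extension against the family of limit conditions (ii). For $n=0$ there is nothing to prove, and for $n=1$ the only non-empty subset of $1$ is $\{0\}$, with $\lim_{J\subsetneq\{0\}}F_{J}=F_{\emptyset}$, so that (ii) says exactly that $F_{\{0\}}\to F_{\emptyset}$ is a regular epimorphism --- which is (i). For $n\geq 2$ I would present $F$ as the square~\eqref{Double-Extension}, i.e.\ as the morphism $(c,f)\colon d\to g$ of $(n-1)$-fold arrows, so that $\dom F=d$ and $\cod F=g$, and write $G=\lind d,c\rind\colon{X\to D\times_{Z}C}$ for the comparison to the pullback of $f$ and $g$. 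By definition $F$ is an $n$-cubic extension precisely when $c$, $d$, $f$, $g$ and $G$ are $(n-1)$-cubic extensions; since $\pi_{D}\comp G=d$ and $\pi_{C}\comp G=c$ with $\pi_{D}$ a pullback of $g$ and $\pi_{C}$ a pullback of $f$, stability of $(n-1)$-cubic extensions under pullback and composition~\cite{EGoeVdL} reduces this to requiring only that $g$, $f$ and $G$ be $(n-1)$-cubic extensions.

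Next I would sort the conditions (ii) for $F$ by the coordinates $n-1$ and $n-2$. The conditions with $n-1\notin I$ involve only subsets avoiding $n-1$, hence only the restriction $\cod F=g$, and they form exactly the family (ii) for $g$; by the induction hypothesis they hold if and only if $g$ is an $(n-1)$-cubic extension. Likewise the conditions with $n-2\notin I$ form exactly the family (ii) for the face $f$. It remains to treat the ``doubly interior'' conditions, those with $n-1,n-2\in I$. For such an $I=K\cup\{n-1\}$, splitting the poset of proper subsets of $I$ along the coordinate $n-1$ expresses $\lim_{J\subsetneq I}F_{J}$ as the pullback of $F_{K}\to\lim_{L\subsetneq K}(\cod F)_{L}$ along $\lim_{L\subsetneq K}(\dom F)_{L}\to\lim_{L\subsetneq K}(\cod F)_{L}$, so that the comparison $F_{I}\to\lim_{J\subsetneq I}F_{J}$ has as components a facet map of $F$ and a comparison map of $\dom F$; for $I=n$ this is $l_{F}$, which by Lemma~\ref{Lemma-L-pullbacks} equals $l_{G}$. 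Since $g$ and $f$ are already known to be extensions, each of these pullbacks can --- by the relative versions of Lemma~\ref{Lemma-L-pullbacks}, applied iteratively --- be recognised as the object $\L$ of a suitable restriction of $G=\lind d,c\rind$, so the doubly interior conditions become precisely the family (ii) for $G$; by the induction hypothesis this says $G$ is an $(n-1)$-cubic extension. Combining the three parts gives (i)$\Leftrightarrow$(ii); and the ``in particular'' clause is the instance $I=n$ of (ii), which, since $\lim_{J\subsetneq n}F_{J}=\L F$, asserts that $l_{F}\colon{F_{n}\to\L F}$ is a regular epimorphism.

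The single delicate point is this last matching: one must verify, keeping careful track of the combinatorics of the $n$-cube, that the wide pullbacks $\lim_{J\subsetneq I}F_{J}$ for $n-1,n-2\in I$ really do reassemble, once $\cod F$ and $f$ are extensions, into the $\L$-objects attached to $G=\lind d,c\rind$, so that the three uses of the induction hypothesis --- for $\cod F$, for $f$, and for $G$ --- can be played off against the recursive definition. The engine behind the decomposition is pullback-stability of regular epimorphisms (and, inductively, of higher extensions) in a regular category, and Lemma~\ref{Lemma-L-pullbacks} applied iteratively is what converts the wide pullbacks into $\L$-objects; everything else is routine bookkeeping.
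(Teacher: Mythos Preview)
The paper does not prove this proposition: it carries the marker \verb|\noproof| and is attributed to~\cite{EGoeVdL}, so there is no argument in the present text to compare against.

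Your inductive strategy is the natural one and is correct in outline. The reduction of (i) to ``$g$, $f$, $G$ are $(n-1)$-cubic extensions'' is right (using pullback-stability and closure under composition of higher extensions), and the matching of the conditions with $n-1\notin I$ to (ii) for $g$, and with $n-2\notin I$ to (ii) for $f$, is clean. The gap lies in your treatment of the doubly interior conditions. For $n\geq 3$ the family (ii) for $G$ is \emph{strictly larger} than the set of doubly interior conditions for $F$: it contains $2^{n-1}-1$ conditions, while there are only $2^{n-2}$ doubly interior subsets $I$. The surplus consists precisely of those conditions on $G$ indexed by $K$ not containing the outer coordinate of $G$, i.e.\ the conditions asserting that $\cod G=D\times_{Z}C$ is an $(n-2)$-cubic extension. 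These do not correspond to any condition (ii) for $F$ directly; they must be derived separately. The derivation is easy --- pullback-stability makes the projection $D\times_{Z}C\to D$ an $(n-1)$-cubic extension once $g$ is one, and the domain of an $(n-1)$-cubic extension is an $(n-2)$-cubic extension --- but you do not mention it, and without it the induction hypothesis cannot be applied to $G$.

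The remaining identification, that the doubly interior condition at $I=I'\cup\{n-1,n-2\}$ coincides with the condition (ii) for $G$ at $K=I'\cup\{\text{outer coordinate of }G\}$, comes down to showing $\lim_{J\subsetneq I}F_{J}\cong\lim_{L\subsetneq K}G_{L}$. This is a limit-interchange: computing the left-hand side by first forming the pullbacks $D_{J'}\times_{Z_{J'}}C_{J'}$ levelwise (limits commute with limits) collapses the $D$-, $C$-, $Z$-layers of the diagram to $\cod G$ and yields the right-hand side. That is routine, but your appeal to ``relative versions of Lemma~\ref{Lemma-L-pullbacks}, applied iteratively'' does not quite cover it; Lemma~\ref{Lemma-L-pullbacks} handles only the top case $I=n$.
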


In a Mal'tsev category, a double split epimorphism such as~\eqref{Double-Split-Epi} above is always a two-cubic extension. That is to say, the induced comparison morphism $\lind a,f_{1}\rind$ may not be a split epimorphism as in Lemma~\ref{Lemma-Naturally-Maltsev}, but it will certainly be a regular epimorphism. More generally, any split epimorphism between one-cubic extensions is a two-cubic extension, as follows from~\cite[Theorem~5.7]{Carboni-Kelly-Pedicchio}.

\subsection{Extensions as diagrams of short exact sequences}\label{3^n diagrams}
In what follows we view higher extensions slightly differently: as diagrams of short exact sequences, such as the one displayed in Figure~\ref{3x3 diag} on page~\pageref{3x3 diag} and in Figure~\ref{Figure-Direction} on page~\pageref{Figure-Direction}.

Consider the ordinal $3$ as a category $0\to1\to 2$ and, for $n\geq 1$, its $n$-th power $3^{n}=3\times \cdots \times 3$. The category $ 3^n$ has initial object $ i_n= (0,\dots,0)$ and terminal object $ t_n= (2,\dots ,2)$. Moreover, it has an embedding
\[
\alpha_{e,i}\colon 3\to 3^n\colon k\mapsto (e_{1},\dots,e_{i-1},k,e_{i+1},\dots, e_{n})
\]
parallel to the $i$-th coordinate axis, for each object $e$ of $3^{n}$.

Now, given objects $Z$ and $A$ in $\X$, an \defn{$n$-fold extension} (\defn{under $A$ and over~$Z$}, or \defn{of $Z$ by $A$}) in $\X$ is a functor $E\colon (3^n)^{\op}\to \X$ which sends $i_n$ to $Z$ and $t_n$ to~$A$, and such that each composite
\[
\xymatrix@R=5ex@C=3em{
3^{\op} \ar[r]^-{\alpha_{e,i}^{\op}} &
 (3^n)^{\op} \ar[r]^-{E} &
 \X
}
\]
is a short exact sequence.

For example, a one-fold extension under $A$ and over $Z$ is just a short exact sequence $ A=E_2 \to E_1 \to E_0=Z$. A two-fold (or \defn{double}) extension under $A$ and over $Z$ is a \defn{$3\times 3$-diagram}~\cite{Bourn2001} as in Figure~\ref{3x3 diag}, in which each row and column is short exact:
\[
\xymatrix@R=5ex@C=3em{
A=E_{2,2} \ar@{{ |>}->}[r] \ar@{{ |>}->}[d] &
 E_{1,2} \ar@{-{ >>}}[r] \ar@{{ |>}->}[d] &
 E_{0,2} \ar@{{ |>}->}[d] \\
E_{2,1} \ar@{{ |>}->}[r] \ar@{-{ >>}}[d] &
 E_{1,1} \ar@{-{ >>}}[r] \ar@{-{ >>}}[d] &
 E_{0,1} \ar@{-{ >>}}[d] \\
E_{2,0} \ar@{{ |>}->}[r] &
 E_{1,0} \ar@{-{ >>}}[r] &
 Z=E_{0,0}
}
\]
Figure~\ref{Figure-Direction} displays a $3$-fold extension as a \defn{$3\times 3\times 3$-diagram}. In general, an $n$-fold extension is the same thing as a \defn{$3^{n}$-diagram} in $\X$. We write $\Diag{n}(\X)$ for the category of $n$-fold extensions in $\X$, considered as a full subcategory of $\Fun((3^{n})^{\op},\X)$. The natural embedding $2^{n}\to 3^{n}$ induces a forgetful functor
\[
(-)|_{2^{n}}\colon\Diag{n}(\X)\to \Ext^{n}(\X)\colon E\mapsto F=E|_{2^{n}}.
\]
We shall, however, always think of an $n$-cubic extension as being part of some $3^{n}$-diagram.

\begin{proposition}\label{3x3 vs ext}
For any $n\geq 1$, the forgetful functor $\Diag{n}(\X)\to \Ext^{n}(\X)$ is an equivalence of categories.
\end{proposition}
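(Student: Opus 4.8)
Write $U$ for the forgetful functor $(-)|_{2^{n}}\colon \Diag{n}(\X)\to \Ext^{n}(\X)$. The starting observation is that, in an $n$-fold extension $E\colon (3^{n})^{\op}\to\X$, the value of $E$ at a point of $3^{n}$ with some coordinate equal to $2$ is forced: since $E$ restricts to a short exact sequence along each axis-parallel $3$-line, a point $p$ with $p_{i}=2$ has $E_{p}$ canonically isomorphic to the kernel of the morphism $E_{p'}\to E_{p''}$, where $p'$, $p''$ arise from $p$ by replacing that coordinate by $1$, resp.\ $0$. So the plan is to build an explicit $K\colon \Ext^{n}(\X)\to\Diag{n}(\X)$ by iterating this kernel construction, to observe that $UK=\mathrm{id}$ (hence $U$ is essentially surjective), and to show that $U$ is fully faithful; an essentially surjective, fully faithful functor is an equivalence.

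Construction of $K$: given an $n$-cubic extension $F\colon (2^{n})^{\op}\to\X$, I would define $E=KF\colon (3^{n})^{\op}\to\X$ by induction on the number of coordinates equal to $2$. On points with no such coordinate, set $E_{p}=F_{p}$, using the embedding $2^{n}\hookrightarrow 3^{n}$ which is the inclusion $\{0,1\}\subseteq\{0,1,2\}$ in each coordinate. If $p_{i}=2$, then $p'$ and $p''$ as above have one $2$ fewer, so $E_{p'}$, $E_{p''}$ and the connecting morphism $E_{p'}\to E_{p''}$ have already been defined; put $E_{p}\mathrel{:=}\K{E_{p'}\to E_{p''}}$ together with the evident induced morphisms out of $E_{p}$. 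As kernels are functorial this produces a functor $K$, and $U(KF)=F$ by construction; moreover, for any $E$ in $\Diag{n}(\X)$ the forcing remark above gives a natural isomorphism $K(UE)\cong E$.

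For full faithfulness, let $F$, $F'$ be $n$-cubic extensions and $\phi\colon F\to F'$ a morphism, that is, a natural transformation of functors on $(2^{n})^{\op}$. I claim $\phi$ extends uniquely to a natural transformation $KF\to KF'$: in a commutative square with short exact rows, the morphism induced between the kernels is the unique one making the square on the kernels commute (universal property of the kernel), and iterating this along the resolved coordinates yields a unique extension whose naturality again follows from the uniqueness. Since restricting a natural transformation $KF\to KF'$ to $2^{n}$ gives back $\phi$, the functor $U$ is full and faithful, hence an equivalence.

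The substantive point is that $KF$ really is an object of $\Diag{n}(\X)$: the iterated kernels must assemble into a $3^{n}$-diagram all of whose axis-parallel $3$-lines are short exact, and the construction must not depend on the order in which coordinates are resolved. I would prove this by induction on $n$. For $n=1$ it is the elementary equivalence between regular epimorphisms and short exact sequences in a homological category; for $n=2$ it is Bourn's result~\cite{Bourn2001} that a two-cubic extension extends uniquely to a $3\times 3$-diagram. For $n\geq 3$, view $F$ as a square~\eqref{Double-Extension} of $(n-1)$-cubic extensions and slice it along the last coordinate; applying the induction hypothesis to the $(n-1)$-cubic extensions that appear (and Lemma~\ref{Lemma-L-pullbacks} to identify the relevant limits) produces three $3^{n-1}$-diagrams, which one glues along the last direction by means of the $3\times 3$-lemma and the stability of short exact sequences in a homological category, the uniqueness in each application guaranteeing independence of choices. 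The remaining checks---functoriality of $K$, the identity $UK=\mathrm{id}$, and the coherence of the isomorphism $KU\cong\mathrm{id}$---are routine applications of the functoriality and universal property of kernels.
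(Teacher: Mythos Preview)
Your approach is essentially the same as the paper's: both establish essential surjectivity by induction on $n$ and deduce full faithfulness from the universal property of kernels. The paper's proof is terser because it packages the inductive step into a single external citation: Proposition~3.9 of~\cite{EGVdL} says that a short exact sequence of $n$-fold arrows has all three terms $n$-cubic extensions if and only if its cokernel part is an $(n+1)$-cubic extension. This is precisely what your gluing step needs---namely, that the kernel $\K{F}$ of an $(n+1)$-cubic extension $F$, viewed as an arrow between $n$-fold arrows, is itself an $n$-cubic extension, so that the induction hypothesis applies to it and the resulting three $3^{n}$-diagrams form a genuine short exact sequence of $3^{n}$-diagrams.

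Your sketch of the inductive step is correct in spirit but slightly imprecise about this point. The reference to Lemma~\ref{Lemma-L-pullbacks} is not what you need here: that lemma computes $\L F$ via iterated pullbacks, whereas the issue is whether the induced maps between kernels remain regular epimorphisms (equivalently, whether the axis-parallel $3$-lines through points with some coordinate equal to $2$ are short exact). That follows from the two-cubic extension property of the relevant sub-squares of $F$, which is exactly what Proposition~3.9 of~\cite{EGVdL} encapsulates. Once you replace the appeal to Lemma~\ref{Lemma-L-pullbacks} by this fact, your argument and the paper's coincide.
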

\begin{proof}
By induction on $n$, we prove that an $n$-fold arrow underlies a $3^{n}$-diagram if and only if it is an $n$-cubic extension. This then shows that the above functor is well defined and (essentially) surjective. 

The case $n=1$ is clear. Suppose now that $n\geq 1$. Then a $3^{n+1}$-diagram, being a short exact sequence of $3^{n}$-diagrams, corresponds to a short exact sequence of $n$-cubic extensions by the induction hypothesis. By Proposition~3.9 in~\cite{EGVdL}, the exactness of this sequence is equivalent to its cokernel piece being an $(n+1)$-cubic extension. 

Moreover, the functor is fully faithful, because any morphism between the $n$-cubic extensions underlying two given $3^{n}$-diagrams extends uniquely to their chosen kernels.
\end{proof} 

Depending on the situation, we may prove categorical properties of $\Diag{n}(\X)$ for $\Ext^{n}(\X)$ and vice versa. 

\subsection{Augmented simplicial objects}
Recall that the \defn{augmented simplicial category} $\Delta^{+}$ has finite ordinals $n\geq 0$ for objects and order preserving functions for morphisms. The category $\S^{+}(\X)$ of \defn{augmented simplicial objects} and augmented simplicial morphisms in a category $\X$ is the functor category $\Fun ((\Delta^{+})^{\op },\X)$. An augmented simplicial object $\XX\colon {(\Delta^{+})^{\op}\to \X}$ is usually considered as a sequence of objects $(\XX_{n})_{n\geq -1}$, with \defn{face operators}~$\del_{i}\colon {\XX_{n}\to \XX_{n-1}}$ and \defn{degeneracy operators} $\sigma_{i}\colon {\XX_{n}\to \XX_{n+1}}$ for ${n\geq i\geq 0}$, subject to the simplicial identities
\[
\begin{aligned}
\del_{i}\comp \del_{j} &=\del_{j-1}\comp \del_{i}\quad \text{if $i<j$}\\
\sigma_{i}\comp \sigma_{j} &= \sigma_{j+1}\comp \sigma_{i}\quad \text{if $i\leq j$}
\end{aligned}
\qquad\text{and}\qquad
\del_{i}\comp\sigma_{j}=\begin{cases}\sigma_{j-1}\comp \del_{i} & \text{if $i<j$} \\
1 & \text{if $i=j$ or $i=j+1$}\\
\sigma_{j}\comp \del_{i-1} & \text{if $i>j+1$.}\end{cases}
\]
%An augmented simplicial object $\XX$ is \defn{contractible} when there is a sequence of morphisms $(\sigma_{n}\colon{\XX_{n-1}\to \XX_{n}})_{n\geq 0}$ such that \[
%\del_{n}\comp \sigma_{n}=1_{\XX_{n-1}}\qquad \text{and} \qquad \del_{i}\comp\sigma_{n}=\sigma_{n-1}\comp\del_{i}
%\]
%for all $i\in n$.

\begin{remark}
All simplicial objects we shall be considering in this text will come equipped with some augmentation, even when we occasionally drop the word ``augmented''.
\end{remark}

\subsection{Truncations and coskeleta}\label{Truncations}
For $n\geq 0$, let $\Delta^{+}_{n}$ denote the full subcategory of~$\Delta^{+}$ determined by the ordinals $i\leq n$. The functor category
\[
\SimpArr^{n}(\X)=\Fun ((\Delta^{+}_{n})^{\op },\X)
\]
is the category of \defn{$(n-1)$-truncated simplicial objects} in $\X$. Indeed, as soon as~$\X$ is finitely complete, there is the adjunction
\[
\xymatrix@C=50pt{{\S^{+}(\X)} \ar@<1ex>[r]^-{\trunc_{n-1}} \ar@{}[r]|-{\perp} & \SimpArr^{n}(\X), \ar@<1ex>[l]^-{\cosk_{n-1}}}
\]
where the truncation functor $\trunc_{n-1}$ is given by composition of a simplicial object with the inclusion $\Delta^{+}_{n}\subseteq \Delta^{+}$, and its right adjoint $\cosk_{n-1}$ by right Kan extension along this functor. More explicitly, a coskeleton of an $(n-1)$-truncated simplicial object may be computed using iterated simplicial kernels (see the next subsection).

Clearly, $\trunc_{n-1}\cosk_{n-1}=1_{\SimpArr^{n}(\X)}$. Conversely, a coskeleton of an $(n-1)$-truncated simplicial object contains no information above simplicial degree $n-1$; given any simplicial object $\XX$, we can remove all higher-dimensional information by applying the functor $\Cosk_{n-1}=\cosk_{n-1}\trunc_{n-1}\colon{\S^{+}(\X)}\to {\S^{+}(\X)}$ to it.

Any $(n-1)$-truncated simplicial object may be considered as an $n$-fold arrow, through composition with the functor
\[
\a_{n}\colon 2^{n}\to \Delta^{+}_{n}
\]
which maps a set $I\subseteq n$ to the associated ordinal $|I|$, and an inclusion $I\subseteq J$ to the corresponding order-preserving map ${|I|\to |J|}$. This defines a faithful functor
\[
\arr_{n}=\Fun(-,\a_{n})\colon{\SimpArr^{n}(\X)\to \Arr^{n}(\X)}.
\]
(An ${(n-1)}$-truncated simplicial object has the additional structure of the degeneracies: a morphism of $n$-fold arrows between two given $(n-1)$-truncated simplicial objects need not commute with the degeneracy operators, and furthermore its components at two given sets of the same size need not coincide.) Hence, if $X$ denotes the~$n$-fold arrow underlying the $(n-1)$-truncation of a simplicial object~$\XX$, then~${X_{I}=\XX(|I|)=\XX_{|I|-1}}$ and, in particular, $X_{n}=\XX_{n-1}$. Note how the difference in font style allows to distinguish between the absolute degree $n$ and the simplicial degree $n-1$. 

In presence of enough projectives, we may now characterise higher extensions as follows. 

\pagebreak
\begin{proposition}\label{Projective-Characterisation-Extensions}
Given any $n$-fold arrow $F$ in a regular category with enough projectives, the following are equivalent:
\begin{enumerate}
\item $F$ is an $n$-cubic extension;
\item for any ${(n-1)}$-truncated degreewise projective simplicial object $\XX$, any collection of arrows ${(X_{J}\to F_{J})_{|J|\leq i}}$ satisfying the conditions of a morphism of $n$-fold arrows up to absolute degree $i\in n$ extends to an actual morphism of $n$-fold arrows ${X\to F}$.  
\end{enumerate}
\end{proposition}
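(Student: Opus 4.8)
The plan is to prove the two implications separately; throughout write $p_{I}\colon F_{I}\to L_{I}$ for the comparison morphism to $L_{I}:=\lim_{J\subsetneq I}F_{J}$, so that by Proposition~\ref{Limit-Characterisation-Extensions}, $F$ is an $n$-cubic extension if and only if every $p_{I}$ (for $\emptyset\neq I\subseteq n$) is a regular epimorphism, and in particular $l_{F}=p_{n}$.

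\emph{From (i) to (ii).} Suppose $F$ is an $n$-cubic extension and that we are given a degreewise projective $(n-1)$-truncated simplicial object $\XX$ together with a family $(\phi_{J}\colon X_{J}\to F_{J})_{|J|\leq i}$ forming a morphism of $n$-fold arrows up to degree $i$. I extend it one absolute degree at a time. Assume $\phi$ is defined and compatible up to degree $k$, with $i\leq k<n$, and fix $I\subseteq n$ with $|I|=k+1$. Precomposing each $\phi_{J}$ (for $J\subsetneq I$) with the structure map $X_{I}\to X_{J}$ of the cube $X$ and using functoriality of $X$ together with the compatibility already assumed, one checks that these arrows assemble into a cone over the restriction of $F$ to the proper subsets of $I$ with apex $X_{I}$, that is, a morphism $\psi_{I}\colon X_{I}\to L_{I}$. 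Since $X_{I}=\XX_{|I|-1}$ is projective and $p_{I}$ is a regular epimorphism, $\psi_{I}$ lifts through $p_{I}$ to an arrow $\phi_{I}\colon X_{I}\to F_{I}$; this $\phi_{I}$ satisfies exactly the commutativity conditions required of a morphism up to degree $k+1$, and for distinct subsets of size $k+1$ the lifts may be chosen independently, since no arrow of the cube relates them. After finitely many steps we reach degree $n$ and obtain the desired morphism $X\to F$. Nothing need be checked against the degeneracies, since a morphism of $n$-fold arrows only involves the face maps.

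\emph{From (ii) to (i).} By Proposition~\ref{Limit-Characterisation-Extensions} it suffices to show that $p_{I}$ is a regular epimorphism for each $\emptyset\neq I\subseteq n$. The idea is, for a fixed such $I$, to feed (ii) an auxiliary degreewise projective $(n-1)$-truncated simplicial object $\XX$ and a partial morphism up to degree $|I|-1$ whose induced comparison to $L_{I}$ is a chosen projective presentation $q\colon P\twoheadrightarrow L_{I}$: once (ii) extends this partial morphism, the component $\phi_{I}\colon X_{I}\to F_{I}$ it produces satisfies $p_{I}\comp\phi_{I}=q$, so $q$ factors through $p_{I}$, and as $q$ is a regular epimorphism and we work in a regular category, $p_{I}$ must be one too. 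For $I=n$ this is immediate: take $\XX$ to be the constant simplicial object on $P$, with all faces and degeneracies equal to the identity (it is plainly degreewise projective, with all cube maps identities), and let the partial morphism in degrees $<n$ be the cone $P\to L_{n}=\L F$ given by $q$; every $J\subsetneq n$ has $|J|\leq n-1$, so this really is a morphism up to degree $n-1$ whose induced comparison is $q$. For a proper nonempty $I\subsetneq n$ one reduces to the case just treated, applied to the $|I|$-fold arrow $F|_{2^{I}}$, for which $\L(F|_{2^{I}})=L_{I}$: the point is that (ii) for the $n$-fold arrow $F$ forces the analogous lifting property for $F|_{2^{I}}$, which one sees by extending a given degreewise projective $(|I|-1)$-truncated simplicial object to a degreewise projective $(n-1)$-truncated one (building the higher degrees as projective covers of the successive coskeleta and lifting the face and degeneracy operators) and extending the partial morphism trivially along the coordinates outside $I$, then applying (ii) to $F$ and restricting back along $2^{I}\hookrightarrow 2^{n}$. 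Alternatively, the whole argument may be organised as an induction on $n$: viewing $F$ via~\eqref{Double-Extension} as a morphism of $(n-1)$-fold arrows, using $\L F=\L G$ of Lemma~\ref{Lemma-L-pullbacks} and the defining characterisation of $n$-cubic extensions, one deduces (ii) for the domain, the codomain and the comparison $\lind d,c\rind$ of $F$ from (ii) for $F$, and concludes by the induction hypothesis that these are $(n-1)$-cubic extensions.

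\emph{Main obstacle.} The first implication is routine. The real work in the second is the manipulation of degreewise projective truncated simplicial objects: extending them to higher truncation levels while preserving degreewise projectivity — this is where ``enough projectives'' is genuinely needed, via the coskeleton adjunction, and where the simplicial identities bite, since they constrain the degeneracies in a way invisible at the level of bare $n$-fold arrows — and matching up the partial morphisms along the embeddings $2^{I}\hookrightarrow 2^{n}$. Getting this combinatorial bookkeeping right, rather than any conceptual difficulty, is the crux.
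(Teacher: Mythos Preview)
Your argument for (i)$\Rightarrow$(ii) is the paper's: both extend degree by degree, lifting along the regular epimorphism $p_{I}\colon F_{I}\to L_{I}$ using projectivity of $X_{I}=\XX_{|I|-1}$.

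For (ii)$\Rightarrow$(i), your treatment of $I=n$ via the constant simplicial object $P$ is exactly what the paper does. The difference is that the paper uses this \emph{same} constant object for \emph{every} nonempty $I\subseteq n$, whereas you attempt, for proper $I\subsetneq n$, a reduction to the $|I|$-fold arrow $F|_{2^{I}}$. That reduction contains a genuine gap. You claim one can ``extend the partial morphism trivially along the coordinates outside $I$'', but this is not possible: given $\phi_{J'}\colon P\to F_{J'}$ for $J'\subsetneq I$, there is no canonical way to produce $\phi_{J}\colon P\to F_{J}$ for $J\not\subseteq I$ with $|J|\leq |I|-1$. The structure maps of $F$ go from $F_{J}$ \emph{to} $F_{J\cap I}$, not the other way, and no sections are available. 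Your alternative induction on $n$ runs into the same obstacle when you try to extend a partial morphism into $\cod F$ to one into $F$.

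The fix---which the paper leaves implicit in the phrase ``extend the collection of arrows induced by the given arrow to a morphism of $n$-fold arrows''---is an induction on $|I|$. For $|I|=1$ the partial morphism up to degree $0$ is just $\phi_{\emptyset}=q$, and (ii) applies directly. For $|I|=m>1$, the cone of $q$ supplies $\phi_{J'}=\pi_{J'}\comp q$ for $J'\subsetneq I$; for $J\not\subseteq I$ with $|J|\leq m-1$ one builds $\phi_{J}$ by a secondary induction on $|J|$, lifting the already-constructed cone $\lind\phi_{J''}\rind_{J''\subsetneq J}\colon P\to L_{J}$ through $p_{J}$, which is a regular epimorphism by the outer induction hypothesis since $|J|<m$. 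This produces a genuine partial morphism up to degree $m-1$, to which (ii) applies, yielding $\phi_{I}$ with $p_{I}\comp\phi_{I}=q$. So the ``bookkeeping'' you flag is not merely combinatorial: it needs the inductive input that the lower $p_{J}$ are already regular epimorphisms, and once you see this, the detour through $F|_{2^{I}}$ and extensions of truncated simplicial objects becomes unnecessary.
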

\begin{proof}
We first use induction on $i$ to prove that (i) implies (ii). Suppose a collection  ${(X_{J}\to F_{J})_{|J|\leq i}}$ like in the statement is given and let $I\subseteq n$ be such that $|I|=i+1$. Then we obtain the needed morphism ${X_{I}\to F_{I}}$ as the dotted lifting in the diagram
\[
\xymatrix{&&F_{I} \ar@{-{ >>}}[d]\\
X_{I} \ar@{.>}[rru] \ar[r] & \lim_{J\subsetneq I} X_{J} \ar[r] & \lim_{J\subsetneq I} F_{J}}
\]
---which exists because $X_{I}$ is projective by assumption, while the right hand side vertical arrow is a regular epimorphism by Proposition~\ref{Limit-Characterisation-Extensions}.

To see that (ii) implies (i) we again use Proposition~\ref{Limit-Characterisation-Extensions}. This time we have to show that any morphism ${P\to \lim_{J\subsetneq I} F_{J}}$ for $P$ projective lifts to a morphism ${P\to F_{I}}$. We simply use the $(n-1)$-truncation of the constant simplicial object $P$, and extend the collection of arrows induced by the given arrow to a morphism of $n$-fold arrows to obtain the needed lifting.
\end{proof}

\subsection{Simplicial kernels}
Let
\[
(f_i\colon {X\to Y})_{i\in n}
\]
be a sequence of $n$ morphisms in a finitely complete category $\X$. A \defn{simplicial kernel} of~$(f_0,\ldots,f_{n-1})$ is a sequence
\[
(k_i\colon {K\to X})_{i\in n+1}
\]
of $n+1$ morphisms in $\X$ satisfying $f_ik_j=f_{j-1}k_i$ for $0\leq i<j\leq n$, which is universal with respect to this property. It may be computed as a limit in~$\X$.

We need the following lemma, which is probably well known:

\begin{lemma}\label{Lemma coskeleton pullback}
Let $(f_i\colon {X\to Y})_{i\in n}$ and $(f'_i\colon {X'\to Y'})_{i\in n}$ be two sequences of $n$ morphisms in a finitely complete category $\X$, and consider morphisms $\chi\colon {X\to X'}$ and $\upsilon\colon {Y\to Y'}$ for which all squares in any diagram 
\[
\vcenter{\xymatrix{
 X \ar[r]^-{f_{i}} \ar[d]_-{\chi} & Y \ar[d]^-{\upsilon}\\
 X' \ar[r]_-{f'_{i}} & Y'}}
\]
are pullbacks. Then all of the induced squares between the respective simplicial kernels of $(f_{i})_{i\in n}$ and $(f'_{i})_{i\in n}$ are pullbacks as well.
\end{lemma}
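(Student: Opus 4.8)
The statement is a compatibility assertion: pullback squares are preserved when passing from the morphisms $(f_i)$ to their simplicial kernel. I would prove it by exhibiting the simplicial kernel as a finite limit over a diagram indexed by a fixed category $\mathcal{J}$, and then invoking the general principle that a natural transformation between $\mathcal{J}$-diagrams whose naturality squares are all pullbacks induces a pullback square between the limits, provided the comparison data is arranged correctly. Concretely, the simplicial kernel $K$ of $(f_0,\dots,f_{n-1})\colon X\to Y$ is the limit of the diagram consisting of $n+1$ copies of $X$ (one for each $k_i$) and a suitable collection of copies of $Y$, with the arrows being the various $f_i$, arranged so that the equations $f_i k_j = f_{j-1} k_i$ for $0\le i<j\le n$ are forced. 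The primed simplicial kernel $K'$ of $(f'_i)$ is the limit of the \emph{same-shaped} diagram built from $X'$ and $Y'$, and the maps $\chi$, $\upsilon$ assemble into a morphism of these diagrams (a natural transformation $D\To D'$), each component square of which is a pullback by hypothesis.

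\textbf{Key steps, in order.} First I would fix the indexing category $\mathcal{J}$ and write down explicitly the two $\mathcal{J}$-diagrams $D$ (built from $X$, $Y$) and $D'$ (built from $X'$, $Y'$) whose limits are $K$ and $K'$; this is the bookkeeping step, making precise which copies of $X$ and $Y$ appear and which $f_i$ connect them. Second, I would observe that $(\chi,\upsilon)$ defines a natural transformation $\tau\colon D\To D'$, all of whose components are either $\chi$ or $\upsilon$, and all of whose component-wise naturality squares coincide with the given pullback squares. Third, I would apply the elementary lemma: if $\tau\colon D\To D'$ is a natural transformation of $\mathcal{J}$-diagrams in a finitely complete category such that for some object $j_0\in\mathcal{J}$ every component $\tau_j$ with $j\ne j_0$ is obtained by pulling back along a map into $D'(j_0)$ --- more robustly, if the comma-style argument applies --- then $\lim D \to \lim D'$ is a pullback of $\lim D' $ along (a suitable projection of) the relevant component. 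In our situation the cleanest formulation is: $K$ is the pullback of $K' \to (X')^{n+1}$ along $X^{n+1}\to (X')^{n+1}$, because the defining equations of the simplicial kernel are pulled back from the primed ones along $\chi^{n+1}$, and each factor square $X\to X'$ is a pullback. Finally, I would conclude that each induced square $K\to K'$ over $X\to X'$ (i.e. the square with horizontal maps $k_i$, $k'_i$ and vertical maps the comparison $K\to K'$ and $\chi$) is a pullback, by a routine pasting argument: the square on projections $K^{n+1}\to (K')^{n+1}$ over $X^{n+1}\to (X')^{n+1}$ is a product of pullback squares hence a pullback, and composing with the pullback $K\to K'$ over $K^{n+1}\to (K')^{n+1}$ and cancelling gives what is wanted for each individual index $i$.

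\textbf{Main obstacle.} The only real subtlety is the second and third steps: verifying that the limit defining $K$ is genuinely a pullback of the limit defining $K'$ along the product map $\chi^{n+1}$ (equivalently, that the cone-defining equations for $K$ are exactly the pullback along $\chi$ of those for $K'$). This requires the hypothesis that \emph{every} square $X\to X'$ over $Y\to Y'$ (for each relevant $f_i$) is a pullback --- one uses this to see that a compatible family of maps into $X$ (satisfying the primed equations after composing with $\chi$) already satisfies the unprimed equations, because each equation $f_i k_j = f_{j-1} k_i$ can be tested after composing with $\upsilon$ using that the squares are (jointly) monomorphic in the appropriate sense. Once this identification $K\cong X^{n+1}\times_{(X')^{n+1}} K'$ is in place, everything else is formal pullback pasting and there are no calculations of substance. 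I would present the argument at the level of ``the simplicial kernel is a finite limit, and finite limits of pullback-squares-worth of data are again pullbacks,'' spelling out only the identification of $K$ as the stated pullback.
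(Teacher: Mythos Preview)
Your central claim --- that $K$ is the pullback of $K' \to (X')^{n+1}$ along $\chi^{n+1}\colon X^{n+1} \to (X')^{n+1}$ --- is false. Take $n=1$ and $Y'=1$; the pullback hypothesis then forces $X \cong X' \times Y$ with $f_0 = \pr_Y$ and $\chi = \pr_{X'}$. The simplicial kernel is the kernel pair, so $K = X \times_Y X \cong X' \times X' \times Y$, whereas $K'=X'\times X'$ and hence $X^2 \times_{(X')^2} K' \cong (X')^2 \times Y^2$; these differ as soon as $Y$ has more than one point. The failure is exactly at your ``main obstacle'': knowing $\upsilon(f_i x_j) = \upsilon(f_{j-1} x_i)$ does \emph{not} give $f_i x_j = f_{j-1} x_i$, because the pullback hypothesis makes the pair $(\chi, f_i)$ jointly monic but says nothing about $\upsilon$ alone.

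What actually works --- and what the paper does, arguing elementwise in $\Set$ --- is to fix a single index $m$ and use $x_m$ to reconstruct each remaining $x_j$. Given $x_m \in X$ and $(x'_0, \dots, x'_n) \in K'$ with $\chi(x_m) = x'_m$, for each $j \ne m$ the simplicial relations provide an equation of the form $f_a(x_j) = f_b(x_m)$ for suitable $a$, $b$; this value in $Y$, together with $\chi(x_j) = x'_j$, pins down $x_j$ uniquely via the pullback property of the $f_a$-square. The essential point is that the $Y$-coordinate needed to determine $x_j$ comes from the already-known entry $x_m$, not from the primed data. Your limit-theoretic framing could be salvaged by computing $K$ as an iterated pullback starting from the $m$-th copy of $X$ and adjoining the remaining copies one at a time, but the all-at-once identification over $X^{n+1}$ does not hold.
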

\begin{proof}
It suffices to give a formal proof in $\Set$. Consider $x_{m}$ in $X$ and $(x'_{0},\dots,x'_{n})$ in the simplicial kernel $K'$ of $(f'_{i})_{i\in n}$ such that $\chi(x_{m})=x'_{m}$. If $(x_{0},\dots,x_{n})$ is an element of the simplicial kernel $K$ of $(f_{i})_{i\in n}$, then $x_{j}$ necessarily satisfies
\[
\upsilon(f_{m}(x_{j}))=\upsilon(f_{j-1}(x_{m}))=f'_{j-1}(\chi(x_{m}))=f'_{j-1}(x'_{m})=f'_{m}(x'_{j})
\]
in case $m<j$, and 
\[
\upsilon(f_{m-1}(x_{j}))=\upsilon(f_{j}(x_{m}))=f'_{j}(\chi(x_{m}))=f'_{j}(x'_{m})=f'_{m-1}(x'_{j})
\]
when $m>j$. This completely determines $(x_{0},\dots,x_{n})$ via the pullback property which we assume to hold. It is also clear that any tuple $(x_{0},\dots,x_{n})$ thus obtained is indeed an element of $K$. In fact, it turns out to be precisely the needed unique element for which $\chi(x_{0},\dots,x_{n})=(x'_{0},\dots,x'_{n})$ and $k_{m}(x_{0},\dots,x_{n})=x_{m}$.
\end{proof}

When $\XX$ is a simplicial object and $n\geq 0$, we write
\[
(\del_{i}\colon\cycle(\XX,n)\to \XX_{n-1})_{i\in n+1}
\]
for the simplicial kernel of the faces $(\del_{i}\colon {\XX_{n-1}\to \XX_{n-2}})_{i\in n}$. The object $\cycle(\XX,n)$ consists of \defn{$n$-cycles} in $\XX$. For instance, the object $\cycle(\XX,2)$ of $2$-cycles in~$\XX$ contains empty triangles:
\[
\vcenter{\xymatrix@1@!0@R=2.4495em@C=1.4142em{& {\cdot} \ar[rd]^-{\beta}\\
{\cdot} \ar[ru]^-{\alpha} \ar[rr]_-{\gamma} && {\cdot}}}
\]
Note that $\cycle(\XX,n)=\L(\trunc_{n}\XX)$. Clearly~${\cycle(\XX,1)=\R{\del_0}}$; we also write $\cycle(\XX,0)$ for~$\XX_{-1}$.

As mentioned in Subsection~\ref{Truncations}, any ${(n-1)}$-truncated simplicial object $X$ in~$\X$ may be universally extended to an $n$-truncated simplicial object. Its initial object and morphisms, in (absolute!) degree~$n+1$, are given by the simplicial kernel~$(k_{i}\colon{K\to X_{n}})_{i\in n+1}$ of the initial morphisms~${(x_{i}\colon {X_{n}\to X_{n-1}})_{i\in n}}$ of $X$. The degeneracies ${(\sigma_{j}\colon X_{n}\to K)_{j\in n}}$ are induced by the simplicial identities
\[
k_{i}\comp\sigma_{j}=\begin{cases}\sigma_{j-1}\comp x_{i} & \text{if $i<j$} \\
1_{X_{n}} & \text{if $i=j$ or $i=j+1$}\\
\sigma_{j}\comp x_{i-1} & \text{if $i>j+1$}\end{cases}
\]
of $X$ and the universal property of the simplicial kernel. Repeating this construction indefinitely gives the $(n-1)$-coskeleton of $X$.

\subsection{Resolutions}\label{Resolutions}
An augmented simplicial object $\XX$ in a regular category is called \defn{acyclic} or \defn{a resolution (of $\XX_{-1}$)} when for every $n\geq 0$, the comparison morphism
\[
\lind\del_{i}\rind_{i}\colon \XX_{n}\to \cycle(\XX,n)
\]
is a regular epimorphism. (Every $n$-cycle is a boundary of an $n$-simplex.) As explained in~\cite{EGoeVdL}, in a semi-abelian category this is the case precisely when all the truncations of $\XX$, considered as higher arrows, are extensions. For this reason we may sometimes also call a truncated simplicial resolution an extension.

\subsection{The simplicial objects $\KK(A,n)$ and $\KK(Z,A,n)$}
Let $A$ be an abelian group in a Barr-exact category~$\X$ and take $n\geq 1$. The augmented simplicial object~$\KK(A,n)$ is the coskeleton of the $(n+1)$-truncated simplicial object
\[
\vcenter{\xymatrix@R=20pt@C=35pt{\scriptstyle{n+1} & \scriptstyle{n} & \scriptstyle{n-1} & \scriptstyle{n-2} \ar@{}[r]|-{\cdots} & \scriptstyle{0} & \scriptstyle{-1}\\
A^{n+1} \ar@<2.33ex>[r]^-{\del_{n+1}} \ar@<1.16ex>[r]|-{\pr_{n}} \ar@<-2.33ex>[r]_-{\pr_{0}}^-{\vdots} &
 A \ar@<1.75ex>[r]^-{!} \ar@<-1.75ex>[r]_-{!}^-{\vdots} &
 1 \ar@{=}@<1.75ex>[r] \ar@{=}@<-1.75ex>[r]^-{\vdots} & 1 \ar@{}[r]|-{\cdots} & 1 \ar@{=}[r] & 1}}
\]
with the $A$ in simplicial degree $n$ (in absolute degree $n+1$), where the degeneracies ${1\to A}$ are determined by the neutral element $0$ of $A$ and $\del_{n+1}$ is equal to
\[
(-1)^{n}\sum^{n}_{i=0}(-1)^{i}\pr_{i}.
\]
When the category is a slice $\X/Z$ over an object $Z$ in a semi-abelian category~$\X$ and $(A,\xi)$ is a $Z$-module, the simplicial object $\KK((A,\xi),n)$, considered as a diagram in $\X$, takes the following shape:
\[
\resizebox{\textwidth}{!}{
$\vcenter{\xymatrix@R=20pt@C=50pt{\scriptstyle{n+1} & \scriptstyle{n} & \scriptstyle{n-1} & \scriptstyle{n-2} \ar@{}[r]|-{\cdots} & \scriptstyle{0} & \scriptstyle{-1}\\
(A,\xi)^{n+1} \rtimes Z \ar@<2.33ex>[r]^-{\del_{n+1}\rtimes 1_Z} \ar@<1.16ex>[r]|-{\pr_{n}\rtimes 1_Z} \ar@<-2.33ex>[r]_-{\pr_{0}\rtimes 1_Z}^-{\vdots} &
 (A,\xi) \rtimes Z \ar@<1.75ex>[r]^-{f} \ar@<-1.75ex>[r]_-{f}^-{\vdots} &
 Z \ar@{=}@<1.75ex>[r] \ar@{=}@<-1.75ex>[r]^-{\vdots} & Z \ar@{}[r]|-{\cdots} & Z \ar@{=}[r] & Z}}
$}
\]
In case $\xi$ is the trivial module structure $\tau$, we obtain
\[
\resizebox{\textwidth}{!}{
$\vcenter{\xymatrix@C=50pt{\scriptstyle{n+1} & \scriptstyle{n} & \scriptstyle{n-1} & \scriptstyle{n-2} \ar@{}[r]|-{\cdots} & \scriptstyle{0} & \scriptstyle{-1}\\
A^{n+1} \times Z \ar@<2.33ex>[r]^-{\del_{n+1}\times 1_{Z}} \ar@<1.16ex>[r]|-{\pr_{n}\times 1_{Z}} \ar@<-2.33ex>[r]_-{\pr_{0}\times 1_{Z}}^-{\vdots} &
 A \times Z \ar@<1.75ex>[r]^-{\pr_{Z}} \ar@<-1.75ex>[r]_-{\pr_{Z}}^-{\vdots} &
 Z \ar@{=}@<1.75ex>[r] \ar@{=}@<-1.75ex>[r]^-{\vdots} & Z \ar@{}[r]|-{\cdots} & Z \ar@{=}[r] & Z}}
$}
\]
with $\del_{n+1}$ as above and degeneracies $\lind 0,1_{Z}\rind\colon{Z\to A\times Z}$. Given any object $Z$ and any abelian object $A$, we shall write $\KK(Z,A,n)$ for this simplicial object in $\X$. In particular, $\KK(0,A,n)=\KK(A,n)$.

\subsection{(Exact) fibrations}
Let $\XX$ be a simplicial object in a finitely complete category $\X$ and consider $n\geq 2$ and $0\leq i\leq n$. The \defn{object of $(n,i)$-horns in $\XX$} is an object $\horn^{i}(\XX,n)$ together with morphisms $x_{j}\colon{\horn^{i}(\XX,n)\to \XX_{n-1}}$ for $i\neq j\in n+1$ satisfying
\[
\text{$\del_{j}\comp x_{k}=\del_{k-1}\comp x_{j}$ for all $j<k$ with $j$, $k\neq i$}
\]
which is universal with respect to this property; also $\horn^{0}(\XX,1)=\XX_{0}=\horn^{1}(\XX,1)$.

For instance, the object $\horn^{1}(\XX,2)$ of $(2,1)$-horns in $\XX$
\[
\vcenter{\xymatrix@1@!0@R=2.4495em@C=1.4142em{& {\cdot} \ar[rd]^-{\beta}\\
{\cdot} \ar[ru]^-{\alpha} && {\cdot}}}
\]
contains ``composable pairs of arrows''.

We write
\[
\widehat f_{i}=\lind f_j\rind_{i\neq j\in n+1}\colon {W\to \horn^{i}(\XX,n)}
\]
for the morphism induced by a family $(f_{j}\colon W\to \XX_{n-1})_{i\neq j\in n+1}$ in which the morphism $f_{i}$ is missing.

Now suppose that $\X$ is a regular category. A simplicial morphism $\ff\colon{\XX\to \YY}$ satisfies the \defn{Kan condition} (respectively satisfies the Kan condition \defn{exactly}) in degree~$n$ for~$i$ when the morphism
\[
\lind \widehat\del_{i}, \ff_{n}\rind\colon \XX_{n}\to \horn^{i}(\XX,n)\times_{\smallhorn^{i}(\YY,n)}\YY_{n}
\]
universally induced by the square
\[
\vcenter{\xymatrix{\XX_{n} \ar[d]_-{\widehat\del_{i}} \ar[r]^-{\ff_{n}} & \YY_{n} \ar[d]^-{\widehat\del_{i}}\\
\horn^{i}(\XX,n) \ar[r]_{\smallhorn^{i}(\ff,n)} & \horn^{i}(\YY,n)}}
\qquad\qquad
\]
is a regular epimorphism (respectively an isomorphism). The morphism $\ff$ is called a \defn{fibration} when it satisfies the Kan condition for all $n\geq 1$ and all $i$. A~fibration is \defn{exact} in degrees larger than $n$ when the Kan condition is satisfied exactly in simplicial degrees larger than $n$ for all $i$.

A regular category is Mal'tsev if and only if every simplicial object is Kan: every morphism ${\XX\to \one}$ is a fibration~\cite[Theorem~4.2]{Carboni-Kelly-Pedicchio}. Furthermore, a regular epimorphism of simplicial objects in a regular Mal'tsev category is always a fibration~\cite[Proposition~4.4]{EverVdL2}. The Kan property for simplicial objects may also be expressed in terms of higher extensions: in a semi-abelian category, a simplicial object $\XX$ is Kan if and only if all of its truncations, considered as higher arrows in all possible directions, have a domain which is an extension~\cite{EGoeVdL}.

\begin{lemma}\label{Lemma-DeltaLambda-Square}
In a finitely complete category, given $n\geq 1$, $i\in n$, and an augmented simplicial object $\XX$, the square
\[
\vcenter{\xymatrix{\cycle(\XX,n) \ar[r]^-{\widehat\del_{i}} \ar[d]_-{\del_{i}} & \horn^{i}(\XX,n) \ar[d]^-{\del_{i-1}^{i}\times \del_{i}^{n-i}} \\
\XX_{n-1} \ar[r]_-{\lind \del_{j}\rind_{j}} & \cycle(\XX,n-1)}}
\]
is a pullback, where the arrow on the right is the restriction of 
\[
\del_{i-1}^{i}\times \del_{i}^{n-i}=\underbrace{\del_{i-1}\times \cdots\times \del_{i-1}}_{\text{$i$ times}} \times \underbrace{\del_{i}\times \cdots \times \del_{i}}_{\text{$n-i$ times}}\colon \XX_{n-1}^{n}\to \XX_{n-2}^{n} 
\]
to a morphism $\horn^{i}(\XX,n)\to \cycle(\XX,n-1)$.
\end{lemma}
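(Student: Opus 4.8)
The plan is to reduce the assertion to a routine set-theoretic check. All four vertices of the square and all four of its edges are built from $\XX$ by finite limit constructions: the cycle objects $\cycle(\XX,n)$ and $\cycle(\XX,n-1)$ are simplicial kernels, the horn object $\horn^{i}(\XX,n)$ is defined by a finite limit, and the maps $\del_{i}$, $\widehat\del_{i}$, $\lind\del_{j}\rind_{j}$ and $\del_{i-1}^{i}\times\del_{i}^{n-i}$ are induced by universal properties. Hence applying a representable functor $\Hom_{\X}(W,-)$ turns the square into exactly the square of the statement for the (augmented) simplicial set $\Hom_{\X}(W,\XX_{\bullet})$; since a commutative square in $\X$ is a pullback precisely when its image under every such $\Hom_{\X}(W,-)$ is a pullback of sets, it suffices to prove the lemma when $\X=\Set$. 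This is the same device as in the proof of Lemma~\ref{Lemma coskeleton pullback}.

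In $\Set$ I would first spell out the four maps on elements. An element of $\cycle(\XX,n)$ is a tuple $(a_{0},\dots,a_{n})$ of elements of $\XX_{n-1}$ satisfying the simplicial-kernel relations $\del_{k}a_{l}=\del_{l-1}a_{k}$ for $0\leq k<l\leq n$; under this description the left-hand vertical map sends $(a_{0},\dots,a_{n})$ to $a_{i}$, whereas $\widehat\del_{i}$ sends it to the horn $(a_{j})_{j\neq i}$. The bottom map sends $a\in\XX_{n-1}$ to the $(n-1)$-cycle $(\del_{0}a,\dots,\del_{n-1}a)$, and the right-hand map sends a horn $(b_{j})_{j\neq i}$ to the tuple whose $p$-th coordinate, for $0\leq p\leq n-1$, equals $\del_{i-1}b_{p}$ when $p<i$ and $\del_{i}b_{p+1}$ when $p\geq i$. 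Commutativity of the square on a cycle $(a_{0},\dots,a_{n})$ then amounts to the two equalities $\del_{p}a_{i}=\del_{i-1}a_{p}$ for $p<i$ and $\del_{p}a_{i}=\del_{i}a_{p+1}$ for $p\geq i$, which are exactly the cycle relations for the index pairs $p<i$ and $i<p+1$.

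For the universal property I would argue directly. Given $a\in\XX_{n-1}$ and a horn $(b_{j})_{j\neq i}$ whose image in $\cycle(\XX,n-1)$ agrees with that of $a$ --- which unwinds to $\del_{j}a=\del_{i-1}b_{j}$ for $0\leq j<i$ and $\del_{m-1}a=\del_{i}b_{m}$ for $i<m\leq n$ --- the only conceivable preimage is the tuple $(c_{0},\dots,c_{n})$ with $c_{i}=a$ and $c_{j}=b_{j}$ for $j\neq i$, so everything comes down to checking that this tuple satisfies $\del_{k}c_{l}=\del_{l-1}c_{k}$ for all $0\leq k<l\leq n$. This splits into three cases according to whether the fixed index $i$ occurs among $\{k,l\}$ and, if so, as the smaller or the larger one: when $k,l\neq i$ the equality is the defining relation of the horn $(b_{j})_{j\neq i}$, when $l=i$ (so $k<i$) it is the hypothesis $\del_{k}a=\del_{i-1}b_{k}$, and when $k=i$ (so $l>i$) it is the hypothesis $\del_{i}b_{l}=\del_{l-1}a$. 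Uniqueness being immediate, this finishes the argument.

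No step here is genuinely difficult; the only place demanding attention is the index bookkeeping in the right-hand map --- in particular the shift by one, feeding the horn coordinate $b_{p+1}$ into the cycle coordinate indexed by $p$ for $p\geq i$ --- and matching it against the correct instance of the simplicial identity $\del_{k}\del_{l}=\del_{l-1}\del_{k}$. Once the conventions for $n$-cycles and for $(n,i)$-horns are pinned down, every equation used above is a direct instance of that identity, so the main obstacle is purely notational.
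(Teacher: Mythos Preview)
Your proof is correct and follows essentially the same approach as the paper: reduce to $\Set$ via representables (as in Lemma~\ref{Lemma coskeleton pullback}), then verify directly that the set of $n$-cycles coincides with the set of pairs $(a,(b_{j})_{j\neq i})$ in $\XX_{n-1}\times\horn^{i}(\XX,n)$ whose images in $\cycle(\XX,n-1)$ agree. The paper's own argument is terser---it simply writes out the two sets and declares them ``clearly isomorphic''---whereas you spell out the three-case check of the cycle relations; but the strategy and the underlying computation are the same.
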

Here is a picture in degree $n=2$ for $i=1$:
\[
\begin{matrix}
\vcenter{\xymatrix@1@!0@R=2.4495em@C=1.4142em{& {\cdot} \ar[rd]^-{\beta}\\
{\cdot} \ar[ru]^-{\alpha} \ar[rr]_-{\gamma} && {\cdot}}}
& \mapsto &
\vcenter{\xymatrix@1@!0@R=2.4495em@C=1.4142em{& {\cdot} \ar[rd]^-{\beta}\\
{\cdot} \ar[ru]^-{\alpha} && {\cdot}}}
\\ \downmapsto && \downmapsto \\
\vcenter{\xymatrix@1@!0@R=2.4495em@C=1.4142em{& {\hole}\\
{\cdot} \ar[rr]_-{\gamma} && {\cdot}}}
& \mapsto &
\vcenter{\xymatrix@1@!0@R=2.4495em@C=1.4142em{& {\hole}\\
{\cdot} && {\cdot}}}
\end{matrix}
\]
\begin{proof}
We again only need to give a formal proof in $\Set$, where it suffices to compare the set of couples 
\[
(x_{i},(x_{0},\dots,\widehat x_{i},\dots,x_{n}))\in \XX_{n-1}\times \XX_{n-1}^{n}
\]
which satisfy 
\[
\text{$\del_{j}(x_{k})=\del_{k-1}(x_{j})$ for all $j<k$ with $j$, $k\neq i$}
\]
and
\[
\del_{j}(x_{i})=
\begin{cases}
\del_{i-1}(x_{j}) & \text{if $j<i$}\\
\del_{i}(x_{j+1}) & \text{if $i\leq j<n$}
\end{cases}
\]
with the set
\[
\{(x_{0},\dots,x_{n})\in \XX_{n-1}^{n+1}\mid \text{$\del_{i}(x_{j})=\del_{j-1}(x_{i})$ for $0\leq i<j\leq n$}\}.
\]
These sets are clearly isomorphic, which finishes the proof.
\end{proof}

\subsection{Higher-dimensional torsors}\label{Torsors}
Let $A$ be an abelian group in a Barr-exact category~$\X$ and consider $n\geq 1$. A \defn{$\KK(A,n)$-torsor} is an augmented simplicial object~$\TT$ equipped with a simplicial morphism $\tt\colon {\TT\to \KK(A,n)}$ such that
\begin{enumerate}
\item[(T1)] $\tt$ is a fibration which is exact from degree $n$ on;
\item[(T2)] $\TT\cong \Cosk_{n-1}\TT$;
\item[(T3)] $\TT$ is a resolution.
\end{enumerate}
Let $Z$ be an object of a semi-abelian category $\X$ and $(A,\xi)$ a $Z$-module. An \defn{$n$"~torsor of $Z$ by $(A,\xi)$} is a $\KK((A,\xi),n)$-torsor in the category $\X/Z$. Morphisms of $\KK(A,n)$-torsors are defined as in the slice over $\KK(A,n)$, and thus we obtain the category $\Tors^{n}(\X,A)$ of $\KK(A,n)$-torsors in $\X$ as a full subcategory of $\S^{+}(\X)/\KK(A,n)$. When the action $\xi$ is trivial, we call $(\TT,\tt)$ an~\defn{$n$-torsor of $Z$ by $A$}, and obtain the following picture:
\[
\resizebox{\textwidth}{!}{\mbox{$
\vcenter{\xymatrix@=45pt{
\cycle(\TT,n+1) \ar[d]_{\cdots\quad\lind\lind\varsigma\circ\del_{i}\rind_{i},\del_{0}^{n+2}\rind} \ar@<2.33ex>[r] \ar@<1.16ex>[r] \ar@<-2.33ex>[r]^-{\vdots} & \cycle(\TT,n) \ar[d]^{\lind\varsigma,\del_{0}^{n+1}\rind} \ar@<1.75ex>[r] \ar@<-1.75ex>[r]^-{\vdots} & \TT_{n-1} \ar[d]^-{\del_{0}^{n}} \ar@<1.75ex>[r] \ar@<-1.75ex>[r]^-{\vdots} & \TT_{n-2} \ar[d]^-{\del_{0}^{n-1}} \ar@{}[r]|-{\cdots} & \TT_{0} \ar[d]^-{\del_{0}} \ar[r]^-{\del_{0}} & \TT_{-1} \ar@{=}[d]\\
A^{n+1} \times Z \ar@<2.33ex>[r]^-{\del_{n+1}\times 1_{Z}} \ar@<1.16ex>[r]|-{\pr_{n}\times 1_{Z}} \ar@<-2.33ex>[r]_-{\pr_{0}\times 1_{Z}}^-{\vdots} &
A \times Z \ar@<1.75ex>[r]^-{\pr_{Z}} \ar@<-1.75ex>[r]_-{\pr_{Z}}^-{\vdots} & Z \ar@{=}@<1.75ex>[r] \ar@{=}@<-1.75ex>[r]^-{\vdots} & Z \ar@{}[r]|-{\cdots} & Z \ar@{=}[r] & Z}}
$}}
\]
When $Z$ is an object of a semi-abelian category $\X$ and~$A$ is an abelian object in $\X$ considered as a trivial $Z$-module $(A,\tau)$, we write $\Tors^{n}(Z,A)$ for the category~$\Tors^{n}(\X/Z,(A,\tau))$. Taking connected components we obtain the set
\[
\Tors^{n}[Z,A]=\pi_{0}\Tors^{n}(Z,A)
\]
of equivalence classes of $n$-torsors of $Z$ by $A$ which is, in fact, an abelian group~\cite{Duskin-Torsors}.

We shall further analyse the concept of torsor in Section~\ref{Section-Torsors-and-Centrality}; for now it suffices to understand their cohomological meaning.

\subsection{The $(n+1)$-th cohomology group}
It follows from~\cite[Theorem 5.2]{Duskin-Torsors} that, when $\X$ is a Barr-exact category and
\[
\GG=(G\colon\X\to\X,\,\delta\colon G\To G^{2},\,\epsilon\colon G\To 1_{\X})
\]
is a comonad on $\X$ such that the $\GG$-projectives coincide with the regular projectives in $\X$, then
\[
\H^{n+1}(1,A)_{\GG}\cong \pi_{0}\Tors^{n}(\X,A)
\]
where $A$ is an internal abelian group in $\X$ and $1$ is the terminal object. If now~$Z$ is an object of~$\X$ then $\GG$ induces a comonad $\GG/Z=(G^{Z},\delta^{Z},\epsilon^{Z})$ on~$\X/Z$ via
\[
\delta^{Z}_{f}=\left(\vcenter{\xymatrix@!0@R=3em@C=2em{GX \ar[rr]^-{\delta_{X}} \ar[rd]_-{G^{Z}f=f\circ\epsilon_{X}} && GGX \ar[ld]^-{G^{Z}G^{Z}f=f\circ\epsilon_{X}\circ\epsilon_{GX}} \\ & Z}}\right)
\quad\text{and}\quad
\epsilon^{Z}_{f}=\left(\vcenter{\xymatrix@!0@R=3em@C=2em{GX \ar[rr]^-{\epsilon_{X}} \ar[rd]_-{G^{Z}f=f\circ\epsilon_{X}} && X \ar[ld]^-{f} \\ & Z}}\right)
\]
for all $f\colon{X\to Z}$. Hence when, in a semi-abelian category $\X$, we consider an abelian object $A$ as a trivial~$Z$-module, we see that
\[
\H^{n+1}(1_{Z},(A,\tau))_{\GG/Z}\cong \pi_{0}\Tors^{n}(\X/Z,(A,\tau))
\]
and
\[
\H^{n+1}(Z,A)_{\GG}\cong\Tors^{n}[Z,A].
\]
For instance, $\X$ may be chosen to be a variety of algebras over $\Set$, so that~$\GG$ is canonically induced by the forgetful/free adjunction. In any case, $\Tors^{n}[Z,A]$ does indeed carry an abelian group structure. Moreover, this defines an additive functor
\[
\Tors^{n}[Z,-]\colon \Ab(\X)\to \Ab.
\]

\pagebreak
\section{The groups of equivalence classes of higher central extensions}\label{Section-Central-Extensions}
We work towards a definition of the group $\Centr^{n}(Z,A)$ of equivalence classes of~$n$-fold central extensions of $Z$ by $A$, extending the definition of $\Centr^{2}(Z,A)$ given in Section~4 of~\cite{RVdL}. We start with some basic theory of (higher-dimen\-sion\-al) central extensions, first recalling known results and then proving some new ones.

\subsection{Central extensions}\label{Central extensions}
We first consider some general definitions and results valid in a homological category with a chosen strongly Birkhoff subcategory. Here we follow~\cite{EGVdL}.

A \defn{Galois structure}~\cite{Janelidze:Precategories} $\Gamma=(\X,\B,\E,\F,I,H)$ consists of categories~$\X$ and~$\B$, an adjunction
\[
\xymatrix{\X \ar@<1 ex>[r]^-{I} \ar@{}[r]|-{} & \B, \ar@<1 ex>[l]^-H \ar@{}[l]|{\perp}}
\]
and classes $\E$ and $\F$ of morphisms of $\X$ and $\B$ respectively, such that:
\begin{enumerate}
\item $\X$ has pullbacks along morphisms in $\E$;
\item
$\E$ and $\F$ contain all isomorphisms, are closed under composition and are pullback-stable;
\item
$I(\E) \subseteq \F$;
\item
$H(\F) \subseteq \E$.
\end{enumerate}
An element of $\E$ is called an \defn{$\E$-extension}.

We shall only consider Galois structures where $\X$ is (at least) a homological category, all $\E$-extensions are regular epimorphisms, and $\B$ is a full replete $\E$-reflective subcategory of $\X$. We shall never write its inclusion $H$. Such a subcategory is called \defn{strongly $\E$-Birkhoff} when for every $\E$-extension $f\colon{X\to Z}$ the induced naturality square
\begin{equation}\label{Birkhoff-Square}
\vcenter{\xymatrix{X \ar@{ >>}[r]^-{f} \ar@{ >>}[d]_-{\eta_{X}} & Z \ar@{ >>}[d]^-{\eta_{Z}} \\
IX \ar@{ >>}[r]_-{If} & IZ}}
\end{equation}
is a two-cubic $\E$-extension. (The universally induced morphism to the pullback must be in~$\E$.) From now on we shall always assume this to be the case.

If $\X$ is an exact Mal'tsev category and $\E$ consists of all regular epimorphisms, a strongly $\E$-Birkhoff subcategory of $\X$ is precisely a \defn{Birkhoff subcategory}: full, reflective and closed under subobjects and regular quotients in~$\X$, see~\cite{Janelidze-Kelly}. A Birkhoff subcategory of a variety of algebras is the same thing as a subvariety. Outside the exact Mal'tsev context, however, when $\E$ is the class of regular epimorphisms, the strong $\E$-Birkhoff property is generally stronger than the Birkhoff property, since not every pushout of extensions needs to be a two-cubic extension.

\begin{example}[Abelianisation]
It is well known that, in any semi-abelian category~$\X$, the full subcategory~$\Ab(\X)$ determined by the abelian objects is Birkhoff. This is the situation which we shall be most interested in here, in particular from Subsection~\ref{Directions} on.
\end{example}

An $\E$-extension $f\colon{X\to Z}$ in $\X$ is \defn{trivial} when the induced square~\eqref{Birkhoff-Square} is a pullback. Of course, if $X$ and $Z$ lie in $\B$ then $f$ is a trivial $\E$-extension. The $\E$-extension~$f$ is said to be \defn{normal} when both projections $\pr_{0}$, $\pr_{1}$ in the kernel pair $(\R{f},\pr_{0},\pr_{1})$ of $f$ are trivial. Finally, $f$ is \defn{central} when there exists an~$\E$"~extension $g\colon{Y\to Z}$ such that the pullback of $f$ along $g$ is trivial.

It is clear that every trivial $\E$-extension is central. Moreover, every normal $\E$"~extension is central; in the present context, also the converse holds (via Theorem~4.8 of~\cite{Janelidze-Kelly} or Proposition~2.6 in~\cite{EGVdL}). Hence the concepts of normality and centrality coincide. It follows immediately from the definition that pullbacks of $\E$-extensions along $\E$-extensions reflect centrality. Furthermore, in the present context, Proposition~4.1 and~4.3 in~\cite{Janelidze-Kelly} may be modified to prove that both the classes of trivial and of central $\E$-extensions are pullback-stable. It is also well known that a split epimorphic central $\E$-extension is always trivial.

The following important result (see~\cite{Gran-Alg-Cent,EGVdL}) will be used in Section~\ref{Section-Geometry}.

\begin{lemma}\label{Lemma-Pullbacks}
When $\X$ is a homological category and $\B$ is a strongly $\E$-Birkhoff subcategory of $\X$, the reflector $I\colon{\X\to \B}$ preserves pullbacks of $\E$-extensions along split epimorphisms.\noproof
\end{lemma}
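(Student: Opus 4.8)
The plan is to prove that the comparison morphism
\[
\phi\colon I(X\times_{Z}Y)\longrightarrow IX\times_{IZ}IY
\]
induced by the two projections is an isomorphism, where $f\colon X\to Z$ is the given $\E$-extension, $g\colon Y\to Z$ is a split epimorphism with section $s$, and $P=X\times_{Z}Y$. Write $p_{X}\colon P\to X$ and $p_{Y}\colon P\to Y$ for the projections: then $p_{Y}$ is again an $\E$-extension (a pullback of $f$), while $p_{X}$ is a split epimorphism with section $\sigma=\lind 1_{X},sf\rind$, and $p_{X}$ (resp.\ $p_{Y}$) restricts to an isomorphism $\K{p_{X}}\cong\K{g}$ (resp.\ $\K{p_{Y}}\cong\K{f}$). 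Since $\B$, a reflective subcategory of the homological category $\X$ closed under subobjects, is itself homological, it suffices to prove that $\phi$ is at once a regular epimorphism and a monomorphism.

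The regular epimorphism part does not use that $g$ splits. As $\phi\comp\eta_{P}=\lind\eta_{X}p_{X},\eta_{Y}p_{Y}\rind$, it is enough that $\psi:=\lind\eta_{X}p_{X},\eta_{Y}p_{Y}\rind\colon P\to IX\times_{IZ}IY$ be a regular epimorphism; and $\psi$ can be written as a composite of two pullbacks of regular epimorphisms --- one of the comparison morphism $Y\to Z\times_{IZ}IY$ of the naturality square of $g$ (a regular epimorphism because $g$ is), and one of the unit $\eta_{X}$. Since regular epimorphisms are stable under pullback, $\psi$, hence $\phi$, is a regular epimorphism.

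The monomorphism part is where the splitting of $g$ is used. The morphism $\phi$ is a morphism of \emph{split} epimorphisms over $IX$ --- from $Ip_{X}\colon IP\to IX$, split by $I\sigma$, to $\pr_{IX}\colon IX\times_{IZ}IY\to IX$, split by $\lind 1_{IX},(Is)(If)\rind$ --- and it respects the two sections. Restricting $\phi$ to the kernels of these split epimorphisms gives $\bar\phi\colon\K{Ip_{X}}\to\K{Ig}$, which is a pullback of $\phi$ (along the inclusion of $\K{Ig}$) and hence, by the previous paragraph, a regular epimorphism; moreover $\K{\phi}=\K{\bar\phi}$, so by protomodularity of $\B$ the morphism $\phi$ is a monomorphism as soon as $\bar\phi$ is --- that is, as soon as $\bar\phi$ is an isomorphism. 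Now the naturality squares of $p_{X}$ and of $g$ yield regular epimorphisms $\K{p_{X}}\twoheadrightarrow\K{Ip_{X}}$ and $\K{g}\twoheadrightarrow\K{Ig}$ with kernels $\K{\eta_{P}}\cap\K{p_{X}}$ and $\K{\eta_{Y}}\cap\K{g}$, and under the isomorphism $\K{p_{X}}\cong\K{g}$ the map $\bar\phi$ is compatible with them; so $\bar\phi$ is an isomorphism exactly when these two kernels correspond. One inclusion is formal, since $p_{Y}$ carries $\K{\eta_{P}}$ onto $\K{\eta_{Y}}$ and $\K{p_{X}}$ onto $\K{g}$. For the other, the point is that $f$ is a regular epimorphism: an element of $\K{\eta_{Y}}\cap\K{g}$ vanishes in $IY$ for a reason --- a relation built from commutators in $Y$ involving $\K{g}$ --- which can be lifted, using surjectivity of $f$, to a relation in $P$ built from commutators involving $\K{p_{X}}$, whence the corresponding element of $\K{p_{X}}$ already lies in $\K{\eta_{P}}$.

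The step I expect to be the main obstacle is precisely this last comparison of the two kernels inside $\K{p_{X}}\cong\K{g}$: carrying it out in a general homological category rather than in the category of groups, i.e.\ making precise that the internal action of $X$ on $\K{p_{X}}\le P$ is the restriction along the regular epimorphism $f$ of the internal action of $Z$ on $\K{g}\le Y$, so that the two corresponding quotients of $I\K{g}$ coincide. A workable way to organise this is to note that $I$ preserves the cokernels presenting $p_{X}$ and $g$ (so that $\K{Ip_{X}}$ and $\K{Ig}$ are the normal closures in $IP$ and $IY$ of the images of $I\K{p_{X}}$ and $I\K{g}$) and to compare these normal subobjects directly, using $I\K{p_{X}}\cong I\K{g}$ together with the fact that $\K{p_{X}}$ and $\K{p_{Y}}$ are complementary normal subobjects of $P$ --- the splitting of $g$ being what allows the reduction to the kernels of the split epimorphisms $Ip_{X}$ and $Ig$ in the first place.
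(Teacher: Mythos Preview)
The paper does not actually prove this lemma: it is stated with a \noproof\ and cited from \cite{Gran-Alg-Cent, EGVdL}. So there is no ``paper's own proof'' to compare against; one has to consult those references for a complete argument.

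Your regular-epimorphism part is essentially fine (though note that the factorisation you wrote uses the strong Birkhoff square for~$g$, which requires $g\in\E$; the cleaner factorisation goes via the strong Birkhoff square for~$f$, which is the hypothesis you actually have). The genuine gap is in the monomorphism part. You correctly reduce, via the Split Short Five Lemma, to showing that $\bar\phi\colon\K{Ip_{X}}\to\K{Ig}$ is an isomorphism, and then further to showing that under $p_{Y}\colon\K{p_{X}}\xrightarrow{\cong}\K{g}$ the two subobjects $\K{\eta_{P}}\cap\K{p_{X}}$ and $\K{\eta_{Y}}\cap\K{g}$ correspond. But the non-trivial inclusion here is \emph{equivalent} to what you are trying to prove: if $a\in\K{p_{X}}$ satisfies $\eta_{Y}(p_{Y}(a))=0$, then $\eta_{P}(a)$ lies in $\K{Ip_{X}}\cap\K{Ip_{Y}}=\K{\phi}$, and asserting this is zero is precisely the monicity of~$\phi$. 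So the reduction is circular, and the informal talk of ``lifting relations built from commutators along the surjection~$f$'' is not a proof---it presupposes $\B=\Ab(\X)$ and a concrete description of $\lb-\rb$ which are not available for an arbitrary strongly $\E$-Birkhoff subcategory of a homological category.

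A second, smaller issue: your argument that $\K{p_{X}}\to\K{Ip_{X}}$ and $\K{g}\to\K{Ig}$ are regular epimorphisms appeals to the strong Birkhoff squares for the split epimorphisms $p_{X}$ and $g$. That needs $p_{X},g\in\E$; in the applications of the paper (where $\E$ is the class of $n$-cubic extensions and every split epimorphism of $(n-1)$-cubic extensions is an $n$-cubic extension) this holds, but it is not part of the hypotheses as stated. The proofs in \cite{Gran-Alg-Cent, EGVdL} organise the argument so that the strong $\E$-Birkhoff property is applied only to the $\E$-extensions $f$ and $p_{Y}$; you should look there for the missing step rather than trying to complete the kernel comparison directly.
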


\subsection{The tower of Galois structures for cubic central extensions}\label{Subsection-Tower}
Now we describe the Galois structures for centrality of $n$-cubic extensions introduced in~\cite{EGVdL}. We start with a semi-abelian category $\X$ and a Birkhoff subcategory~$\B$ of~$\X$. Choosing $\E$ and $\F$ to be the classes of regular epimorphisms in~$\X$ and~$\B$, we obtain a Galois structure $\Gamma$ as above---$\B$ is strongly $\E$-Birkhoff. We may now drop the prefix~$\E$; the elements of this class are the one-cubic extensions of Subsection~\ref{Extensions}.

Let us view the objects of $\X$ as zero-cubic extensions, and the objects of $\B$ as zero-cubic central extensions. With respect to the Galois structure $\Gamma_{0}=\Gamma$, there is the notion of central extension, and it is such that the full subcategory~$\CExt^{1}_{\B}(\X)$ of~$\Ext^{1}(\X)$ determined by those one-cubic central extensions is again reflective. Its reflector $I_{1}\colon{\Ext^{1}(\X)\to \CExt^{1}_{\B}(\X)}$, together with the classes $\E^{1}$ and $\F^{1}$ of one-cubic extensions in $\Ext^{1}(\X)$ and in $\CExt^{1}_{\B}(\X)$ (which we choose to be two-cubic extensions in~$\X$, and two-cubic extensions with central domain and codomain), in turn determines a Galois structure $\Gamma_{1}$. This Galois structure is again ``nice'' in that $\CExt^{1}_{\B}(\X)$ is again strongly $\E^{1}$-Birkhoff in the homological category~$\Ext^{1}(\X)$. Inductively, this defines a family of Galois structures $(\Gamma_{n})_{n\geq 0}$:
\[
\Gamma_{n}=(\Extn(\X),\CExt^{n}_{\B}(\X),\E^{n},\F^{n},I_{n},\subseteq),
\]
each of which gives rise to a notion of $(n+1)$-cubic central extension which determines the next structure~\cite[Theorem~4.6]{EGVdL}. (Here $\E^0=\E$, $\F^0=\F$ and $I_0=I$.) In particular, for every $n\geq 1$ we obtain a reflector (the centralisation functor)
\[
I_{n}\colon{\Extn(\X)\to\CExt^{n}_{\B}(\X),}
\]
left adjoint to the inclusion $\CExt^{n}_{\B}(\X)\subseteq\Extn(\X)$.

For any $n\geq 1$, the $n$-cubic extension $\lb F\rb_{\CExt^{n}_{\B}(\X)}$ in the short exact sequence
\[
\xymatrix{0 \ar[r] & \lb F\rb_{\CExt^{n}_{\B}(\X)} \ar@{{ |>}->}[r]^-{\mu^{n}_{F}} & F \ar@{-{ >>}}[r]^-{\eta^{n}_{F}} & I_{n}F \ar[r] & 0}
\]
induced by the centralisation of an $n$-cubic extension $F$ is zero everywhere except in its initial object $\lb F\rb^{n}=(\lb F\rb_{\CExt^{n}_{\B}(\X)})_{n}$, because centralisation keeps all objects in an $n$-cubic extension fixed except the initial object. So, restricting to initial objects, we obtain a short exact sequence
\begin{equation}\label{brackets}
\xymatrix{0 \ar[r] & \lb F\rb^{n} \ar@{{ |>}->}[r] & F_{n} \ar@{-{ >>}}[r] & I_{n}[F] \ar[r] & 0}
\end{equation}
in $\X$. In parallel with the case~${n=0}$ considered in Subsection~\ref{Subsection-Abelian}, this object $\lb F\rb^{n}$ acts like an $n$-dimensional commutator which may be computed as the kernel of the restriction of the kernel pair projection $(\pr_{0})_{n-1}\colon{\R{F}_{n-1}\to \dom F_{n-1}}$ to a morphism
\[
\lb \pr_{0}\rb^{n-1}\colon {\lb \R{F}\rb^{n-1}\to \lb \dom F\rb^{n-1}}
\]
in $\X$. Furthermore, an $n$-cubic extension~$F$ is central if and only if the induced morphisms
\[
\lb \pr_{0}\rb^{n-1}, \lb \pr_{1}\rb^{n-1}\colon{\lb\R{F}\rb^{n-1}\to \lb \dom F\rb^{n-1}}
\]
are isomorphisms---which happens precisely when they coincide; see~\cite{EverHopf, EGVdL} for more details. The notation $\lb F\rb^{n}$ not mentioning the Birkhoff subcategory $\B$ need not lead to confusion, because the only case which we shall use it in is $\B=\Ab(\X)$; keeping this in mind, we also write $\lb X\rb^{0}=\lb X\rb$ for the kernel of $\eta_{X}\colon {X\to \ab X}$ when $X$ is an object of $\X$.

\begin{definition}\label{def n-fold central extension}
An \defn{$n$-fold central extension} is an $n$-fold extension of which the underlying $n$-cubic extension is central.
\end{definition}

\begin{example}[The simplicial objects $\KK(Z,A,n)$]\label{Example-K(A,n)-as-Extension}
Given any integer $n\geq 1$, any object $Z$ and any abelian object~$A$ in~$\X$, the $(n+1)$-cubic extension underlying~$\KK(Z,A,n)$ is always trivial with respect to abelianisation. This follows by induction from the fact that both its domain and its codomain are $n$-cubic trivial extensions. Note, however, that the~${(n+2)}$-fold arrow underlying $\KK(Z,A,n)$ is not even a cubic extension!
\end{example}

\begin{example}[One-cubic central extensions]\label{Example-Dimension-One}
Recall that a surjective group homomorphism $f\colon{X\to Z}$ is central (with respect to $\Ab$) if and only if $[\K{f},X]=0$. This result was adapted to a semi-abelian context in~\cite{Bourn-Gran, Gran-Alg-Cent}: when $\X$ is a semi-abelian category and $\B=\Ab(\X)$ is the Birkhoff subcategory determined by all abelian objects in $\X$, the one-cubic central extensions induced by the Galois structure (the ``categorically central'' ones) are the central extensions in the algebraic sense. These may be characterised through the Smith/Pedicchio commutator of equivalence relations as those $f\colon{X\to Z}$ such that~${[\R{f},\nabla_{X}]=\Delta_{X}}$, which means that the kernel pair of the arrow~$f$ is a central equivalence relation (Subsection~\ref{Commutators}). A characterisation closer to the group case appears in~\cite{Gran-VdL} where the condition is reformulated in terms of the Huq commutator of normal subobjects so that it becomes~$[\K{f},X]=0$.
\end{example}

\begin{example}[Double central extensions]\label{Example-Dimension-Two}
One level up, the double central extensions of groups vs.\ abelian groups were first characterised in~\cite{Janelidze:Double}: a two-cubic extension such as~\eqref{Double-Extension} above is central if and only if
\[
[\K{d},\K{c}]=0=[\K{d}\cap \K{c},X].
\]
General versions of this characterisation were given in~\cite{Gran-Rossi} for Mal'tsev varieties, then in~\cite{RVdL} for semi-abelian categories and finally in~\cite{EverVdL3} for exact Mal'tsev categories: the two-cubic extension~\eqref{Double-Extension} is central (with respect to abelianisation) if and only if
\begin{equation}\label{Double-Central}
[\R{d},\R{c}]=\Delta_{X}=[\R{d}\cap \R{c},\nabla_{X}].
\end{equation}
This means that the span $(X,d,c)$ is a special kind of pregroupoid in the slice category $\X/Z$.

The main technical problem here is that later on, we will use the Huq commutator of normal monomorphisms rather than the Smith/Pedicchio commutator of equivalence relations---and the correspondence between the two which exists in level one is no longer there when we go up in degree. In fact, it is well known and easily verified that if the Smith/Pedicchio commutator of two equivalence relations is trivial, then the Huq commutator of their normalisations is also trivial~\cite{BG}. But, in general, the converse is false; in~\cite{Borceux-Bourn, Bourn2004} a counterexample is given in the category of digroups, which is a semi-abelian variety, even a variety of~$\Omega$-groups~\cite{Higgins}. The equivalence of these commutators is known as the \defn{\emph{Smith is Huq} condition (SH)} and it is shown in~\cite{MFVdL} that, for a semi-abelian category, this condition holds if and only if every star-multiplicative graph is an internal groupoid, which is important in the study of internal crossed modules~\cite{Janelidze}. Moreover, the \emph{Smith is Huq} condition is also known to hold for pointed strongly protomodular categories~\cite{BG} (in particular, for any Moore category~\cite{Rodelo:Moore}) and in action accessible categories~\cite{BJ07} (in particular, for any category of interest~\cite{Montoli, Orzech}).

The condition (SH) also implies that every action of an object on an abelian object is a module: here, the equality $[\R{f},\R{f}]=\Delta_{X}$ in Subsection~\ref{Subsection-Abelian} follows from $[\K{f},\K{f}]=[A,A]=0$.
\end{example}

\subsection{Two lemmas on higher centrality}
The centrality of a cubic extension implies that certain induced lower-dimensional cubic extensions are also central. The present proof of Lemma~\ref{Lemma-Direction-Limit} was kindly offered to us by Everaert and Gran; it is more general and more elegant than our original proof. In the case of abelianisation, it also follows easily from Theorem~\ref{Theorem-Higher-Centrality}. We first recall a well-known result (essentially Proposition~4.3 in \cite{Janelidze-Kelly}):

\begin{lemma}\label{Lemma-Kernel-Central-Extension}
In a semi-abelian category with a chosen Birkhoff subcategory, let $f\colon{X\to Y}$ be an $n$-cubic central extension considered as an arrow between $(n-1)$-cubic extensions $X$ and $Y$. Then its kernel $K$ is an $(n-1)$-cubic central extension.
\end{lemma}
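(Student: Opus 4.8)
The plan is to argue by induction on $n$, using the characterisation of central extensions in each Galois structure $\Gamma_{n-1}$ together with the observation that kernels commute with the relevant constructions (kernel pairs, pullbacks, reflections along split epimorphisms). First I would set up the base case: for $n=1$, $f$ is a central one-cubic extension, its kernel $K$ is an object of $\X$, hence a zero-cubic extension, and there is nothing to prove (every object is "central" as a zero-cubic extension). So assume $n\geq 2$ and the statement holds in dimension $n-1$.

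For the inductive step, the key is to unwind what it means for $f\colon X\to Y$ to be central as an arrow between $(n-1)$-cubic extensions. By the discussion in Subsection~\ref{Subsection-Tower}, centrality of $f$ with respect to $\Gamma_{n-1}$ means that one (hence either) of the kernel pair projections $\pr_0,\pr_1\colon \R{f}\to X$ is a trivial $(n-1)$-cubic extension, i.e.\ the naturality square for $\pr_0$ relative to the reflector $I_{n-1}$ is a pullback in $\Extn[n-1](\X)$. The first step is then to take kernels everywhere: applying the kernel functor $\K{-}$ (which, on the category of $n$-cubic extensions viewed as arrows, lands in $(n-1)$-cubic extensions, as recalled in Subsection~\ref{Extensions} and in the discussion preceding Definition~\ref{def n-fold central extension}) to the split epimorphism $\pr_0\colon \R{f}\to X$ with its splitting $\langle 1_X,1_X\rangle$. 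Since $\R{\pr_0\comp\Ker f}=\R{\Ker f}$ computes as the kernel pair $\R{K}$ of the inclusion $\K{f}\to X$, and since kernels are preserved by the reflector on the relevant fibres, this should show that $\pr_0\colon\R{K}\to K$ is a trivial $(n-1)$-cubic extension; that is exactly centrality of $K$ in dimension $n-1$. Here I would lean on Lemma~\ref{Lemma-Pullbacks}, which guarantees that $I_{n-1}$ preserves pullbacks of $\E^{n-1}$-extensions along split epimorphisms — the split epimorphism $\pr_0$ being precisely of this kind — so that the naturality square for $I_{n-1}$ applied to the kernel remains a pullback.

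Alternatively, and perhaps more cleanly, I would invoke the description of $f$ being central as the condition that $\lb\pr_0\rb^{n-2}$ and $\lb\pr_1\rb^{n-2}$ coincide (equivalently are isomorphisms) from Subsection~\ref{Subsection-Tower}, and observe that the commutator object $\lb-\rb^{\bullet}$ of an $(n-1)$-cubic extension and its kernel agree up to the obvious identifications, because $\lb F\rb^{n-1}$ is concentrated in the initial object $F_{n}$ and taking the kernel along the appropriate direction leaves that initial object (or its commutator) essentially untouched; this is the same mechanism by which Proposition~\ref{3x3 vs ext} works. Either way, the main obstacle I expect is bookkeeping: carefully matching up "which direction" one is taking the kernel in, so that $\R{-}$, $\Kn[n-1]{-}$ and the reflector $I_{n-1}$ really do commute on the nose and not merely up to some zig-zag — in other words, checking that the kernel of a trivial extension is trivial and the kernel of a central extension is central is a genuine diagram chase in $\Extn[n-1](\X)$, and the protomodular Short Five Lemma (in the form of Lemma~\ref{Lemma-Iso-Pullback}) is what makes the pullback-preservation survive passage to kernels. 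Since the excerpt tells us an even slicker proof via Theorem~\ref{Theorem-Higher-Centrality} is available in the abelianisation case, I would present the general inductive argument but remark that in the case $\B=\Ab(\X)$ one may instead restrict the isomorphism~\eqref{Iso-Box} to the sub-$n$-cube determined by $K$.
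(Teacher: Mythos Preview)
Your proposal identifies the right starting observation---that centrality of $f$ means the kernel pair projection $f_{0}\colon\Eq(f)\to X$ is a trivial $n$-cubic extension---but the argument that follows does not go through. The sentence ``$\R{\pr_0\comp\Ker f}=\R{\Ker f}$ computes as the kernel pair $\R{K}$ of the inclusion $\K{f}\to X$'' conflates several unrelated objects: the kernel pair of a monomorphism is always the diagonal, so this cannot be what you intend, and the jump from there to ``$\pr_0\colon\R{K}\to K$ is a trivial $(n-1)$-cubic extension'' is unjustified. You seem to be trying to verify centrality of $K$ by checking that \emph{its} kernel pair projections are trivial, but you never actually relate $\R{K}$ (the kernel pair of $K$ viewed as an arrow between $(n-2)$-cubes) to anything built from $\R{f}$.

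More to the point, the induction is unnecessary. The paper's proof is a one-line direct argument: since $f_{0}\colon\Eq(f)\to X$ is trivial, its unit naturality square for $I_{n-1}$ is a pullback, and hence $K=\ker f_{0}$ is also the kernel of $I_{n-1}f_{0}\colon I_{n-1}\Eq(f)\to I_{n-1}X$ (Lemma~\ref{Lemma-Iso-Pullback}). But $I_{n-1}f_{0}$ is a morphism in $\CExt^{n-1}_{\B}(\X)$, and this (strongly Birkhoff) subcategory is closed under subobjects, so $K$ lies in it. That is all---no induction, no bookkeeping about ``which direction'' kernels are taken in, and no need for Lemma~\ref{Lemma-Pullbacks}. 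Your alternative route via $\lb\pr_0\rb^{n-2}$ is too vague to assess, but would in any case be more work than what is required.
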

\begin{proof}
The $(n-1)$-cubic extension $K$ may be obtained as the kernel of the $n$"~cubic trivial extension $f_{0}\colon{\Eq(f)\to X}$, hence also as the kernel of the $n$-cubic extension $I_{n-1}f_{0}\colon{I_{n-1}\Eq(f)\to I_{n-1}X}$ in $\CExt_{\B}^{n-1}(\X)$.
\end{proof}

\begin{lemma}\label{Lemma-Direction-Limit}
Let $F$ be an $n$-cubic central extension in a semi-abelian category with a chosen Birkhoff subcategory. Then the one-cubic extension $l_{F}\colon{F_{n}\to \L F}$ induced by $F$ is always central.
\end{lemma}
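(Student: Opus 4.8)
The plan is to argue by induction on $n$. When $n=1$ there is nothing to prove: the limit $\L F$ of $F$ restricted to $2^{1}\setminus\{1\}$ is just $\cod F=F_{0}$, so $l_{F}$ is the given central one-cubic extension $f\colon F_{1}\to F_{0}$ itself. Suppose now that $n\geq 2$ and that the result holds in degree $n-1$. Viewing $F$ as a commutative square~\eqref{Double-Extension} of $(n-1)$-cubic extensions, that is, as a morphism $\lind c,f\rind\colon d\to g$ in $\Ext^{n-1}(\X)$, Lemma~\ref{Lemma-L-pullbacks} gives $\L F=\L G$ and $l_{F}=l_{G}$, where $G=\lind d,c\rind\colon X\to D\times_{Z}C$ is the comparison to the pullback of $f$ and $g$; since $F$ is an $n$-cubic extension, $G$ is an $(n-1)$-cubic extension. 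It therefore suffices to prove that $G$ is \emph{central}, as the induction hypothesis applied to $G$ then yields that $l_{G}=l_{F}$ is central.

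So everything reduces to the following claim: \emph{the comparison $\lind d,c\rind$ of a central $n$-cubic extension $F$, presented as a morphism $\lind c,f\rind\colon d\to g$ of $(n-1)$-cubic extensions, is itself a central $(n-1)$-cubic extension}. In degree $n=2$ this is immediate from the characterisation~\eqref{Double-Central}: the second equality $[\R d\cap\R c,\nabla_{X}]=\Delta_{X}$ says exactly that the kernel pair $\R{\lind d,c\rind}=\R d\cap\R c$ of $\lind d,c\rind$ is a central equivalence relation, hence (Example~\ref{Example-Dimension-One}) that $\lind d,c\rind$ is a central one-cubic extension. For general $n$ one has to unwind the inductive definition of centrality along the tower of Galois structures $(\Gamma_{n})_{n}$: centrality of $F$ with respect to $\Gamma_{n-1}$ is equivalent to $F$ being normal, i.e.\ both kernel pair projections $\pr_{0}$, $\pr_{1}\colon\R F\to\dom F$ being $\Gamma_{n-1}$-trivial, and from this---using the pullback-stability of trivial and of central extensions and Lemma~\ref{Lemma-Kernel-Central-Extension} on kernels of central extensions---one should deduce that $\lind d,c\rind$ is normal as well. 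I expect this claim to be the main obstacle: the difficulty is to turn the abstract centrality condition on the full cube $F$ into centrality of a lower-dimensional comparison map, and the natural route is to mimic the degree-two argument by exhibiting $\R{\lind d,c\rind}$ as the kernel pair of (a restriction of) a suitable trivial extension built from $F$.

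For the Birkhoff subcategory $\B=\Ab(\X)$ a shorter argument is available. Since $l_{F}=\lind f_{0},\dots,f_{n-1}\rind$, its kernel is $\K{l_{F}}=\bigcap_{i\in n}\K{f_{i}}=\Kn F$, the direction of $F$, which is an abelian object because $F$ is central. The isomorphism~\eqref{Iso-Box} provided by Theorem~\ref{Theorem-Higher-Centrality}, once restricted appropriately---the kernel pair $\R{l_{F}}=\bigcap_{i\in n}\R{f_{i}}$ sits inside $\bigboxvoid_{i\in n}\R{f_{i}}$ as the ``thin'' diamonds---yields $\R{l_{F}}\cong\Kn F\times F_{n}$, compatibly with the first kernel pair projection. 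As $\Kn F$ is abelian, the case $n=1$ of Theorem~\ref{Theorem-Higher-Centrality} then shows that $l_{F}$ is central; concretely, this amounts to the vanishing of the Huq commutator $[\Kn F,F_{n}]$.
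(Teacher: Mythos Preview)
Your overall framework---induction on $n$ with the reduction $l_{F}=l_{G}$ via Lemma~\ref{Lemma-L-pullbacks}, so that everything hinges on showing the comparison $\lind d,c\rind$ is an $(n-1)$-cubic central extension---is exactly the skeleton of the paper's proof. The gap is that you do not prove this key claim in general. Your degree-two argument invokes~\eqref{Double-Central}, which is a characterisation \emph{with respect to abelianisation only}, so already for $n=2$ and a general Birkhoff subcategory you have not established the claim; and for higher $n$ you explicitly leave the argument as a sketch (``one has to unwind\dots\ I expect this claim to be the main obstacle''). The paper fills this gap by a three-step reduction that works for any Birkhoff subcategory: first pull back along the kernel pair projections to reduce to the case where the $n$-cubic extension is \emph{trivial} (using that pullbacks of extensions along extensions reflect centrality, and that $\lind r,c_{0}\rind$ is a pullback of $\lind d,c\rind$); second, for a trivial extension, pass to the centralisations $I_{n-1}d$ and $I_{n-1}g$ and use that the resulting pullback square makes $\lind d,c\rind$ a pullback of a map between $(n-1)$-cubic central extensions; third, observe that for an $n$-cubic extension between central $(n-1)$-cubic extensions, $\lind d,c\rind$ is a \emph{subobject} of $d$ in $\Ext^{n-1}(\X)$, hence central because central extensions are closed under subobjects. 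None of these steps require a commutator description of centrality.

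Your alternative route via Theorem~\ref{Theorem-Higher-Centrality} is correct for $\B=\Ab(\X)$ and, as the paper itself remarks just before Lemma~\ref{Lemma-Kernel-Central-Extension}, the lemma does follow easily from that theorem in the abelianisation case. The embedding of $\R{l_{F}}=\bigcap_{i}\R{f_{i}}$ into $\bigboxvoid_{i\in n}\R{f_{i}}$ as ``thin'' diamonds and the restriction of the product decomposition indeed yield $\R{l_{F}}\cong A\times F_{n}$, whence $l_{F}$ is central. There is no circularity: the proof of Theorem~\ref{Theorem-Higher-Centrality} does not invoke Lemma~\ref{Lemma-Direction-Limit}. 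But this second argument does not prove the lemma as stated, since the lemma is formulated for an arbitrary Birkhoff subcategory.
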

\begin{proof}
The case $n=1$ is clear (because then $F=l_{F}$), so take $n\geq 2$. We shall prove that for an $n$-cubic central extension, considered as a square~\eqref{Double-Extension} of $(n-1)$-cubic extensions, the induced comparison $\lind d,c\rind\colon{X\to D\times_{Z}C}$ is an $(n-1)$-cubic central extension $G$. Then the claim follows by induction, because $l_{F}=l_{G}$ by Lemma~\ref{Lemma-L-pullbacks}. Since $\lind d,c\rind$ is an $(n-1)$-cubic extension by definition, we just have to show its centrality.

First we may reduce the situation to trivial extensions. Indeed, taking kernel pairs to the left, we obtain the diagram
\[
\vcenter{\xymatrix{\R{c} \ar@{-{ >>}}[d]_{r} \ar@{-{ >>}}@<.5ex>[r]^-{c_{1}} \ar@{-{ >>}}@<-.5ex>[r]_-{c_{0}} & X \ar@{-{ >>}}[r]^{c} \ar@{-{ >>}}[d]_-{d} & C \ar@{-{ >>}}[d]^{g}\\
\R{f} \ar@<.5ex>@{-{ >>}}[r]^-{f_{1}} \ar@{-{ >>}}@<-.5ex>[r]_-{f_{0}} & D \ar@{-{ >>}}[r]_-{f} & Z.}}
\]
It is not hard to see that the induced comparison $\lind r,c_{0}\rind\colon{\R{c}\to \R{f}\times_{D}X}$ is a pullback of the cubic extension $\lind d,c\rind$: the diagram
\[
\vcenter{\xymatrix@1@!0@=35pt{
\Eq(c) \pullbackdots \ar[rr]^-{\lind r,c_{0}\rind} \ar[dd]_-{c_{0}} && \Eq(f)\times_{D}X \pullback \skewpullback \ar[rr]^-{d^{*}f_{1}} \ar[dd]|(.5){\hole} \ar[ld]_(.6){f_{1}^{*}d} && X \ar[dd]^-{c} \ar[ld]_-{d} \\
&\Eq(f) \pullback \ar[rr]^(.75){f_{1}} \ar[dd]_(.25){f_{0}} && D \ar[dd]^(.25){f} \\
X \ar[rr]_(.25){\lind d,c\rind}|(.5){\hole} && D\times_{Z}C \skewpullback \ar[ld]|-{f^{*}g} \ar[rr]|(.5){\hole}|(.7){g^{*}f} && C \ar[ld]^-{g} \\
&D \ar[rr]_-{f} && Z}}
\]
shows how the back pullback rectangle decomposes as a composite of pullback squares. Hence if $\lind r,c_{0}\rind$ is central then so is $\lind d,c\rind$, because pulling back reflects centrality.

Now we reduce to $n$-cubic extensions between $(n-1)$-cubic central extensions. Suppose that the square~\eqref{Double-Extension}, viewed as an arrow from $d$ to $g$, is an $n$-cubic trivial extension. Consider the following cube, which displays the centralisation of $d$ and of $g$ using the notation of~\eqref{brackets}:
\[
\vcenter{\xymatrix@!0@=2.5em{&& {I_{n-1}[d]} \ar@{~{ >>}}[rd] \ar@{-{ >>}}[rrrr] \ar@{.{ >>}}[dddd]|-{\hole} &&&& {I_{n-1}[g]} \ar@{-{ >>}}[dddd]^-{I_{n-1}g} \\
&&& {\cdot} \ar@{.{ >>}}[lddd] \ar@{.{ >>}}[rrru] \pullback \\
{X} \ar@{~{ >>}}[rd] \ar@{-{ >>}}[rrrr] \ar@{-{ >>}}[dddd]_-{d} \ar@{~{ >>}}[rruu] &&&& {C} \ar@{-{ >>}}[dddd]^(.25){g} \ar@{-{ >>}}[rruu]\\
& {\cdot} \ar@{~{ >>}}[rruu]|-{\hole} \ar@{-{ >>}}[lddd] \ar@{-{ >>}}[rrru] \pullback \\
&& {D} \ar@{.{ >>}}[rrrr] &&&& {Z} \\\\
{D} \ar@{-{ >>}}@<-.5ex>[rrrr] \ar@{:}[rruu] &&&& {Z} \ar@{=}[rruu]}}
\]
Recall from Subsection~\ref{Subsection-Tower} that the centralisation functor only changes the domain of a cubic extension, which explains the two identity morphisms in the cube.
Since the front square is a trivial extension, the top square is a pullback. By pullback cancellation, the top square of the prism between the front and back pullbacks is also a pullback, and it follows that the square of wiggly arrows is a pullback too. This completes the reduction, since cubic central extensions are pullback-stable.

Finally, for an $n$-cubic extension~\eqref{Double-Extension} between $(n-1)$-cubic central extensions~$d$ and~$g$ the claim that $\lind d,c\rind\colon{X\to D\times_{Z}C}$ is an $(n-1)$-cubic central extension holds, since $\lind d,c\rind$ is a subobject of the cubic extension $d$ in the category of $(n-1)$-cubic central extensions---indeed, a monomorphism of cubic extensions is a square of which the top map is a monomorphism---and cubic central extensions are closed under subobjects.
\end{proof}

\subsection{(Central) extensions over a fixed base object}
Let $Z$ be an object of~$\X$ and $n\geq 1$. Denote by $\Ext^{n}_{Z}(\X)$ the category of \defn{$n$-fold extensions of $Z$} or \defn{over~$Z$}, defined as the fibre over $Z$ (the pre-image of the identity $1_Z$) of the functor
\[
(-)_{0,\dots,0}\colon\Diag{n}(\X)\to \X\colon E\mapsto E_{0,\dots,0}
\]
which projects an $n$-fold extension on its terminal object---see Subsection~\ref{3^n diagrams}. Thus the objects of $\Ext^{n}_{Z}(\X)$ are $3^{n}$-diagrams with ``terminal object'' $Z$, and the morphisms are those morphisms in $\Diag{n}(\X)$ which restrict to the identity on $Z$ under the functor $(-)_{0,\dots,0}$. Similarly $\CExt^{n}_{Z}(\X)$ is the full subcategory of $\Ext^{n}_{Z}(\X)$ determined by the $n$-fold extensions of $Z$ that are central with respect to $\B$ as in Definition~\ref{def n-fold central extension}. (The index~$\B$ being dropped here is not really problematic, since we shall take~$\B$ equal to $\Ab(\X)$ anyway from Subsection~\ref{Directions} on.) Sending an $n$"~fold (central) extension to its underlying $n$-cubic (central) extension, we obtain an equivalence with the category of \defn{$n$-cubic (central) extensions over $Z$}, the fibre over $Z$ of the functor
\[
\cod^n=\underbrace{\cod\comp\cdots\comp\cod}_{\text{$n$ times}}=(-)_{0}\colon {\Ext^{n}(\X)\to \X\colon F\mapsto F_{0}}
\]
or, in the case of central extensions, its restriction to $\CExt^{n}_{\B}(\X)$.

\begin{lemma}\label{Lemma-Product-Over-Z}
Consider a semi-abelian category $\X$ with a chosen Birkhoff subcategory. Let $Z$ be an object of $\X$ and $n\geq 1$. Then $\Ext^{n}_{Z}(\X)$ and $\CExt^{n}_{Z}(\X)$ have binary products: the product of two $n$-fold (central) extensions $F$ and $G$ over $Z$ is an $n$-fold (central) extension over $Z$. Moreover, $l_{F\times G}=l_{F}\times_{Z}l_{G}$.
\end{lemma}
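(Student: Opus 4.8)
The plan is to build the product pointwise, as a fibred product over $Z$, and then to check in turn that the result is an $n$-fold extension, that it has the correct universal property over $Z$, and that it stays central. Throughout I pass to the underlying $n$-cubic extensions via the equivalence of Proposition~\ref{3x3 vs ext}, identifying $\Ext^{n}_{Z}(\X)$ with the fibre over $Z$ of $\cod^{n}\colon\Ext^{n}(\X)\to\X$. Let $F$ and $G$ be $n$-cubic extensions over $Z$ and let $\underline{Z}$ be the constant $n$-fold arrow at $Z$; by Proposition~\ref{Limit-Characterisation-Extensions} this is an $n$-cubic extension (each comparison morphism $Z\to\lim_{J\subsetneq I}Z=Z$ being an identity), and, seen as a morphism between $(n-1)$-cubic extensions, it is the identity, so it is an isomorphism and hence a trivial --- in particular central --- $n$-cubic extension. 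Using the structure morphisms $F_{I}\to F_{\emptyset}=Z$ and $G_{I}\to G_{\emptyset}=Z$ one obtains canonical morphisms $F\to\underline{Z}$ and $G\to\underline{Z}$ in $\Arr^{n}(\X)$, and I set $F\times_{Z}G$ equal to their pullback in $\Arr^{n}(\X)$, so that $(F\times_{Z}G)_{I}=F_{I}\times_{Z}G_{I}$ for every $I\subseteq n$.

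That $F\times_{Z}G$ is again an $n$-cubic extension I would deduce from the limit characterisation of Proposition~\ref{Limit-Characterisation-Extensions}. For $\emptyset\neq I\subseteq n$ the indexing poset $\{J\mid J\subsetneq I\}$ is connected, so finite limits commute with fibred products over $Z$ and $\lim_{J\subsetneq I}(F_{J}\times_{Z}G_{J})\cong(\lim_{J\subsetneq I}F_{J})\times_{Z}(\lim_{J\subsetneq I}G_{J})$; under this identification the comparison morphism $(F\times_{Z}G)_{I}\to\lim_{J\subsetneq I}(F\times_{Z}G)_{J}$ becomes exactly the fibred product over $Z$ of the comparison morphisms of $F$ and of $G$. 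Factoring this morphism as a composite of two pullbacks of the individual (regular epimorphic) comparison morphisms of $F$ and $G$ shows it is a regular epimorphism in the regular category $\X$, whence $F\times_{Z}G$ is an $n$-cubic extension. Taking $I=n$ in the displayed isomorphism gives at the same time $\L(F\times_{Z}G)\cong\L F\times_{Z}\L G$ and $l_{F\times_{Z}G}=l_{F}\times_{Z}l_{G}$, which is the final assertion of the lemma.

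For the universal property I would argue that, since all four objects of the defining square lie in the full subcategory $\Ext^{n}(\X)$ of $\Arr^{n}(\X)$, this square is a pullback in $\Ext^{n}(\X)$; moreover $\cod^{n}(F\times_{Z}G)=Z\times_{Z}Z=Z$ and both projections restrict to $1_{Z}$ at $\emptyset$, so $F\times_{Z}G$ lies over $Z$. Given a cone $u\colon K\to F$, $v\colon K\to G$ in $\Ext^{n}_{Z}(\X)$, both composites $K\to F\to\underline{Z}$ and $K\to G\to\underline{Z}$ coincide with the structure morphism $K\to\underline{Z}$ (because $u$ and $v$ restrict to $1_{Z}$ at $\emptyset$), so the pullback property yields a unique morphism $K\to F\times_{Z}G$ over these, and it too restricts to $1_{Z}$ at $\emptyset$; hence $F\times_{Z}G$ is the product in $\Ext^{n}_{Z}(\X)$. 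For the central case, recall from Subsection~\ref{Subsection-Tower} that $\CExt^{n}_{\B}(\X)$ is reflective in $\Ext^{n}(\X)$ and is therefore closed under limits taken in $\Ext^{n}(\X)$; since $\underline{Z}$ is central, if $F$ and $G$ are central then the pullback $F\times_{Z}G=F\times_{\underline{Z}}G$ lies in $\CExt^{n}_{\B}(\X)$, and being the product of $F$ and $G$ in $\Ext^{n}_{Z}(\X)$ while lying in $\CExt^{n}_{Z}(\X)$ it is their product in $\CExt^{n}_{Z}(\X)$ as well.

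The one step I expect to require care is the bookkeeping in the second paragraph: that limits over the relevant connected posets genuinely commute with the pullbacks over $Z$, and that a fibred product over $Z$ of two regular epimorphisms is again a regular epimorphism in $\X$ --- this is precisely what lets Proposition~\ref{Limit-Characterisation-Extensions} be applied to $F\times_{Z}G$. The remaining steps are formal: the universal property over $Z$ is immediate from the pullback property in $\Ext^{n}(\X)$, and the preservation of centrality follows at once from $\CExt^{n}_{\B}(\X)$ being a reflective, hence limit-closed, subcategory.
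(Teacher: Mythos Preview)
Your proof is correct and runs closely parallel to the paper's: the pointwise fibred-product construction, the use of Proposition~\ref{Limit-Characterisation-Extensions} together with the commutation of limits (your connectedness observation making explicit why $\lim_{J\subsetneq I}Z=Z$), and the identification $l_{F\times_{Z}G}=l_{F}\times_{Z}l_{G}$ all match the paper's argument essentially line for line.

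The one genuine difference is in the centrality step. The paper shows that $F\times_{Z}G$ is a subobject in $\Ext^{n}(\X)$ of the \emph{absolute} product $(F_{I}\times G_{I})_{I\subseteq n}$, which is central as a product of centrals in the reflective subcategory $\CExt^{n}_{\B}(\X)$, and then invokes the Birkhoff property (closure under subobjects). You instead observe that the constant cube $\underline{Z}$ is a trivial, hence central, $n$-cubic extension and deduce centrality of the pullback $F\times_{\underline{Z}}G$ directly from the fact that the full reflective subcategory $\CExt^{n}_{\B}(\X)\subseteq\Ext^{n}(\X)$ is closed under limits. Your route is slightly cleaner in that it never leaves the fibred construction and does not need the subobject-closure part of the Birkhoff property; the paper's route, on the other hand, avoids having to check that $\underline{Z}$ is central. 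Both are short and valid.
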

\begin{proof}
Given two $n$-cubic extensions $F$ and $G$ over $Z$, their product $F\times G$ in the category~$\Ext^{n}_{Z}(\X)$ is given pointwise by pullbacks in $\X$:
\[
(F\times G)_{I}=F_{I}\times_{Z}G_{I}
\]
for $I\subseteq n$. To see that it is the product as $n$-fold arrows over $Z$, it suffices to verify the universal property. This $n$-fold arrow is indeed an $n$-cubic extension by Proposition~\ref{Limit-Characterisation-Extensions}, since
\begin{align*}
{(F\times G)_{I}\to\lim_{J\subsetneq I} (F\times G)_{J}} &= {(F_{I}\times_{Z} G_{I})\to\lim_{J\subsetneq I} (F_{J}\times_{Z} G_{J})}\\
&= {(F_{I}\times_{Z} G_{I})\to(\lim_{J\subsetneq I} F_{J}\times_{Z} \lim_{J\subsetneq I} G_{J})}\\
&= {(F_{I}\to\lim_{J\subsetneq I} F_{J})\times_{Z} (G_{I}\to \lim_{J\subsetneq I} G_{J})}
\end{align*}
for all $\emptyset\neq I\subseteq n$. Note that in particular,
\[
l_{F\times G}=l_{F}\times_{Z}l_{G}\colon{(F_{n}\to\lim_{J\subsetneq n} F_{J})\times_{Z} (G_{n}\to \lim_{J\subsetneq n} G_{J}).}
\]
The $n$-cubic extension $F\times G$ is central by the Birkhoff property of $\CExt^{n}_{\B}(\X)$, it being a subobject in $\Ext^{n}(\X)$ of an $n$-cubic central extension. Indeed it is a subobject of the product $(F_{I}\times G_{I})_{I\subseteq n}$ of~$F$ and~$G$ in $\CExt^{n}_{\B}(\X)$ which, since $\CExt^{n}_{\B}(\X)$ is a reflective subcategory, is computed pointwise as in $\Ext^{n}(\X)$.
\end{proof}

\subsection{The direction of a higher (central) extension}\label{Directions}
From now on we assume that $\X$ is a semi-abelian category and $\B=\Ab(\X)$ is the Birkhoff subcategory determined by the abelian objects of $\X$. We introduce the concept of \emph{direction} for $n$-fold (central) extensions in $\X$, which is crucial in the definition and in the study of the groups $\Centr^{n}(Z,A)$. As explained in~\cite{RVdL}, this notion is based on Bourn's concept of direction for internal groupoids~\cite{Bourn2002b}.

\begin{definition}\label{Definition-Centr}
The \defn{direction} of an $n$-fold extension $E$ is its initial object $E_{2,\dots,2}$ (see Subsection~\ref{3^n diagrams}). 

From the point of view of the underlying $n$-cubic extension $F$, it is the object $\Kn{F}$, obtained by taking kernels $n$ times---each time considering a $(k+1)$-cubic extension as an arrow between $k$-cubic extensions---in the way determined by the extension $E$. If $F$ is an $n$-cubic extension, then its kernel $\K{F}$ is an $(n-1)$-cubic extension, whose kernel is an $(n-2)$-cubic extension $\Ktwo{F}$, and so on. By taking kernels $n$ times we obtain a $0$-cubic extension, so an object $\Kn{F}$.

If $E$ is central then the direction of $E$ is an abelian object of $\X$ by Lemma~\ref{Lemma-Kernel-Central-Extension} and the convention regarding zero-cubic central extensions. Given any object $Z$ of~$\X$, this defines the \defn{direction functor}
\[
\D_{(n,Z)} \colon{\CExt^n_Z(\X)\to \Ab(\X).}
\]
The fibre $\D^{-1}_{(n,Z)}A$ of this functor over an abelian object~$A$ is the category of \defn{$n$"~fold central extensions of $Z$ by $A$}, which are special $3^{n}$-diagrams under $A$ and over~$Z$. Two $n$-fold central extensions of~$Z$ by~$A$ which are connected by a zigzag in $\D^{-1}_{(n,Z)}A$ are called \defn{equivalent}. As explained in Subsection~\ref{Connected-components}, the equivalence classes, which we shall denote $[E]$ for $E$ an $n$-fold central extension of~$Z$ by~$A$, form the set
\[
\Centr^{n}(Z,A)=\pi_0(\D ^{-1}_{(n,Z)}A)
\]
of connected components of the category $\D^{-1}_{(n,Z)}A\subseteq \CExt^n_Z(\X)$. 
\end{definition}

\begin{remark}
Abusing terminology, when this does not lead to confusion, we sometimes talk about \emph{the direction of an $n$-cubic extension}---which is only determined up to isomorphism, since this $n$-cubic extension may be part of many $n$-fold extensions. 
\end{remark}

\begin{lemma}\label{Direction-as-Kernel}
For any $n$-fold central extension $E$ with underlying $n$-cubic extension $F$ we have
\[
\D_{(n,Z)} E=\K{l_{F}}=\bigcap_{i\in n} \K{f_{i}}
\]
where $l_{F}$ and the morphisms $f_{i}$ are as in~\ref{HDA}.
\end{lemma}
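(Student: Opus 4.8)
The plan is to establish the two displayed equalities separately, the second one first since it is purely formal and requires nothing beyond the definition of $\L F$. By construction $l_{F}=\lind f_{0},\dots,f_{n-1}\rind$, where $\pr_{i}\comp l_{F}=f_{i}$ for the limit projections $\pr_{i}\colon\L F\to F_{n\setminus\{i\}}$. I would first observe that the comparison $\lind\pr_{i}\rind_{i\in n}\colon\L F\to\prod_{i\in n}F_{n\setminus\{i\}}$ is a monomorphism: a cone on the punctured cube $F|_{2^{n}\setminus\{n\}}$ is determined by its components at the maximal objects $n\setminus\{i\}$, since every $J\subsetneq n$ sits below $n\setminus\{i\}$ for any $i\notin J$, so two morphisms into $\L F$ agreeing after all $\pr_{i}$ must coincide. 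Composing $l_{F}$ with this mono gives $\lind f_{0},\dots,f_{n-1}\rind\colon F_{n}\to\prod_{i\in n}F_{n\setminus\{i\}}$, whose kernel in the pointed category $\X$ is $\bigcap_{i\in n}\K{f_{i}}$; as a monomorphism does not alter kernels, $\K{l_{F}}=\bigcap_{i\in n}\K{f_{i}}$. (A diagram chase, legitimate in any semi-abelian category, would give the same conclusion.) Note this part does not use centrality.

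For the remaining equality $\D_{(n,Z)}E=\bigcap_{i\in n}\K{f_{i}}$ I would argue by induction on $n$. When $n=1$ the punctured cube $2^{1}\setminus\{1\}$ is the single object $0=\emptyset$, so $\L F=F_{0}=Z$ and $l_{F}=f_{0}=f$; hence $\K{l_{F}}=\K{f}=E_{2}=A$, which is the direction of the one-fold extension $E$. For $n\geq 2$, regard $F$ as an arrow $F\colon\dom F\to\cod F$ between $(n-1)$-cubic extensions, cut along the last coordinate, so that the component of this arrow at the level of initial objects is $f_{n-1}\colon F_{n}\to F_{n\setminus\{n-1\}}$. By Lemma~\ref{Lemma-Kernel-Central-Extension} its kernel $\K F$ is an $(n-1)$-cubic central extension, underlying the $(n-1)$-fold central extension $\K E$ (the kernel of $E$ viewed as a short exact sequence of $(n-1)$-fold extensions). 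By Definition~\ref{Definition-Centr}, $\D_{(n,Z)}E=\Kn{F}=\Knminus{\K F}$, which is the direction of $\K E$; the induction hypothesis identifies it with $\bigcap_{i\in n-1}\K{(\K F)_{i}}$, the $(\K F)_{i}$ being the initial arrows of $\K F$.

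It then remains to compare the two families of subobjects of $F_{n}$. The initial object of $\K F$ is $(\K F)_{n-1}=\K{f_{n-1}}$, carrying its inclusion into $F_{n}$, and for $i\in n-1$ the arrow $(\K F)_{i}$ is just $f_{i}$ restricted to the kernels of the relevant vertical maps $f_{n-1}$; hence, as subobjects of $F_{n}$, $\K{(\K F)_{i}}=\K{f_{i}}\cap\K{f_{n-1}}$. Intersecting over $i\in n-1=\{0,\dots,n-2\}$ and absorbing the repeated factor $\K{f_{n-1}}$ yields $\bigcap_{i\in n-1}\K{(\K F)_{i}}=\bigcap_{i\in n}\K{f_{i}}$, completing the induction; together with the first paragraph this proves the lemma. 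I expect the only genuinely delicate point to be this last bookkeeping — keeping track of which coordinate is killed at each stage and checking that ``taking the kernel in direction $n-1$'' really does commute with intersecting the remaining kernels — everything else being either a limit computation or a direct appeal to Lemma~\ref{Lemma-Kernel-Central-Extension} and Definition~\ref{Definition-Centr}.
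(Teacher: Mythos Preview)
Your proof is correct and is essentially the paper's own argument written out in full: the paper compresses your induction into the single chain $\K{l_{F}}=\K{l_{\K{F}}}=\cdots=\K{l_{\Knminus{F}}}=\K{\Knminus{F}}$ and declares the equality $\K{l_{F}}=\bigcap_{i\in n}\K{f_{i}}$ immediate from the definition of $l_{F}$, which is exactly your mono-into-product argument. Your appeal to Lemma~\ref{Lemma-Kernel-Central-Extension} is harmless but not actually needed, since the equalities hold for any $n$-cubic extension and the induction goes through without invoking centrality.
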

\begin{proof}
The chain
\[
\K{l_{F}}=\K{l_{\K{F}}}=\cdots=\K{l_{\Knminus{F}}}=\K{\Knminus{F}}
\]
gives us the first equality; the second is immediate from the definition.
\end{proof}

\begin{remark}
For an $n$-cubic extension $F$ underlying an $n$-fold extension $E$, an ``element'' $x$ of $F_{n}$ is an $n$-dimen\-sion\-al hyper-tetrahedron with faces $x_{i}=f_{i}(x)$. Such a tetrahedron is in the direction of~$E$ precisely when all its faces $x_{i}$ are zero---see Figure~\ref{Figure-Direction} on page~\pageref{Figure-Direction} for the case~${n=3}$.
\end{remark}

\subsection{The group structure on $\Centr^{n}(Z,A)$}
We are now ready to show that the set $\Centr^{n}(Z,A)$ of equivalence classes of $n$-fold central extensions of~$Z$ by~$A$ carries a canonical abelian group structure (Corollary~\ref{Corollary-Centr^{n}(Z,-)}).

\begin{lemma}\label{Lemma-Product-and-Direction}
For any object $Z$ of a semi-abelian category $\X$ and any $n\geq 1$, the direction functor~$\D_{(n,Z)} \colon{\CExt^n_Z(\X)\to \Ab(\X)}$ preserves finite products.
\end{lemma}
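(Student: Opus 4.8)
The plan is to reduce the statement to the explicit description of binary products in $\CExt^n_Z(\X)$ given in Lemma~\ref{Lemma-Product-Over-Z}, combined with the description of the direction as an iterated kernel in Lemma~\ref{Direction-as-Kernel}. Recall that a functor preserves finite products precisely when it preserves the terminal object and binary products, so I would treat these two cases separately. The terminal object of $\CExt^n_Z(\X)$ is the $n$-fold central extension whose underlying $n$-cubic extension is the constant $n$-cube at $Z$ (so that $F_I=Z$ for all $I\subseteq n$); for it $\L F=Z$ and $l_F=1_Z$, whence by Lemma~\ref{Direction-as-Kernel} its direction is $\K{1_Z}=0$, the terminal (= zero) object of $\Ab(\X)$.

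For binary products, let $F$ and $G$ be $n$-fold central extensions of $Z$. By Lemma~\ref{Lemma-Product-Over-Z} the product $F\times G$ taken in $\CExt^n_Z(\X)$ is again an $n$-fold central extension of $Z$, with underlying $n$-cubic extension computed pointwise by pullbacks over $Z$, and moreover $l_{F\times G}=l_F\times_Z l_G$. Hence, by Lemma~\ref{Direction-as-Kernel},
\[
\D_{(n,Z)}(F\times G)=\K{l_F\times_Z l_G},
\]
and it remains to identify this with $\K{l_F}\times\K{l_G}=\D_{(n,Z)}F\times\D_{(n,Z)}G$, the product being computed in $\X$ or, equivalently, in $\Ab(\X)$, which is closed under limits. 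This follows from the general fact that for morphisms $u\colon{X\to Y}$ and $v\colon{X'\to Y'}$ lying over a common object $Z$ (that is, morphisms of $\X/Z$), the kernel of the induced morphism $u\times_Z v\colon{X\times_Z X'\to Y\times_Z Y'}$ is $\K u\times\K v$. Its proof is a straightforward manipulation of the universal properties of kernel and pullback: the monomorphism $\K u\times\K v\to X\times X'$ has both its composites to $Z$ equal to zero, so it factors through $X\times_Z X'$; the resulting composite into $Y\times_Z Y'$ vanishes, so it factors through $\K{u\times_Z v}$; conversely the composite $\K{u\times_Z v}\to X\times_Z X'\to X\times X'$ is annihilated by both $u$ and $v$, hence factors through $\K u\times\K v$; and these two factorisations are mutually inverse. (A formal verification in $\Set$ in the style of the proof of Lemma~\ref{Lemma coskeleton pullback} works equally well.) Applying this with $u=l_F$ and $v=l_G$ gives the required isomorphism, which by construction is the canonical comparison morphism determined by the two product projections.

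The only delicate point is the one flagged above: the product in the fibre $\CExt^n_Z(\X)$ is a fibre product over $Z$ computed in $\X$, not an ordinary product in $\X$, so the interaction with kernels is not merely ``limits commute with limits'' but needs the small argument just described. Once this is unwound, the statement is a purely formal consequence of Lemmas~\ref{Lemma-Product-Over-Z} and~\ref{Direction-as-Kernel}.
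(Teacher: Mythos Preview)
Your proof is correct and follows essentially the same route as the paper: reduce to the terminal object and binary products, invoke Lemma~\ref{Lemma-Product-Over-Z} for the formula $l_{F\times G}=l_{F}\times_{Z}l_{G}$, then use Lemma~\ref{Direction-as-Kernel} to identify the direction as $\K{l_{F}\times_{Z}l_{G}}$. The paper dispatches the last step more tersely---writing $\K{l_{F}\times_{Z}l_{G}}=A\times_{Z}B=A\times B$ and noting that the morphisms $A,B\to Z$ are null---whereas you unfold the general fact $\K{u\times_{Z}v}\cong\K u\times\K v$ for morphisms $u,v$ of $\X/Z$; but this is the same observation spelled out in slightly more detail.
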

\begin{proof}
The terminal object $1$ of $\CExt^n_Z(\X)$ is determined by the ``constant'' $n$-cubic central extension of~$Z$ formed out of the identities~$1_Z$; it is clear that the direction of $1$ is zero.

Given two $n$-fold central extensions with respective underlying $n$-cubic extensions~$F$ and $G$ over $Z$ and directions $A$ and $B$, we have to prove that their product over $Z$ has direction~${A\times B}$. Lemma~\ref{Lemma-Product-Over-Z} tells us that the product in question does indeed exist. While lemmas~\ref{Direction-as-Kernel} and~\ref{Lemma-Product-Over-Z} give us the direction:
the kernel of~${l_{F\times G}=l_{F}\times_{Z}l_{G}}$ is $A\times_Z B=A\times B$, since the morphisms from $A$ and~$B$ to~$Z$ are null.
\end{proof}

\begin{proposition}\label{Proposition-Centr^{n}(Z,-)}
Let $Z$ be an object of a semi-abelian category~$\X$. Mapping any abelian object $A$ of~$\X$ to the set $\Centr^{n}(Z,A)$ of equivalence classes of~$n$-fold central extensions of $Z$ by $A$ gives a finite product-preserving functor
\[
\Centr^{n}(Z,-)\colon \Ab(\X)\to \Set.
\]
\end{proposition}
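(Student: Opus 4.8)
The plan is to establish the two halves of the statement separately. First I would turn $\Centr^{n}(Z,-)$ into a functor by constructing a \emph{pushforward} (``change of direction'') operation: given a morphism $a\colon A\to A'$ in $\Ab(\X)$ and an $n$-fold central extension $E$ of $Z$ by $A$, I want an $n$-fold central extension $a_{*}E$ of $Z$ by $A'$ together with a canonical morphism $\eta_{a}\colon E\to a_{*}E$ in $\CExt^{n}_{Z}(\X)$ which restricts to $a$ on directions and is universal with this property---that is, every morphism $E\to E''$ in $\CExt^{n}_{Z}(\X)$ restricting to $a$ on directions (with $\D_{(n,Z)}E''=A'$) factors uniquely through $\eta_{a}$ by a morphism of the fibre $\D^{-1}_{(n,Z)}A'$. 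In degree $n=1$ this is the classical Baer pushforward along $a$; in general I would obtain it by adapting the degree-two construction of~\cite{RVdL}, using the stability properties of higher central extensions (closure under regular quotients coming from the strong Birkhoff property) together with Lemma~\ref{Lemma-Kernel-Central-Extension} to control directions. The universal property then immediately yields that $a_{*}$ carries zigzags in $\D^{-1}_{(n,Z)}A$ to zigzags in $\D^{-1}_{(n,Z)}A'$, so it descends to a map $\Centr^{n}(Z,a)\colon\Centr^{n}(Z,A)\to\Centr^{n}(Z,A')$, and that $(1_{A})_{*}$ and $(a'a)_{*}$ agree up to connectedness with the identity and with $a'_{*}a_{*}$; hence $\Centr^{n}(Z,-)$ is a functor into $\Set$.

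For product preservation, the nullary case is easy: by the proof of Lemma~\ref{Lemma-Product-and-Direction} the terminal object of $\CExt^{n}_{Z}(\X)$---the constant diagram on $1_{Z}$---has direction $0$, and it is terminal also in $\D^{-1}_{(n,Z)}0$ (the unique morphism from any $E$ with trivial direction lies over $1_{0}$), so $\Centr^{n}(Z,0)$ is a singleton. For binary products I would exhibit mutually inverse bijections between $\Centr^{n}(Z,A)\times\Centr^{n}(Z,B)$ and $\Centr^{n}(Z,A\times B)$: in one direction $([E],[E'])\mapsto[E\times_{Z}E']$, which is well defined since $E\times_{Z}E'$ lies in $\CExt^{n}_{Z}(\X)$ by Lemma~\ref{Lemma-Product-Over-Z}, has direction $A\times B$ by Lemma~\ref{Lemma-Product-and-Direction}, and products of zigzags are zigzags; in the other direction $[G]\mapsto([(\pr_{A})_{*}G],[(\pr_{B})_{*}G])$, using pushforward along the projections $\pr_{A}\colon A\times B\to A$ and $\pr_{B}\colon A\times B\to B$. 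These are inverse by the universal property: the projection $E\times_{Z}E'\to E$ lies over $\pr_{A}$, hence factors through $(\pr_{A})_{*}(E\times_{Z}E')$ by a morphism of $\D^{-1}_{(n,Z)}A$, so $[(\pr_{A})_{*}(E\times_{Z}E')]=[E]$ (symmetrically for $B$); conversely the canonical maps $G\to(\pr_{A})_{*}G$ and $G\to(\pr_{B})_{*}G$ induce $G\to(\pr_{A})_{*}G\times_{Z}(\pr_{B})_{*}G$, which restricts to $\langle\pr_{A},\pr_{B}\rangle=1_{A\times B}$ on directions and so lies in $\D^{-1}_{(n,Z)}(A\times B)$, giving $[G]=[(\pr_{A})_{*}G\times_{Z}(\pr_{B})_{*}G]$. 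Naturality of these bijections in $A$ and $B$ is another routine application of the universal property of the pushforward.

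The main obstacle is the construction of $a_{*}E$ and the verification that it is again an $n$-fold central extension equipped with the stated universal property: pushing out the direction along an arbitrary morphism of abelian objects---which need not be a regular epimorphism nor split---is delicate, and centrality must be shown to survive the construction. Once this ingredient is in place, everything else is formal bookkeeping with connected components, $n$-fold products over $Z$, and universal properties, and the asserted group structure follows in Corollary~\ref{Corollary-Centr^{n}(Z,-)} since a finite-product-preserving functor into $\Set$ sends cogroup objects---and every object of the additive category $\Ab(\X)$ is one---to groups.
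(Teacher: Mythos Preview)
Your outline is correct and the product-preservation half matches the paper's argument almost verbatim: the inverse bijection via $([E],[E'])\mapsto[E\times_{Z}E']$ using Lemma~\ref{Lemma-Product-Over-Z} and Lemma~\ref{Lemma-Product-and-Direction}, and its one-sided inverse via pushforward along the projections, is exactly what the paper does.

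Where you and the paper differ is in the construction of the pushforward $a_{*}E$, which you rightly flag as the main obstacle. You propose to adapt the two-dimensional construction from~\cite{RVdL} and check centrality via closure under regular quotients; the paper instead \emph{reduces to dimension one}. The key observation is Lemma~\ref{Lemma-Direction-Limit}: for any $n$-cubic central extension $F$, the comparison $l_{F}\colon F_{n}\to\L F$ is a one-cubic central extension with kernel the direction $A$. Given $a\colon A\to B$, the paper applies the known functor $\Centr^{1}(\L F,-)$ from~\cite{Gran-VdL} to the class $[l_{F}]$, obtaining a new one-cubic central extension $l_{F'}\colon F'_{n}\to\L F$ via an explicit pushout. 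The new $n$-cubic extension $F'$ then has initial object $F'_{n}$, initial arrows $f'_{i}=\pr_{i}\comp l_{F'}$, and $F'_{I}=F_{I}$ for all $I\subsetneq n$; its centrality is checked by exhibiting it as a quotient of the central extension $F\times\KK(B,n-1)$. This sidesteps the need to verify any universal property in dimension $n$, and functoriality is inherited directly from $\Centr^{1}(\L F,-)$. Your approach would also work but requires more bookkeeping; the paper's reduction is more economical because only the initial object of the cube changes.
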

\begin{proof}
We explain how the functoriality of $\Centr^{n}(Z,-)$ follows from the functoriality of~$\Centr^{1}(\L F,-)$, which is an instance of Proposition~6.1 in~\cite{Gran-VdL}. Given an $n$-fold central extension $E$ of $Z$ by $A$ with underlying $n$-cubic extension $F$, we have an induced one-fold central extension
\[
\xymatrix{0 \ar[r] & A \ar@{{ |>}->}[r]^-{k_{F}} & F_{n} \ar@{ >>}[r]^{l_{F}} & \L F \ar[r] & 0}
\]
by Lemma~\ref{Lemma-Direction-Limit} and Lemma~\ref{Direction-as-Kernel}. Now let $a\colon {A\to B}$ be a morphism of abelian objects in~$\X$. Then, applying the function $\Centr^{1}(\L F,a)$ to $[l_{F}]$, we obtain an element $[l_{F'}]$ of~$\Centr^{1}(\L F,B)$ through the following construction.
\[
 \xymatrix{0 \ar[r] & A \ar@{{ |>}->}[r]^-{k_{F}} \ar[d]_-{\lind 1_{A},0\rind} & F_{n} \ar@{ >>}[r]^-{l_{F}} \ar[d]^-{\lind 1_{F_{n}},0\rind} & \L F \ar@{=}[d] \ar[r] & 0\\
0 \ar[r] & A\oplus B \ar@{{ |>}->}[r]^-{k_{F}\times 1_B} \ar@{ >>}[d]_-{\left\lind \begin{smallmatrix} a& 1_B\end{smallmatrix}\right\rind} & F_{n}\times B \ar@{ >>}[d] \ar@{ >>}[r] & \L F \ar@{=}[d] \ar[r] & 0 \\
0 \ar[r] & B \ar@{{ |>}->}[r] & F'_{n} \pushout \ar@{ >>}[r]_-{l_{F'}} & \L F \ar[r] & 0}
\]
(Here $A\oplus B$ is the biproduct of $A$ and $B$ in $\Ab(\X)$, which may be computed as their product $A\times B$ in $\X$, and $F_{n}'$ is the pushout of $\left\lind\begin{smallmatrix} a& 1_B\end{smallmatrix}\right\rind$ and $k_{F}\times 1_{B}$.) We define $\Centr^n(Z,a)[E]=[E']$, where~$E'$ is determined by the $n$-cubic extension $F'$ with initial object $F_n'$, with initial morphisms $f_i'=\pr_{i}\comp l_{F'}$ for~$i\in n$, and with $F'_I=F_I$ for all $I\subsetneq n$. The centrality of $F'$ is a consequence of~$F$ being central, since the extension $F'$ is a quotient of $F\times \KK(B,n-1)$, which is central as a product of central extensions (see~Example~\ref{Example-K(A,n)-as-Extension}). The functoriality of $\Centr^n(Z,-)$ is now an immediate consequence of the functoriality of $\Centr^1(\L F,-)$.

The functor $\Centr^{n}(Z,-)$ preserves terminal objects: indeed, $\Centr^{n}(Z,0)$ is a singleton, because the terminal object of $\CExt^{n}_{Z}(\X)$ has direction~$0$ by Lemma~\ref{Lemma-Product-and-Direction}; if $E$ is an $n$-fold central extension of~$Z$ by $0$, there is the unique morphism ${E\to 1}$ to testify that $[E]=[1]$. As for binary products, we must define an inverse to the map
\[
\xymatrix@=10em{\Centr^n(Z, A\times B) \ar@<-.5ex>[r]_-{\lind\Centr^n(Z,\pr_A), \Centr^n(Z,\pr_B)\rind} & \Centr^n(Z,A)\times \Centr^n(Z,B). \ar@<-.5ex>@{.>}[l]}
\]
This inverse takes a couple $([E],[E'])$ and sends it to $[E\times E']$, where the product is taken over $Z$: Lemma~\ref{Lemma-Product-and-Direction} insures that the direction of $E\times E'$ is $A\times B$, and the two morphisms are easily seen to compose to the respective identities. 

Indeed, for any couple $([E],[E'])$, the $n$-fold extension $E$ is an element of $\Centr^n(Z,\pr_A)[E\times E']$, while $E'$ is an element of $\Centr^n(Z,\pr_B)[E\times E']$. This proves that the dotted arrow is a section. On the other hand, by the universal property of pullbacks, any $n$-fold extension $H$ of $Z$ by $A\times B$ is connected to $E\times E'$ when $([E],[E'])=\lind\Centr^n(Z,\pr_A), \Centr^n(Z,\pr_B)\rind[H]$. Hence the dotted arrow is a retraction.
\end{proof}

\begin{corollary}\label{Corollary-Centr^{n}(Z,-)}
When $\X$ is a semi-abelian category, the functor $\Centr^{n}(Z,-)$ lifts uniquely over the forgetful functor ${\Ab\to\Set}$ to yield a functor
\[
\Centr^{n}(Z,-)\colon {\Ab(\X)\to \Ab}.
\]
In particular, any $\Centr^{n}(Z,A)$ carries a canonical abelian group structure.\noproof
\end{corollary}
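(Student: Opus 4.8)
The plan is to deduce the statement formally from Proposition~\ref{Proposition-Centr^{n}(Z,-)}, using that $\Ab(\X)$ is an abelian --- hence additive --- category. Recall from Subsection~\ref{Subsection-Abelian} that in $\Ab(\X)$ the biproduct $A\oplus A$ coincides with the product $A\times A$, the zero object is terminal, and each object $A$ carries a canonical and unique internal abelian group structure, given by the codiagonal (fold) map $+\colon A\oplus A\to A$, the unit $0\to A$, and the negation $-1_{A}\colon A\to A$. The abelian group axioms for this structure are \emph{equational}: they assert that certain composites of these maps with projections, diagonals and the canonical isomorphisms between iterated biproducts agree, i.e.\ they are encoded by a finite family of diagrams, built from finite products, that commute in $\Ab(\X)$.

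Write $T=\Centr^{n}(Z,-)\colon\Ab(\X)\to\Set$. By Proposition~\ref{Proposition-Centr^{n}(Z,-)} this functor preserves finite products, so $T0$ is a singleton and, in every finite arity, there are natural isomorphisms $T(A\oplus\cdots\oplus A)\cong TA\times\cdots\times TA$. First I would transport the structure maps of $A$ along these isomorphisms: $T(+)$, $T(0\to A)$ and $T(-1_{A})$ then equip the set $TA$ with a binary operation, a distinguished element and a unary operation. Since $T$ sends the commuting diagrams expressing the group axioms for $A$ to commuting diagrams in $\Set$ --- again because $T$ preserves the finite products occurring in them --- these data satisfy precisely the axioms of an abelian group. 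Thus $\Centr^{n}(Z,A)$ acquires an abelian group structure.

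Next I would check naturality and uniqueness. Every morphism $a\colon A\to B$ in $\Ab(\X)$ is automatically compatible with the group structures (this is just additivity in a preadditive category), so the commuting squares expressing that $a$ is a homomorphism are carried by $T$ to commuting squares in $\Set$; hence $\Centr^{n}(Z,a)$ is a group homomorphism, and $T$ factors through the forgetful functor $\Ab\to\Set$. This factorisation is unique because $\Ab\to\Set$ is faithful and because the operations on $TA$ are forced to be the images under $T$ of the structure maps of $A$. In particular the abelian group structure on each $\Centr^{n}(Z,A)$ is canonical, as claimed.

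I do not expect a real obstacle: the corollary is a purely formal consequence of product preservation, with all the substantive work already contained in Proposition~\ref{Proposition-Centr^{n}(Z,-)}. The only points deserving a moment's attention are that the terminal object of $\Ab(\X)$ is $0$ (so that $T0$ is a one-element set rather than possibly empty), which holds because $\Ab(\X)$ is pointed, and that $T$ is defined on the possibly large classes of connected components, which was already discussed in Subsection~\ref{Connected-components}; neither causes any trouble.
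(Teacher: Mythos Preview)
Your argument is correct and is exactly the approach the paper has in mind: the corollary is marked \texttt{\textbackslash noproof} in the paper, i.e.\ it is treated as an immediate formal consequence of Proposition~\ref{Proposition-Centr^{n}(Z,-)}, and what you wrote is precisely the standard unpacking of that implication (a finite-product-preserving functor out of an additive category lifts uniquely along the forgetful functor $\Ab\to\Set$).
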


\section{The geometry of higher central extensions}\label{Section-Geometry}
We give a geometrical interpretation of the concept of higher central extension, essentially a higher-dimensional version of Bourn and Gran's result~\cite{Bourn-Gran} that a one-cubic extension $f\colon {X\to Z}$ is central if and only if its kernel $A$ is abelian and its kernel pair $(\Eq(f),f_{0},f_{1})$ is the product $A\times X$ with $f_{0}=\pr_{X}$ and $f_{1}=\varphi_{\ker f,1_{X}}$ as in Subsection~\ref{Commutators}. Our Theorem~\ref{Theorem-Higher-Centrality} in essence says that an~$n$-cubic extension~$F$ is central if and only if
\begin{enumerate}
\item the direction of $F$ is abelian, and
\item any face in any $n$-fold diamond in $F$ is uniquely determined by an element of the direction of $F$.
\end{enumerate}
In the following sections this will lead to an equivalence between torsors and central extensions, Theorem~\ref{Theorem-Torsor-Equivalence}, which in turn will lead to our main result on cohomology, Theorem~\ref{Main-Theorem}.

\subsection{Higher equivalence relations}
Recall that a \defn{double equivalence relation} is an equivalence relation of equivalence relations: given two (internal) equivalence relations $R_{0}$ and $R_{1}$ on an object $X$, it is an equivalence relation~${R\rightrightarrows R_{1}}$ on the relation ${R_{0}\rightrightarrows X}$ as in the diagram below:
\[
\xymatrix@=3em{R \ar@<-.5ex>[d]_{\pr^{0}_0} \ar@<.5ex>[d]^{\pr^{0}_1} \ar@<-.5ex>[r]_{\pr^{1}_0} \ar@<.5ex>[r]^{\pr^{1}_1} & R_{1} \ar@<-.5ex>[d]_-{r^{1}_{0}} \ar@<.5ex>[d]^-{r^{1}_{1}}\\
R_{0} \ar@<-.5ex>[r]_-{r^{0}_{0}} \ar@<.5ex>[r]^-{r^{0}_{1}} & X.}
\]
That is, each of the four pairs of parallel morphisms on this diagram represents an equivalence relation, and these relations are compatible in an obvious sense. For instance, $R_{1} \square R_{0}$ denotes the largest double equivalence relation on~$R_{0}$ and~$R_{1}$, a two-dimensional version of $\nabla_{X}$; see~\cite{CPP, Smith, Borceux-Bourn, Bourn2003, Janelidze-Pedicchio}. It ``consists of'' all quadruples $(\alpha, \beta, \gamma, \delta)$ in $X^{4}$ in the configuration
\[
\vcenter{\xymatrix@1@!0@=3.5em{\gamma \ar@{.}[r] \ar@{.}[d]|-{1} & \beta \ar@{.}[d] \\
\delta \ar@{.}[r]|-{0} & \alpha,}}
\]
a $2\times 2$ matrix where $(\delta,\alpha)$, $(\gamma,\beta)\in R_{0}$ and $(\alpha,\beta)$, $(\delta,\gamma)\in R_{1}$. We shall be especially interested in the particular case where $R$ is induced by a two-cubic extension~$F$ as in Diagram~\eqref{Double-Extension}, as follows: $R_{0}=\R{c}$ is the kernel pair of $c$, the relation $R_{1}=\R{d}$ is the kernel pair of $d$ and $R=\R{d}\square \R{c}$. It is easily seen that then the rows and columns of the induced diagram
\begin{equation}\label{Blokske}
\vcenter{\xymatrix{\R{d}\square \R{c} \ar@<.5ex>[r]^-{p_1} \ar@<-.5ex>[r]_-{p_0} \ar@<.5ex>[d]^-{r_1} \ar@<-.5ex>[d]_-{r_0} & \R{d} \ar[r]^-{p} \ar@<.5ex>[d]^-{d_1} \ar@<-.5ex>[d]_-{d_0} & \R{g} \ar@<.5ex>[d]^-{g_{1}} \ar@<-.5ex>[d]_-{g_{0}} \\
\R{c} \ar@<.5ex>[r]^-{c_1} \ar@<-.5ex>[r]_-{c_0} \ar[d]_-{r} & X \ar[r]^-{c} \ar[d]_-{d} & C \ar[d]^-{g}\\
\R{f} \ar@<.5ex>[r]^-{f_{1}} \ar@<-.5ex>[r]_-{f_{0}} & D \ar[r]_-{f} & Z}}
\end{equation}
are exact forks, so consist of (effective) equivalence relations with their coequalisers; it is a denormalised $3\times 3$~diagram as studied in~\cite{Bourn2003}. Since the ``elements'' of~$X$ may now be viewed as arrows with a domain in $D$ and a codomain in $C$, any ``element'' of $\R{d}\square \R{c}$ corresponds to a \defn{(two-fold) diamond}~\cite{Janelidze-Pedicchio} in the two-cubic extension~$F$:
\begin{equation*}\label{Two-Diamond}
\vcenter{\xymatrix@1@!0@=2em{& {\cdot}\\
{\cdot} \ar[ru]^-{\gamma} \ar[rd]_-{\delta} && {\cdot} \ar[lu]_-{\beta} \ar[ld]^-{\alpha}\\
&{\cdot}}}\qquad\qquad
\vcenter{\xymatrix@1@!0@=2em{\gamma \ar@{.}[rr] \ar@{.}[dd] & {\cdot} & \beta \ar@{.}[dd] \\
{\cdot} \ar[ru] \ar[rd] && {\cdot} \ar[lu] \ar[ld]\\
\delta \ar@{.}[rr] & \cdot & \alpha}}\qquad\qquad
\vcenter{\xymatrix@1@!0@=4em{\gamma \ar@{.}[r] \ar@{.}[d]|-{1} & \beta \ar@{.}[d] \\
\delta \ar@{.}[r]|-{0} & \alpha}}
\end{equation*}
Note the geometrical duality here, which at this level is almost invisible since the dual of a square is a square. This will become more manifest in higher degrees. In some sense $\R{d}\square \R{c}$ is a kind of \emph{denormalised direction} of~$F$ (where the kernels are replaced by kernel pairs), also in that $\R{d}\square \R{c}$ may be considered as $\RR{F}$---see Diagram~\eqref{Blokske} and compare with Definition~\ref{Definition-Centr} for $n=2$.

\begin{figure}[b]
\resizebox{.7\textwidth}{!}{
$\xymatrix@!0@=4em{&& \bigboxvoid_{i\in 3}\R{f_{i}} \ar@<.5ex>@{.>}[ld] \ar@<-.5ex>@{.>}[ld] \ar@<.5ex>@{.>}[rrr]^-{\pr^{0}_{1}} \ar@<-.5ex>@{.>}[rrr]_-{\pr_{0}^{0}} \ar@<.5ex>@{.>}[ddd] \ar@<-.5ex>@{.>}[ddd] &&& {\R{f_{2}}\boxvoid\R{f_{1}}} \ar@<.5ex>@{.>}[ld]^(.6){\pr^{1}_{1}} \ar@<-.5ex>@{.>}[ld]_(.6){\pr_{0}^{1}} \ar@<.5ex>@{.>}[ddd]^-{\pr^{2}_{1}} \ar@<-.5ex>@{.>}[ddd]_-{\pr^{2}_{0}} \ar@{.{ >>}}[rrr] &&& \cdot \ar@<.5ex>@{.>}[ld] \ar@<-.5ex>@{.>}[ld] \ar@<.5ex>@{.>}[ddd] \ar@<-.5ex>@{.>}[ddd] \\
&{\cdot} \ar@{.{ >>}}[ld] \ar@<.5ex>@{.>}[rrr] \ar@<-.5ex>@{.>}[rrr] \ar@<.5ex>@{.>}[ddd] \ar@<-.5ex>@{.>}[ddd] &&& {\R{f_{2}}} \ar@{.{ >>}}[ld] \ar@<.5ex>@{.>}[ddd]^-{\pr^{2}_{1}} \ar@<-.5ex>@{.>}[ddd]_-{\pr^{2}_{0}} \ar@{.{ >>}}[rrr] &&& \cdot \ar@{.{ >>}}[ld] \ar@<.5ex>@{.>}[ddd] \ar@<-.5ex>@{.>}[ddd] \\
{\cdot} \ar@<-.5ex>@{.>}[rrr] \ar@<.5ex>@{.>}[rrr] \ar@<.5ex>@{.>}[ddd] \ar@<-.5ex>@{.>}[ddd] &&& \cdot \ar@<.5ex>@{.>}[ddd] \ar@<-.5ex>@{.>}[ddd] \ar@{.{ >>}}[rrr] &&& \cdot \ar@<.5ex>@{.>}[ddd] \ar@<-.5ex>@{.>}[ddd] \\
&&{\cdot} \ar@<.5ex>@{.>}[ld] \ar@<-.5ex>@{.>}[ld] \ar@<.5ex>@{.>}[rrr] \ar@<-.5ex>@{.>}[rrr] \ar@{.{ >>}}[ddd] &&& {\R{f_{1}}} \ar@<-.5ex>@{.>}[ld] \ar@<.5ex>@{.>}[ld] \ar@{.{ >>}}[ddd] \ar@{.{ >>}}[rrr] &&& \cdot \ar@<.5ex>@{.>}[ld] \ar@<-.5ex>@{.>}[ld] \ar@{.{ >>}}[ddd]\\
&{\cdot} \ar@{.{ >>}}[ld] \ar@<.5ex>@{.>}[rrr] \ar@<-.5ex>@{.>}[rrr] \ar@{.{ >>}}[ddd] &&& F_{3} \ar@{-{ >>}}[ld]|-{f_{1}} \ar@{-{ >>}}[ddd]|(.33){\hole}|-{f_{2}} \ar@{-{ >>}}[rrr]|-{f_{0}} &&& \cdot \ar@{-{ >>}}[ld] \ar@{-{ >>}}[ddd]\\
{\cdot} \ar@<.5ex>@{.>}[rrr] \ar@<-.5ex>@{.>}[rrr] \ar@{.{ >>}}[ddd] &&& \cdot \ar@{-{ >>}}[ddd] \ar@{-{ >>}}[rrr] &&& \cdot \ar@{-{ >>}}[ddd]\\
&&{\cdot} \ar@<.5ex>@{.>}[ld] \ar@<-.5ex>@{.>}[ld] \ar@<.5ex>@{.>}[rrr] \ar@<-.5ex>@{.>}[rrr] &&& \cdot \ar@<.5ex>@{.>}[ld] \ar@<-.5ex>@{.>}[ld] \ar@{.{ >>}}[rrr] &&& \cdot \ar@<.5ex>@{.>}[ld] \ar@<-.5ex>@{.>}[ld] \\
&{\cdot} \ar@{.{ >>}}[ld] \ar@<.5ex>@{.>}[rrr] \ar@<-.5ex>@{.>}[rrr] &&& \cdot \ar@{-{ >>}}[ld] \ar@{-{ >>}}[rrr]|(.66){\hole} &&& \cdot \ar@{-{ >>}}[ld]\\
{\cdot} \ar@<.5ex>@{.>}[rrr] \ar@<-.5ex>@{.>}[rrr] &&& \cdot \ar@{-{ >>}}[rrr] &&& Z}$}
\caption{$\bigboxvoid_{i\in 3}\R{f_{i}}$ for a three-cubic extension $F$}\label{Figure-Blokske}
\end{figure}

Inductively, an \defn{$n$-fold equivalence relation} may be defined as an equivalence relation of $(n-1)$-fold equivalence relations. Considered as a diagram in the base category~$\X$, it has $n$ underlying equivalence relations $R_{0}$, \dots, $R_{n-1}$ on a common object~$X$. An internal $n$-fold equivalence relation is the same thing as an internal $n$"~fold groupoid ($n$-cat-group in the case of groups~\cite{Loday}; double categories appear in~\cite{Benabou:Bicategories, Ehresmann}, for instance) in which all pairs of projections are jointly monomorphic. The \emph{largest} $n$-fold equivalence relation on $n$ given equivalence relations $R_{0}$, \dots,~$R_{n-1}$ on an object $X$---meaning that it contains all $n$-fold equivalence relations on those relations---is denoted
\begin{equation}\label{blokske Ri}
\bigboxvoid_{i\in n}R_{i}.
\end{equation}
It has projections $\pr^{i}_{0}$ and $\pr^{i}_{1}$ to $R_{i}$, for all $i\in n$, and thus consists of $2^{n}$ commutative cubes of projections, one for each choice of projection (either~$\pr^{i}_{0}$ or $\pr^{i}_{1}$) in each direction $i\in n$. This largest $n$-fold equivalence relation on $R_{0}$, \dots,~$R_{n-1}$ does indeed exist; the elements of $\bigboxvoid_{i\in n}R_{i}$ are $n$-dimensional matrices in $X$, in fact matrices of order
\[
\underbrace{2\times \cdots \times 2}_{n}.
\]
In the $i$-th direction of the matrix (counting from $0$ to $n-1$) the elements are related by the equivalence relation~$R_{i}$. In Subsection~\ref{constructing blokske} we give a two-step formal construction. 

In practice the $n$-fold equivalence relation will be induced by an $n$-cubic extension~$F$, by taking $R_{i}=\R{f_{i}}$. The induced object ${\bigboxvoid_{i\in n}\R{f_{i}}=\Rn{F}}$ may then be considered as a denormalised direction of $F$. Its elements are called \defn{\mbox{($n$-fold)} diamonds in $F$} because of their shape in the lower dimensions.

When $F$ is a three-cubic extension (see Figure~\ref{Figure-Blokske}) such a diamond is a hollow octahedron (see Figure~\ref{Figure-Diamond}) of which the faces are elements of $F_{3}$. We name the faces of the octahedron by the vertices of a cube which is formally a three-dimensional matrix where $\R{f_{0}}$ is the left-right relation, $\R{f_{1}}$ is bottom-top and $\R{f_{2}}$ is front-back. Note how the geometrical duality between the octahedron and the cube is explicit here.

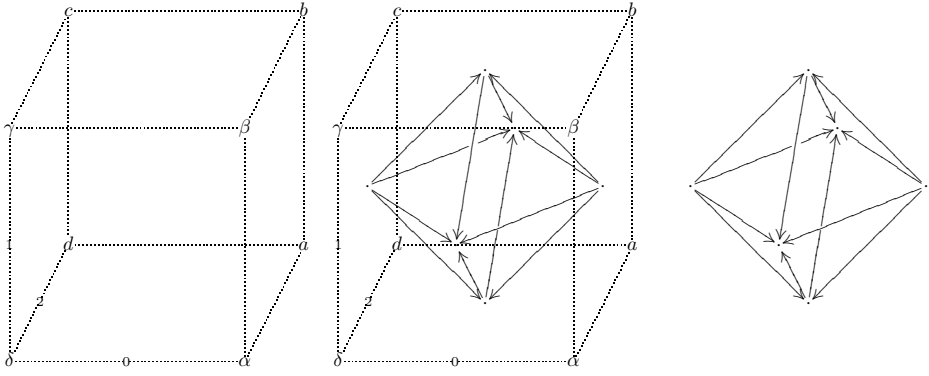
\begin{figure}
\resizebox{\textwidth}{!}{
$\xymatrix@1@!0@=3em{& c \ar@{.}[rrrr] \ar@{.}[dddd] \ar@{.}[ddl] &&&& b \ar@{.}[dddd] \ar@{.}[ddl]\\\\
\gamma \ar@{.}[rrrr] \ar@{.}[dddd]|-{1} &&&& \beta \ar@{.}[dddd] \\\\
& d \ar@{.}[rrrr] \ar@{.}[ddl]|-{2} &&&& a \ar@{.}[ddl] \\\\
\delta \ar@{.}[rrrr]|-{0} &&&& \alpha}
\quad
\xymatrix@1@!0@C=1.5em@R=3em{&& c \ar@{.}[rrrrrrrr] \ar@{.}[dddd] \ar@{.}[ddll] &&&&&&&& b \ar@{.}[dddd] \ar@{.}[ddll]\\
&&&&& {\cdot} \ar[rd] \ar[dddl] \\
\gamma \ar@{.}[rrrrrrrr] \ar@{.}[dddd]|-{1} &&&&&& {\cdot} && \beta \ar@{.}[dddd] \\
& {\cdot} \ar[urrrrr]|(.72){\hole\hole\hole} \ar[uurrrr] \ar[ddrrrr] \ar[drrr] &&&&&&&& {\cdot} \ar[ulll] \ar[dlllll] \ar[uullll] \ar[ddllll] \\
&& d \ar@{.}[rrrrrrrr] \ar@{.}[ddll]|-{2} && {\cdot} &&&&&& a \ar@{.}[ddll] \\
&&&&& {\cdot} \ar[ul] \ar[uuur]|(.43){\hole\hole} \\
\delta \ar@{.}[rrrrrrrr]|-{0} &&&&&&&& \alpha}
\quad
\xymatrix@1@!0@C=1.5em@R=3em{&&&&&&&&&&\\
&&&&& {\cdot} \ar[rd] \ar[dddl] \\
&&&&&& {\cdot} &&\\
& {\cdot} \ar[urrrrr]|(.72){\hole\hole\hole} \ar[uurrrr] \ar[ddrrrr] \ar[drrr] &&&&&&&& {\cdot} \ar[ulll] \ar[dlllll] \ar[uullll] \ar[ddllll] \\
&&&& {\cdot} &&&&&&\\
&&&&& {\cdot} \ar[ul] \ar[uuur]|(.43){\hole\hole}\\
&&&&&&&&}$}
\caption{Matrix and diamond for a three-cubic extension}\label{Figure-Diamond}
\end{figure}

\subsection{Formal construction of $\bigboxvoid_{i\in n}R_{i}$}\label{constructing blokske}
\emph{Starting with an $n$-fold arrow.} Given an $n$-fold arrow $F$, first consider it as an arrow $\dom F\to \cod F$ between $(n-1)$-fold arrows, and then take its kernel pair ${\Eq(F)\rightrightarrows \dom F}$. By Corollary~3.10 in~\cite{EGVdL}, both projections are $n$-cubic extensions in $\X$ if such is $F$. Then consider those $(n-1)$-fold arrows as (vertical) arrows between $(n-2)$-fold arrows and take kernel pairs, obtaining a double equivalence relation
\[
\xymatrix{\RR{F} \ar@<.5ex>[r] \ar@<-.5ex>[r] \ar@<.5ex>[d] \ar@<-.5ex>[d] &  \R{\dom F} \ar@<.5ex>[d] \ar@<-.5ex>[d] \\
\dom \R{F} \ar@<.5ex>[r] \ar@<-.5ex>[r] & \dom^{2}F}
\]
of $(n-2)$-fold arrows. All commutative squares in it are $n$-cubic extensions in~$\X$ if such is $F$, again by~\cite[Corollary~3.10]{EGVdL}. Repeat the process until an $n$-fold equivalence relation in~$\X$ is obtained. The object $\Rn{F}$ is $\bigboxvoid_{i\in n}\R{f_{i}}$.

\emph{Starting with equivalence relations $(R_{i})_{i\in n}$.} Take coequalisers $f_{i}$ so that each $R_{i}$ is $\R{f_{i}}$. Take pushouts of the $f_{i}$ along each other and along their pushouts until an $n$-fold arrow $F$ is obtained. Now we can apply the above construction to obtain $\bigboxvoid_{i\in n}R_{i}=\bigboxvoid_{i\in n}\R{f_{i}}$.

\subsection{Indexing the elements of $\bigboxvoid_{i\in n}\R{f_{i}}$}\label{indexing}
Consider an $n$-cubic extension~$F$. An element of $\bigboxvoid_{i\in n}\R{f_{i}}$ being an $n$-dimensional matrix, its entries are indexed by the elements of $2^{n}$, the subsets of the ordinal $n$. An entry $x_{I}$ in a matrix ${x\in\bigboxvoid_{i\in n}\R{f_{i}}}$ finds itself in the first entry of the $i$-th direction when~${i\not \in I}$ and in the second entry of the~$i$-th direction when $i\in I$. Hence the entry~$x_{I}=\pr_{I}(x)$ is
\begin{equation}\label{diagonal}
(\pr^{0}_{\delta_{I}(0)}\comp\pr^{1}_{\delta_{I}(1)}\comp\cdots\comp\pr^{n-1}_{\delta_{I}(n-1)})(x)
\end{equation}
where
\[
\delta_{I}(i)=\begin{cases} 0 & \text{if $i\not\in I$}\\
1 & \text{if $i\in I$}
\end{cases}
\]
and $\pr^{i}_{0}$ and $\pr^{i}_{1}$ are the first and second projection of $\R{f_{i}}$, extended to morphisms
\[
\bigboxvoid_{j\in k}\R{f_{j}}\to \bigboxvoid_{j\in k\setminus \{i\}}\R{f_{j}}
\]
for all $i<k\leq n$ (see Figure~\ref{Figure-Blokske}). Two entries~$x_{I}$ and~$x_{J}$ are related by $\R{f_{i}}$ when the only difference between~$I$ and $J$ is that one does, and the other does not, contain $i$. So $(x_{I},x_{J})\in \R{f_{i}}$ when $J=I\cup\{i\}$ or $I=J\cup\{i\}$.

For instance, in Figure~\ref{Figure-Diamond}, the face $\beta$ corresponds to the entry $x_2$: the set~${2\subseteq 3}$ contains $0$ and $1$ but it doesn't contain $2$.

\subsection{The induced $n$-cubes}\label{Chosen-cubes}
As explained in the paragraph following~\eqref{blokske Ri} above, given an $n$-cubic extension~$F$, any choice of a set $I\subseteq n$ corresponds to one of the commutative $n$-cubes in the $n$-fold equivalence relation $\bigboxvoid_{i\in n}\R{f_{i}}$, namely the cube whose diagonal~\eqref{diagonal} ``picks'' the $I$'th entry of any given $n$-fold diamond. We shall denote it $\kuub(F,I)$. In fact, it again forms an $n$-cubic extension in $\X$, and its initial morphisms are the
\[
\pr^{i}_{\delta_{I}(i)}\colon\bigboxvoid_{j\in n}\R{f_{j}}\to \bigboxvoid_{j\in n\setminus \{i\}}\R{f_{j}}.
\]
The property of being an extension follows, for instance, from the fact that all its morphisms are compatibly split (by the reflexivity of all the equivalence relations involved). Since no confusion with the other arrows is possible (compare with the notation introduced in Subsection~\ref{HDA}), we shall denote such a ``composed splitting'' or ``composed projection''
\[
\kuub(F,I)^{J}_{K}\colon{\kuub(F,I)_{J}\to \kuub(F,I)_{K}}
\]
when $J\subseteq K\in 2^{n}$ or $K\subseteq J\in 2^{n}$, respectively.

\subsection{The objects $\bigboxdot_{i\in n}^{I}\R{f_{i}}$}
Given an $n$-cubic extension $F$ and $I\subseteq n$, the elements of the object $\bigboxdot_{i\in n}^{I}\R{f_{i}}$ are diamonds in $F$ with the $I$-face missing, or equivalently, $n$-dimensional matrices (of order $2\times\cdots\times 2$) with the $I$-entry left out; it is the limit~$\L(\kuub(F,n\setminus I))$ from Subsection~\ref{HDA} determined by the $n$-cubic extension~$\kuub(F,n\setminus I)$. Indeed, removing an $I$'th vertex from a cube is the same thing as considering only those faces which contain the complementary $(n\setminus I)$'th vertex. Let
\[
\pi^{I}=l_{\boxvoid(F,n\setminus I)}\colon \bigboxvoid_{i\in n}\R{f_{i}}\to \bigboxdot_{i\in n}^{I}\R{f_{i}}
\]
denote the canonical projection which forgets the $I$-face, then clearly the kernel of~$\pi^{I}$ is isomorphic to the direction of $F$. (If a diamond is in the kernel of $\pi^{I}$ then all faces but one in this diamond are zero, and of course this face has its boundary zero.) In fact, this gives us a version of Lemma~\ref{Lemma-DeltaLambda-Square}, valid for higher extensions:

\begin{lemma}\label{Lemma-Diamond-Pullback}
For any $n$-cubic extension $F$, the square
\begin{equation*}\label{Diamond-Pullback}
\vcenter{\xymatrix{\bigboxvoid_{i\in n}\R{f_{i}} \ar[d]_{\pr_{I}} \ar[r]^-{\pi^{I}} & \bigboxdot_{i\in n}^{I}\R{f_{i}} \ar[d] \\
F_{n} \ar[r]_-{\lind f_{i}\rind_{i}} & \L F}}
\end{equation*}
is a pullback.
\end{lemma}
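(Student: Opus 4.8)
The plan is to reduce the statement to an elementary verification in $\Set$, much as in the proofs of Lemma~\ref{Lemma coskeleton pullback} and Lemma~\ref{Lemma-DeltaLambda-Square}. All four objects of the square, together with the four morphisms, are built from the underlying $n$-fold arrow $F$ by finite limits and the ensuing comparisons: $\bigboxvoid_{i\in n}\R{f_{i}}=\Rn{F}$ is the iterated kernel pair of Subsection~\ref{constructing blokske}, $\bigboxdot_{i\in n}^{I}\R{f_{i}}=\L(\kuub(F,n\setminus I))$ is a limit of a sub-$n$-cube of it, $\L F$ is a limit of a punctured $n$-cube, and $\pr_{I}$, $\pi^{I}=l_{\boxvoid(F,n\setminus I)}$, $\lind f_{i}\rind_{i}=l_{F}$ and the unlabelled right-hand arrow are all universally induced; none of this uses that $F$ is an extension. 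Applying $\Hom_{\X}(W,-)$ for an arbitrary object $W$ therefore turns the whole square into the square constructed in the same way from the functor $(2^{n})^{\op}\to\Set$ obtained by composing $F$ with $\Hom_{\X}(W,-)$; since a commutative square in $\X$ is a pullback precisely when all of these $\Set$-valued squares are, it suffices to prove the statement for an arbitrary $n$-fold arrow of sets.

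So let $F$ be an $n$-fold arrow in $\Set$. Using the indexing of Subsection~\ref{indexing}, an element of $\bigboxvoid_{i\in n}\R{f_{i}}$ is a family $(x_{J})_{J\in 2^{n}}$ of elements of $F_{n}$ satisfying, for every $i\in n$ and every $J\subseteq n$ with $i\notin J$, the edge condition $f_{i}(x_{J})=f_{i}(x_{J\cup\{i\}})$; the map $\pr_{I}$ returns $x_{I}$, while $\lind f_{i}\rind_{i}$ sends $x\in F_{n}$ to its boundary $(f_{i}(x))_{i\in n}\in\L F$. An element of $\bigboxdot_{i\in n}^{I}\R{f_{i}}$ is a family $(x_{J})_{J\neq I}$ satisfying those of the edge conditions which do not mention the entry $x_{I}$, and $\pi^{I}$ forgets $x_{I}$. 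For $i\in n$ write $J_{i}$ for the subset obtained from $I$ by toggling the membership of $i$; since $J_{i}\neq I$, the assignment $(x_{J})_{J\neq I}\mapsto f_{i}(x_{J_{i}})\in F_{n\setminus\{i\}}$ is defined on $\bigboxdot_{i\in n}^{I}\R{f_{i}}$ for each such $i$, and these $n$ assignments are compatible by the remaining edge conditions, so together they yield the unlabelled morphism $\bigboxdot_{i\in n}^{I}\R{f_{i}}\to\L F$, which sends $(x_{J})_{J\neq I}$ to $(f_{i}(x_{J_{i}}))_{i\in n}$. The square commutes, since for a genuine diamond $f_{i}(x_{I})=f_{i}(x_{J_{i}})$ is one of the edge conditions.

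It remains to show that the comparison $\bigboxvoid_{i\in n}\R{f_{i}}\to F_{n}\times_{\L F}\bigboxdot_{i\in n}^{I}\R{f_{i}}$ is a bijection. The edge conditions defining $\bigboxvoid_{i\in n}\R{f_{i}}$ fall into two groups: those not involving $x_{I}$, which are exactly the conditions defining $\bigboxdot_{i\in n}^{I}\R{f_{i}}$; and, one for each $i\in n$, the single condition $f_{i}(x_{I})=f_{i}(x_{J_{i}})$ linking $x_{I}$ to its neighbour in direction $i$. Now take $w\in F_{n}$ and $(x_{J})_{J\neq I}\in\bigboxdot_{i\in n}^{I}\R{f_{i}}$ with the same image in $\L F$; by the description of the maps this means exactly that $f_{i}(w)=f_{i}(x_{J_{i}})$ for every $i\in n$. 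The only possible preimage in $\bigboxvoid_{i\in n}\R{f_{i}}$ is the family obtained by adjoining $x_{I}:=w$, and it does lie in $\bigboxvoid_{i\in n}\R{f_{i}}$: the edge conditions of the first group hold by the hypothesis on $(x_{J})_{J\neq I}$, while those of the second group are precisely the $n$ equalities $f_{i}(w)=f_{i}(x_{J_{i}})$. Hence the comparison is bijective, so the square is a pullback in $\Set$, and therefore also in $\X$.

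I do not expect a genuine obstacle: once the element descriptions of $\bigboxvoid_{i\in n}\R{f_{i}}$ and $\bigboxdot_{i\in n}^{I}\R{f_{i}}$ supplied by Subsections~\ref{constructing blokske} and~\ref{indexing} are in hand, the one point that needs care is the combinatorial observation that the $n$ edge conditions at the vertex $I$ correspond bijectively to the $n$ coordinates of $\L F$---the higher-dimensional shadow of the pullback square of Lemma~\ref{Lemma-DeltaLambda-Square}.
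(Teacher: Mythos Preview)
Your proof is correct, but it takes a genuinely different route from the paper's.

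The paper gives a one-line argument: since $\K{\pi^{I}}$ and $\K{l_{F}}$ are both (canonically) the direction~$A$ of~$F$, and the map~$\pr_{I}$ restricts to an isomorphism between them, Lemma~\ref{Lemma-Iso-Pullback} immediately yields the pullback. This is quick but genuinely uses the semi-abelian context and the hypothesis that~$F$ is an $n$-cubic extension (so that both~$\pi^{I}=l_{\kuub(F,n\setminus I)}$ and~$l_{F}$ are regular epimorphisms, making the rows short exact). Your argument, by contrast, is a direct Yoneda reduction to $\Set$ followed by an explicit element count, and it works for an arbitrary $n$-fold arrow in any finitely complete category---the extension hypothesis is never invoked. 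The trade-off is length: you have to spell out the element descriptions of $\bigboxvoid$ and $\bigboxdot$ and check the compatibility of the tuple $(f_{i}(x_{J_{i}}))_{i}$ in~$\L F$. That compatibility step (``these $n$ assignments are compatible by the remaining edge conditions'') is the one place where a reader might want a line more: for $i\neq j$ one uses the neighbours $x_{J_{i}}$, $x_{J_{j}}$ and the diagonal neighbour $x_{I\ominus\{i,j\}}$ together with the commutativity of~$F$ to see that $f^{n}_{n\setminus\{i,j\}}(x_{J_{i}})=f^{n}_{n\setminus\{i,j\}}(x_{J_{j}})$. Once that is made explicit, the argument is complete and slightly more general than the paper's.
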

\begin{proof}
This follows from Lemma~\ref{Lemma-Iso-Pullback} since $\K{\lind f_{i}\rind_{i}} = \D_{(n,Z)} F=\K{\pi^{I}}$ as explained above.
\end{proof}

For instance, in $\bigboxdot_{i\in 3}^{2}\R{f_{i}}$ we have $3$-fold diamonds as in Figure~\ref{Figure-Diamond} in which the face $\beta=x_{2}$ is missing.

In degree two the pullback $\R{d}\times_{X} \R{c}$ (mentioned in the introduction, and computed as in Diagram~\eqref{Pullback-Smith}) that contains two-fold diamonds in which the face~$\delta$ is missing, is nothing but~${\R{d}\squaredot^{\varnothing} \R{c}}$, and the projection $\pi$ is $\pi^{\varnothing}$.

\subsection{Analysis of centrality in degree two}\label{Degree two}
As explained in~\cite{RVdL} (and, in full generality, in the proof of Theorem~\ref{Theorem-Higher-Centrality} below), the two-cubic extension~$F$ from Diagram~\eqref{Double-Extension} is central if and only if in the diagram
\[
\xymatrix{& \lb\R{d}\square \R{c}\rb \ar@{ >>}[r]^-{\lb\pi\rb} \ar@{{ |>}->}[d] & \lb\R{d}\times_{X} \R{c}\rb \ar@{{ |>}->}[d] \\
A \ar@{{ |>}->}[r] \ar@{ >>}[d] & \R{d}\square \R{c} \ar@{}[rd]|-{\texttt{(i)}} \ar@<-.5ex>@{ >>}[r]_-{\pi} \ar@{ >>}[d] & \R{d}\times_{X} \R{c} \ar@{.>}@<-.5ex>[l]_-{\iota} \ar@{ >>}[d]\\
\K{\ab\pi} \ar@{{ |>}->}[r] & \ab(\R{d}\square \R{c}) \ar@{ >>}@<-.5ex>[r]_-{\ab\pi} & \ab(\R{d}\times_{X} \R{c}) \ar@<-.5ex>[l]_-{\nu}}
\]
the morphism $\lb\pi\rb$ is an isomorphism. (Recall the bracket notation from~\eqref{Angular Bracket}.) By Lemma~\ref{Lemma-Iso-Pullback}, this occurs when the square $\texttt{(i)}$ is a pullback, which is precisely saying that~$\pi$ is a one-cubic trivial extension. (Indeed~$\pi$ is a one-cubic extension, because it is the comparison to the pullback in a two-cubic extension, in fact in a double split epimorphism.)

Note that $\ab\pi$ is a split epimorphism by Lemma~\ref{Lemma-Naturally-Maltsev}, because $\ab$ preserves the pullback $\R{d}\times_{X} \R{c}$: in fact, it preserves all pullbacks of split epimorphisms along split epimorphisms, or even all pullbacks of split epimorphisms along cubic extensions (Lemma~\ref{Lemma-Pullbacks}). This makes $\ab \pi$ a product projection. Further recall that the kernel of $\pi$ is the direction of $F$. Finally, note that the splitting $\nu$ commutes with the sections in the double equivalence relation $\R{d}\square \R{c}$ and the ones induced to $\R{d}\times_{X} \R{c}$. In fact, it is uniquely determined by this property (Lemma~\ref{Lemma-Naturally-Maltsev}). 

Hence if $F$ is central then $\pi$ is a split epimorphism, in fact a product projection (since product projections are stable under pullbacks), and thus we see that
\begin{equation}\label{Iso-Dimension-Two'}
\R{d}\square \R{c}\cong A\times (\R{d}\times_{X} \R{c})
\end{equation}
where $A$ is the direction of $F$, an abelian object. Conversely, whenever $A$ is abelian and $\pi$ is the projection in the product~\eqref{Iso-Dimension-Two'}, the extension $\pi$ is trivial, so that the square $\texttt{(i)}$ is a pullback, and $F$ is a two-cubic central extension.

The inclusion $\iota$ of $\R{d}\times_{X} \R{c}$ into $\R{d}\square \R{c}$ is \emph{compatible with degeneracies} in the following sense. The conditions which determine $\nu$ uniquely in Lemma~\ref{Lemma-Naturally-Maltsev} extend to the splitting $\iota$ of $\pi$, so that $\iota$ maps
\[
\vcenter{\xymatrix@1@!0@=2em{& {\cdot}\\
{\cdot} \ar[ru]^-{\gamma} && {\cdot} \ar[lu]_-{\beta} \ar[ld]^-{\beta}\\
&{\cdot}}}\qquad
\text{to}\qquad
\vcenter{\xymatrix@1@!0@=2em{& {\cdot}\\
{\cdot} \ar[ru]^-{\gamma} \ar[rd]_-{\gamma} && {\cdot} \ar[lu]_-{\beta} \ar[ld]^-{\beta}\\
&{\cdot}}}
\qquad\text{and}\qquad
\vcenter{\xymatrix@1@!0@=2em{& {\cdot}\\
{\cdot} \ar[ru]^-{\beta} && {\cdot} \ar[lu]_-{\beta} \ar[ld]^-{\alpha}\\
&{\cdot}}}\qquad\text{to}\qquad
\vcenter{\xymatrix@1@!0@=2em{& {\cdot}\\
{\cdot} \ar[ru]^-{\beta} \ar[rd]_-{\alpha} && {\cdot} \ar[lu]_-{\beta} \ar[ld]^-{\alpha}\\
&{\cdot}}}.
\]
Note how the conditions satisfied by a Mal'tsev operation appear here.

We may also view this slightly differently: by Lemma~3.3 in~\cite{Bourn-Gran-Maltsev}, the condition $[\R{d},\R{c}]=\Delta_{X}$ in~\eqref{Double-Central} is equivalent to the morphism
\[
\pi\colon{\R{d}\square \R{c}\to \R{d}\times_{X} \R{c}}
\]
being a split epimorphism, compatible with certain splittings as in Lemma~\ref{Lemma-Naturally-Maltsev}. Also $\Delta_{X}=[\R{d}\cap \R{c},\nabla_{X}]$ if and only if~$\pi$ is central~\cite{Gran-Alg-Cent}. Now a split epimorphism is a one-cubic central extension if and only if it is a one-cubic trivial extension, so~$\pi$ is trivial---the square~$\texttt{(i)}$ is a pullback---when the commutators $[\R{d},\R{c}]$ and $[\R{d}\cap \R{c},\nabla_{X}]$ vanish.

\subsection{Higher degrees}
This characterisation of centrality goes up to higher dimensions. The basic idea is to show by induction that an $n$-cubic extension~$F$ is central if and only if the morphisms
\[
\lb \pi^{I}\rb\colon \lb \bigboxvoid_{i\in n}\R{f_{i}}\rb \to \lb \bigboxdot_{i\in n}^{I}\R{f_{i}}\rb
\]
are isomorphisms. As we shall see, this then amounts to an isomorphism 
\begin{equation}\label{Iso-General-n}
\bigboxvoid_{i\in n}\R{f_{i}}\cong A\times \bigboxdot_{i\in n}^{I}\R{f_{i}}
\end{equation}
where $A$ is the direction of $F$: any missing face in an $n$-fold diamond is completely determined by an element in $A$.

\begin{lemma}\label{Lemma-ab-of-Square}
When $\X$ is a semi-abelian category, the functor $\ab\colon{\X\to \Ab(\X)}$ preserves any limit $\bigboxdot_{i\in n}^{I}\R{f_{i}}$ induced by any $n$-cubic extension $F$ and $I\subseteq n$. Furthermore, the comparison morphism 
\[
\ab\pi^{I}\colon\ab(\bigboxvoid_{i\in n}\R{f_{i}}) \to \bigboxdot_{i\in n}^{I}\ab(\R{f_{i}})
\]
admits a splitting. This splitting is uniquely determined by the property that it commutes with the sections in the $n$-cubic extension~$\ab(\kuub(F,n\setminus I))$ and the induced sections to the limit object $\bigboxdot_{i\in n}^{I}\ab(\R{f_{i}})$.
\end{lemma}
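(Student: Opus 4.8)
The plan is to argue by induction on $n$, lowering the dimension one step at a time by means of Lemma~\ref{Lemma-L-pullbacks}, and to treat the two assertions---preservation of the limit, and existence together with uniqueness of the splitting---as separately as possible. The statement is clear for $n=1$, where $\pi^{I}$ is just a split kernel pair projection $\R{f_{0}}\to F_{1}$ and the limit in question is degenerate.

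For the preservation claim, recall from Subsection~\ref{Chosen-cubes} that $\kuub(F,n\setminus I)$ is an $n$-cubic extension all of whose morphisms are compatibly split epimorphisms, and that $\bigboxdot_{i\in n}^{I}\R{f_{i}}=\L(\kuub(F,n\setminus I))$. Viewing $\kuub(F,n\setminus I)$ as a square~\eqref{Double-Extension} of $(n-1)$-fold arrows, Lemma~\ref{Lemma-L-pullbacks} identifies this limit with $\L G$, where $G=\lind d,c\rind$ maps into the pullback $D\times_{Z}C$ of the split epimorphisms $f$ and $g$; that pullback is therefore a pullback of split epimorphisms, hence preserved by $\ab$ by Lemma~\ref{Lemma-Pullbacks}. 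Since $D\times_{Z}C$ again carries compatible splittings, the reduction can be iterated, exhibiting $\L(\kuub(F,n\setminus I))$ as a finite sequence of pullbacks of split epimorphisms along split epimorphisms, each of which $\ab$ preserves. Consequently $\ab$ preserves the limit $\bigboxdot_{i\in n}^{I}\R{f_{i}}$, its image being the limit $\bigboxdot_{i\in n}^{I}\ab(\R{f_{i}})$ formed in $\Ab(\X)$; in particular $\ab\pi^{I}$ coincides with the canonical comparison morphism of the $n$-cube $\ab(\kuub(F,n\setminus I))$.

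For the splitting I would work inside $\Ab(\X)$, which is additive and hence naturally Mal'tsev, as is every arrow category $\Arr^{k}(\Ab(\X))$, so that Lemma~\ref{Lemma-Naturally-Maltsev} applies throughout. Applying $\ab$ vertexwise turns $\kuub(F,n\setminus I)$ into an $n$-cube in $\Ab(\X)$ whose morphisms are still compatibly split epimorphisms, since a functor preserves the commuting squares of~\eqref{Double-Split-Epi}. One then shows, by an induction on $n$ that uses Lemma~\ref{Lemma-L-pullbacks} to peel off one dimension and Lemma~\ref{Lemma-Naturally-Maltsev} to split the comparison to the resulting pullback, that the comparison from the apex of such a cube to its limit is a split epimorphism with a unique splitting compatible with the sections. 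Tracking the uniqueness clauses of Lemma~\ref{Lemma-Naturally-Maltsev} along this induction shows that the splitting of $\ab\pi^{I}$ so obtained is the unique one commuting with the sections of $\ab(\kuub(F,n\setminus I))$ and with the sections they induce to $\bigboxdot_{i\in n}^{I}\ab(\R{f_{i}})$.

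The part I expect to be most delicate is the inductive bookkeeping common to the last two paragraphs: presenting the punctured-cube limit $\bigboxdot_{i\in n}^{I}\R{f_{i}}$ as an explicit finite iterated pullback of compatibly split epimorphisms so that Lemmas~\ref{Lemma-Pullbacks} and~\ref{Lemma-Naturally-Maltsev} apply at every stage, and checking that the splittings produced at the successive stages glue to a single morphism with the asserted universal property. The geometry behind it is harmless---every ``missing face'' limit of a diamond diagram is a composite of pullbacks of projections---but making the indexing precise is what requires care.
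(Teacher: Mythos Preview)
Your approach is essentially the paper's---iterate Lemma~\ref{Lemma-L-pullbacks} to decompose the punctured-cube limit, invoke Lemma~\ref{Lemma-Pullbacks} at each stage for preservation, and then apply Lemma~\ref{Lemma-Naturally-Maltsev} inside $\Ab(\X)$ to produce the unique compatible splitting---but there is one inaccuracy worth flagging.

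In the preservation argument you claim that the iteration exhibits $\L(\kuub(F,n\setminus I))$ as a sequence of pullbacks of \emph{split} epimorphisms along \emph{split} epimorphisms. Only the first pullback $D\times_{Z}C$ is of that form. The comparison $\lind d,c\rind\colon X\to D\times_{Z}C$, which becomes one direction of the new $(n-1)$-cube $G$, is in general only a regular epimorphism in $\X$: the Mal'tsev property guarantees that much, but not a splitting (that would be the naturally Mal'tsev property, which you only have after passing to $\Ab(\X)$). So from the second stage onward each pullback is of an \emph{extension} along a split epimorphism. This is precisely what Lemma~\ref{Lemma-Pullbacks} provides, so the conclusion survives once you state the shape of the pullbacks correctly; the paper makes exactly this distinction. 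Your splitting argument in $\Ab(\X)$ is unaffected: there Lemma~\ref{Lemma-Naturally-Maltsev} makes each successive comparison genuinely split, so the pullbacks encountered at that stage really are split along split throughout, and the uniqueness clause of the lemma at each step gives the claimed uniqueness of $\nu^{I}$.
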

\begin{proof}
The first part follows from Lemma~\ref{Lemma-Pullbacks}, because by Lemma~\ref{Lemma-L-pullbacks}, the limit $\L(\kuub(F,n\setminus I))$ may be computed by repeated pullbacks of regular epimorphisms along split epimorphisms. 

Indeed, all arrows in the cube $\kuub(F,n\setminus I)$ are (compatibly) split, so that the pullback in the statement of Lemma~\ref{Lemma-L-pullbacks} is a pullback of a split epimorphism along a split epimorphism. Now Lemma~\ref{Lemma-L-pullbacks} is applied to the induced $(n-1)$-cubic extension $G$, in which all morphisms are (compatibly) split, except in the direction~$\lind d,c\rind$. We now take a pullback of an extension along a split epimorphism. This procedure is repeated until nothing but a pullback of a regular epimorphism along a split epimorphism is left. Again, by Lemma~\ref{Lemma-Pullbacks} all those pullbacks are preserved.

In the abelian category $\Ab(\Arr^{n-1}(\X))$, the reflection $\lind \ab d,\ab c\rind$ of $\lind d,c\rind$ is a split epimorphism by Lemma~\ref{Lemma-Naturally-Maltsev}. There is actually a unique splitting, compatible with the sections in the square~\eqref{Double-Extension}, coming from the fact that the $\R{f_{i}}$ are equivalence relations. Also at each further stage of the above proof we may now apply Lemma~\ref{Lemma-Naturally-Maltsev}, so that at every stage a pullback of a split epimorphism along a split epimorphism is taken, and eventually the needed morphism $\nu^{I}$ is obtained. The requirement that at each stage the chosen splitting commutes with the given sections determines $\nu^{I}$ uniquely.
\end{proof}

Recall from Subsection~\ref{Subsection-Tower} the notation $\lb F\rb^{n}$ for the initial object of the kernel of the unit of the centralisation of an $n$-cubic extension $F$, which vanishes if and only if $F$ is central. It determines a functor $\lb-\rb^{n}\colon {\Ext^{n}(\X)\to \X}$. In particular, $\lb-\rb=\lb-\rb^{0}\colon \X\to \X$ is the kernel of the unit of the abelianisation functor as in~\eqref{Angular Bracket}.

\pagebreak
\begin{theorem}\label{Theorem-Higher-Centrality}
In a semi-abelian category, let $F$ be an $n$-cubic extension with direction $A$. Then the following are equivalent:
\begin{enumerate}
\item $F$ is central;
\item the $n$-cubic extension $\lb\kuub(F,I)\rb$ is a limit $n$-cube;
\item the morphism $\lb \pi^{I}\rb\colon \lb \bigboxvoid_{i\in n}\R{f_{i}}\rb \to \lb \bigboxdot_{i\in n}^{I}\R{f_{i}}\rb$ is an isomorphism;
\item $A$ is abelian and $\bigboxvoid_{i\in n}\R{f_{i}}\cong A\times \bigboxdot_{i\in n}^{I}\R{f_{i}}$:
we have a short exact sequence
\[
\xymatrix{0 \ar[r] & A \ar@{{ |>}->}[r] & \bigboxvoid_{i\in n}\R{f_{i}} \ar@{ >>}[r]^-{\pi^{I}} & \bigboxdot_{i\in n}^{I}\R{f_{i}} \ar[r] & 0}
\]
where $\pi^{I}$ is a product projection;
\end{enumerate}
for any, hence for all,~${I\subseteq n}$. Furthermore, when these conditions are satisfied, there is a unique splitting 
\[
\iota^{I}\colon \bigboxdot_{i\in n}^{I}\R{f_{i}}\to \bigboxvoid_{i\in n}\R{f_{i}}
\]
of $\pi^{I}$ which commutes with the given sections in the $n$-cubic extension $\kuub(F,n\setminus I)$ and the induced sections to the object $\bigboxdot_{i\in n}^{I}\R{f_{i}}$.
\end{theorem}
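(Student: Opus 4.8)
The plan is to prove (i)$\Leftrightarrow$(ii)$\Leftrightarrow$(iii)$\Leftrightarrow$(iv) for one fixed $I\subseteq n$; since condition (i) does not mention $I$, the clause ``for any, hence for all~$I$'' then follows automatically. The argument will go by induction on $n$, with the base case $n=1$ — where $\bigboxvoid_{i\in 1}\R{f_{0}}$ is the kernel pair of $f_{0}$, $\bigboxdot_{i\in 1}^{I}\R{f_{0}}=F_{0}$, and $\pi^{I}$ is one of its projections — being the theorem of Bourn and Gran recalled at the start of this section.

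I would first dispatch (iii)$\Leftrightarrow$(iv), which needs no induction. By the remarks preceding Lemma~\ref{Lemma-Diamond-Pullback}, the kernel of the one-cubic extension $\pi^{I}$ is the direction $A$, so there is always a short exact sequence $0\to A\to\bigboxvoid_{i\in n}\R{f_{i}}\to\bigboxdot_{i\in n}^{I}\R{f_{i}}\to 0$ with projection $\pi^{I}$. By Lemma~\ref{Lemma-Iso-Pullback}, $\lb\pi^{I}\rb$ is an isomorphism exactly when the Birkhoff naturality square of $\pi^{I}$ is a pullback, i.e.\ when $\pi^{I}$ is a trivial extension. Lemma~\ref{Lemma-ab-of-Square} gives that $\ab$ preserves $\bigboxdot_{i\in n}^{I}\R{f_{i}}$ and that $\ab\pi^{I}$ is a split epimorphism, hence (Lemma~\ref{Lemma-Split-Kernel-Product}) a product projection in $\Ab(\X)$. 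If $\pi^{I}$ is trivial, pulling this product decomposition back along the unit identifies $\bigboxvoid_{i\in n}\R{f_{i}}$ with $A\times\bigboxdot_{i\in n}^{I}\R{f_{i}}$ and, by comparing kernels, forces $A\cong\ab A$ to be abelian; conversely, when $A$ is abelian and $\pi^{I}$ is the product projection of $A\times\bigboxdot_{i\in n}^{I}\R{f_{i}}$, a direct computation of the Birkhoff square of $\pi^{I}$ (using $\ab(A\times-)=A\times\ab(-)$) shows it to be a pullback.

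Next I would treat (i)$\Leftrightarrow$(ii). By Subsection~\ref{Subsection-Tower} the $n$-cubic extension $\lb\kuub(F,I)\rb$ vanishes in every vertex but the initial one, where it equals the $n$-dimensional commutator of $\bigboxvoid_{i\in n}\R{f_{i}}$; hence it is a limit $n$-cube precisely when $\kuub(F,I)$ is a central $n$-cubic extension. Now $\kuub(F,I)$ is obtained from $F$ by the iterated kernel-pair construction of Subsection~\ref{constructing blokske}, under which centrality is preserved (kernel pair projections of central extensions are trivial, trivial extensions are pullback-stable, and kernel pairs of $n$-cubic extensions are again $n$-cubic extensions, by Corollary~3.10 in~\cite{EGVdL}); this gives (i)$\Rightarrow$(ii). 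Conversely, projecting an $n$-fold diamond onto its $I$-face is a regular epimorphism of $n$-cubic extensions $\kuub(F,I)\to F$, and since $\CExt^{n}_{\B}(\X)$ is closed under regular quotients in $\Ext^{n}(\X)$, centrality of $\kuub(F,I)$ forces centrality of $F$; this gives (ii)$\Rightarrow$(i).

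The remaining link, (i)$\Leftrightarrow$(iii), is where the induction does its real work. Writing $F$ as a square~\eqref{Double-Extension} of $(n-1)$-cubic extensions and setting $G=\lind d,c\rind\colon X\to D\times_{Z}C$, one uses Lemma~\ref{Lemma-L-pullbacks} and the construction of Subsection~\ref{constructing blokske} to re-express $\bigboxvoid_{i\in n}\R{f_{i}}$, $\bigboxdot_{i\in n}^{I}\R{f_{i}}$ and $\pi^{I}$, via repeated pullbacks along (compatibly) split epimorphisms, in terms of the analogous data for the $(n-1)$-cubic extension $G$ (and for $d$). Since $F$ central implies $G$ central (the reduction carried out in the proof of Lemma~\ref{Lemma-Direction-Limit}), the induction hypothesis applies and triviality of $\pi^{I}$ descends from dimension $n-1$; for the converse one argues symmetrically, feeding the product decomposition of the $(n-1)$-dimensional data back through the reduction and using Lemma~\ref{Lemma-Direction-Limit} to see that $\pi^{I}=l_{\kuub(F,n\setminus I)}$ is not only central but trivial. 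I expect this reduction — keeping track of how the faces of an $n$-dimensional diamond decompose into $(n-1)$-dimensional diamonds and verifying that the inductive hypothesis is invoked on the correct sub-extension — to be the main obstacle; everything else is a matter of the pullback and splitting lemmas already available. Finally, once (i)--(iv) are known equivalent, $\pi^{I}$ is trivial, so its Birkhoff square is a pullback; therefore any section of $\pi^{I}$ compatible with the sections of $\kuub(F,n\setminus I)$ is the pullback along the unit of a section of $\ab\pi^{I}$ with the matching compatibility, and there is a unique such section by Lemma~\ref{Lemma-ab-of-Square}, yielding the stated existence and uniqueness of $\iota^{I}$.
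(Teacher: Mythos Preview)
Your treatment of (iii)$\Leftrightarrow$(iv) and of the final uniqueness claim for $\iota^{I}$ is essentially the paper's argument and is fine. The problem is your handling of condition~(ii).

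You write that $\lb\kuub(F,I)\rb$ ``vanishes in every vertex but the initial one, where it equals the $n$-dimensional commutator of $\bigboxvoid_{i\in n}\R{f_{i}}$''. This is a misreading of the notation. In the theorem, $\lb-\rb$ is the \emph{zero-dimensional} bracket $X\mapsto[X,X]=\K{\eta_{X}}$ of~\eqref{Angular Bracket}, applied pointwise to the $n$-cube $\kuub(F,I)$. Thus $\lb\kuub(F,I)\rb$ is an $n$-cube whose vertex at $J\subseteq n$ is $[\kuub(F,I)_{J},\kuub(F,I)_{J}]$, and none of these vertices has any reason to vanish. You have confused this with the object $\lb-\rb_{\CExt^{n}_{\B}(\X)}$ of Subsection~\ref{Subsection-Tower}, which is indeed concentrated in the initial vertex; but that is a different functor. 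Consequently your reformulation ``(ii) holds iff $\kuub(F,I)$ is a central $n$-cubic extension'' is unsupported, and your (i)$\Rightarrow$(ii) and (ii)$\Rightarrow$(i) arguments, which rest on this reformulation, do not establish what is required.

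The paper's route for (i)$\Leftrightarrow$(ii) is quite different and is the step you are missing: one unwinds the definition of centrality level by level. Centrality of $F$ means the projections $\R{F}\rightrightarrows\dom F$ are trivial, i.e.\ the maps $\lb\R{F}\rb^{n-1}\rightrightarrows\lb\dom F\rb^{n-1}$ are isomorphisms; by Lemma~\ref{Lemma-Iso-Pullback} this is a pullback condition on squares built from $\lb-\rb^{n-2}$; iterating, one descends through $\lb-\rb^{n-2},\dots,\lb-\rb^{1}$ down to $\lb-\rb^{0}=\lb-\rb$, and the resulting $n$-cube is precisely $\lb\kuub(F,I)\rb$. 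This is where the genuine inductive content lives, and (ii)$\Leftrightarrow$(iii) is then immediate because $\lb\pi^{I}\rb$ is the comparison map to the limit of that cube (using that $\lb-\rb$ preserves the limit $\bigboxdot^{I}$, via Lemma~\ref{Lemma-ab-of-Square}). Your separate attempt at (i)$\Leftrightarrow$(iii) through the reduction to $G=\lind d,c\rind$ is vague as stated and, even if it could be made to work, would be duplicating this same descent in disguise.
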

\begin{proof}
First we show that (i) and (ii) are equivalent. The $n$-cubic extension $F$, considered as a morphism ${\dom F\to \cod F}$, is central if and only if either one of the projections ${\R{F}\to \dom F}$ is trivial. This, by definition of trivial extensions, occurs when the morphisms
\[
\xymatrix{\lb \R{F}\rb^{n-1} \ar@<.5ex>[r] \ar@<-.5ex>[r] & \lb \dom F\rb^{n-1}}
\]
are isomorphisms (see Subsection~\ref{Subsection-Tower} for more details). By Lemma~\ref{Lemma-Iso-Pullback} this happens when either one of the commutative squares in
\[
\xymatrix{\lb \RR{F}\rb^{n-2} \ar@<.5ex>[r] \ar@<-.5ex>[r] \ar@<.5ex>[d] \ar@<-.5ex>[d] & \lb \R{\dom F}\rb^{n-2} \ar@<.5ex>[d] \ar@<-.5ex>[d] \\
\lb \dom \R{F}\rb^{n-2} \ar@<.5ex>[r] \ar@<-.5ex>[r] & \lb \dom^{2}F\rb^{n-2}}
\]
is a pullback. This, in turn, is equivalent to either one of the commutative cubes in
\[
\xymatrix@!0@R=4em@C=6em{& \lb \Rthree{F}\rb^{n-2} \ar@{.>}@<.5ex>[dd] \ar@{.>}@<-.5ex>[dd] \ar@<.5ex>[rr] \ar@<-.5ex>[rr] \ar@<.5ex>[dl] \ar@<-.5ex>[dl] && \lb \RR{\dom F}\rb^{n-2} \ar@<.5ex>[dd] \ar@<-.5ex>[dd] \ar@<.5ex>[dl] \ar@<-.5ex>[dl] \\
\lb \R{\dom \R{F}}\rb^{n-2} \ar@<.5ex>[dd] \ar@<-.5ex>[dd] \ar@<.5ex>[rr] \ar@<-.5ex>[rr] && \lb \R{\dom^{2}F}\rb^{n-2} \ar@<.5ex>[dd] \ar@<-.5ex>[dd]\\
& \lb \dom \RR{F}\rb^{n-2} \ar@{.>}@<.5ex>[rr] \ar@{.>}@<-.5ex>[rr] \ar@{.>}@<.5ex>[dl] \ar@{.>}@<-.5ex>[dl] && \lb \dom \R{\dom F}\rb^{n-2} \ar@<.5ex>[dl] \ar@<-.5ex>[dl] \\
\lb \dom^{2} \R{F}\rb^{n-2} \ar@<.5ex>[rr] \ar@<-.5ex>[rr] && \lb \dom^{3}F\rb^{n-2}}
\]
being a limit cube. This process continues until we obtain a cube of dimension~$n$ whose vertices are brackets $\lb-\rb$ and whose edges are parallel pairs of arrows as in the diagrams above. This cube is precisely the $n$-fold equivalence relation $\lb \bigboxvoid_{i\in n}\R{f_{i}}\rb$, considered as a diagram in $\X$---compare with the construction in Subsection~\ref{constructing blokske}. As in Subsection~\ref{Chosen-cubes}, a choice of~${I\subseteq n}$ picks one of any two parallel arrows in this diagram in such a way that we obtain the $n$-cubic extension~$\lb\kuub(F,I)\rb$.

The equivalence between (ii) and (iii) is clear because $\lb\kuub(F,I)\rb$ is nothing but one of the cubes induced by choosing an $n$-fold arrow (making a choice of projections) in the $n$-fold equivalence relation $\lb \bigboxvoid_{i\in n}\R{f_{i}}\rb$ as in Subsection~\ref{Chosen-cubes}; so $\lb \pi^{I}\rb$ is an isomorphism if and only if this cube is a limit. The functor $\lb-\rb$ does indeed preserve the limit~$\bigboxdot_{i\in n}^{I}\R{f_{i}}$, since so does $\ab$ by Lemma~\ref{Lemma-ab-of-Square}.

Now we prove the equivalence between (iii) and (iv). Condition (iii) is equivalent to the square
\begin{equation}\label{Square-Blokskes}
\vcenter{\xymatrix{\bigboxvoid_{i\in n}\R{f_{i}} \ar@{ >>}[r]^-{\pi^{I}} \ar@{ >>}[d] & \bigboxdot^{I}_{i\in n}\R{f_{i}} \ar@{ >>}[d]\\
\ab(\bigboxvoid_{i\in n}\R{f_{i}}) \ar@{ >>}[r]_-{\ab\pi^{I}} & \ab(\bigboxdot^{I}_{i\in n}\R{f_{i}})}}
\end{equation}
being a pullback, which means that $\pi^{I}$ is a one-cubic trivial extension. Since its kernel is the abelian object $A$, the extension $\pi^{I}$ is a product projection if and only if it is a split epimorphism. By Lemma~\ref{Lemma-ab-of-Square}, the latter condition does indeed hold.

The final statement is again a consequence of Lemma~\ref{Lemma-ab-of-Square}: the needed morphism~$\iota^{I}$ is induced by the pullback \eqref{Square-Blokskes} and the splitting $\nu^{I}$ of $\ab\pi^{I}$ given by the lemma.
\end{proof}

In what follows we shall use this result to obtain one half of the equivalence between torsors and central extensions.

\begin{remark}\label{Remark-Full-3}
Note that the splitting $\iota^{I}$ of $\pi^{I}$ constructed in the proof above is natural in $F$, so that also the product decompositions (iv) are natural in the extension considered.
\end{remark}

\begin{remark}\label{Remark-Tomas-Centrality}
The proof of Theorem~\ref{Theorem-Higher-Centrality} shows that an $n$-cubic extension~$F$ is central precisely when, for any $I\subseteq n$, the induced $(n+1)$-cubic extension
\[
{\kuub(F,I)\to \ab(\kuub(F,I))}
\]
is a limit $(n+1)$-cube. In fact, these $(n+1)$-cubic extensions are part of the regular epimorphism of $n$-fold groupoids
\[
\eta_{\bigboxvoid_{i\in n}\R{f_{i}}}\colon{\bigboxvoid_{i\in n}\R{f_{i}}\to \ab(\bigboxvoid_{i\in n}\R{f_{i}})},
\]
which therefore is a discrete fibration if and only if $F$ is central. (The concept of \defn{discrete fibration} between higher-dimensional internal groupoids is the obvious extension of the one-fold groupoid case: any of its induced $n$-fold arrows must be a pullback. In the situation at hand this gives precisely the condition on the $(n+1)$-cubes ${\kuub(F,I)\to \ab(\kuub(F,I))}$ mentioned above.) In the article~\cite{Everaert-Gran-nGroupoids}, the authors study the Galois structure for $n$-fold groupoids in a semi-abelian category ($n$-cat-groups in $\Gp$, for instance~\cite{Loday}) induced by the reflection
\[
\vcenter{\xymatrix{{\Gpd^{n}(\X)} \ar@<1ex>[r]^-{\Pi^{n}_{0}} \ar@{}[r]|-{\perp} & \Dis^{n}(\X)\simeq \X \ar@<1ex>[l]^-{\supset}}}
\]
to $\X$ via the ``connected components'' functor to discrete $n$-fold groupoids. It turns out~\cite[Proposition~2.9]{Everaert-Gran-nGroupoids} that the central extensions with respect to this reflection are again the regular epimorphisms of internal $n$-fold groupoids which are discrete fibrations. Hence an $n$-cubic extension $F$ in $\X$ is central relative to~$\Ab(\X)$ if and only if the induced extension of $n$-fold groupoids~$\eta_{\bigboxvoid_{i\in n}\R{f_{i}}}$ is central relative to~$\X$.
\end{remark}

\subsection{Higher central extensions as higher-dimensional pregroupoids}\label{Maltsev}
The isomorphisms~\eqref{Iso-General-n} determine ``multiplications'' or ``compositions'' of $(n-1)$-di\-men\-sion\-al hyper-tetrahedra (or $n$-di\-men\-sion\-al hyper-triangles) in an $n$-cubic central extension, in the sense that any aggregation of hyper-tetrahedra in the shape of an $n$-fold diamond with a face missing ``composes'' to the missing face. That is to say, the composite morphism
\[
\xymatrix{p^{I}\colon\bigboxdot_{i\in n}^{I}\R{f_{i}} \ar[r]^-{\lind 0, 1\rind} & A\times \bigboxdot_{i\in n}^{I}\R{f_{i}} \ar[r]^-{\cong} & \bigboxvoid_{i\in n}\R{f_{i}} \ar[r]^-{\pr_{I}} & F_{n}}
\]
acts as a \emph{higher-dimensional Mal'tsev operation} or, more precisely, as a \emph{higher-dimensional pregroupoid structure} on $F$. Indeed, Proposition~\ref{Centrality-Sum} below implies that $p^{I}$ satisfies certain conditions which we could call \emph{higher-di\-men\-sion\-al Mal'tsev laws}. In higher degrees those algebraic properties of the $p^{I}$ still have to be further studied---for instance, what about associativity?---but we may already give a few examples.

In the two-dimensional case, $\delta=p^{\emptyset}(\alpha,\beta,\gamma)$ is the unique choice of~$\delta$ such that the projection $a=\pr_{A}(\alpha,\beta,\gamma,\delta)$ of the diamond $(\alpha,\beta,\gamma,\delta)$ on the direction~$A$ is zero. In this case we may think of $\delta$ as a composite $\gamma\beta^{-1}\alpha$. Furthermore, $p^{\emptyset}(\alpha,\alpha,\gamma)=\gamma$, since once $\alpha=\beta$ we have to take $\delta=\gamma$, as already explained  in Subsection~\ref{Degree two}. Proposition~\ref{Centrality-Sum} below gives us an alternative argument: there is no other choice possible for $\delta$ because $\pr_{A}(\alpha,\alpha,\gamma,\delta)$ has to be zero, and $\delta=\gamma$ is a \emph{valid} choice, since $\pr_{A}(\alpha,\alpha,\gamma,\gamma)=0$, so it is the \emph{uniquely valid} one.

Similarly, writing $\delta=p^{\emptyset}(a,b,c,d,\alpha,\beta,\gamma)$ for a configuration such as in Figure~\ref{Figure-Diamond}, we find
\[
\begin{cases}
d=p^{\emptyset}(a,b,c,d,a,b,c)\\
\alpha=p^{\emptyset}(a,b,b,a,\alpha,\beta,\beta)\\
\gamma=p^{\emptyset}(b,b,c,c,\beta,\beta,\gamma).
\end{cases}
\]

\begin{proposition}\label{Centrality-Sum}
In a semi-abelian category, let $F$ be an $n$-cubic central extension with direction $A$. Then in any product diagram
\[
\xymatrix{0 \ar[r] & A \ar@<-.5ex>@{{ |>}->}[r]_-{\ker\pi^{I}} & \bigboxvoid_{i\in n}\R{f_{i}} \ar@<-.5ex>@{ >>}[r]_-{\pi^{I}} \ar@{.>}@<-.5ex>[l]_-{\pr_{A}} & \bigboxdot_{i\in n}^{I}\R{f_{i}} \ar[r] \ar@{.>}@<-.5ex>[l]_-{\iota^{I}} & 0}
\]
induced by Theorem~\ref{Theorem-Higher-Centrality}, the projection $\pr_{A}$ is an alternating sum
\begin{equation}\label{Sum-Formula}
\sum_{J\subseteq n}(-1)^{|J|}\eta_{F_{n}}\comp \pr_{J}
\end{equation}
where $\pr_{J}\colon{\bigboxvoid_{i\in n}\R{f_{i}}\to F_{n}}$ sends a diamond to its $J$-face.
\end{proposition}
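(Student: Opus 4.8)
The plan is to identify the map $\pr_A$ in two stages: first reduce the problem to a purely simplicial/combinatorial computation in the abelian category $\Ab(\X)$ by applying the abelianisation functor, then check the identity on that linearised level using the description of the splitting $\iota^I$ provided by Theorem~\ref{Theorem-Higher-Centrality}. Since $\pr_A$ is the retraction of $\ker\pi^I$ in the product decomposition, it is uniquely characterised by the two equations $\pr_A\comp\ker\pi^I = 1_A$ and $\pr_A\comp\iota^I = 0$ (together they say that $(\pr_A,\pi^I)$ is the pair of product projections). So the strategy is: define $s\colon \bigboxvoid_{i\in n}\R{f_i}\to A$ to be the (unique) factorisation through $A=\K{\eta_{F_n}}$ of the morphism $\sum_{J\subseteq n}(-1)^{|J|}\eta_{F_n}\comp\pr_J\colon\bigboxvoid_{i\in n}\R{f_i}\to \ab F_n$, and then verify these two equations for $s$ in place of $\pr_A$; uniqueness of the retraction then forces $s=\pr_A$.

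First I would check that the alternating sum~\eqref{Sum-Formula} really does factor through $A$. A diamond in the kernel of $\pi^I$ has all faces zero except the $I$-face, which moreover has zero boundary; so on $A=\K{\pi^I}$ only the term $(-1)^{|I|}\eta_{F_n}\comp\pr_I$ survives, and (up to the sign $(-1)^{|I|}$, which is why the formula is stated without reference to $I$ — it is independent of $I$) this is $\eta_{F_n}$ restricted to the $I$-face, which is $\eta_A$. Since $A$ is abelian, $\eta_A=1_A$, so the sum composed with $\ker\pi^I$ equals $\pm 1_A$; absorbing the sign we get exactly $1_A$, i.e.\ equation one holds, and in particular the sum does land in $A$. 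For the second equation, I would use that $\iota^I$ is the splitting which is \emph{compatible with the degeneracies} of $\kuub(F,n\setminus I)$, in the sense made precise in Theorem~\ref{Theorem-Higher-Centrality} and Lemma~\ref{Lemma-ab-of-Square}: the missing $I$-face of $\iota^I(\xi)$ is obtained from the remaining faces by the ``composed splitting'' (a higher Mal'tsev/pregroupoid operation). The point is that this makes every face of $\iota^I(\xi)$ — including the $I$-face — a composite, via the reflexivity morphisms, of faces indexed by sets $J\subsetneq I$ and $J\supsetneq I$ in a way that, after applying $\eta_{F_n}$ and summing with signs, telescopes to zero. Concretely, in $\Ab(\X)$ the composed splitting is literally the alternating-sum formula for $\del_{n}$ on simplicial kernels (the same $\del_{n+1}=(-1)^n\sum(-1)^i\pr_i$ pattern appearing in $\KK(Z,A,n)$), so $\sum_J(-1)^{|J|}\pr_J$ annihilates its image by the standard identity $\partial\comp\partial = 0$ / inclusion–exclusion cancellation.

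I expect the main obstacle to be making precise the claim that "$\iota^I$ is given by the composed Mal'tsev operation, hence the alternating sum vanishes on its image." The cleanest route is probably to work entirely inside $\Ab(\X)$: by Lemma~\ref{Lemma-ab-of-Square}, $\ab$ preserves $\bigboxdot_{i\in n}^I\R{f_i}$ and produces the splitting $\nu^I$ of $\ab\pi^I$ explicitly, and the proof of Theorem~\ref{Theorem-Higher-Centrality} builds $\iota^I$ from $\nu^I$ via the pullback square~\eqref{Square-Blokskes}. Because $\eta_{F_n}\comp\pr_J$ factors through $\ab$ for every $J$, verifying $(\sum_J(-1)^{|J|}\eta_{F_n}\comp\pr_J)\comp\iota^I = 0$ is the same as verifying the corresponding identity for $\nu^I$ in the abelian category — and there one may argue formally "as in $\Set$" (as is done repeatedly in the paper, e.g.\ in the proofs of Lemma~\ref{Lemma coskeleton pullback} and Lemma~\ref{Lemma-DeltaLambda-Square}), reducing to the elementary fact that the simplicial-boundary alternating sum vanishes on a tuple all of whose entries are determined, by reflexivity/degeneracy, from a strictly smaller set of entries. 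Once both equations $\pr_A\comp\ker\pi^I=1_A$ and "$\pr_A$-candidate" $\comp\,\iota^I=0$ are in hand, uniqueness of the retraction in the split short exact sequence of Theorem~\ref{Theorem-Higher-Centrality}(iv) gives the result. I would also remark (cf.\ Remark~\ref{Remark-Full-3}) that, since $\iota^I$ is natural in $F$, so is the resulting $\pr_A$, which is what is needed for the applications in the following sections.
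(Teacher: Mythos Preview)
Your proposal has a concrete error and a logical gap, both at the step where you claim the alternating sum factors through $A$.

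First, the identification $A=\K{\eta_{F_{n}}}$ is wrong: $\K{\eta_{F_{n}}}=[F_{n},F_{n}]$, whereas the direction is $A=\bigcap_{i\in n}\K{f_{i}}$. Worse, the composite $A\hookrightarrow F_{n}\xrightarrow{\eta_{F_{n}}}\ab F_{n}$ is not a monomorphism in general, so there is no ``unique factorisation through $A$'' of a map into $\ab F_{n}$. (Take $n=1$ and $f_{0}\colon G\to G/Z(G)$ for $G$ the Heisenberg group: then $A=Z(G)=[G,G]$ and the map $A\to\ab G$ is zero, so the formula \eqref{Sum-Formula} as a morphism to $\ab F_{1}$ is identically zero and tells you nothing about $\pr_{A}$.) Consequently, your inference ``the sum composed with $\ker\pi^{I}$ equals $\pm 1_{A}$, hence the sum lands in $A$'' is a non sequitur: knowing that a map restricts to $A$ on a subobject does not make it factor through $A$ globally.

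The paper avoids this by working one level up, in $\ab\bigl(\bigboxvoid_{i\in n}\R{f_{i}}\bigr)$ rather than in $\ab F_{n}$. There $A$ genuinely is the kernel of $\ab\pi^{I}$, and the task becomes: show that a suitable alternating sum of morphisms $\bigboxvoid\to\ab(\bigboxvoid)$ vanishes after each projection $\ab\pr^{i}_{\delta_{n\setminus I}(i)}$. The paper does this by introducing auxiliary endomorphisms $\xi_{(F,I,J)}$ of $\bigboxvoid$ (project to a subdiamond, re-include via reflexivity) and observing the elementary cancellation $\pr^{i}\comp\xi_{(F,I,J)}=\pr^{i}\comp\xi_{(F,I,J\cup\{i\})}$ for $i\notin J$, so that terms pair off with opposite signs. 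No knowledge of $\iota^{I}$ is used; Lemma~\ref{Lemma-Split-Kernel-Product} then finishes the argument. Your alternative route via $s\comp\iota^{I}=0$ would require an explicit description of $\pr_{I}\comp\iota^{I}$, which Theorem~\ref{Theorem-Higher-Centrality} does not directly provide, and your appeal to ``$\partial\comp\partial=0$ / inclusion--exclusion'' does not bridge that gap.
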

\begin{proof}
The idea behind the proof may be illustrated as follows in dimension two. (Here we let $F$ be the two-cubic extension from Diagram~\eqref{Double-Extension} to simplify notations.) When the calculation
\[
\vcenter{\xymatrix@1@!0@=4em{\gamma \ar@{.}[r] \ar@{.}[d]|-{1} & \beta \ar@{.}[d] \\
\delta \ar@{.}[r]|-{0} & \alpha}}
-
\vcenter{\xymatrix@1@!0@=4em{\gamma \ar@{.}[r] \ar@{.}[d]|-{1} & \beta \ar@{.}[d] \\
\gamma \ar@{.}[r]|-{0} & \beta}}
+
\vcenter{\xymatrix@1@!0@=4em{\beta \ar@{.}[r] \ar@{.}[d]|-{1} & \beta \ar@{.}[d] \\
\beta \ar@{.}[r]|-{0} & \beta}}
-
\vcenter{\xymatrix@1@!0@=4em{\beta \ar@{.}[r] \ar@{.}[d]|-{1} & \beta \ar@{.}[d] \\
\alpha \ar@{.}[r]|-{0} & \alpha}}
=
\vcenter{\xymatrix@1@!0@R=4em@C=6em{0 \ar@{.}[r] \ar@{.}[d]|-{1} & 0 \ar@{.}[d] \\
\delta-\gamma+\beta-\alpha \ar@{.}[r]|-{0} & 0,}}
\]
in which we denote the equivalence classes in the quotient by representative elements, is made in the abelian object~$\ab(\R{d}\square\R{c})$, we see that the result belongs to the kernel $A$ of the projection $\ab\pi^{\emptyset}$. Indeed, the pullback $\R{d}\times_{X}\R{c}$ is preserved by the functor $\ab$, and the projections to~$\ab \R{d}$ and~$\ab\R{c}$ send the above sum to zero. Writing $\eta_{\boxvoid}=\eta_{\R{d}\square\R{c}}$, this gives us the morphism
\[
\eta_{\boxvoid}\comp\pr_{\emptyset}\;-\;\eta_{\boxvoid}\comp\pr_{\{1\}}\;+\;\eta_{\boxvoid}\comp\pr_{2}\;-\;\eta_{\boxvoid}\comp\pr_{1}\colon{\R{d}\square\R{c}\to A},
\]
clearly a splitting for $\ker \pi^{\emptyset}$; hence by Lemma~\ref{Lemma-Split-Kernel-Product} it is the needed product projection. Note that the terms in this sum are obtained by projecting a diamond $(\alpha,\beta,\gamma,\delta)$ to a certain subdiamond, and then considering it again as a two-fold diamond via reflexivity. In the first term we  do not project at all, in the second term we project to $(\gamma,\beta)$ in $\R{f_{0}}$, in the fourth we project to $(\alpha,\beta)$ in $\R{f_{1}}$, and in the third term we project all the way to $F_{2}$.

For general $n$, let us again consider the commutative square~\eqref{Square-Blokskes}---which is a pullback by centrality of $F$---and the induced kernels:
\[
\xymatrix{A \ar@{=}[d] \ar@{{ |>}->}[r] & \bigboxvoid_{i\in n}\R{f_{i}} \ar@{ >>}[r]^-{\pi^{I}} \ar@{ >>}[d]_-{\eta_{\bigboxvoid_{i\in n}\R{f_{i}}}} \pullback & \bigboxdot^{I}_{i\in n}\R{f_{i}} \ar@{ >>}[d]^-{\eta_{\bigboxdot^{I}_{i\in n}\R{f_{i}}}}\\
A \ar@{{ |>}->}[r] & \ab(\bigboxvoid_{i\in n}\R{f_{i}}) \ar@{ >>}[r]_-{\ab\pi^{I}} & \ab(\bigboxdot^{I}_{i\in n}\R{f_{i}})}
\]
Since $\ab(\bigboxdot^{I}_{i\in n}\R{f_{i}})=\bigboxdot^{I}_{i\in n}\ab\R{f_{i}}$ by Lemma~\ref{Lemma-ab-of-Square}, the abelian object $A$ being the kernel of $\ab \pi^{I}$ implies that it is the direction of $\ab(\kuub(F,n\setminus I))$, which means~${A=\bigcap_{i\in n}\K{\ab\pr^{i}_{\delta_{n\setminus I}(i)}}}$. In order to define a morphism with codomain~$A$, we now only need to define a morphism with codomain~$\ab(\bigboxvoid_{i\in n}\R{f_{i}})$ which becomes zero when composed with the
\[
\ab\pr^{i}_{\delta_{n\setminus I}(i)}\colon\ab(\bigboxvoid_{j\in n}\R{f_{j}})\to \ab(\bigboxvoid_{j\in n\setminus \{i\}}\R{f_{j}}).
\]
We shall use this procedure to define a splitting for $\ker\pi^{I}$ as an alternating sum, which will then automatically be the needed product projection by Lemma~\ref{Lemma-Split-Kernel-Product}.

Recall the notation introduced in Subsection~\ref{Chosen-cubes}. Then, for any $J\subseteq n$, write~$I\ominus J$ for the symmetric difference $(I\cup J)\setminus (I\cap J)$ of $I$ and $J$, and put
\[
\xi_{(F,I,J)}=\kuub(F,J)^{n\setminus (I\ominus J)}_{n}\comp\kuub(F,J)^{n}_{n\setminus (I\ominus J)}\colon{\bigboxvoid_{i\in n}\R{f_{i}}\to \bigboxvoid_{i\in n}\R{f_{i}}}.
\]
This formalises the process of ``projecting to a subdiamond, then including again via reflexivity''. Now note that, given any element $x$ of $\bigboxvoid_{i\in n}\R{f_{i}}$, the $I$-entry of $\xi_{(F,I,J)}(x)$ is $x_{J}$. Furthermore, after projecting in any direction $i\in n$ onto $\R{f_{i}}$, every morphism $\pr^{i}_{\delta_{n\setminus I}(i)}\comp \xi_{(F,I,J)}$ occurs twice: indeed
\[
\pr^{i}_{\delta_{n\setminus I}(i)}\comp \xi_{(F,I,J)}=\pr^{i}_{\delta_{n\setminus I}(i)}\comp\xi_{(F,I,J\cup \{i\})}
\]
when $i\not\in J$. Two such terms will cancel each other when the alternating sum below is composed with $\ab\pi^{I}$. Hence the induced morphism
\[
\sum_{J\subseteq n}(-1)^{|J|}\eta_{(\bigboxvoid_{i\in n}\R{f_{i}})}\comp \xi_{(F,I,J)}\colon\bigboxvoid_{i\in n}\R{f_{i}}\to \ab(\bigboxvoid_{i\in n}\R{f_{i}})
\]
satisfies the conditions required to lift over $A$---that is to say, it becomes zero when we compose it with $\ab\pi^{I}$. Its $I$-entry is precisely the needed formula~\eqref{Sum-Formula}, while the other entries are zero. In particular, the morphism~\eqref{Sum-Formula} does indeed split the kernel of $\pi^{I}$.
\end{proof}

Note that the formula~\eqref{Sum-Formula} for the projection $\pr_{A}$ is independent of the chosen index~$I\subseteq n$.

When $n=1$, Proposition~\ref{Centrality-Sum} reduces to a well-known property of (one-cubic) central extensions (see~\cite{Bourn-Gran}): if $f\colon{X\to Z}$ is central and $x_{0}$, $x_{1}\colon {W\to X}$ are such that~$f\comp x_{0}=f\comp x_{1}$, then they induce a unique morphism~${x_{1}-x_{0}\colon{W\to A}}$ to the kernel $A$ of $f$ such that $x_{0}$ and ${x_{1}-x_{0}}$ together determine~$x_{1}$.

\section{Torsors and centrality}\label{Section-Torsors-and-Centrality}
We analyse the concept of torsor from the point of view of centrality of higher extensions. We prove that a truncated simplicial resolution of an object $Z$ is a torsor of $Z$ by an abelian object $A$ if and only if the underlying extension is central with direction $A$ (Theorem~\ref{Theorem-Torsor-Equivalence}; one implication is Proposition~\ref{Proposition-Central-then-Torsor}, the other Proposition~\ref{Proposition-Torsor-then-Central}).

Let $Z$ be an object and $(A,\xi)$ a $Z$-module in a semi-abelian category $\X$. Recall from Subsection~\ref{Torsors} that an $n$-torsor of $Z$ by $(A,\xi)$ is an augmented simplicial object $\TT$ together with a simplicial morphism $\tt\colon {\TT\to \KK((A,\xi),n)}$ such that
\begin{enumerate}
\item[(T1)] $\tt$ is a fibration which is exact from degree $n$ on;
\item[(T2)] $\TT\cong \Cosk_{n-1}\TT$;
\item[(T3)] $\TT$ is a resolution.
\end{enumerate}

\subsection{Why extensions?}
Condition (T2) in the definition of $n$-torsor means that (the simplicial object-part $\TT$ of) an $n$-torsor $(\TT,\tt)$ \emph{is} the ${(n-1)}$-truncated simplicial object $T=\trunc_{n-1} \TT$ (Subsection~\ref{Truncations}), in the sense that this is the only information~$\TT$ contains. Its initial object is $T_{n}=\TT(n)=\TT_{n-1}$. Condition (T3) means that the underlying $n$-fold arrow of $T$ is an extension (Subsection~\ref{Resolutions}).

\subsection{Why trivial actions?}
We shall prove that for an $n$-torsor $(\TT,\tt)$ of an object~$Z$ by a $Z$-module $(A,\xi)$ in a semi-abelian category, the action $\xi$ is trivial if and only if the induced one-cubic extension
\[
\lind \del_{i}\rind_{i}=l_{T}\colon {T_{n}=\TT_{n-1}\to \cycle(\TT,n-1)=\L T}
\]
is central with respect to abelianisation. In other words, an $n$-torsor $(\TT,\tt)$ has a trivial action if and only if
\[
\Bigl[\bigcap_{i\in n} \R{\del_{i}},\nabla_{T_{n}}\Bigr]=\Delta_{T_{n}}
\qquad
\text{or, equivalently,}
\qquad
\Bigl[\bigcap_{i\in n} \K{\del_{i}},T_{n}\Bigr]=0;
\]
see Example~\ref{Example-Dimension-One}. This extends Proposition~3.3 in~\cite{RVdL} to higher dimensions. It also explains why only cohomology \emph{with trivial coefficients} can ever classify higher central extensions: this commutator condition is part of the centrality by Lemma~\ref{Lemma-Direction-Limit}.

\begin{proposition}\label{Proposition-Trivial-Action}
In a semi-abelian category, consider an object $Z$ and a $Z$"~mod\-ule~$(A,\xi)$. For any $n$-torsor $(\TT,\tt)$ of $Z$ by $(A,\xi)$, the kernel of
\[
\lind \del_{i}\rind_{i}=l_{T}\colon {T_{n}\to \cycle(\TT,n-1)=\L T}
\]
is $A$, and the following conditions are equivalent:
\begin{enumerate}
\item the action $\xi$ is trivial;
\item the one-cubic extension $l_{T}$ is central;
\item for all $i\in n$ we have $\cycle(\TT,n)\cong A\times \horn^{i}(\TT,n)$; more precisely,
\[
\widehat\del_{i}\colon \cycle(\TT,n)\to \horn^{i}(\TT,n)
\]
is a product projection with kernel $A$.
\end{enumerate}
\end{proposition}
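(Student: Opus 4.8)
The plan is to first identify the kernel of $l_{T}$, and then to derive all three equivalences by passing everything through the object $\cycle(\TT,n)$, which by~(T2) is canonically identified with the degree-$n$ object $\TT_{n}$ via $\lind\del_{i}\rind_{i}$. First, for the kernel: by Lemma~\ref{Lemma-DeltaLambda-Square}, for each $i\in n$ there is a pullback square with top edge $\widehat{\del}_{i}\colon\cycle(\TT,n)\to\horn^{i}(\TT,n)$, left edge the simplicial kernel projection $\del_{i}\colon\cycle(\TT,n)\to\TT_{n-1}=T_{n}$, and bottom edge $l_{T}$; hence $\del_{i}$ restricts to an isomorphism $\K{\widehat{\del}_{i}}\to\K{l_{T}}$. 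On the other hand~(T1) says that $\lind\widehat{\del}_{i},\tt_{n}\rind\colon\TT_{n}\to\horn^{i}(\TT,n)\times_{\smallhorn^{i}(\KK((A,\xi),n),n)}\KK((A,\xi),n)_{n}$ is an isomorphism; since every face of $\KK((A,\xi),n)$ below degree~$n$ is the identity of~$Z$, one computes $\horn^{i}(\KK((A,\xi),n),n)=Z$ and identifies the corresponding horn map of $\KK((A,\xi),n)$ with the bundle projection $f\colon(A,\xi)\rtimes Z\to Z$, whose kernel is~$A$. As pullbacks preserve kernels, $\tt_{n}$ then restricts to an isomorphism $\K{\widehat{\del}_{i}}\to A$, and composing the two gives the desired $\K{l_{T}}=A$.

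Next I would reduce the statement to a condition on $\cycle(\TT,n)=\TT_{n}$. The face $\del_{i}\colon\cycle(\TT,n)\to T_{n}$ is split by a degeneracy, hence a regular epimorphism, and it carries the normal subobject $\K{\widehat{\del}_{i}}$ isomorphically onto $\K{l_{T}}$ and the whole object onto $T_{n}$; since regular epimorphisms preserve binary Huq commutators, $[\K{l_{T}},T_{n}]=\del_{i}\bigl([\K{\widehat{\del}_{i}},\cycle(\TT,n)]\bigr)$. Moreover $[\K{\widehat{\del}_{i}},\cycle(\TT,n)]\leq\K{\widehat{\del}_{i}}$ (a commutator of a normal subobject with the ambient object lies inside that subobject), while $\K{\widehat{\del}_{i}}\cap\K{\del_{i}\colon\cycle(\TT,n)\to T_{n}}=0$ (a cycle all of whose faces vanish is zero). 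Together these make $[\K{l_{T}},T_{n}]=0$ equivalent to $[\K{\widehat{\del}_{i}},\cycle(\TT,n)]=0$; by Example~\ref{Example-Dimension-One} this says that $l_{T}$ is central if and only if the normal subobject $A=\K{\widehat{\del}_{i}}$ is central in $\cycle(\TT,n)=\TT_{n}$.

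It then remains to read off the module structure. Combining~(T1) with the computation above, $\TT_{n}$ is the pullback $\horn^{i}(\TT,n)\times_{Z}\bigl((A,\xi)\rtimes Z\bigr)$ along the structure morphism $\theta_{i}\colon\horn^{i}(\TT,n)\to Z$; that is, $\TT_{n}\cong(A,\theta_{i}^{*}\xi)\rtimes\horn^{i}(\TT,n)$ for the restricted module $\theta_{i}^{*}\xi$, with $A=\K{\widehat{\del}_{i}}$ as its kernel part and $\widehat{\del}_{i}$ as the bundle projection. Centrality of this kernel part in the semidirect product is exactly triviality of the action $\theta_{i}^{*}\xi$, equivalently the assertion that $\widehat{\del}_{i}$ is a product projection $\TT_{n}\cong A\times\horn^{i}(\TT,n)$ of kernel $A$. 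This yields $\text{(ii)}\Leftrightarrow\text{(iii)}$; and $\text{(i)}\Rightarrow\text{(iii)}$ is immediate, since for $\xi=\tau$ trivial the object $\KK(Z,A,n)_{n}$ is $A\times Z$ and pulling the projection $\pr_{Z}$ back along $\theta_{i}$ again splits off~$A$. For the converse $\text{(iii)}\Rightarrow\text{(i)}$---i.e.\ $\theta_{i}^{*}\xi$ trivial $\Rightarrow$ $\xi$ trivial---I would invoke that $\TT$ is a resolution, whence $\theta_{i}$ is a regular epimorphism (it is assembled from faces of $\TT$, all of which are regular epimorphisms here), and that an action whose restriction along a regular epimorphism is trivial must itself be trivial.

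The main obstacle lies in the two computational steps: correctly identifying the low-degree structure of $\KK((A,\xi),n)$---its degree-$n$ horn object being~$Z$ and its degree-$n$ faces being the bundle projection---so that~(T1) genuinely produces $\TT_{n}\cong(A,\theta_{i}^{*}\xi)\rtimes\horn^{i}(\TT,n)$, and then, for $\text{(iii)}\Rightarrow\text{(i)}$, checking that $\theta_{i}$ is a regular epimorphism and that triviality of actions descends along it. Everything else is elementary: the inclusion $[K,X]\leq K$ for a normal subobject $K$ of $X$, preservation of binary Huq commutators by regular epimorphisms, and the one-dimensional characterisation of central extensions recalled in Example~\ref{Example-Dimension-One}.
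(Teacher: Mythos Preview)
Your argument is correct, but it takes a more computational route than the paper. Both proofs begin with the same two pullback squares: the one from Lemma~\ref{Lemma-DeltaLambda-Square} relating $\widehat\del_{i}$ to $l_{T}$, and the one coming from the exactness in~(T1) relating $\widehat\del_{i}$ to the bundle projection $p\colon(A,\xi)\rtimes Z\to Z$. From there, the paper simply invokes twice the Galois-theoretic fact that central extensions are preserved and reflected by pullbacks of extensions along extensions. This gives a single chain
\[
\text{$\xi$ trivial} \;\Longleftrightarrow\; \text{$p$ central} \;\Longleftrightarrow\; \text{$\widehat\del_{i}$ central} \;\Longleftrightarrow\; \text{$l_{T}$ central},
\]
and since $\widehat\del_{i}$ is a split epimorphism (as the pullback of the split epimorphism $p$) with abelian kernel $A$, its centrality is equivalent to its being a product projection, which is~(iii). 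No commutator manipulation is needed.

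Your approach instead transfers the Huq condition $[\K{l_{T}},T_{n}]=0$ to $[\K{\widehat\del_{i}},\cycle(\TT,n)]=0$ via the split epimorphism $\del_{i}$, using preservation of commutators under regular images together with the observation $\K{\widehat\del_{i}}\cap\K{\del_{i}}=0$; you then read off the semidirect-product structure on $\TT_{n}$ and argue descent of triviality of actions along the regular epimorphism $\theta_{i}$. All of this is valid, but the commutator transfer and the descent step are precisely what the pullback-reflection principle packages more economically. In short: same two squares, same identification of the kernel; the paper stays at the level of categorical centrality throughout, whereas you unpack it into Huq commutators and module-theoretic language.
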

\begin{proof}
For any $i\in n$, Lemma~\ref{Lemma-DeltaLambda-Square} tells us that the square
\[
\xymatrix{\cycle(\TT,n) \ar[r]^-{\widehat\del_{i}} \ar[d]_-{\del_{i}} & \horn^{i}(\TT,n) \ar[d] \\
T_{n} \ar[r]_-{\lind \del_{i}\rind_{i}} & \cycle(\TT,n-1)}
\]
is a pullback. Note that all its arrows are regular epimorphisms: the morphism~$\del_{i}$ as any split epimorphism; $\lind\del_{i}\rind_{i}$ since $\TT$ is a resolution; and $\widehat\del_{i}$ either by the Kan property, which all simplicial objects in a semi-abelian category have, or as a pullback of~$\lind\del_{i}\rind_{i}$. We see that the kernel of $\lind \del_{i}\rind_{i}$ is isomorphic to the kernel of~$\widehat\del_{i}$ (Lemma~\ref{Lemma-Iso-Pullback}), and furthermore $\lind \del_{i}\rind_{i}$ is central if and only if so is $\widehat\del_{i}$---indeed, central extensions are preserved and reflected by pullbacks of extensions along extensions. Since~$(\TT,\tt)$ is an $n$-torsor, also the square
\[
\xymatrix{\cycle(\TT,n) \ar@<-.5ex>[r]_-{\widehat\del_{i}} \ar[d]_-{\lind \varsigma,\del_{0}^{n+1}\rind} & \horn^{i}(\TT,n) \ar@{.>}@<-.5ex>[l] \ar[d]^{\del_{0}^{n}} \\
(A,\xi) \rtimes Z \ar@<-.5ex>[r]_-{p} & Z \ar@<-.5ex>[l]_-{s}}
\]
is a pullback, by the exact fibration property. This already proves that the kernel of $\lind \del_{i}\rind_{i}$ is $A$ (again Lemma~\ref{Lemma-Iso-Pullback}). Note that a split epimorphism with abelian kernel represents a trivial action if and only if it is a product projection, if and only if it is a trivial extension, if and only if it is a central extension. Again using that central extensions are preserved and reflected by pullbacks of extensions along extensions we obtain the claimed result.
\end{proof}

Hence, from now on, we shall only have to consider torsors of $Z$ by a trivial module $(A,\tau)$---we called them \emph{$n$-torsors of $Z$ by $A$} in Subsection~\ref{Torsors}---and restrict our cohomology theory accordingly. In this case, a torsor ``looks'' as follows:
\begin{equation}\label{torsordiagram}
\resizebox{.9\textwidth}{!}{\mbox{$
\vcenter{\xymatrix@=45pt{
\cycle(\TT,n+1) \ar[d]_{\cdots\quad\lind\lind\varsigma\circ\del_{i}\rind_{i},\del_{0}^{n+2}\rind} \ar@<2.33ex>[r] \ar@<1.16ex>[r] \ar@<-2.33ex>[r]^-{\vdots} & \cycle(\TT,n) \ar[d]^{\lind\varsigma,\del_{0}^{n+1}\rind} \ar@<1.75ex>[r] \ar@<-1.75ex>[r]^-{\vdots} & \TT_{n-1} \ar[d]^-{\del_{0}^{n}} \ar@<1.75ex>[r] \ar@<-1.75ex>[r]^-{\vdots} & \TT_{n-2} \ar[d]^-{\del_{0}^{n-1}} \ar@{}[r]|-{\cdots} & \TT_{0} \ar[d]^-{\del_{0}} \ar[r]^-{\del_{0}} & \TT_{-1} \ar@{=}[d]\\
A^{n+1} \times Z \ar@<2.33ex>[r]^-{\del_{n+1}\times 1_{Z}} \ar@<1.16ex>[r]|-{\pr_{n}\times 1_{Z}} \ar@<-2.33ex>[r]_-{\pr_{0}\times 1_{Z}}^-{\vdots} & A \times Z \ar@<1.75ex>[r]^-{\pr_{Z}} \ar@<-1.75ex>[r]_-{\pr_{Z}}^-{\vdots} & Z \ar@{=}@<1.75ex>[r] \ar@{=}@<-1.75ex>[r]^-{\vdots} & Z \ar@{}[r]|-{\cdots} & Z \ar@{=}[r] & Z}}
$}}
\end{equation}

\begin{remark}\label{Naturality-Product-Decomposition}
It is clear from the proof that the product decomposition (iii) is natural in $(\TT,\tt)$, so that any morphism of torsors is compatible with the induced product decompositions.
\end{remark}

\begin{lemma}\label{Lemma-Direction-Cycle}
For any simplicial resolution $\XX$, the kernel of any induced regular epimorphism $\widehat\del_{i}\colon{\cycle(\XX,n)\to \horn^{i}(\XX,n)}$ is the direction $A$ of the underlying $n$-cubic extension $X$.
\end{lemma}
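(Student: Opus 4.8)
The statement to prove is Lemma~\ref{Lemma-Direction-Cycle}: for any simplicial resolution $\XX$, the kernel of each induced regular epimorphism $\widehat\del_{i}\colon{\cycle(\XX,n)\to \horn^{i}(\XX,n)}$ is the direction $A$ of the underlying $n$-cubic extension $X$. The plan is to exploit the pullback square of Lemma~\ref{Lemma-DeltaLambda-Square}, which is available here since $\X$ is in particular a finitely complete category. That square reads
\[
\vcenter{\xymatrix{\cycle(\XX,n) \ar[r]^-{\widehat\del_{i}} \ar[d]_-{\del_{i}} & \horn^{i}(\XX,n) \ar[d] \\
\XX_{n-1} \ar[r]_-{\lind \del_{j}\rind_{j}} & \cycle(\XX,n-1)}}
\]
and is a pullback. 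The horizontal arrow at the bottom is $l_{X}=\lind\del_{j}\rind_{j}\colon X_{n}=\XX_{n-1}\to \L X=\cycle(\XX,n-1)$, which is a regular epimorphism because $\XX$ is a resolution (Subsection~\ref{Resolutions}); consequently $\widehat\del_{i}$ is a regular epimorphism as a pullback of it, as already noted in the statement.

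First I would invoke Lemma~\ref{Lemma-Iso-Pullback}: in a semi-abelian category, given a commutative square with short exact rows in which the downward maps sit over a pullback square of regular epimorphisms, the kernels agree. More precisely, applying the pullback square above to the ``kernel comparison'', the kernel of $\widehat\del_{i}$ is isomorphic (via $\del_{i}$) to the kernel of $l_{X}=\lind\del_{j}\rind_{j}$. So it remains to identify $\K{l_{X}}$ with the direction $A$ of $X$. But this is exactly the content of Lemma~\ref{Direction-as-Kernel} together with the definition of direction in Subsection~\ref{Directions}: for the $n$-cubic extension $X$ underlying any $n$-fold extension $E$ of the relevant object, $\D_{(n,Z)}E=\K{l_{X}}=\bigcap_{j\in n}\K{\del_{j}}$, and this is precisely $A$. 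Hence $\K{\widehat\del_{i}}\cong A$, naturally in $i$ (the isomorphisms being induced by the various $\del_{i}$).

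I expect the only genuine subtlety to be a bookkeeping one: one should make sure that ``the direction $A$ of the underlying $n$-cubic extension $X$'' is unambiguous here, i.e.\ that the simplicial structure pins down which iterated kernel is meant, and that the identification $\L X \cong \cycle(\XX,n-1)$ (stated in the paragraph on simplicial kernels, $\cycle(\XX,n)=\L(\trunc_n\XX)$) is being used in the form that makes $l_{X}$ literally equal to $\lind\del_{j}\rind_{j}$. Once that is acknowledged, the proof is just the chain: Lemma~\ref{Lemma-DeltaLambda-Square} gives the pullback; $\XX$ being a resolution makes the bottom map a regular epimorphism; Lemma~\ref{Lemma-Iso-Pullback} transports kernels across the pullback; and Lemma~\ref{Direction-as-Kernel} names the resulting kernel $A$. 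No real obstacle — this is essentially the $n$-fold version of the argument already carried out for $n$-torsors in the proof of Proposition~\ref{Proposition-Trivial-Action}, stripped of the torsor hypotheses, and so the write-up can be a two- or three-line proof referring back to those lemmas.
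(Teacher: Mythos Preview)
Your proposal is correct and follows essentially the same approach as the paper: the paper's formal argument likewise combines the pullback square of Lemma~\ref{Lemma-DeltaLambda-Square} with Lemma~\ref{Direction-as-Kernel} to identify $\K{\widehat\del_{i}}$ with $\K{l_{X}}=A$. The only addition in the paper is a brief informal element-wise description preceding this formal argument.
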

\begin{proof}
If an $(n,i)$-horn $\widehat x_{i}$ of an $n$-cycle $x$ in $\XX$ is zero, then the $i$-face~$x_{i}$ which is missing in the horn must have boundary zero, so that~$x_{i}$ belongs to~$A$. Conversely, the inclusion of $A$ into $\cycle(\XX,n)$ takes an element $a$ of $A$ and sends it to the $n$-cycle in $\XX$ which is zero everywhere---except in its $i$-entry, where it is $a$. This $n$-cycle is sent to zero by $\widehat\del_{i}$.

More formally, this also follows from Lemma~\ref{Direction-as-Kernel} combined with Lemma~\ref{Lemma-DeltaLambda-Square}, since ${\lind\del_i\rind_i = l_X\colon \XX_{n-1}\to \cycle(\XX,n-1)}$: the kernel of $\widehat\del_{i}$ coincides with the kernel of~$l_{X}$ since the square in Lemma~\ref{Lemma-DeltaLambda-Square} is a pullback, and the latter kernel is $A$ by Lemma~\ref{Direction-as-Kernel}.
\end{proof}

\subsection{Multiplying simplices in a torsor}\label{Multiplication}
As explained in~\cite{Duskin}, given an~$n$-torsor $(\TT,\tt)$ of $Z$ by $A$ and an integer $i\in n$, the isomorphism
\[
\cycle(\TT,n)\cong A\times \horn^{i}(\TT,n)
\]
induces a multiplication or composition of the simplices in a horn to the ``missing face'' such that the thus completed $n$-cycle ``commutes'', in the sense that its projection on $A$ is zero. So a horn may be considered as a \emph{composable aggregation of simplices}---compare with the higher Mal'tsev structures $p^{I}$ from Subsection~\ref{Maltsev}. Indeed, we may simply use the morphism
\[
m^{i}\colon\xymatrix@1{\horn^{i}(\TT,n) \ar[r]^-{\lind 0,1\rind} & A\times \horn^{i}(\TT,n) \ar[r]^-{\cong} & \cycle(\TT,n) \ar[r]^-{\del_{i}} & T_{n}.}
\]
This composition of $(n,i)$-horns satisfies certain additional properties~\cite{Duskin}, of which for us the most important one is compatibility with degeneracies. From the axioms of torsor (the requirement that $\tt\colon{\TT\to \KK(Z,A,n)}$ be a simplicial morphism) it follows that a degenerate $n$-cycle commutes. Hence any $(n,i)$-horn in $\TT$ which \emph{may be} completed to a degenerate $n$-cycle \emph{has to be} completed this way, and hence composes to the $i$-face of this degenerate $n$-cycle.

For instance, in degree two, the left hand side $(2,1)$-horn
\[
\vcenter{\xymatrix@1@!0@R=2.4495em@C=1.4142em{& {\cdot} \ar[rd]^-{\sigma_{0}\del_{0}\alpha}\\
{\cdot} \ar[ru]^-{\alpha} && {\cdot}}}
\qquad\qquad
\vcenter{\xymatrix@1@!0@R=2.4495em@C=1.4142em{& {\cdot} \ar@{}[d]|(.7){\sigma_{1}\alpha} \ar[rd]^-{\sigma_{0}\del_{0}\alpha}\\
{\cdot} \ar[ru]^-{\alpha} \ar@{.>}[rr]_-{\alpha} && {\cdot}}}
\]
fits into the right hand side degenerate $2$-simplex $\sigma_{1}\alpha$. It follows by uniqueness that $m^{1}(\sigma_{0}\del_{0}\alpha,\alpha)=\alpha$. Likewise, $m^{0}(\alpha,\alpha)=\sigma_{0}\del_{0}\alpha$, etc.

\subsection{The exact fibration property}
Most of the fibration property (T1) of a torsor comes for free, since a regular epimorphism of simplicial objects in a regular Mal'tsev category is always a fibration~\cite[Proposition~4.4]{EverVdL2}. Given a simplicial morphism $\tt\colon{\TT\to \KK(Z,A,n)}$ satisfying (T2) and (T3), already the~${\tt_{i}=\del_0^{i+1}}$ are regular epimorphisms for all $i\in n$, so it suffices to check the regularity of $\tt_{n}$ and~$\tt_{n+1}$. Then there is the exactness, but this reduces to one square being a pullback---Diagram~\eqref{Fundamental-Square} for any $i\in n$---which in turn corresponds to a direction property.

\begin{proposition}\label{Isos-are-Free}
Suppose that $Z$ is an object and $A$ is an abelian object in a semi-abelian category. Let $\tt\colon{\TT\to \KK(Z,A,n)}$ be as in the definition of torsors, satisfying conditions {\rm (T2)} and {\rm (T3)}. Then for every $i$ the square
\begin{equation}\label{Fundamental-Square}
\vcenter{\xymatrix{\cycle(\TT,n) \ar[r]^-{\widehat\del_{i}} \ar[d]_-{\lind \varsigma,\del_{0}^{n+1}\rind} & \horn^{i}(\TT,n) \ar[d]^{\del_{0}^{n}} \\
A \times Z \ar[r]_-{\pr_{Z}} & Z}}
\end{equation}
is a pullback if and only if the induced morphism ${\bigcap_{i}\K{\del_{i}}\to A}$ is an isomorphism. When this is the case, the simplicial morphism $\tt$ is a fibration, exact from degree $n$ on, so that $(\TT,\tt)$ is an $n$-torsor of $Z$ by~$A$.
\end{proposition}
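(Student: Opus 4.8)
The plan is to realise the square~\eqref{Fundamental-Square} as the cokernel part of a morphism of short exact sequences and to invoke Lemma~\ref{Lemma-Iso-Pullback}; the remaining torsor axioms will then be bootstrapped from the product decomposition this produces. Fix $i$. Since $\TT$ is a simplicial object in a semi-abelian, hence Mal'tsev, category it is Kan, so $\widehat{\del}_{i}\colon\cycle(\TT,n)\to\horn^{i}(\TT,n)$ is a regular epimorphism; equivalently, by Lemma~\ref{Lemma-DeltaLambda-Square} it is a pullback of $l_{T}=\lind\del_{j}\rind_{j}$, which is a regular epimorphism because $\TT$ is a resolution. Its kernel is the direction $A_{T}=\bigcap_{j}\K{\del_{j}}$ of the underlying $n$-cubic extension $T$, by Lemma~\ref{Lemma-Direction-Cycle} (or by Lemma~\ref{Direction-as-Kernel} combined with Lemma~\ref{Lemma-DeltaLambda-Square}). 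Thus $0\to A_{T}\to\cycle(\TT,n)\xrightarrow{\widehat{\del}_{i}}\horn^{i}(\TT,n)\to 0$ is a short exact sequence, and it maps to the split short exact sequence $0\to A\to A\times Z\xrightarrow{\pr_{Z}}Z\to 0$ through the middle morphism $\tt_{n}=\lind\varsigma,\del_{0}^{n+1}\rind$ and the right-hand vertical $\del_{0}^{n}\colon\horn^{i}(\TT,n)\to Z$: the right-hand square of this morphism is exactly~\eqref{Fundamental-Square}, it commutes because $\del_{0}^{n}\comp\widehat{\del}_{i}=\del_{0}^{n+1}$ (slide the $\del_{0}$'s past an $x_{j}$, $j\neq i$, using the simplicial identities), and the morphism induced on kernels agrees, up to a sign depending on $i$, with the comparison $\bigcap_{i}\K{\del_{i}}\to A$ of the statement, namely the restriction of $\varsigma$ to $A_{T}$. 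By Lemma~\ref{Lemma-Iso-Pullback}, this kernel comparison is an isomorphism if and only if~\eqref{Fundamental-Square} is a pullback; as the sign is irrelevant and the same argument runs for every $i$, we obtain the asserted equivalence, for every $i$.

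Now assume that~\eqref{Fundamental-Square} is a pullback for every $i$; since (T2) and (T3) already hold, only (T1) remains. Because $\pr_{Z}$ is a product projection, its pullback along $\del_{0}^{n}\colon\horn^{i}(\TT,n)\to Z$ is $A\times\horn^{i}(\TT,n)$, so~\eqref{Fundamental-Square} being a pullback amounts to $\lind\varsigma,\widehat{\del}_{i}\rind\colon\cycle(\TT,n)\to A\times\horn^{i}(\TT,n)$ being an isomorphism. In particular $\tt_{n}=\lind\varsigma,\del_{0}^{n+1}\rind$ is then a pullback of $\del_{0}^{n}\colon\horn^{i}(\TT,n)\to Z$, which is a regular epimorphism (a factor of the regular epimorphism $\del_{0}^{n+1}=\del_{0}^{n}\comp\widehat{\del}_{i}$ in a regular category), so $\tt_{n}$ is a regular epimorphism; and in degrees $m\leq n-1$ one has $\tt_{m}=\del_{0}^{m+1}$, again a regular epimorphism since $\TT$ is a resolution. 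For $m>n$, condition (T2) turns the comparisons $\widehat{\del}_{i}\colon\cycle(\TT,m)\to\horn^{i}(\TT,m)$ into isomorphisms, so each $\TT_{m}=\cycle(\TT,m)$ is built by iterated simplicial kernels out of $\TT_{n}=\cycle(\TT,n)$; transporting the decomposition $\cycle(\TT,n)\cong A\times\horn^{i}(\TT,n)$ upward through these kernels, using that $A$ is abelian so that every comparison arising in the process splits by Lemma~\ref{Lemma-Naturally-Maltsev} in the abelian category $\Ab(\X)$, shows that each $\tt_{m}$ is a regular epimorphism and that the Kan condition for $\tt$ holds \emph{exactly} in every degree $\geq n$. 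Hence $\tt$ is a regular epimorphism of simplicial objects (being degreewise one) in a regular Mal'tsev category, so a fibration by~\cite[Proposition~4.4]{EverVdL2}, and it is exact from degree $n$ on; together with (T2) and (T3) this means that $(\TT,\tt)$ is an $n$-torsor of $Z$ by $A$.

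The formal core of the equivalence is getting the short exact sequence $0\to\bigcap_{i}\K{\del_{i}}\to\cycle(\TT,n)\to\horn^{i}(\TT,n)\to 0$ in place and recognising~\eqref{Fundamental-Square} as the cokernel part of a morphism between it and the split sequence defining $A$; after that Lemma~\ref{Lemma-Iso-Pullback} does the work, and this half of the argument is essentially soft. I expect the genuine difficulty to lie in the final statement, specifically the passage to simplicial degrees $m>n$: one must show that the single product decomposition $\cycle(\TT,n)\cong A\times\horn^{i}(\TT,n)$ in degree $n$ already forces $\tt_{m}$ to be regular epic and the Kan condition to be satisfied exactly for all $m>n$. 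That is precisely where the abelianness of $A$ (through Lemma~\ref{Lemma-Naturally-Maltsev}) and the $(n-1)$-coskeletality of $\TT$ are used essentially, and it is the step I would treat most carefully.
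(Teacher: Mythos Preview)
Your treatment of the equivalence is essentially the paper's: identify the kernel of $\widehat\del_{i}$ as $\bigcap_{j}\K{\del_{j}}$, view \eqref{Fundamental-Square} as the cokernel square in a morphism of short exact sequences, and apply Lemma~\ref{Lemma-Iso-Pullback}. This part is fine.

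The place where you diverge from the paper, and where your argument becomes shaky, is precisely the step you flag as difficult: exactness of $\tt$ in degrees above $n$. You propose to ``transport the decomposition $\cycle(\TT,n)\cong A\times\horn^{i}(\TT,n)$ upward'' using abelianness of $A$ and Lemma~\ref{Lemma-Naturally-Maltsev}. But Lemma~\ref{Lemma-Naturally-Maltsev} is a statement about double split epimorphisms in a naturally Mal'tsev category; it is not clear what role it plays here, and the abelianness of $A$ is in fact irrelevant to this step. The paper's argument is both simpler and sharper. In degree $n+1$, the exactness square has \emph{isomorphisms} as its horizontal arrows: by (T2) every $n$-simplex of $\TT$ is an $n$-cycle, so an $(n+1,i)$-horn already determines its missing face and $\widehat\del_{i}\colon\cycle(\TT,n+1)\to\horn^{i}(\TT,n+1)$ is an isomorphism; the same holds for $\KK(Z,A,n)$ by direct inspection. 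A square with two parallel isomorphisms is trivially a pullback. For degrees above $n+1$, both $\TT$ and $\KK(Z,A,n)$ are built by iterated simplicial kernels from degree $n$, and Lemma~\ref{Lemma coskeleton pullback} propagates the pullback property upward. No product decomposition needs to be ``transported'', and no appeal to the abelian structure of $A$ is required at this point.
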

\begin{proof}
Again, $\widehat\del_{i}$ is a regular epimorphism by the Kan property. As in the proof of Lemma~\ref{Lemma-Direction-Cycle}, the kernel of $\widehat\del_{i}$ is $\K{\lind\del_{i}\rind_{i}}=\bigcap_{i}\K{\del_{i}}$. Via Lemma~\ref{Lemma-Iso-Pullback} this already proves the equivalence.

Recall that every regular epimorphism of simplicial objects in a semi-abelian category is a fibration. When the above square~\eqref{Fundamental-Square} is a pullback (for any~${i\in n}$), the morphism ${\del_{0}^{n}}$ being regular epimorphic implies that also $\lind \varsigma,\del_{0}^{n+1}\rind$ is a regular epimorphism.

One degree up, the corresponding squares are automatically pullbacks: indeed, any comparison ${\cycle(\TT,n+1)\to \horn^{i}(\TT,n+1)}$ is an isomorphism by the axiom~(T2) which tells us that every $n$-simplex in $\TT$ is an $n$-cycle, as is any morphism
\[
\widehat\del_{i}\colon A^{n+1}\times Z \to \horn^{i}(\KK(Z,A,n),n+1)=A^{n+1}\times Z.
\]
In higher degrees there is nothing to be checked because $\tt\colon{\TT\to \KK(Z,A,n)}$ is completely determined by the coskeleton construction. This implies that $\tt$ is a regular epimorphism in all degrees, hence it is a fibration. This fibration is exact in degree~$n$ since \eqref{Fundamental-Square} is a pullback for every $i$, and in higher degrees since both its domain and its codomain are constructed as a coskeleton, so that we can apply Lemma~\ref{Lemma coskeleton pullback}.
\end{proof}

Thus we see that an $n$-torsor $(\TT,\tt)$ of $Z$ by $A$ has an underlying $n$-cubic extension of~$Z$ of which the direction is $A$. Furthermore, the squares~\eqref{Fundamental-Square} are pullbacks, which means that $\cycle(\TT,n)\cong A\times \horn^{i}(\TT,n)$. Note that the projection on $\horn^{i}(\TT,n)$ is $\widehat\del_{i}$ and the projection on $A$ is $\varsigma$.

In what follows we shall prove that this condition is equivalent to the centrality of the underlying $n$-cubic extension. Given an $n$-cubic central extension~$T$ of~$Z$ by~$A$, we construct a simplicial morphism $\tt\colon{\TT=\cosk_{n-1}T\to \KK(Z,A,n)}$ such that the squares~\eqref{Fundamental-Square} are all pullbacks. As explained above, this is enough for~$(\TT,\tt)$ to be an $n$-torsor. Furthermore, Proposition~\ref{Proposition-Full} tells us that such a simplicial morphism~$\tt$ is uniquely determined, so that its existence is a \emph{property} of $T$, not additional structure---as it should be, because centrality is also a property.

The other implication (which says that the underlying $n$-cubic extension of an~$n$-torsor is always central) will be treated in the following section.

\subsection{Embedding cycles into diamonds}
Up to symmetry of the diamond, there is a unique way a cycle may be embedded into a diamond using degeneracies to fill up missing faces. In degree two there is the morphism
\[
s_{2}(\XX)\colon\cycle(\XX,2)\to \R{\del_{1}}\square \R{\del_{0}}\colon \lind x_{0},x_{1},x_{2}\rind \mapsto\vcenter{\xymatrix@1@!0@=3.5em{\sigma_{0}\del_{1}x_{0} \ar@{.}[r] \ar@{.}[d]|-{1} & x_{2} \ar@{.}[d] \\
x_{0} \ar@{.}[r]|-{0} & x_{1}}}
\]
which sends the left hand side (empty) triangle
\[
\vcenter{\xymatrix@1@!0@R=2.4495em@C=1.4142em{& {\cdot} \ar[rd]^-{x_{0}}\\
{\cdot} \ar[ru]^-{x_{2}} \ar[rr]_-{x_{1}} && {\cdot}}}
\qquad\qquad
\vcenter{\xymatrix@1@!0@=2em{& {\cdot}\\
{\cdot} \ar[ru]^-{\sigma_{0}\del_{1}x_{0}} \ar[rd]_-{x_{0}} && {\cdot} \ar[lu]_-{x_{2}} \ar[ld]^-{x_{1}}\\
&{\cdot}}}
\]
to the right hand side diamond. In degree three we have
\[
s_{3}(\XX)\colon\cycle(\XX,3)\to \bigboxvoid_{i\in 3}\R{\del_{i}}\colon \lind x_{0},x_{1},x_{2},x_{3}\rind\mapsto\vcenter{\xymatrix@1@!0@=2.5em{& \sigma_{0}\del_{2}x_{0} \ar@{.}[rr] \ar@{.}[dd] \ar@{.}[dl] && x_{3} \ar@{.}[dd] \ar@{.}[dl]\\
\sigma_{0}\del_{1}x_{0} \ar@{.}[rr] \ar@{.}[dd]|-{1} && x_{2} \ar@{.}[dd] \\
& \sigma_{1}\del_{2}x_{0} \ar@{.}[rr] \ar@{.}[dl]|-{2} && \sigma_{1}\del_{2}x_{1} \ar@{.}[dl] \\
x_{0} \ar@{.}[rr]|-{0} && x_{1}}}
\]
and in general we have an inductive formula, as follows.

\begin{notation}[D\'ecalage]
Let ${}^{-}\XX$ denote the \defn{d\'e\-ca\-lage} of $\XX$, the augmented simplicial object constructed out of $\XX$ by forgetting the lowest degree $\XX_{-1}$ and the last face operators $\del_{n}\colon{\XX_{n}\to \XX_{n-1}}$, so that ${}^{-}\XX_{n}=\XX_{n+1}$. We obtain a morphism of simplicial objects $\dd\colon{{}^{-}\XX\to \XX}$ by $\dd_{n}=\del_{n+1}\colon {{}^{-}\XX_{n}=\XX_{n+1}\to \XX_{n}}$.
\end{notation}

\begin{proposition}\label{Proposition-Inclusion}
For any simplicial object $\XX$ in a semi-abelian category and any~${n\geq 2}$ there is a canonical natural inclusion
\[
s_{n}(\XX)\colon\cycle(\XX,n)\to \bigboxvoid_{i\in n}\R{\del_{i}}.
\]
\end{proposition}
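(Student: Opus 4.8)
The plan is to define $s_n(\XX)$ entrywise, to verify by a formal computation in $\Set$ that the resulting $n$-dimensional matrix really is an element of the largest $n$-fold equivalence relation on the $\R{\del_i}$, and then to read off from the construction that the map is monomorphic, choice-free and natural in $\XX$. Both $\cycle(\XX,n)$ and $\bigboxvoid_{i\in n}\R{\del_i}$ are finite limits built out of $\XX_{n-1}$ and $\XX_{n-2}$ together with the face operators, so, exactly as in the proofs of Lemma~\ref{Lemma coskeleton pullback} and Lemma~\ref{Lemma-DeltaLambda-Square}, it suffices to specify, for each $I\subseteq n$, a morphism $y_I=\pr_I\comp s_n(\XX)\colon\cycle(\XX,n)\to\XX_{n-1}$ assembled from the structural projections $\pr_0,\dots,\pr_n\colon\cycle(\XX,n)\to\XX_{n-1}$ by faces and degeneracies of $\XX$, and to check that the family $(y_I)_{I\subseteq n}$ satisfies the defining relations of $\bigboxvoid_{i\in n}\R{\del_i}$ --- namely that $\del_i\comp y_I=\del_i\comp y_{I\cup\{i\}}$ for all $I$ and all $i\in n\setminus I$.

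For the definition, attach to each $I\subseteq n$ the length $m(I)$ of the longest initial segment $\{0,\dots,m(I)-1\}$ of $n$ which is contained in $I$. When $I=\{0,\dots,j-1\}$ is itself an initial segment, let $y_I$ be the $j$-th structural projection; when $I$ is arbitrary, let $y_I$ be a single degeneracy composed with a single face composed with the $m(I)$-th structural projection, the indices of that face and that degeneracy being dictated by the remaining part $I\setminus\{0,\dots,m(I)-1\}$ of $I$. In degrees two and three this reproduces precisely the formulas for $s_2(\XX)$ and $s_3(\XX)$ displayed above; equivalently, the same data can be generated inductively, building $s_n(\XX)$ from $s_{n-1}({}^{-}\XX)$ by means of the d\'ecalage morphism $\dd$ and the degeneracy operators of ${}^{-}\XX$, which is the source of the ``inductive formula''.

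The verification then has three parts. First, that $\del_i\comp y_I=\del_i\comp y_{I\cup\{i\}}$ for every $I\subseteq n$ and every $i\in n\setminus I$ (note that $i\geq m(I)$ automatically): this comes down to the simplicial identities together with the cycle relations $\del_i\comp\pr_j=\del_{j-1}\comp\pr_i$ for $i<j$, after separating the case $i=m(I)$ from the case $i>m(I)$. Secondly, $s_n(\XX)$ is a monomorphism: composing it with the projections of $\bigboxvoid_{i\in n}\R{\del_i}$ onto the $n+1$ entries indexed by the initial segments $\{0,\dots,j-1\}$ returns, by construction, the $n+1$ structural projections of $\cycle(\XX,n)$, that is, the canonical monomorphism $\cycle(\XX,n)\hookrightarrow\XX_{n-1}^{n+1}$; and a morphism that becomes monic after postcomposition is itself monic. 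Thirdly, $s_n$ is natural in $\XX$ --- and in particular canonical, depending on no choice --- since the defining expressions for the $y_I$ involve only identities, faces and degeneracies, all of which commute with simplicial morphisms.

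The main obstacle is the first part of the verification. It is purely combinatorial but genuinely intricate, because one must prescribe, uniformly in $I$, which ``face then degeneracy'' to apply so that all the boundary-matching identities $\del_i\comp y_I=\del_i\comp y_{I\cup\{i\}}$ hold simultaneously; the delicate case is $i=m(I)$, where passing from $I$ to $I\cup\{i\}$ can enlarge the longest initial segment by several steps at once, so that one must re-express $\del_i\comp y_{I\cup\{i\}}$ --- which is built from the $m(I\cup\{i\})$-th structural projection --- in terms of the $m(I)$-th one using the cycle relations. The reduction to a $\Set$-level computation and the monomorphism and naturality statements are all routine.
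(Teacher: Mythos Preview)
Your overall strategy---defining $s_n(\XX)$ entrywise, verifying the relations $\del_i\comp y_I=\del_i\comp y_{I\cup\{i\}}$ via a formal argument in $\Set$, and deducing monomorphicity and naturality---is sound, and the last two parts go through as you say. The gap is in the formula itself: the prescription ``a single degeneracy composed with a single face composed with the $m(I)$-th structural projection'' only works for $n\leq 3$. Already for $n=4$ and $I=\{1,3\}$ the entry produced by the inductive construction is
\[
y_{\{1,3\}}=\sigma_0\del_1(\widehat y_4)_0=\sigma_0\del_1\sigma_2\del_0 x_4=\sigma_0\sigma_1\del_1\del_0 x_4,
\]
which involves two degeneracies and two faces; and a direct check of the constraints $\del_0 y_{\{1,3\}}=\del_0 y_{\{0,1,3\}}$ and $\del_2 y_{\{1,3\}}=\del_2 y_{\{1,2,3\}}$ shows that no expression of the form $\sigma_a\del_b\,\pr_{0}$ can satisfy both. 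In general the number of faces and degeneracies needed for $y_I$ grows with the number of ``gaps'' in $I$ beyond its initial segment, so no closed one-face/one-degeneracy formula exists.

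The paper sidesteps this combinatorial difficulty by taking seriously the inductive route you mention only in passing: it defines $s_{n+1}(\XX)$ from $s_n({}^{-}\XX)$ via the d\'ecalage, pairing $s_n({}^{-}\XX)(\widehat x_{n+1})$ with $s_n({}^{-}\XX)(\widehat y_{n+1})$ where $y=\sigma_n x_{n+1}$, and then uses naturality of $s_n$ with respect to $\dd$ to verify the single remaining compatibility $\del_n(-)=\del_n(-)$. Thus one never writes down an explicit entry formula for arbitrary $I$, and the verification reduces to a one-line computation at each step rather than a case analysis over $2^n$. If you want to salvage the entrywise approach you would need to replace your formula by an iterated expression (or give the recursion and prove the relations by induction on $n$), which is essentially the paper's argument; the ``main obstacle'' you flag is not just delicate but actually fails for the formula as stated.
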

\begin{proof}
We give a proof by induction; the base step is explained above. Suppose $s_{n}(\XX)$ is defined for every $\XX$ and natural in $\XX$; we then construct a morphism $s_{n+1}(\XX)$, natural in $\XX$. Given an $(n+1)$-cycle
\[
x=\lind x_{0},\dots,x_{n},x_{n+1}\rind \in\cycle(\XX,n+1),
\]
note that both $\widehat x_{n+1}=\lind x_{0},\dots,x_{n}\rind$ and
\[
\widehat y_{n+1}=\lind\sigma_{n-1}\del_{0}x_{n+1},\dots,\sigma_{n-1}\del_{n-1}x_{n+1},x_{n+1}\rind,
\]
where
\[
y=\sigma_{n}x_{n+1}=\lind\sigma_{n-1}\del_{0}x_{n+1},\dots,\sigma_{n-1}\del_{n-1}x_{n+1},x_{n+1},x_{n+1}\rind,
\]
are in $\cycle({}^{-}\XX,n)$. The induction hypothesis gives us a pair of diamonds, and we define
\[
s_{n+1}(\XX)(x)=\lind s_{n}({}^{-}\XX)(\widehat x_{n+1}),s_{n}({}^{-}\XX)(\widehat y_{n+1})\rind\in\bigboxvoid_{i\in n}\R{{}^{-}\del_{i}}\times \bigboxvoid_{i\in n}\R{{}^{-}\del_{i}}.
\]
Now we only have to show that this pair does belong to $\bigboxvoid_{i\in n+1}\R{\del_{i}}$, which means that $\del_{n}(s_{n}({}^{-}\XX)(\widehat x_{n+1}))=\del_{n}(s_{n}({}^{-}\XX)(\widehat y_{n+1}))$. This equality follows from the naturality of $s_{n}$, which makes the square
\[
\xymatrix{\cycle({}^{-}\XX,n) \ar[r]^-{\cycle(\dd,n)} \ar[d]_-{s_{n}({}^{-}\XX)} & \cycle(\XX,n) \ar[d]^{s_{n}(\XX)} \\
\bigboxvoid_{i\in n}\R{{}^{-}\del_{i}} \ar[r]_-{\dd} & \bigboxvoid_{i\in n}\R{\del_{i}}}
\]
commute, and the fact that $\cycle(\dd,n)(\widehat x_{n+1})$ is equal to $\cycle(\dd,n)(\widehat y_{n+1})$. Indeed, we have $\del_{n}x_{n}=\del_{n}x_{n+1}$ and
\[
\del_{n}x_{i}=\del_{i}x_{n+1}=\del_{n}\sigma_{n-1}\del_{i}x_{n+1}
\]
for every $i\in n$, so that the latter equality holds. This completes the construction of~$s_{n+1}(\XX)$, which is evidently natural in $\XX$.
\end{proof}

The morphism $s_{n}(\XX)$ constructed above takes an ``element'' $x=\lind x_{0},\dots,x_{n}\rind$ of the object~$\cycle(\XX,n)$ and maps it to the diamond $s_{n}(\XX)(x)$ which has $x_{i}$ on its $i$"~entry and degeneracies elsewhere (see Subsection~\ref{indexing}). Clearly, $s_{n}(\XX)$ restricts to morphisms
\[
\dot s_{n}^{i}(\XX)\colon\horn^{i}(\XX,n)\to \bigboxdot^{i}_{j\in n}\R{\del_{j}},
\]
natural in $\XX$.

When we say that an ${(n-1)}$-truncated simplicial resolution is \defn{central}, we mean that such is the underlying $n$-cubic extension. We write $\SimpCExt^{n}_{Z}(\X)$ for the (non-full) subcategory of $\Diag{n}(\X)$ consisting of those $3^{n}$-diagrams with an underlying $n$-cubic extension which is a central ${(n-1)}$-truncated simplicial resolution, with morphisms between such which restrict to simplicial morphisms. We write  
\begin{equation}\label{Dd}
\Dd_{(n,Z)} \colon{\SimpCExt^n_Z(\X)\to \Ab(\X)}
\end{equation}
for the restriction of $\D_{(n,Z)}$ to this category.

\begin{proposition}\label{Proposition-Central-then-Torsor}
If, in a semi-abelian category, an $(n-1)$-truncated simplicial resolution is central, then it is an $n$-torsor.
\end{proposition}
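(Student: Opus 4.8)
The plan is to apply the characterisation of torsors obtained in the previous subsection, namely Proposition~\ref{Isos-are-Free}: an augmented simplicial object $\TT\cong \cosk_{n-1}\TT$ which is a resolution and whose coskeleton structure gives a simplicial morphism $\tt\colon\TT\to\KK(Z,A,n)$ is an $n$-torsor precisely when the squares~\eqref{Fundamental-Square} are pullbacks, equivalently when the induced morphism $\bigcap_{i}\K{\del_i}\to A$ is an isomorphism. So, starting from a central $(n-1)$-truncated simplicial resolution $T$ with direction $A=\D_{(n,Z)}T$, I would first recall that $A$ is abelian by Lemma~\ref{Lemma-Kernel-Central-Extension}, and form $\TT=\cosk_{n-1}T$. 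The point is then to construct the required simplicial morphism $\tt$ to $\KK(Z,A,n)$ and verify the pullback condition.

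First I would produce $\tt$. Since $\TT$ is $(n-1)$-coskeletal and $\KK(Z,A,n)$ is (above simplicial degree $n$) determined by its coskeleton as well, a simplicial morphism $\tt$ is completely determined by its values in degrees $\leq n$; in degrees $<n$ there is no choice ($\tt_i=\del_0^{i+1}$, the unique map to $Z$), so the only genuine datum is $\tt_n\colon T_n=\TT_{n-1}\to A\times Z$, which must be $\lind \varsigma,\del_0^{n}\rind$ for some $\varsigma\colon T_n\to A$. Here is where centrality enters: by Theorem~\ref{Theorem-Higher-Centrality} (condition (iv)) centrality of $F=T|_{2^n}$ gives a product decomposition $\bigboxvoid_{i\in n}\R{f_i}\cong A\times\bigboxdot^{I}_{i\in n}\R{f_i}$ with explicit projection $\pr_A$ on $A$ described by Proposition~\ref{Centrality-Sum}, and via the inclusion $s_n(\TT)\colon\cycle(\TT,n)\to\bigboxvoid_{i\in n}\R{\del_i}$ of Proposition~\ref{Proposition-Inclusion} this restricts, along Lemma~\ref{Lemma-Diamond-Pullback} and Lemma~\ref{Lemma-DeltaLambda-Square}, to a product decomposition $\cycle(\TT,n)\cong A\times\horn^{i}(\TT,n)$ for each $i\in n$, with the projection $\widehat\del_i$ on the horn and a projection $\varsigma^i$ on $A$. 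I would check that these $\varsigma^i$ all arise from one and the same map $\varsigma\colon T_n\to A$ using the pullback square of Lemma~\ref{Lemma-DeltaLambda-Square} (the kernel of $\lind\del_i\rind_i$ equals the kernel of $\widehat\del_i$, both being $A$ by Lemma~\ref{Direction-as-Kernel} and Lemma~\ref{Lemma-Direction-Cycle}), so that $\varsigma$ is well defined; and that the simplicial identities and compatibility with degeneracies needed for $\lind\varsigma,\del_0^n\rind$ to extend to a simplicial morphism $\tt$ to $\KK(Z,A,n)$ hold, which again reduces to the naturality of the $s_n$ and the compatibility of the splitting $\iota^I$ with degeneracies recorded in Theorem~\ref{Theorem-Higher-Centrality}.

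With $\tt$ in hand, verifying the hypotheses of Proposition~\ref{Isos-are-Free} is then formal: (T2) holds by construction ($\TT=\cosk_{n-1}T$), (T3) holds because $T$ is assumed to be a resolution, and the induced morphism $\bigcap_i\K{\del_i}\to A$ is an isomorphism — indeed it is the identity on $A$ once $\varsigma$ is set up as above, since $\D_{(n,Z)}T=\bigcap_{i\in n}\K{f_i}$ by Lemma~\ref{Direction-as-Kernel} and this is exactly the kernel of $\lind\del_i\rind_i$, while the product decomposition $\cycle(\TT,n)\cong A\times\horn^i(\TT,n)$ obtained from centrality makes $\varsigma$ restrict to the identity on this kernel. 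Proposition~\ref{Isos-are-Free} then delivers that $\tt$ is an exact fibration from degree $n$ on, so $(\TT,\tt)$ is an $n$-torsor of $Z$ by $A$.

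I expect the main obstacle to be the bookkeeping in the construction of $\tt$: translating the cubical product decomposition of Theorem~\ref{Theorem-Higher-Centrality} into the simplicial one via $s_n(\TT)$, and checking that the resulting $\varsigma\colon T_n\to A$ is genuinely independent of the chosen $i\in n$ and assembles into a simplicial morphism compatible with all face and degeneracy operators of $\KK(Z,A,n)$ — in particular that it kills degeneracies, which is what encodes $\tt$ landing in $\KK(Z,A,n)$ rather than just in an augmented simplicial object over $Z$. The naturality statements in Remark~\ref{Remark-Full-3}, Remark~\ref{Naturality-Product-Decomposition}, and the compatibility-with-degeneracies clauses built into Theorem~\ref{Theorem-Higher-Centrality} and Proposition~\ref{Proposition-Inclusion} are precisely what should make this go through, so the argument is a matter of organising these ingredients rather than proving anything genuinely new.
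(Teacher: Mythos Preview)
Your approach matches the paper's, and the overall architecture---build $\varsigma$ from the product decomposition of Theorem~\ref{Theorem-Higher-Centrality} pulled back along $s_n(\TT)$, then invoke Proposition~\ref{Isos-are-Free}---is exactly right. Two points need correction, one notational and one substantive.

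First, an indexing slip: the map $\varsigma$ does not live on $T_n=\TT_{n-1}$. In $\KK(Z,A,n)$ the object in simplicial degree $n-1$ is $Z$, not $A\times Z$; the component $A\times Z$ sits in simplicial degree $n$, and since $\TT\cong\Cosk_{n-1}\TT$ we have $\TT_n=\cycle(\TT,n)$. So the only datum to supply is $\varsigma\colon\cycle(\TT,n)\to A$, and the paper takes $\varsigma=\pr_A\comp s_n(\TT)$ directly. There is no need to descend $\varsigma^i$ to a map out of $T_n$; independence of $i$ is immediate because the formula~\eqref{Sum-Formula} for $\pr_A$ in Proposition~\ref{Centrality-Sum} is already independent of $I$.

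Second, and more seriously, you underestimate the check that $\tt$ is a simplicial morphism. You write that it ``reduces to the naturality of the $s_n$ and the compatibility of the splitting $\iota^I$ with degeneracies''. Degeneracy compatibility is part of the story (degenerate $n$-cycles must go to zero under $\varsigma$), but the non-formal condition is the \emph{face} compatibility in simplicial degree $n+1$: one must show
\[
\varsigma\comp\del_{n+1}=(-1)^n\sum_{i=0}^n(-1)^i\varsigma\comp\del_i
\]
on $\cycle(\TT,n+1)$, where $\del_{n+1}$ on $A^{n+1}\times Z$ is the alternating-sum face of $\KK(Z,A,n)$. This does not follow from naturality or degeneracy compatibility alone. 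The paper obtains it by expanding $\varsigma$ via Proposition~\ref{Centrality-Sum} as the alternating sum $\sum_{J\subseteq n}(-1)^{|J|}\eta_{\TT_{n-1}}\comp\pr_J\comp s_n(\TT)$ and combining this with the universal simplicial identity $\sum_{i=0}^{n+1}(-1)^i\del_i=0$ (in abelianisation). So Proposition~\ref{Centrality-Sum} is not merely a convenient description of $\pr_A$: its explicit formula is the engine that makes $\tt$ simplicial. You should foreground that computation rather than defer it to bookkeeping.
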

\begin{proof}
Let $\TT$ be a simplicial resolution and let $A$ be the direction of $T=\trunc_{n-1}\TT$, considered as a trivial $Z$-module. We have to define a morphism of augmented simplicial objects $\tt\colon{\TT\to \KK(Z, A, n)}$ as in~\eqref{torsordiagram}. Such a simplicial morphism is completely determined by the choice of a suitable morphism $\varsigma\colon{\cycle(\TT,n)\to A}$.

Consider, for $i\in n+1$, the commutative square of solid arrows
\[
\xymatrix{0 \ar@{.>}[r] & A \ar@{:}[d] \ar@{.>}@<-.5ex>[r] & \cycle(\TT,n) \pullbackdots \ar@{.>}@<-.5ex>[l]_-{\varsigma} \ar@<-.5ex>[r]_-{\widehat\del_{i}} \ar[d]_-{s_{n}(\TT)} & \horn^{i}(\TT,n) \ar@{.>}@<-.5ex>[l] \ar[d]^{\dot s_{n}^{i}(\TT)} \ar@{.>}[r] & 0 \\
0 \ar@{.>}[r] & A \ar@{.>}@<-.5ex>[r] & \bigboxvoid_{j\in n}\R{\del_{j}} \ar@<-.5ex>[r]_{\pi^{i}} \ar@{.>}@<-.5ex>[l]_-{\pr_{A}} & \bigboxdot_{j\in n}^{i}\R{\del_{j}} \ar@{.>}@<-.5ex>[l] \ar@{.>}[r] & 0}
\]
which embeds cycles into diamonds. By assumption, the kernel of $\pi^{i}$ is~$A$; moreover, by Theorem~\ref{Theorem-Higher-Centrality},
\[
\bigboxvoid_{j\in n}\R{\del_{j}}\cong A\times \bigboxdot_{j\in n}^{i}\R{\del_{j}}
\]
with $\pi^{i}$ the projection on $\bigboxdot_{j\in n}^{i}\R{\del_{j}}$. The square above is a pullback as a consequence of Lemma~\ref{Lemma-Iso-Pullback}, since $\widehat\del_{i}$ is a regular epimorphism by the extension property of $T$, and since the kernel of~$\widehat\del_{i}$ is~$A$ by Lemma~\ref{Lemma-Direction-Cycle}. This implies that
\[
\cycle(\TT,n)\cong A\times \horn^{i}(\TT,n)
\]
with $\widehat\del_{i}$ the projection on $\horn^{i}(\TT,n)$. We may now complete the square with the dotted arrows.

We choose $\varsigma$ to be $\pr_{A}\comp s_{n}(\TT)\colon{\cycle(\TT,n)\to A}$, the projection of $\cycle(\TT,n)$ on~$A$. We must prove that this does indeed give us a genuine morphism~$\tt\colon{\TT\to \KK(Z, A, n)}$; then the exact fibration property holds by Proposition~\ref{Isos-are-Free}, so that $(\TT,\tt)$ is an $n$-torsor.

For this, we only need to check that all the squares in the diagram
\[
\xymatrix@=45pt{
\cycle(\TT,n+1) \ar[d]_{\lind\lind\varsigma\circ\del_{i}\rind_{i},\del_{0}^{n+2}\rind} \ar@<2.33ex>[r]^-{\del_{n+1}} \ar@<1.16ex>[r] \ar@<-2.33ex>[r]^-{\vdots} & \cycle(\TT,n) \ar[d]^{\lind\varsigma,\del_{0}^{n+1}\rind}\\
A^{n+1} \times Z \ar@<2.33ex>[r]^-{\del_{n+1}\times 1_{Z}} \ar@<1.16ex>[r]|-{\pr_{n}\times 1_{Z}} \ar@<-2.33ex>[r]_-{\pr_{0}\times 1_{Z}}^-{\vdots} &
 A \times Z}
\]
commute. This condition reduces to the commutativity of just one square, the one ``on top'':
\begin{equation}\label{Sum-condition}
\varsigma\comp \del_{n+1}=(-1)^{n}\sum^{n}_{i=0}(-1)^{i}\varsigma\comp\del_{i}.
\end{equation}
In fact the morphism $\lind\lind\varsigma\comp\del_{i}\rind_{i},\del_{0}^{n+2}\rind$ is already the unique one that makes all the other squares commute. But this equality follows from Proposition~\ref{Centrality-Sum}, which tells us that the morphism $\varsigma$ itself may be considered as an alternating sum,
\[
\varsigma=\sum_{J\subseteq n}(-1)^{|J|}\eta_{\TT_{n-1}}\comp\pr_{J}\comp s_{n}(\TT).
\]
Using that the alternating sum
\[
\sum_{i=0}^{n+1}(-1)^{i}\eta_{\cycle(\TT,n)}\comp \del_{i}
\]
is zero by the simplicial identities, the equality~\eqref{Sum-condition} may now be obtained via a direct calculation in the abelian object~$A$.
\end{proof}

%\pagebreak
\begin{proposition}\label{Proposition-Full}
Given $f\colon X\to Y$ in $\SimpCExt^{n}_{Z}(\X)$, let $(\XX,\xx)$ and~$(\YY,\yy)$ be the $n$-torsors corresponding to $X$ and $Y$ and $\ff\colon{\XX\to \YY}$ the simplicial morphism corresponding to $f$. If $f$ keeps the direction fixed, that is to say, if
\[
\Dd_{(n,Z)}f=1\colon \Dd_{(n,Z)}X\to \Dd_{(n,Z)}Y, 
\]
then~$\yy\comp \ff=\xx$ so that $\ff$ is a morphism of torsors. In other words, we have a functor
\[
\Dd_{(n,Z)}^{-1}A\to \Tors^{n}(Z,A).
\]
Furthermore, this functor is fully faithful.
\end{proposition}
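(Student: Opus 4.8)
The plan is to exploit that all the simplicial objects in sight are highly coskeletal: $\XX$ and $\YY$ are $(n-1)$-coskeletal by axiom~(T2), while $\KK(Z,A,n)$ is $(n+1)$-coskeletal, so the claimed identity of morphisms need only be verified on finitely many low degrees. I would split the argument into three blocks: (a)~$\ff$ is a morphism of torsors under the hypothesis $\Dd_{(n,Z)}f=1$; (b)~functoriality; (c)~full faithfulness.

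For~(a): to prove $\yy\comp\ff=\xx$ it suffices, by $(n+1)$-coskeletality of $\KK(Z,A,n)$, to compare $i$-components for $-1\leq i\leq n+1$. In degrees $i\leq n-1$ both sides equal the iterated augmentation $\del_{0}^{i+1}$, and $\del_{0}^{i+1}\comp\ff_{i}=\del_{0}^{i+1}$ because $\ff$ is simplicial with $\ff_{-1}=1_{Z}$ (as $f$ lies over $Z$). In degree $n$, where $\xx_{n}=\lind\varsigma_{\XX},\del_{0}^{n+1}\rind$ with $\varsigma_{\XX}=\pr_{A}\comp s_{n}(\XX)$ (the choice made in Proposition~\ref{Proposition-Central-then-Torsor}) and similarly for $\yy$, the $\del_{0}^{n+1}$-parts match as above, so everything comes down to the identity $\varsigma_{\YY}\comp\cycle(\ff,n)=\varsigma_{\XX}$. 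This is the step with content: $s_{n}$ is natural in its simplicial argument (Proposition~\ref{Proposition-Inclusion}), and the splitting $\pr_{A}\colon\bigboxvoid_{i\in n}\R{f_{i}}\to A$ is natural in the $n$-cubic extension, as one reads off the explicit alternating sum of Proposition~\ref{Centrality-Sum} (see also Remark~\ref{Remark-Full-3}); chasing the resulting commuting rectangle gives $\varsigma_{\YY}\comp\cycle(\ff,n)=\Dd_{(n,Z)}f\comp\varsigma_{\XX}$, which is $\varsigma_{\XX}$ by hypothesis. In degree $n+1$, where $\xx_{n+1}=\lind\lind\varsigma_{\XX}\comp\del_{j}\rind_{j},\del_{0}^{n+2}\rind$, simpliciality of $\ff$ together with the degree-$n$ equality forces agreement. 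Hence $\yy\comp\ff=\xx$.

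For~(b) and~(c): setting $\Phi(X)=(\XX,\xx)$ and $\Phi(f)=\ff$ yields a functor $\Dd_{(n,Z)}^{-1}A\to\Tors^{n}(Z,A)$---well defined on objects by Proposition~\ref{Proposition-Central-then-Torsor}, on morphisms by~(a), and functorial since passing to the underlying $(n-1)$-truncated simplicial morphism and then applying $\cosk_{n-1}$ are both functorial. It is faithful because a morphism of torsors $\XX\to\YY$ equals $\cosk_{n-1}$ of its $(n-1)$-truncation ($\XX$, $\YY$ being $(n-1)$-coskeletal), while by Proposition~\ref{3x3 vs ext} a morphism of $3^{n}$-diagrams over $Z$ is determined by its underlying $n$-cubic-extension morphism, i.e.\ exactly that truncation. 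It is full because, given a morphism of torsors $g\colon(\XX,\xx)\to(\YY,\yy)$, its $(n-1)$-truncation extends uniquely (Proposition~\ref{3x3 vs ext}) to a morphism $f$ of the corresponding $3^{n}$-diagrams; one has $g_{-1}=1_{Z}$ (from $\yy_{-1}=\xx_{-1}=1_{Z}$), so $f$ lies over $Z$, and $f\in\SimpCExt^{n}_{Z}(\X)$ since its underlying morphism restricts to $\trunc_{n-1}g$; finally $\Dd_{(n,Z)}f=1_{A}$ follows by reading $\yy\comp g=\xx$ in degree $n$ as $\varsigma_{\YY}\comp g_{n}=\varsigma_{\XX}$ and restricting along the inclusion $A=\K{\widehat\del_{i}}\hookrightarrow\cycle(\XX,n)$ of Lemma~\ref{Lemma-Direction-Cycle}---along which $\varsigma$ is the identity (Proposition~\ref{Isos-are-Free}) and $g_{n}$ restricts to $\Dd_{(n,Z)}f$---giving $\Dd_{(n,Z)}f=\varsigma_{\XX}|_{A}=1_{A}$. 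Hence $g=\cosk_{n-1}\trunc_{n-1}g=\ff=\Phi(f)$.

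The main obstacle is the degree-$n$ identity $\varsigma_{\YY}\comp\cycle(\ff,n)=\varsigma_{\XX}$ in~(a) (and its mirror image in the fullness step): everything else is bookkeeping with coskeleta and with Proposition~\ref{3x3 vs ext}, whereas this step rests on the naturality in $F$ of the splitting $\pr_{A}$, which the earlier sections only establish implicitly. So the proof will hinge on extracting that naturality cleanly from Proposition~\ref{Centrality-Sum} and on reconciling the three incarnations of the direction as a kernel---$\K{l_{F}}$, $\bigcap_{i}\K{f_{i}}$, and $\K{\widehat\del_{i}}$ inside $\cycle(\XX,n)$---that occur along the way.
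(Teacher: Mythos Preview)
Your proof is correct and follows the same approach as the paper: both establish the crucial identity $\varsigma_{\YY}\comp\cycle(\ff,n)=\varsigma_{\XX}$ via the naturality recorded in Remark~\ref{Remark-Full-3} (equivalently, via the explicit formula of Proposition~\ref{Centrality-Sum}), and both deduce full faithfulness from the observation that the torsor condition $\yy\comp\ff=\xx$ is then automatic for morphisms in $\Dd_{(n,Z)}^{-1}A$. Your account is considerably more detailed---you verify each simplicial degree separately via coskeletality and, for fullness, spell out the converse extraction of $\Dd_{(n,Z)}f=1_A$ from $\yy\comp g=\xx$ by restricting along $A\hookrightarrow\cycle(\XX,n)$---whereas the paper compresses full faithfulness into a single sentence; but the underlying argument is the same.
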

\begin{proof}
For any morphism of central truncated simplicial objects which keeps the terminal object and the direction fixed, the projections to the directions are compatible with it. To see this we only need to consider the diagram
\[
\xymatrix{A \ar@{=}[d] \ar@<-.5ex>@{{ |>}->}[r] & \cycle(\XX,n) \ar[d]^-{\cycle(\ff,n)} \ar@{ >>}@<-.5ex>[l]_-{\varsigma_{\XX}} \ar@{ >>}[r] & \horn^{i}(\XX,n) \ar[d]^-{\smallhorn^{i}(\ff,n)}\\
A \ar@<-.5ex>@{{ |>}->}[r] & \cycle(\YY,n) \ar@{ >>}@<-.5ex>[l]_-{\varsigma_{\YY}} \ar@{ >>}[r] & \horn^{i}(\YY,n)}
\]
and note that $\varsigma_{\YY}\comp \cycle(\ff,n)=\varsigma_{\XX}$ by naturality of the product decompositions \emph{induced by centrality}---see Remark~\ref{Remark-Full-3} or the above proof. This shows that $\ff$ is a morphism of torsors.

This immediately shows that the functor $\Dd_{(n,Z)}^{-1}A\to \Tors^{n}(Z,A)$ is fully faithful as claimed: after all, a morphism in $\Tors^{n}(Z,A)$ is nothing but a morphism in $\Dd_{(n,Z)}^{-1}A$ satisfying an additional condition---but we just proved that this condition always holds.
\end{proof}

\section{The commutator condition}\label{Section-Commutator-Assumption}
In general it is not clear how an isomorphism on the simplicial level may be extended to an isomorphism on the level of higher-dimensional diamonds. Therefore, to prove that every $n$-torsor is an $n$-cubic central extension, we shall add an assumption on the base category: we ask that higher central extensions may be characterised in terms of binary Huq commutators. This happens in many cases, but thus far we have no precise characterisation of the categories which satisfy this condition.

It is proved in Section~9.1 of~\cite{EGVdL} that an $n$-cubic extension of groups $F$ is central with respect to $\Ab$ if and only if $\bigl[\bigcap_{i\in I}\K{f_{i}},\bigcap_{i\in n\setminus I}\K{f_{i}}\bigr]=0$ for all~$I\subseteq n$. The theory which we develop depends crucially on a similar characterisation of higher central extensions, valid in a sufficiently general context.

\begin{definition}\label{Commutator-assumption}~\cite{RVdL3}
We say that an $n$-cubic extension $F$ in a semi-abelian category~$\X$ is \defn{H-central} when
\[
\Bigl[\bigcap_{i\in I}\K{f_{i}},\bigcap_{i\in n\setminus I}\K{f_{i}}\Bigr]=0
\]
for all $I\subseteq n$. The category $\X$ satisfies the \defn{commutator condition on $n$-cubic central extensions} when the H-central $n$-cubic extensions in~$\X$ coincide with the \defn{categorically central} ones, namely those which are central with respect to~$\Ab(\X)$ in the Galois-theoretic sense used throughout the rest of the paper. We say that~$\X$ satisfies the \defn{commutator condition~(CC)} when it satisfies the commutator condition on $n$-cubic central extensions for all~$n$.
\end{definition}

\subsection{The cases $n=1$ and $n=2$}
As explained in the Introduction and in Example~\ref{Example-Dimension-One}, every semi-abelian category satisfies the commutator condition for one-cubic central extensions. From the Introduction and Example~\ref{Example-Dimension-Two}, it follows that in a semi-abelian category, the commutator condition on two-cubic central extensions is weaker than the \emph{Smith is Huq} condition~\cite{MFVdL}.

\subsection{Some examples}
It is shown in~\cite{EGVdL} that, next to the category of groups, also the categories Lie algebras and non-unitary rings have (CC). The examples of Leibniz and Lie $n$-algebras were treated in~\cite{CKLVdL}. Moreover, from~\cite{RVdL3} we know that any semi-abelian category with the \emph{Smith is Huq} condition has (CC), while the categories of loops and of commutative loops do not satisfy this condition. As examples we have \emph{action representative} semi-abelian categories~\cite{BJK2,Borceux-Bourn-SEC}, \emph{action accessible} categories~\cite{BJ07}---in particular all \emph{categories of interest}~\cite{Orzech,Montoli}, so also all \emph{varieties of groups}~\cite{Neumann}---next to all \emph{strongly semi-abelian}~\cite{Bourn2004} and \emph{Moore} categories~\cite{Gerstenhaber, Rodelo:Moore}. For instance, the categories of associative and non-associative algebras and of (pre)crossed modules satisfy (CC).

A general context where many examples may be found is given by those semi-abelian categories for which the abelianisation functor is protoadditive~\cite{Everaert-Gran-nGroupoids, Everaert-Gran-TT}, as considered below. This example gives two extreme special cases: semi-abelian arithmetical categories recalled in Example~\ref{Arithmetical categories}, such as the categories of von Neumann regular rings, Boolean rings and Heyting semilattices (where the cohomology theory becomes trivial) on the one hand, and abelian categories recalled in Example~\ref{Abelian categories} (where, via a version of the Dold--Kan correspondence~\cite{Dold-Puppe}, the theory gives us the Yoneda $\ext$ groups) on the other.

\begin{example}[Protoadditive abelianisation]
Recall from~\cite{Everaert-Gran-nGroupoids} that a functor between semi-abelian categories is \defn{protoadditive} when it preserves split short exact sequences
\[
\vcenter{\xymatrix{0 \ar[r] & K \ar@{{ |>}->}[r]^-{k} & X \ar@<-.5ex>@{-{ >>}}[r]_-{f} & Y \ar[r] \ar@<-.5ex>[l]_-{s} & 0}}
\]
(the cokernel $f$ is split by some morphism $s$). It is explained in~\cite{Everaert-Gran-TT} that, when~$\X$ is semi-abelian and the abelianisation functor $\ab\colon{\X\to \Ab(\X)}$ is protoadditive, the Huq commutator $[K,L]$ of two normal subobjects $K$, $L$ of an object~$X$ is $\lb K\cap L\rb=[K\cap L,K\cap L]$. This gives us
\[
\Bigl[\bigcap_{i\in I}\K{f_{i}},\bigcap_{i\in n\setminus I}\K{f_{i}}\Bigr]=\Bigl[\bigcap_{i\in n}\K{f_{i}},\bigcap_{i\in n}\K{f_{i}}\Bigr]=\lb\D_{(n,Z)}F\rb
\]
for any $n$-cubic extension $F$ of $Z$ and any $I\subseteq n$. Furthermore, by another result in~\cite{Everaert-Gran-TT}, an $n$-cubic extension is categorically central if and only if its direction is abelian; hence the commutator condition~(CC) holds. In fact, this argument extends easily to a proof that all semi-abelian arithmetical categories satisfy (SH).
\end{example}

A non-trivial instance of this situation, mentioned in~\cite{Everaert-Gran-TT}, is the variety of non-unitary rings that satisfy the law $abab=ab$. We now explain another special case, one which is less interesting from a cohomological point of view, but which does give a class of extreme examples.

\begin{example}[Arithmetical categories]
\label{Arithmetical categories}
Recall from \cite{Pedicchio2} that an exact Mal'\-tsev category is \defn{arithmetical} when every internal groupoid is an equivalence relation. We restrict ourselves to semi-abelian arithmetical categories, examples of which are the dual of the category of pointed sets, more generally, the dual of the category of pointed objects in any topos, and also the categories of von Neumann regular rings, Boolean rings and Heyting semi-lattices~\cite{Borceux-Bourn}. Since in such a category all abelian objects are trivial, the abelianisation functor is protoadditive, so that the commutator condition (CC) holds. (By the above, as shown in~\cite{MFVdL2}, every arithmetical category moreover satisfies (SH).) Here an $n$-cubic extension is categorically central if and only if its direction is zero, which means that the extension is a limit $n$-cube (or an isomorphism, when $n=1$). Hence the interpretation of cohomology in terms of higher central extensions (Theorem~\ref{Main-Theorem}) just means that any two $n$-cubic central extensions of an object~$Z$, so limit $n$-cubes over $Z$, are connected, because $\Centr^{n}(Z,0)\cong \H^{n+1}(Z,0)$ is trivial---which is, however, not difficult to prove directly.
\end{example}

At the other end of the spectrum we find the context of abelian categories where~(CC) also holds, and where the cohomology theory reduces to Yoneda's interpretation of $\ext^{n}(Z,A)$.

\begin{example}[Abelian categories]
\label{Abelian categories}
In an abelian category all Huq commutators are zero while all extensions are categorically central, which already gives us~(CC). On top of that the abelianisation functor is an identity, so that it is trivially protoadditive. Via the Dold--Kan theorem~\cite{Dold-Puppe}, an~$(n-1)$-truncated simplicial resolution, considered as an $n$-fold extension of $Z$ by $A$, corresponds to an exact sequence
\[
\xymatrix{0 \ar[r] & A \ar@{{ |>}->}[r] & C_{n-1} \ar[r] & C_{n-2} \ar[r] & {\cdots} \ar[r] & C_{0} \ar@{ >>}[r] & Z \ar[r] & 0}
\]
of length $n$. (See Figure~\ref{3x3 diag} on page~\pageref{3x3 diag} for a picture when $n=2$.) As a consequence of the results in Section~\ref{Section-Main-Theorem} we get
\[
\ext^{n}(Z,A)\cong \Centr^{n}(Z,A)\cong \H^{n+1}(Z,A),
\]
the equivalence between Yoneda's cohomology via $\ext$ groups~\cite{Yoneda-Exact-Sequences, MacLane:Homology} and comonadic cohomology which was first established in~\cite{Beck}. (See also~\cite{Barr-Beck}. A proof of the same result via torsor theory is given in~\cite{Glenn}.) The dimension shift is there because our numbering of the cohomology objects agrees with the classical non-abelian examples (groups, Lie algebras, etc.) rather than with the abelian case.
\end{example}

\subsection{From torsors to central extensions}
We are now ready to prove the equivalence between torsors and central extensions we need for our cohomological interpretation of higher central extensions.

\begin{proposition}\label{Proposition-Torsor-then-Central}
In a semi-abelian category, the underlying $n$-cubic extension of an $n$-torsor is H-central.
\end{proposition}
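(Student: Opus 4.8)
The plan is to show that for an $n$-torsor $(\TT,\tt)$ of $Z$ by $A$, each Huq commutator $\bigl[\bigcap_{i\in I}\K{\del_{i}},\bigcap_{i\in n\setminus I}\K{\del_{i}}\bigr]$ vanishes, where the $\del_{i}$ are the initial faces of the underlying $n$-cubic extension $T=\trunc_{n-1}\TT$. Since by Proposition~\ref{Proposition-Trivial-Action} and Proposition~\ref{Isos-are-Free} the action is necessarily trivial and the direction of $T$ is $A=\bigcap_{i\in n}\K{\del_{i}}$, I first want to reduce the commutator condition to a statement about two fixed complementary faces. Fix $I\subseteq n$; I would use the torsor structure together with the product decompositions $\cycle(\TT,n)\cong A\times \horn^{i}(\TT,n)$ (one for each $i$, from the exact fibration property) to build, by the sort of inductive argument used in Section~9 of~\cite{EGVdL} for groups, a morphism witnessing that the normal subobjects $\bigcap_{i\in I}\K{\del_{i}}$ and $\bigcap_{i\in n\setminus I}\K{\del_{i}}$ of $T_{n}=\TT_{n-1}$ commute in the sense of Subsection~\ref{Commutators}.

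Concretely, I would argue by induction on $n$. The base case $n=1$ is trivial ($I$ or $n\setminus I$ is empty, so one of the two intersections is the whole object and the other is $\K{\del_{0}}$, which being the direction is abelian, so the only nontrivial commutator $[\K{\del_0},\K{\del_0}]=0$ is exactly abelianness of $A$, guaranteed by hypothesis on the torsor). For the inductive step I would slice: an $n$-torsor of $Z$ by $A$, read as an arrow between $(n-1)$-truncated simplicial objects via the unfolding of Subsection~\ref{Truncations}, has domain and codomain which are (truncated) simplicial resolutions, and I would like to realise the relevant lower-dimensional intersections of kernels as directions of torsors one dimension down — using Lemma~\ref{Lemma-Direction-Cycle} and the décalage $\dd\colon{}^{-}\TT\to\TT$ to pass between cycles and horns. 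The key computational input is the same as in the proof of Proposition~\ref{Proposition-Central-then-Torsor}: a degenerate $n$-cycle ``commutes'' (its projection to $A$ is zero), so any horn that completes to a degenerate cycle composes, via the morphism $m^{i}$ of Subsection~\ref{Multiplication}, to the expected degenerate face. This compatibility of the composition $m^{i}$ with degeneracies is what lets one manipulate elements of $\bigcap_{i\in I}\K{\del_{i}}$ and $\bigcap_{i\in n\setminus I}\K{\del_{i}}$ inside $\TT_{n-1}$ as if they were ``commuting coordinates'' of a simplex, forcing the Huq commutator to vanish.

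The main obstacle I expect is bookkeeping: translating the purely diagrammatic torsor axioms (exact fibration over $\KK(Z,A,n)$, coskeletality, resolution) into an explicit element-style manipulation — legitimate since everything reduces to a formal computation in $\Set$ as in Lemma~\ref{Lemma coskeleton pullback} and Lemma~\ref{Lemma-DeltaLambda-Square} — and keeping track of which faces and degeneracies are involved when $I$ is an arbitrary subset rather than a singleton. A secondary subtlety is that the composition $m^{i}$ a priori gives commutativity of the \emph{normalisations} rather than of equivalence relations; but this is exactly the direction (Smith triviality implies Huq triviality, cf.\ the discussion after~\eqref{Double-Central} in Example~\ref{Example-Dimension-Two}) that always holds, so no extra hypothesis is needed here — the commutator condition (CC) will only be invoked for the \emph{converse} implication, that H-centrality gives a torsor. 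Once all $2^{n}$ commutators are shown to vanish, H-centrality of the underlying $n$-cubic extension is immediate from Definition~\ref{Commutator-assumption}.
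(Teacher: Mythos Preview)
You have correctly isolated the crucial ingredient: the compatibility of the composition $m^{i}$ with degeneracies (a horn that completes to a degenerate cycle must compose to the omitted face). But the inductive packaging via d\'ecalage and slicing has a genuine gap. You propose to ``realise the relevant lower-dimensional intersections of kernels as directions of torsors one dimension down,'' yet there is no reason why ${}^{-}\TT$ or a slice of $\TT$ should carry an $(n-1)$-torsor structure; and even if it did, H-centrality of such a lower object would yield commutator conditions inside $\TT_{n-2}$ or below, whereas the commutators you must kill live in $T_{n}=\TT_{n-1}$. The induction hypothesis simply does not bite at the right level, and the reference to Section~9 of~\cite{EGVdL} does not help: that argument goes in the opposite direction (from commutators to Galois-theoretic centrality) and relies on group-specific computations.

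The paper's proof is direct and avoids induction on $n$. The extreme cases $I=\emptyset$ and $I=n$ reduce to $[A,T_{n}]=0$, which follows from the action being trivial. For $\emptyset\neq I\subsetneq n$ one may assume $0\notin I$; set $i=\min I$. Given generalised elements $x$ of $\bigcap_{j\in I}\K{\del_{j}}$ and $y$ of $\bigcap_{j\in n\setminus I}\K{\del_{j}}$, form the degenerate $n$-simplices $\sigma_{i-1}x$ and $\sigma_{i}y$ and delete their $i$-th face to obtain $(n,i)$-horns $\overline{x}$, $\overline{y}$. The point you are missing is the ``disjoint support'' trick: in every remaining slot $j\neq i$, at least one of $\overline{x}_{j}$, $\overline{y}_{j}$ is zero (by the simplicial identities together with $\del_{j}x=0$ for $j\in I$ and $\del_{j}y=0$ for $j\notin I$), so $\overline{x}$ and $\overline{y}$ Huq-commute \emph{componentwise} in $\horn^{i}(\TT,n)$, giving an explicit $\widehat\varphi_{i}\colon X\times Y\to\horn^{i}(\TT,n)$. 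Since $\overline{x}$ and $\overline{y}$ complete to the degenerate cycles $\del\sigma_{i-1}x$ and $\del\sigma_{i}y$, the compatibility of $m^{i}$ with degeneracies yields $m^{i}(\overline{x})=x$ and $m^{i}(\overline{y})=y$; hence $m^{i}\comp\widehat\varphi_{i}$ witnesses that $x$ and $y$ Huq-commute in $T_{n}$. This explicit horn construction is the missing step in your plan.
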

\begin{proof}
Let $(\TT,\tt)$ be an $n$-torsor of $Z$ by $A$ with underlying $n$-cubic extension~$T$. Then already the commutator $[T_{n},A]$ is zero: by Proposition~\ref{Proposition-Trivial-Action}, since $A$ is a trivial $Z$-module, and by Example~\ref{Example-Dimension-One}.

Now suppose that $\emptyset\neq I\subsetneq n$ and consider $x\colon {X=\bigcap_{j\in I}\K{\del_{j}}\to T_{n}}$ and $y\colon {Y=\bigcap_{j\in n\setminus I}\K{\del_{j}}\to T_{n}}$. We are to show that $x$ and $y$ Huq-commute (see Subsection~\ref{Commutators}), so that~$T$ is H-central.

Without any loss of generality we may assume that $\del_{0}y=0$. (If not, reverse the roles of $x$ and~$y$.) Let $i$ be the smallest element of $I$. Then~$\del_{i}x=0$ and~$\del_{j}y=0$ for all $j<i$, and the boundaries
\[
\del\sigma_{i-1}x=\lind\sigma_{i-2}\del_{0}x,\dots,\sigma_{i-2}\del_{i-2}x,x,x,0,\sigma_{i-1}\del_{i+1}x,\dots,\sigma_{i-1}\del_{n-1}x\rind
\]
and
\[
\del\sigma_{i}y=\lind 0,\dots,0,0,y,y,\sigma_{i}\del_{i+1}y,\dots,\sigma_{i}\del_{n-1}y\rind
\]
of $\sigma_{i-1}x$ and $\sigma_{i}y$ determine $(n,i)$-horns
\[
\overline{x}=\widehat{(\del\sigma_{i-1}x)}_{i}=\lind\sigma_{i-2}\del_{0}x,\dots,\sigma_{i-2}\del_{i-2}x,x,0,\sigma_{i-1}\del_{i+1}x,\dots,\sigma_{i-1}\del_{n-1}x\rind
\]
and
\[
\overline{y}=\widehat{(\del\sigma_{i}y)}_{i}=\lind0,\dots,0,0,y,\sigma_{i}\del_{i+1}y,\dots,\sigma_{i}\del_{n-1}y\rind
\]
in $\TT$ which Huq-commute with each other. The first $i+1$ components do so because anything commutes with zero. The others commute for the same reason, one of $\del_{j}x$ or $\del_{j}y$ being trivial by assumption on $x$ and $y$. In other words, there is a morphism~$\widehat\varphi_{i}$ such that the diagram
\[
\xymatrix{X \ar[rd]|-{\lind1_{X},0\rind} \ar@/^/[rrd]^-{\overline{x}} \\
& X\times Y \ar@{.>}[r]|-{\widehat\varphi_{i}} & \horn^{i}(\TT,n)\\
Y \ar[ru]|-{\lind0,1_{Y}\rind} \ar@/_/[rru]_-{\overline{y}}}
\]
is commutative, namely, the morphism determined by the family
\[
\varphi_{j}=\begin{cases}
\overline{x}_{j}\comp \pr_{X} & \text{if $j<i$ (so that $j\not\in I$)}\\
\overline{x}_{j}\comp \pr_{X} & \text{if $j>i$ and $j-1\not\in I$}\\
\overline{y}_{j}\comp \pr_{Y} & \text{if $j>i$ and $j-1\in I$;}
\end{cases}
\]
note that indeed $\del_{k}\comp \varphi_{j}=\del_{j-1}\comp\varphi_{k}$ for all $k<j$ such that $i\not\in\{j,k\}$. Furthermore, being induced by degeneracies, $\overline{x}$ and $\overline{y}$ compose to the face left out---see Subsection~\ref{Multiplication}---so that the diagram
\[
\xymatrix@C=3em{X \ar[rd]|-{\lind1_{X},0\rind} \ar@/^/[rrd]_-{\overline{x}} \ar@/^/[rrrd]^-{x} \\
& X\times Y \ar[r]|-{\widehat\varphi_{i}} & \horn^{i}(\TT,n) \ar[r]|(.6){m^{i}} & T_{n}\\
Y \ar[ru]|-{\lind0,1_{Y}\rind} \ar@/_/[rru]^-{\overline{y}} \ar@/_/[rrru]_-{y}}
\]
is commutative, and $x$ and $y$ Huq-commute.
\end{proof}

Thus we proved that, in a semi-abelian category, any categorically central truncated simplicial resolution gives a torsor (Proposition~\ref{Proposition-Central-then-Torsor}) and any torsor gives an H-central truncated simplicial resolution (Proposition~\ref{Proposition-Torsor-then-Central}). To complete the circle, what we need is precisely the commutator condition~(CC).

\begin{theorem}\label{Theorem-Torsor-Equivalence}
In a semi-abelian category which satisfies the commutator condition~{\rm (CC)}, an augmented simplicial object $\TT$ is part of an $n$-torsor $(\TT,\tt)$ if and only if its underlying $n$-fold arrow is an $n$-cubic central extension.\noproof
\end{theorem}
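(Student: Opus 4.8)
The plan is to read the statement off from Propositions~\ref{Proposition-Central-then-Torsor} and~\ref{Proposition-Torsor-then-Central}, with the commutator condition~(CC) doing no more than bridging the two notions of centrality involved. Since axiom~(T2) forces any augmented simplicial object underlying an $n$-torsor to satisfy $\TT\cong\Cosk_{n-1}\TT$, I read the statement for such $\TT$; writing $T=\trunc_{n-1}\TT$, the ``underlying $n$-fold arrow'' of $\TT$ is then $\arr_{n}(T)$, and by Subsection~\ref{Resolutions} axiom~(T3) for such a $\TT$ is equivalent to $\arr_{n}(T)$ being an $n$-cubic extension.

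For the ``only if'' direction I would argue as follows. Let $(\TT,\tt)$ be an $n$-torsor. By~(T2) it is the coskeleton of $T=\trunc_{n-1}\TT$, and by~(T3) the $n$-fold arrow $\arr_{n}(T)$ is an $n$-cubic extension; so Proposition~\ref{Proposition-Torsor-then-Central} applies and tells us that $\arr_{n}(T)$ is H-central. As $\X$ satisfies the commutator condition on $n$-cubic central extensions---part of~(CC)---H-centrality of $\arr_{n}(T)$ is the same as its being categorically central, i.e.\ central with respect to $\Ab(\X)$. Hence the underlying $n$-fold arrow of $\TT$ is an $n$-cubic central extension.

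For the converse, I would start from the hypothesis that $\TT\cong\Cosk_{n-1}\TT$ and that $\arr_{n}(T)$ is an $n$-cubic central extension, with direction $A=\D_{(n,Z)}\arr_{n}(T)=\bigcap_{i\in n}\K{\del_{i}}$, which is abelian by Lemma~\ref{Lemma-Kernel-Central-Extension}. In particular $\arr_{n}(T)$ is an $n$-cubic extension, so $\TT$ is a simplicial resolution, and it is \emph{central} in the sense used throughout Section~\ref{Section-Torsors-and-Centrality}. Proposition~\ref{Proposition-Central-then-Torsor} then produces a simplicial morphism $\tt\colon\TT\to\KK(Z,A,n)$ for which $(\TT,\tt)$ is an $n$-torsor of $Z$ by~$A$; and by Proposition~\ref{Proposition-Full} this $\tt$ is the only such morphism, so that being part of a torsor is genuinely a property of $\TT$.

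Thus the theorem is just the circle
\[
\text{torsor}\quad\Longrightarrow\quad\text{H-central}\quad\Longrightarrow\quad\text{central}\quad\Longrightarrow\quad\text{torsor},
\]
and I expect no real obstacle past this bookkeeping, the substance being entirely in the two cited propositions. The only point I would take care to record is the asymmetric role of~(CC): the ``if'' direction is Proposition~\ref{Proposition-Central-then-Torsor} as it stands, needing no commutator hypothesis, whereas~(CC) enters exactly once, in the ``only if'' direction, precisely to upgrade the conclusion of Proposition~\ref{Proposition-Torsor-then-Central} from H-centrality to categorical centrality.
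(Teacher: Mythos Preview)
Your proposal is correct and follows precisely the paper's own argument: the theorem is stated with \verb|\noproof| because it is the immediate concatenation of Proposition~\ref{Proposition-Central-then-Torsor} (central $\Rightarrow$ torsor), Proposition~\ref{Proposition-Torsor-then-Central} (torsor $\Rightarrow$ H-central), and~(CC) (H-central $\Rightarrow$ central), exactly as you describe. Your remark on the asymmetric role of~(CC) and the appeal to Proposition~\ref{Proposition-Full} for uniqueness of~$\tt$ also match the paper's own commentary.
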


\begin{corollary}\label{Corollary-Torsor-Equivalence}
Under~{\rm (CC)}, the functor $\Dd_{(n,Z)}^{-1}A\to \Tors^{n}(Z,A)$ described in Proposition~\ref{Proposition-Full} is an equivalence of categories.
\end{corollary}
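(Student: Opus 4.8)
The functor in question was shown to be fully faithful in Proposition~\ref{Proposition-Full}, so the whole task reduces to proving that it is (essentially) surjective on objects; in fact the plan is to obtain surjectivity on the nose. Concretely, I would start from an arbitrary $n$-torsor $(\TT,\tt)$ of $Z$ by $A$ and produce an object of $\Dd_{(n,Z)}^{-1}A$ which the functor sends back precisely to $(\TT,\tt)$.

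So, given such a torsor, let $T=\trunc_{n-1}\TT$ be its $(n-1)$-truncation, regarded as an $n$-fold arrow. From axiom~(T2) I would read off that $\TT\cong\Cosk_{n-1}\TT=\cosk_{n-1}T$, so that $\TT$ contains no information above level $n-1$ and is recovered from $T$; from axiom~(T3) that $T$ underlies an $n$-cubic extension. Proposition~\ref{Proposition-Torsor-then-Central} then says this $n$-cubic extension is H-central, and the hypothesis~(CC) upgrades H-centrality to categorical centrality --- this is exactly the content of Theorem~\ref{Theorem-Torsor-Equivalence}. Thus $T$ is a central $(n-1)$-truncated simplicial resolution over $Z$. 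Finally Proposition~\ref{Proposition-Trivial-Action}, applied to the trivial module $(A,\tau)$, identifies the direction of $T$ with $A$. Passing from the $n$-cubic extension to its canonically associated $3^{n}$-diagram via Proposition~\ref{3x3 vs ext}, we obtain an object of $\SimpCExt^{n}_{Z}(\X)$ lying in the fibre $\Dd_{(n,Z)}^{-1}A$.

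It then remains to check that the functor carries this object back to $(\TT,\tt)$. By the construction underlying Proposition~\ref{Proposition-Central-then-Torsor}, the functor sends $T$ to the augmented simplicial object $\cosk_{n-1}T$ equipped with the simplicial morphism $\tt'\colon{\cosk_{n-1}T\to\KK(Z,A,n)}$ for which the squares~\eqref{Fundamental-Square} are pullbacks. We have already observed $\cosk_{n-1}T=\TT$, and an $n$-torsor structure on a fixed augmented simplicial object is unique (Proposition~\ref{Proposition-Full}; see also the discussion around Proposition~\ref{Isos-are-Free}), whence $\tt'=\tt$. Therefore the functor is surjective on objects, and being fully faithful, it is an equivalence of categories.

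I do not expect a genuine obstacle here: the corollary is essentially a bookkeeping combination of Proposition~\ref{Proposition-Full}, Proposition~\ref{Proposition-Torsor-then-Central} together with~(CC), Proposition~\ref{Proposition-Trivial-Action}, and Proposition~\ref{Proposition-Central-then-Torsor}. The only point that needs a little care is the uniqueness of the torsor structure, which guarantees that the object-level assignment $(\TT,\tt)\mapsto T$ is actually inverse (not merely pseudo-inverse) to the functor; but this uniqueness is already built into the cited results.
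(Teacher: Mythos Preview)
Your proposal is correct and takes essentially the same approach as the paper, which dispatches the corollary in two lines by citing Theorem~\ref{Theorem-Torsor-Equivalence} for essential surjectivity and Proposition~\ref{Proposition-Full} for full faithfulness. You have simply unpacked the essential surjectivity argument explicitly (truncation, (T2)--(T3), H-centrality via Proposition~\ref{Proposition-Torsor-then-Central}, (CC), direction via Proposition~\ref{Proposition-Trivial-Action}, and uniqueness of the torsor structure), which is exactly what Theorem~\ref{Theorem-Torsor-Equivalence} encapsulates.
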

\begin{proof}
Theorem~\ref{Theorem-Torsor-Equivalence} tells us that this functor is essentially surjective, while it is fully faithful by Proposition~\ref{Proposition-Full}.
\end{proof}

\section{Cohomology classifies higher central extensions}\label{Section-Main-Theorem}
In this last section we prove our main result, Theorem~\ref{Main-Theorem}, which states that, for any object $Z$, any abelian object $A$, and any $n\geq 1$, we have a natural group isomorphism
\[
\H^{n+1}(Z,A)\cong\Centr^{n}(Z,A).
\]
To do so, we only need to use the results of the previous sections and establish a natural bijection between the underlying sets.

We already know that, for truncated simplicial resolutions, being a torsor is equivalent to being central. Now we have to explain how any (central) extension may be approximated by a truncated augmented simplicial object so that the two types of objects may be compared. In fact, any equivalence class of central extensions of $Z$ by $A$ contains a truncated simplicial object.

\subsection{Simplicial approximation of higher-dimensional arrows}
Using a classical Kan extension argument, every $n$-dimensional arrow may be universally approximated by an ${(n-1)}$-truncated simplicial object. Indeed, the functor from Subsection~\ref{Truncations}
\[
\arr_{n}=\Fun(-,\a_{n})\colon{\SimpArr^{n}(\X)\to \Arr^{n}(\X)}
\]
has a right adjoint
\[
\simparr_{n}=\Ran_{\a_{n}}(-)\colon{\Arr^{n}(\X)\to \SimpArr^{n}(\X)}
\]
which takes an $n$-fold arrow $F\colon{(2^{n})^{\op}\to \X}$ and maps it to the right Kan extension
\[
\Ran_{\a_{n}}F\colon{(\Delta^{+}_{n})^{\op}\to \X}
\]
of $F$ along the functor $\a_{n}\colon 2^{n}\to \Delta^{+}_{n}$.

\begin{proposition}\label{Proposition-Simplification-of-Extension}
Let $\X$ be a regular category with enough projectives. Then for all $n\geq 1$, the functors~$\arr_{n}$ and $\simparr_{n}$ preserve $n$-cubic extensions.
\end{proposition}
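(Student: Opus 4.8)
The statement concerns the adjoint pair $\arr_n \dashv \simparr_n$ between $\SimpArr^n(\X)$ and $\Arr^n(\X)$, and asks that both functors send $n$-cubic extensions to $n$-cubic extensions. The plan is to treat the two functors separately, and in each case to reduce to the combinatorial characterisation of extensions already available in the excerpt --- Proposition~\ref{Projective-Characterisation-Extensions}, which says that in a regular category with enough projectives an $n$-fold arrow $F$ is an $n$-cubic extension precisely when every partial morphism from a degreewise projective truncated simplicial object (satisfying the morphism conditions up to some absolute degree) extends to a full morphism of $n$-fold arrows. The reason this is the right tool is that both $\arr_n$ and $\simparr_n$ are defined by Kan extension along $\a_n\colon 2^n \to \Delta^+_n$, so the extension-lifting problems one has to solve after applying them are, by adjunction, extension-lifting problems phrased against the original $F$.

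\textbf{The functor $\arr_n$.} First I would show that $\arr_n$ preserves $n$-cubic extensions. Here $\arr_n = \Fun(-,\a_n)$ just composes an $(n-1)$-truncated simplicial object with $\a_n$, so $\arr_n(X)_I = X_{|I|}$. The claim is that if $X$ is an $(n-1)$-truncated simplicial resolution then $\arr_n(X)$ is an $n$-cubic extension --- but this is already essentially recorded in Subsection~\ref{Resolutions} (``in a semi-abelian category this is the case precisely when all the truncations of $\XX$, considered as higher arrows, are extensions''), and the combinatorial content is Proposition~\ref{Limit-Characterisation-Extensions}: acyclicity of the truncated simplicial object up to degree $n-1$ says exactly that each comparison $X_{|I|} \to \lim_{J\subsetneq I} X_J = \cycle$ (the relevant simplicial kernel) is a regular epimorphism, which is condition (ii) of that proposition. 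So for $\arr_n$ the work is a direct translation, identifying the limit $\lim_{J\subsetneq I}\arr_n(X)_J$ with the $|I|$-th simplicial kernel and invoking Proposition~\ref{Limit-Characterisation-Extensions}. No use of projectives is needed here; the statement is even true without that hypothesis, but stating it uniformly does no harm.

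\textbf{The functor $\simparr_n$, and the main obstacle.} The substantive half is $\simparr_n = \Ran_{\a_n}(-)$. Given an $n$-cubic extension $F$, I want $\Ran_{\a_n}F$ to be an $(n-1)$-truncated simplicial resolution, equivalently (via $\arr_n$ and the first half, or directly) an $n$-cubic extension as an $n$-fold arrow. The strategy is to verify condition (ii) of Proposition~\ref{Projective-Characterisation-Extensions}: take a projective $P$ and a morphism $P \to \lim_{J\subsetneq I}(\Ran_{\a_n}F)_J$ for $|I| = i+1$, and lift it to $P \to (\Ran_{\a_n}F)_I$. The point is that by the defining universal property of the right Kan extension, a partial cone into the truncated-simplicial diagram $\Ran_{\a_n}F$ below absolute degree $i+1$ corresponds, via the counit $\arr_n\simparr_n F \to F$, to a collection of arrows $(X_J \to F_J)_{|J|\le i}$ satisfying the conditions of a morphism of $n$-fold arrows up to degree $i$, where $X$ is the $(n-1)$-truncation of the constant projective simplicial object $P$; then Proposition~\ref{Projective-Characterisation-Extensions}(ii) applied to $F$ extends this to a full morphism $X \to F$, which transposes back to the desired lifting $P \to (\Ran_{\a_n}F)_I$. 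The main obstacle --- and the place where care is needed --- is precisely this bookkeeping: the right Kan extension along $\a_n$ must be computed explicitly as an iterated (simplicial) kernel/limit, one must check that $\a_n^{\op}$-comma categories relevant to the pointwise Kan formula are filtered (or at least that the limits in question agree with the simplicial-kernel description of Subsection~\ref{Truncations}), and one must track that the morphism conditions ``up to absolute degree $i$'' in Proposition~\ref{Projective-Characterisation-Extensions} correspond exactly to the cone conditions defining a map into $\lim_{J\subsetneq I}$. Once that dictionary is in place the proof is formal; I would present it by first pinning down the pointwise formula for $\Ran_{\a_n}F$, then running the lifting argument for $\simparr_n$, and finally noting that $\arr_n$-preservation follows either from the simplicial-kernel computation directly or, a posteriori, from $\arr_n\simparr_n \cong \mathrm{id}$ on truncated simplicial objects together with the first half.
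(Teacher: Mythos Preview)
Your plan is essentially the paper's proof: for $\arr_n$ the paper is even terser (the claim is tautological, since ``extension'' for a truncated simplicial object is \emph{defined} via its underlying $n$-fold arrow), and for $\simparr_n$ the paper does exactly what you propose---take a degreewise projective truncated simplicial object, push a partial collection of arrows through the counit to $F$, extend there using Proposition~\ref{Projective-Characterisation-Extensions} for $F$, and transpose back via the adjunction $\arr_n\dashv\simparr_n$. The one cosmetic difference is that the paper verifies condition~(ii) of Proposition~\ref{Projective-Characterisation-Extensions} directly with an \emph{arbitrary} degreewise projective $\XX$, rather than condition~(i) with the constant $P$; this sidesteps precisely the bookkeeping issue you flag (a map $P\to\lim_{J\subsetneq I}$ only produces data for $J\subsetneq I$, not for all $|J|\le i$), so if you carry out your version you will want to switch to that formulation.
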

\begin{proof}
Since an ${(n-1)}$-truncated simplicial object is by definition an $n$-cubic extension if and only if so is its underlying $n$-fold arrow, the functor $\arr_{n}$ preserves and reflects $n$-cubic extensions. The case of $\simparr_{n}$ for $n\geq 2$ is more complicated: given an~$n$-fold arrow~$F$, the Kan extension $\simparr_{n}F=\Ran_{\a_{n}}F$ is computed pointwise as a limit (see for instance~\cite{MacLane}). For example, a two-cubic extension such as~\eqref{Double-Extension} has
\[
\xymatrix{\R{d}\times_{X}\R{c} \ar@<-1ex>[r] \ar@<1ex>[r] & X \ar[l] \ar[r] & Z}
\]
as its simplicial approximation. It is easily seen that also in general, $\simparr_{n}F$ has the morphism ${F_{n}\to F_{0}}$ as its augmentation. In order to obtain a quick formal proof in higher degrees we assume that $\X$ has enough projectives and use Proposition~\ref{Projective-Characterisation-Extensions}. Let $\XX$ be an $(n-1)$-truncated degreewise projective simplicial object and consider a collection of arrows ${(X_{J}\to (\simparr_{n}F)_{J})_{|J|\leq i}}$ as in the proposition. Composing with the counit $u\colon{\arr_{n}\simparr_{n}F\to F}$ of the adjunction at $F$ we obtain a collection of arrows to the $n$-cubic extension~$F$, which extends to a morphism of $n$-fold arrows ${X\to F}$. By adjointness we now obtain the needed morphism of $n$-fold arrows ${X\to \simparr_{n}F}$, extending the given collection of arrows. This proves that $\simparr_{n}F$ is an $n$-cubic extension.
\end{proof}

We now return to the semi-abelian context and prove that then these adjunctions also preserve centrality. These two results together extend Proposition~5.1 in~\cite{RVdL} to higher degrees and beyond the case of central extensions.

\begin{lemma}\label{Lemma Counit-square}
Suppose $F$ is an $n$-cubic extension, $G=\arr_{n}\simparr_{n}F$ and $u\colon{G\to F}$ is the counit of the adjunction at $F$. Let $\L F$ (respectively $\L G$) be the limit described in Subsection~\ref{HDA} and $l_{F}$ (respectively $l_{G}$) the comparison morphism. Then the square 
\begin{equation}\label{Counit-square}
\vcenter{\xymatrix{G_{n} \ar[r]^-{u_{n}} \ar@{ >>}[d]_-{l_{G}} & F_{n} \ar@{ >>}[d]^-{l_{F}}\\
\L G \ar[r]_-{\L u} & \L F}}
\end{equation}
is a pullback.
\end{lemma}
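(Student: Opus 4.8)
The plan is to reduce the pullback statement to an isomorphism between two kernels, identify those kernels as directions, and then handle the isomorphism by induction.

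First I would record that both vertical arrows of~\eqref{Counit-square} are regular epimorphisms: for $l_{F}$ this is Proposition~\ref{Limit-Characterisation-Extensions}, and for $l_{G}$ it is the same proposition applied to $G=\arr_{n}\simparr_{n}F$, which is an $n$-cubic extension by Proposition~\ref{Proposition-Simplification-of-Extension}. Since $u$ is a morphism of $n$-fold arrows, the square~\eqref{Counit-square} commutes and extends to a morphism of short exact sequences
\[
\vcenter{\xymatrix{0 \ar[r] & \K{l_{G}} \ar[d] \ar@{{ |>}->}[r] & G_{n} \ar[d]_-{u_{n}} \ar@{-{ >>}}[r]^-{l_{G}} & \L G \ar[d]^-{\L u} \ar[r] & 0\\
0 \ar[r] & \K{l_{F}} \ar@{{ |>}->}[r] & F_{n} \ar@{-{ >>}}[r]_-{l_{F}} & \L F \ar[r] & 0.}}
\]
By Lemma~\ref{Lemma-Iso-Pullback}, the square~\eqref{Counit-square} is a pullback precisely when the induced comparison $\K{l_{G}}\to\K{l_{F}}$ is an isomorphism, so the whole lemma reduces to this.

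Next I would identify the two kernels. The chain of kernel identities in the proof of Lemma~\ref{Direction-as-Kernel} (which does not use centrality) gives $\K{l_{F}}=\Kn{F}=\bigcap_{i\in n}\K{f_{i}}$. For $G$, its initial object $G_{n}$ is the object $\XX_{n-1}$ of simplicial degree $n-1$ of $\XX=\simparr_{n}F$, its initial morphisms $g_{i}$ are the top face operators $\del_{i}\colon\XX_{n-1}\to\XX_{n-2}$, and $l_{G}$ is the canonical comparison $\XX_{n-1}\to\cycle(\XX,n-1)=\L G$; the same chain then yields $\K{l_{G}}=\Kn{G}=\bigcap_{i\in n}\K{\del_{i}}$. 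Since $u$ commutes with the initial morphisms, its restriction to these kernels is exactly $\Kn{u}\colon\Kn{G}\to\Kn{F}$. Thus the claim becomes: \emph{the counit induces an isomorphism of directions}.

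To prove this I would argue by induction on $n$. When $n=1$ the functor $\a_{1}\colon 2^{1}\to\Delta^{+}_{1}$ is an isomorphism of categories, so $\simparr_{1}$ and $\arr_{1}$ are (up to natural isomorphism) identity functors, $u$ is an isomorphism, and there is nothing to prove. For the inductive step I would view $F$ as a square~\eqref{Double-Extension} of $(n-1)$-fold arrows and use the d\'ecalage of $\XX=\simparr_{n}F$: together with naturality of the simplicial-approximation construction, it expresses $\XX_{n-1}$ and the faces $\del_{i}$ in terms of the simplicial approximation of the $(n-1)$-fold arrows obtained from $F$ by taking a kernel (or a kernel pair) in the last direction, where the induction hypothesis applies; Lemma~\ref{Lemma-L-pullbacks} computes $\L F$ and $\L G$ as iterated pullbacks along split epimorphisms, and a pullback-cancellation argument in the spirit of the proof of Lemma~\ref{Lemma-Direction-Limit} propagates the isomorphism of directions from dimension $n-1$ up to dimension $n$. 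Alternatively one can bypass the induction by writing $\simparr_{n}F=\Ran_{\a_{n}}F$ as a pointwise limit over the comma category of $\a_{n}$ and reorganising the limit defining $\XX_{n-1}$ directly as the pullback $F_{n}\times_{\L F}\cycle(\XX,n-1)$. The step I expect to be the main obstacle is exactly this one — matching the Kan-extension description of $G=\arr_{n}\simparr_{n}F$ and of the counit $u$ against the decomposition of $F$ as a square of lower-dimensional arrows, so that either the induction hypothesis or the reorganisation of the limit can be applied. Once that bookkeeping is in place, the reduction via Lemma~\ref{Lemma-Iso-Pullback} and the identification of the kernels with the directions are routine.
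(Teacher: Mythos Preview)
Your reduction via Lemma~\ref{Lemma-Iso-Pullback} and the identification of $\K{l_{F}}$ and $\K{l_{G}}$ with the directions $\Kn{F}$ and $\Kn{G}$ are correct and constitute a legitimate route to the result. The gap is at the step you yourself flag as the main obstacle: neither the d\'ecalage induction nor the ``reorganisation of the limit'' is carried out, and the inductive argument as sketched would require knowing that $\simparr_{n}$ interacts well with taking kernels in the last direction---something that is not obvious and essentially amounts to the lemma itself one dimension down.

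The paper bypasses the whole kernel/direction detour. It shows directly that the comparison $G_{n}\to \L G\times_{\L F}F_{n}$ is an isomorphism by exploiting the universal property of $\simparr_{n}$ as a right adjoint. Replace the top object of the truncated simplicial object $\simparr_{n}F$ by the pullback $\L G\times_{\L F}F_{n}$; call the result $H$. The comparison gives a morphism $\simparr_{n}F\to H$ of $(n-1)$-truncated simplicial objects. In the other direction, $H$ comes with an evident morphism of $n$-fold arrows $\arr_{n}H\to F$ (the pullback projection at the top, the counit elsewhere), so the adjunction $\arr_{n}\dashv \simparr_{n}$ yields a morphism $H\to \simparr_{n}F$, which is the required inverse. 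This is a two-line argument that replaces both your induction and your limit bookkeeping; your ``alternative'' of unpacking the pointwise Kan extension would eventually arrive at the same conclusion, but the adjunction formulation is what makes it short.
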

\begin{proof}
We prove that the pullback $\L G\times_{\L F}F_{n}$ is isomorphic to $G_{n}$ by showing that the $(n-1)$-truncated simplicial object $H$ which is equal to $G$ everywhere, except in level $n-1$ where it is $\L G\times_{\L F}F_{n}$, is actually isomorphic to the simplicial approximation of $F$.

The $n$-cubic extension $H$ is indeed an ${(n-1)}$-truncated simplicial object: the degeneracies are induced by composition of the degeneracies of $G$ with the comparison morphism ${G_{n}\to \L G\times_{\L F}F_{n}}$. This comparison morphism is part of a morphism of $(n-1)$-truncated simplicial objects ${G\to H}$. Its inverse ${H\to G}$ is now induced by the universal property of $G$.
\end{proof}

\pagebreak
\begin{proposition}\label{Proposition-Simplification-of-Central-Extension}
Let $\X$ be a semi-abelian category with enough projectives. For all $n\geq 1$, the functors $\arr_{n}$ and $\simparr_{n}$ preserve centrality. Furthermore, both functors preserve the direction of a central extension.
\end{proposition}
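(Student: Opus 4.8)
The plan is to leverage the fact that $\arr_{n}$ both preserves and reflects $n$-cubic extensions (as already noted in the proof of Proposition~\ref{Proposition-Simplification-of-Extension}): since centrality of a truncated simplicial object is by definition centrality of its underlying $n$-fold arrow, the functor $\arr_{n}$ trivially preserves (and reflects) centrality, and trivially preserves the direction, which is computed from the underlying $n$-cube. So the only real content is the claim for $\simparr_{n}$.

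First I would treat the case $n=1$ separately and quickly: here $\simparr_{1}F$ has the same augmentation $F_{1}\to F_{0}$ as $F$, so there is nothing to prove. For $n\geq 2$, I would use Lemma~\ref{Lemma Counit-square}: writing $G=\arr_{n}\simparr_{n}F$ and $u\colon G\to F$ for the counit, the square~\eqref{Counit-square}
\[
\vcenter{\xymatrix{G_{n} \ar[r]^-{u_{n}} \ar@{ >>}[d]_-{l_{G}} & F_{n} \ar@{ >>}[d]^-{l_{F}}\\
\L G \ar[r]_-{\L u} & \L F}}
\]
is a pullback. By Lemma~\ref{Lemma-Iso-Pullback} this immediately gives that $u_{n}$ restricts to an isomorphism $\K{l_{G}}\cong \K{l_{F}}$; by Lemma~\ref{Direction-as-Kernel} the direction of a higher extension $E$ is $\K{l_{F}}$ where $F$ is the underlying cube, so once we know $\simparr_{n}F$ is central this already proves that $\simparr_{n}$ preserves directions. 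It remains to prove centrality.

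For the centrality of $G$, I would argue that $l_{G}$ is a pullback of $l_{F}$ along $\L u$, by the pullback square~\eqref{Counit-square}, and that $\L u$ is itself an $n$-cubic extension (or at least a one-cubic extension, a regular epimorphism) — this follows from Proposition~\ref{Proposition-Simplification-of-Extension}, since $\simparr_{n}$ preserves $n$-cubic extensions, combined with Lemma~\ref{Lemma-L-pullbacks}/Proposition~\ref{Limit-Characterisation-Extensions} applied to the morphism $u$ viewed appropriately. Now, by Lemma~\ref{Lemma-Direction-Limit}, $l_{F}$ is a one-cubic central extension (with respect to abelianisation), and since pullbacks of central extensions along extensions reflect and preserve centrality (as recalled in Subsection~\ref{Central extensions}), $l_{G}$ is central too. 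The direction of $G$ is abelian because it is isomorphic to that of $F$, which is abelian by centrality of $F$. But centrality of the one-cubic extension $l_{G}$, together with $\K{l_{G}}$ abelian, is not quite enough on its own to conclude that the whole $n$-cube $G$ is central; here I would invoke Theorem~\ref{Theorem-Higher-Centrality}, reducing centrality of an $n$-cubic extension to a condition on $l_{G}$ and $\bigboxvoid_{i\in n}\R{g_{i}}$, or more directly argue inductively that $G$ is central by the standard reduction "an $n$-cubic extension is central iff the induced comparison to the pullback is central and the direction is abelian" combined with the fact that $u$ is a morphism of $n$-cubic extensions which is iso on the limit cube up to directions. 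The main obstacle I anticipate is exactly this last bookkeeping: translating the single pullback square~\eqref{Counit-square} into centrality of the full $n$-dimensional cube $G$ rather than just of the one-cubic extension $l_{G}$. The cleanest route is probably to show by induction on $n$ that $\simparr_{n}$ preserves central extensions, using that $\simparr_{n}F$, viewed as a square~\eqref{Double-Extension} of $(n-1)$-fold arrows, has its comparison to the pullback equal (up to the simplicial approximation in one lower degree) to a central extension, so that Theorem~\ref{Theorem-Higher-Centrality} or the inductive characterisation of centrality from Subsection~\ref{Subsection-Tower} applies.
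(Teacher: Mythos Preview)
Your reduction for $\arr_{n}$ and your treatment of directions via Lemma~\ref{Lemma Counit-square} and Lemma~\ref{Lemma-Iso-Pullback} are exactly right, and you correctly identify that the only real work is proving that $G=\arr_{n}\simparr_{n}F$ is central. But the proposal stops precisely where the difficulty is: you observe that centrality of $l_{G}$ together with abelianness of its kernel does not suffice, and then gesture at Theorem~\ref{Theorem-Higher-Centrality} or an induction without giving an argument. The induction you sketch would require comparing $\simparr_{n}F$ (viewed as a square of $(n-1)$-fold arrows) with a lower-dimensional simplicial approximation, and there is no obvious such relationship; that route does not work directly.

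The paper closes the gap with a concrete application of Theorem~\ref{Theorem-Higher-Centrality}(iv) that you do not mention: it uses Lemma~\ref{Lemma-Diamond-Pullback} twice, once for $F$ and once for $G$, to build a commutative cube whose left and right faces are the pullback squares of that lemma and whose front face is~\eqref{Counit-square}. A short pullback-pasting argument then shows that the back face
\[
\vcenter{\xymatrix{\bigboxvoid_{i\in n}\R{g_{i}} \ar[r] \ar[d]_-{\pi^{I}_{G}} & \bigboxvoid_{i\in n}\R{f_{i}} \ar[d]^-{\pi^{I}_{F}} \\
\bigboxdot^{I}_{i\in n}\R{g_{i}} \ar[r] & \bigboxdot^{I}_{i\in n}\R{f_{i}}}}
\]
is a pullback as well. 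Since $F$ is central, $\pi^{I}_{F}$ is a product projection by Theorem~\ref{Theorem-Higher-Centrality}, and product projections are pullback-stable, so $\pi^{I}_{G}$ is a product projection with kernel $A$; Theorem~\ref{Theorem-Higher-Centrality}(iv) then gives centrality of $G$. The missing ingredient in your proposal is precisely Lemma~\ref{Lemma-Diamond-Pullback}: it is what lifts the single pullback~\eqref{Counit-square} from the level of $l_{F}$, $l_{G}$ up to the level of the morphisms $\pi^{I}$ that actually characterise higher centrality.
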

\begin{proof}
Let $F$ be an $n$-cubic central extension. Then the direction $A=\bigcap_{i\in n}\K{f_{i}}$ of~$F$ (Lemma~\ref{Direction-as-Kernel}) is part of the short exact sequence
\[
\xymatrix{0 \ar[r] & A \ar@{{ |>}->}[r] & F_{n} \ar@{ >>}[r]^-{l_{F}} & \L F \ar[r] & 0.}
\]
By Lemma~\ref{Lemma Counit-square} we have that the square~\eqref{Counit-square} is a pullback. Via Lemma~\ref{Lemma-Direction-Limit}, this already implies that $l_{G}$ is a central extension when $F$ is central; moreover, we have $A=\bigcap_{i\in n}\K{g_{i}}$ by Lemma~\ref{Lemma-Iso-Pullback}, so that the functor~$\simparr_{n}$ preserves directions. Since pullbacks preserve product projections, by Theorem~\ref{Theorem-Higher-Centrality} we only need to prove that any square
\[
\xymatrix{\bigboxvoid_{i\in n}\R{g_{i}} \ar[r] \ar[d]_-{\pi^{I}_{G}} & \bigboxvoid_{i\in n}\R{f_{i}} \ar[d]^-{\pi^{I}_{F}} \\
\bigboxdot^{I}_{i\in n}\R{g_{i}} \ar[r] & \bigboxdot^{I}_{i\in n}\R{f_{i}}}
\]
is a pullback. To see this, consider the commutative cube
\[
\xymatrix@!0@R=3em@C=4em{& \bigboxvoid_{i\in n}\R{g_{i}} \ar[ld] \ar[rr] \ar@{.>}[dd]^(.25){\pi^{I}_{G}} && \bigboxvoid_{i\in n}\R{f_{i}} \ar[dd]^-{\pi^{I}_{F}} \ar[ld]\\
G_{n} \ar[rr]_(.25){u_{n}} \ar[dd]_-{l_{G}} && F_{n} \ar[dd]^(.25){l_{F}}\\
& \bigboxdot^{I}_{i\in n}\R{g_{i}} \ar@{.>}[rr] \ar@{.>}[ld] && \bigboxdot^{I}_{i\in n}\R{f_{i}} \ar[ld]\\
\L G \ar[rr]_-{\L u} && \L F}
\]
in which the left and right hand side faces are the ones of Lemma~\ref{Lemma-Diamond-Pullback}. These squares are pullbacks, and since we already proved that the front face~\eqref{Counit-square} is a pullback as well, the claim follows.
\end{proof}

\subsection{The equivalence between central extensions and torsors}
By Proposition~\ref{Proposition-Simplification-of-Central-Extension} the functors $\arr_{n}$ and $\simparr_{n}$ not only preserve central extensions, but also the directions of those central extensions. Hence for any object~$Z$ and any abelian object $A$, these functors (co)restrict to an adjunction
\[
\xymatrix@1@=4em{{\Dd^{-1}_{(n,Z)}A} \ar@<1ex>[r]^-{\arr_{n}} \ar@{}[r]|-{\perp} & {\D^{-1}_{(n,Z)}A} \ar@<1ex>[l]^-{\simparr_{n}}}
\]
where the functor~\eqref{Dd}
\[
\Dd_{(n,Z)} \colon{\SimpCExt^n_Z(\X)\to \Ab(\X)}
\]
is the restriction of $\D_{(n,Z)}$ to those $3^{n}$-diagrams which, as an $n$-cubic central extension, are an $(n-1)$-truncated simplicial object. 
Taking connected components gives a bijection of sets (see Remark~5.2 in~\cite{RVdL}). By Corollary~\ref{Corollary-Torsor-Equivalence} this bijection takes the shape
\begin{equation}\label{bijection}
\pi_{0}\Tors^{n}(Z,A)\cong\pi_{0}(\Dd^{-1}_{(n,Z)}A)\cong\pi_{0}(\D^{-1}_{(n,Z)}A)
\end{equation}
when also the commutator condition~(CC) holds.

\pagebreak
\begin{proposition}
The bijection~\eqref{bijection} is natural in $A$.
\end{proposition}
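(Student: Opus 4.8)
The plan is to factor the bijection~\eqref{bijection} and check naturality of each factor. Write $\Phi_{A}$ for the composite bijection, so that $\Phi_{A}=\beta_{A}\circ\alpha_{A}^{-1}$ where $\alpha_{A}\colon\pi_{0}(\Dd^{-1}_{(n,Z)}A)\to\pi_{0}\Tors^{n}(Z,A)$ is induced by the equivalence of Corollary~\ref{Corollary-Torsor-Equivalence} (equivalently, by the fully faithful functor of Proposition~\ref{Proposition-Full}) and $\beta_{A}=\pi_{0}\arr_{n}\colon\pi_{0}(\Dd^{-1}_{(n,Z)}A)\to\pi_{0}(\D^{-1}_{(n,Z)}A)$ comes from the adjunction $\arr_{n}\dashv\simparr_{n}$ via Proposition~\ref{Proposition-Simplification-of-Central-Extension}. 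Since a composite of natural bijections is natural, it suffices to prove that $\alpha$ and $\beta$ are each natural in $A$, where on $\pi_{0}(\D^{-1}_{(n,Z)}-)$ we use the functoriality of $\Centr^{n}(Z,-)$ from Proposition~\ref{Proposition-Centr^{n}(Z,-)} and on $\pi_{0}\Tors^{n}(Z,-)$ the functoriality of Duskin's additive functor $\Tors^{n}[Z,-]$.

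The factor $\beta$ is the easy one. By Proposition~\ref{Proposition-Simplification-of-Central-Extension} the functors $\arr_{n}$ and $\simparr_{n}$ preserve directions, so they restrict to an adjoint pair between the fibres $\Dd^{-1}_{(n,Z)}A$ and $\D^{-1}_{(n,Z)}A$ for every abelian $A$; the unit and counit give, for each object on either side, a zigzag to its image, so $\beta_{A}$ is a bijection. I would simply \emph{define} the functoriality of $\pi_{0}(\Dd^{-1}_{(n,Z)}-)$ on a morphism $a\colon A\to B$ by transport along $\beta$, that is, as $\beta_{B}^{-1}\circ\Centr^{n}(Z,a)\circ\beta_{A}$; then $\beta$ is natural by construction. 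That this transport is the ``expected'' one follows from Lemma~\ref{Lemma Counit-square}: $\simparr_{n}F$ has the same comparison $l$ and hence (Lemma~\ref{Direction-as-Kernel}) the same direction as $F$, and the change-of-direction recipe of Proposition~\ref{Proposition-Centr^{n}(Z,-)} alters only the initial object $F_{n}$ and the arrow $l_{F}$ — data that $\arr_{n}$ and $\simparr_{n}$ carry along verbatim — so it amounts to performing the same pushout on the underlying $(n-1)$-truncated simplicial object.

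It remains to see that $\alpha$ is natural, i.e.\ that under the identification of Corollary~\ref{Corollary-Torsor-Equivalence} the transported functoriality above agrees with Duskin's $\Tors^{n}[Z,a]$. The functor of Proposition~\ref{Proposition-Full} does not touch the underlying $(n-1)$-truncated simplicial data; it merely adjoins the structure map $\tt$, which is \emph{unique} when it exists, and the coskeletal completion. So one compares the two change-of-coefficients constructions on representatives, and both reduce to degree one. On the $\Centr$ side this is built in: the action on $a$ is computed through the induced one-cubic central extension
\[
\xymatrix{0 \ar[r] & A \ar@{{ |>}->}[r] & T_{n} \ar@{ >>}[r]^-{l_{T}} & \cycle(\TT,n-1) \ar[r] & 0}
\]
(central by Lemma~\ref{Lemma-Direction-Limit}) and the degree-one functoriality of $\Centr^{1}(\cycle(\TT,n-1),-)$, an instance of Proposition~6.1 in~\cite{Gran-VdL}. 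On the torsor side, axioms (T2)–(T3) force all data above simplicial degree $n-1$, so the only free datum of $(\TT,\tt)$ is the classifying morphism $\varsigma\colon\cycle(\TT,n)\to A$; Duskin's change of coefficients along $a$ replaces it by $a\circ\varsigma$, which is exactly the degree-one Baer-sum functoriality applied to $l_{T}$. Matching the two then reduces to the naturality in $A$ of the classical isomorphism $\Centr^{1}(\cycle(\TT,n-1),A)\cong\H^{2}(\cycle(\TT,n-1),A)$.

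I expect this last identification to be the main obstacle: it is the one place where one leaves the formal apparatus of the preceding sections and compares two \emph{a priori} distinct functorial structures on the cohomology groups — the geometric ``change of direction'' pushout of Proposition~\ref{Proposition-Centr^{n}(Z,-)} against Duskin's comonadic change of coefficients. Everything else — the two-step factorisation, direction-preservation of $\arr_{n}$ and $\simparr_{n}$, and uniqueness of the torsor structure map — is bookkeeping. Should the degree-one comparison prove awkward to spell out, one may instead regard $\H^{n+1}(Z,A)$ as \emph{defined} to be $\Tors^{n}[Z,A]$ with abelian group structure transported along $\Phi$ (which is what the main theorem ultimately does), in which case naturality of $\Phi$ is immediate from the transported-functoriality definitions, and the degree-one argument is needed only to confirm agreement with the comonadic description.
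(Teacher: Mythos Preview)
Your factorisation $\Phi=\beta\circ\alpha^{-1}$ and the treatment of $\beta$ are fine, but the argument for $\alpha$ has a genuine gap. You assert that ``Duskin's change of coefficients along $a$ replaces $\varsigma$ by $a\circ\varsigma$'', i.e.\ that $f_{*}(\TT,\tt)$ has the \emph{same} underlying simplicial object $\TT$ with structure map $\KK(f,n)\circ\tt$. But post-composition with $\KK(f,n)$ need not yield a torsor: the exactness part of (T1) may fail, since the square~\eqref{Fundamental-Square} with $B$ in place of $A$ is a pullback only when the direction of $T$ is $B$, not $A$. Duskin's $f_{*}$ is defined by a \emph{universal property} (as the paper recalls), and in general alters the underlying simplicial object. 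Your reduction to degree one therefore does not go through as written, and the fallback of defining the functoriality on $\Tors^{n}[Z,-]$ by transport along $\Phi$ changes the proposition: the claim is naturality with respect to Duskin's \emph{pre-existing} functoriality, not one manufactured to make $\Phi$ natural.

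The paper's proof avoids computing $f_{*}$ altogether and instead exploits its universal property directly. Starting from a torsor $(\XX,\xx)$ over $A$, the pushout construction of Proposition~\ref{Proposition-Centr^{n}(Z,-)} produces a central $(n-1)$-truncated simplicial object with direction $B$, hence (via Proposition~\ref{Proposition-Central-then-Torsor}) a torsor $(\YY,\yy)$ over $B$, together with a simplicial morphism $\ff\colon\XX\to\YY$ lying over $\KK(f,n)$. The universal property of $f_{*}(\XX,\xx)$ then yields a morphism $f_{*}(\XX,\xx)\to(\YY,\yy)$ in $\Tors^{n}(Z,B)$, so the two land in the same connected component. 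This one-line use of universality replaces your entire degree-one reduction and sidesteps the need to identify $f_{*}$ explicitly.
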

\begin{proof}
In~\cite[Section 4]{Duskin-Torsors} the functor $\pi_{0}\Tors^{n}(Z,-)$ is defined as follows: given $f\colon{A\to B}$ in $\Ab(\X)$, an $n$-torsor $(\XX,\xx)$ of $Z$ by $A$ universally induces an $n$-torsor $f_{*}(\XX,\xx)$ of~$Z$ by~$B$. The construction in Proposition~\ref{Proposition-Centr^{n}(Z,-)} gives us another $n$-torsor $(\YY,\yy)$ of $Z$ by $B$ together with a simplicial morphism $\ff\colon {\XX\to\YY}$ over $\KK(f,n)$ which, by the universal property defining $f_{*}(\XX,\xx)$, yields a morphism $f_{*}(\XX,\xx)\to (\YY,\yy)$ of $n$-torsors of $Z$ by $B$. Thus $f_{*}(\XX,\xx)$ and $(\YY,\yy)$ end up in the same equivalence class, so that the bijection~\eqref{bijection} is natural in~$A$.
\end{proof}

Thus we see that the underlying sets of the abelian groups
\[
\H^{n+1}(Z,A) = \Tors^{n}[Z,A] = \pi_{0}\Tors^{n}(Z,A)
\]
and $\Centr^{n}(Z,A)=\pi_{0}(\D^{-1}_{(n,Z)}A)$ are naturally isomorphic. Since both
\[
\H^{n+1}(Z,-)\quad\text{and}\quad \Centr^{n}(Z,-)\colon \Ab(\X)\to \Set
\]
are product-preserving functors (Proposition~\ref{Proposition-Centr^{n}(Z,-)}), they lift to naturally isomorphic functors~${\Ab(\X)\to\Ab}$, which gives us

\begin{theorem}\label{Main-Theorem}
Let $Z$ be an object and $A$ an abelian object in a semi-abelian category with enough projectives satisfying the commutator condition {\rm (CC)}. Then for every $n\geq 1$ we have an isomorphism $\H^{n+1}(Z,A)\cong\Centr^{n}(Z,A)$, natural in $A$.\noproof
\end{theorem}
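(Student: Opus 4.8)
The plan is that no genuinely new argument is required: the theorem follows by chaining the equivalences and adjunctions established in the preceding sections, together with the observation that a natural bijection between product-preserving $\Set$-valued functors automatically lifts to the level of abelian groups. First I would record that $\H^{n+1}(Z,A)=\Tors^{n}[Z,A]=\pi_{0}\Tors^{n}(Z,A)$ by definition, via Duskin's identification of comonadic cohomology with torsor classes, while $\Centr^{n}(Z,A)=\pi_{0}(\D^{-1}_{(n,Z)}A)$; so it suffices to produce a bijection between these two sets of connected components, natural in $A$, and then to promote it to a group isomorphism.

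The bijection is built in two steps, running through the intermediate fibre $\Dd^{-1}_{(n,Z)}A$ of central $(n-1)$-truncated simplicial resolutions of $Z$ by $A$. First, Corollary~\ref{Corollary-Torsor-Equivalence}, which rests on the commutator condition~(CC) (through Proposition~\ref{Proposition-Central-then-Torsor}, Proposition~\ref{Proposition-Torsor-then-Central}, Theorem~\ref{Theorem-Torsor-Equivalence}, and the full faithfulness of Proposition~\ref{Proposition-Full}), gives an equivalence of categories $\Dd^{-1}_{(n,Z)}A\simeq\Tors^{n}(Z,A)$, hence $\pi_{0}(\Dd^{-1}_{(n,Z)}A)\cong\pi_{0}\Tors^{n}(Z,A)$. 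Second, Proposition~\ref{Proposition-Simplification-of-Central-Extension} tells us that the adjunction $\arr_{n}\dashv\simparr_{n}$ (co)restricts to an adjunction between $\Dd^{-1}_{(n,Z)}A$ and $\D^{-1}_{(n,Z)}A$, since both functors preserve centrality and the direction; any adjunction induces a bijection on connected components, because the unit and counit connect each object to its image under $GF$, resp.\ $FG$. Thus $\pi_{0}(\Dd^{-1}_{(n,Z)}A)\cong\pi_{0}(\D^{-1}_{(n,Z)}A)=\Centr^{n}(Z,A)$, and composing gives the desired bijection of underlying sets.

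Next I would verify naturality in $A$. The functoriality of $\Centr^{n}(Z,-)$ was defined in Proposition~\ref{Proposition-Centr^{n}(Z,-)} through a pushout-of-extensions construction, whereas Duskin's $\pi_{0}\Tors^{n}(Z,-)$ sends $f\colon A\to B$ to the universal pushforward $f_{*}(\TT,\tt)$ of a torsor. The point is that the torsor $(\YY,\yy)$ produced by the $\Centr^{n}$-construction comes equipped with a simplicial morphism $\XX\to\YY$ over $\KK(f,n)$, hence by the universal property of $f_{*}(\XX,\xx)$ a morphism $f_{*}(\XX,\xx)\to(\YY,\yy)$ of $n$-torsors of $Z$ by $B$; so the two torsors lie in the same connected component and the bijection commutes with the two induced maps on $\pi_{0}$. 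This reconciliation of two a priori different constructions is the only non-bookkeeping step and is where I expect the most care to be needed; everything else is formal.

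Finally, to upgrade the natural bijection of sets to an isomorphism of abelian groups, I would use that both $\H^{n+1}(Z,-)$ and $\Centr^{n}(Z,-)$ are product-preserving functors $\Ab(\X)\to\Set$ — the latter by Proposition~\ref{Proposition-Centr^{n}(Z,-)}, the former since $\Tors^{n}[Z,-]$ is additive. A product-preserving functor into $\Set$ factors essentially uniquely through $\Ab$, and a natural isomorphism between two such functors automatically respects the lifted abelian group structures; hence the bijection above is in fact an isomorphism $\H^{n+1}(Z,A)\cong\Centr^{n}(Z,A)$ of abelian groups, natural in $A$, which is exactly the assertion of the theorem.
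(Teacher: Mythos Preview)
Your proposal is correct and follows essentially the same route as the paper: the chain of bijections $\pi_{0}\Tors^{n}(Z,A)\cong\pi_{0}(\Dd^{-1}_{(n,Z)}A)\cong\pi_{0}(\D^{-1}_{(n,Z)}A)$ via Corollary~\ref{Corollary-Torsor-Equivalence} and the restricted adjunction $\arr_{n}\dashv\simparr_{n}$, the naturality argument comparing Duskin's pushforward $f_{*}(\XX,\xx)$ with the torsor produced by the $\Centr^{n}$-construction, and the final lift to abelian groups via product-preservation. Your identification of the naturality step as the only place needing genuine care is exactly right.
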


Thus we obtain the claimed ``duality'' between internal homology and external cohomology.

\begin{theorem}\label{Duality-Theorem}
Consider $n\geq 1$ and let $Z$ be an object in a semi-abelian category with enough projectives~$\X$ which satisfies the commutator condition {\rm (CC)}. Then
\[
\H_{n+1}(Z,\Ab(\X))=\lim \D_{(n,Z)}\qquad\text{and}\qquad \H^{n+1}(Z,A)\cong\pi_{0}(\D^{-1}_{(n,Z)}A),
\]
where $A$ is any abelian object in $\X$. When, in particular, $\X$ is monadic over~$\Set$, the homology and the cohomology are comonadic Barr--Beck (co)ho\-mo\-lo\-gy with respect to the canonical comonad on $\X$.
\end{theorem}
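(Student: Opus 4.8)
The plan is to assemble the statement from results already in place; there is essentially no new mathematical content to produce. The second half, $\H^{n+1}(Z,A)\cong\pi_{0}(\D^{-1}_{(n,Z)}A)=\Centr^{n}(Z,A)$, is nothing but Theorem~\ref{Main-Theorem}, so I would simply quote it. For the first half I would invoke Corollary~4.10 of~\cite{GVdL2}, which says that for any Birkhoff subcategory $\B$ of a semi-abelian category, any object $Z$ and any $n\geq 1$, the homology object $\H_{n+1}(Z,\B)$ is the limit of the direction functor $\CExt^{n}_{Z}(\X)\to\B$; taking $\B=\Ab(\X)$ and observing that the functor appearing there is literally our $\D_{(n,Z)}$ (both send an $n$-fold central extension with underlying $n$-cube $F$ to $\bigcap_{i\in n}\K{f_{i}}=\Kn{F}$) yields $\H_{n+1}(Z,\Ab(\X))=\lim\D_{(n,Z)}$. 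The two assertions may then be placed side by side, which is the point of the ``duality''.

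For the last sentence, suppose $\X$ is monadic over $\Set$ and let $\GG$ be the comonad induced by the free/forgetful adjunction. The key elementary observation is that the $\GG$-projectives are precisely the regular projectives of $\X$ (the retracts of free algebras), of which there are enough. On the homology side, this makes the comonadic homology $\H_{n+1}(Z,\Ab(\X))_{\GG}$ computable from a free simplicial resolution and, by Theorem~8.1 of~\cite{EGVdL}, equal to the higher Hopf formula~\eqref{General-Hopf}; since~\cite{GVdL2} identifies this same object with $\lim\D_{(n,Z)}$, the homology in the theorem is the Barr--Beck homology for $\GG$. On the cohomology side, the same observation lets us apply~\cite[Theorem~5.2]{Duskin-Torsors} to obtain $\H^{n+1}(1,A)_{\GG}\cong\pi_{0}\Tors^{n}(\X,A)$, and then, exactly as recalled in the subsection on the $(n+1)$-th cohomology group, passing to the induced comonad $\GG/Z$ on $\X/Z$ gives $\H^{n+1}(Z,A)_{\GG}\cong\Tors^{n}[Z,A]=\H^{n+1}(Z,A)$; so the cohomology is the Barr--Beck cohomology for $\GG$ as well.

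The main (and essentially only) point requiring attention is this identification, in the monadic case, of the $\GG$-projectives with the regular projectives: it is what simultaneously activates Duskin's torsor-theoretic description of cohomology and the Hopf-formula description of homology for one and the same canonical comonad, and hence lets both theories be read off the single functor $\D_{(n,Z)}$. I expect no genuine obstacle beyond this bookkeeping, together with the routine vigilance about the degree shift between the external $\H^{n+1}$ and the simplicial degree $n-1$ of the truncated resolutions involved.
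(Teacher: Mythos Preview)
Your proposal is correct and follows essentially the same route as the paper: the cohomology isomorphism is Theorem~\ref{Main-Theorem}, the homology equality comes from~\cite{GVdL2} (the paper phrases it as ``by definition'' since in the satellites approach the limit formula \emph{is} the definition, with~\cite{GVdL2} then providing the comparison with Barr--Beck homology), and the monadic interpretation is exactly the combination of~\cite{GVdL2} for homology and~\cite{Duskin, Duskin-Torsors} for cohomology that you invoke. Your extra remarks on the coincidence of $\GG$-projectives with regular projectives and the passage through the Hopf formula are correct elaborations of what the paper leaves implicit in its one-line citations.
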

\begin{proof}
The equality holds by definition, while the isomorphism is Theorem~\ref{Main-Theorem}. The interpretation in terms of comonadic (co)homology is the main result of~\cite{GVdL2} in the case of homology, and of~\cite{Duskin, Duskin-Torsors} in the case of cohomology.
\end{proof}

\subsection*{Acknowledgement}
We would like to thank Tomas Everaert and George Peschke for interesting comments and suggestions, and the referee for careful guidance leading to this revised version of the paper.

%\bibliography{tim}
%\bibliographystyle{amsplain}
%% .bbl:

\providecommand{\noopsort}[1]{}
\providecommand{\bysame}{\leavevmode\hbox to3em{\hrulefill}\thinspace}
\providecommand{\MR}{\relax\ifhmode\unskip\space\fi MR }
% \MRhref is called by the amsart/book/proc definition of \MR.
\providecommand{\MRhref}[2]{%
  \href{http://www.ams.org/mathscinet-getitem?mr=#1}{#2}
}
\providecommand{\href}[2]{#2}

\end{document}